\newcommand{\wtd}{\widetilde}
\newcommand{\Z}{\mathbb{Z}}
\newcommand{\x}{\times}
\newcommand{\C}{\BC}
\newcommand{\CMP}{\theta}
\newcommand{\cmptv}{\varsigma}
\newcommand{\bksl}{\backslash}
\newcommand{\Thetam}{\Theta}
    \newcommand{\BA}{{\mathbb {A}}} 
    \newcommand{\BC}{{\mathbb {C}}} 
     \newcommand{\BF}{{\mathbb {F}}}
    \newcommand{\BG}{{\mathbb {G}}} \newcommand{\BH}{{\mathbb {H}}}
    \newcommand{\BQ}{{\mathbb {Q}}}
     \newcommand{\BZ}{{\mathbb {Z}}}
    \newcommand{\CA}{{\mathcal {A}}} 
    \newcommand{\CE}{{\mathcal {E}}} 
     \newcommand{\CH}{{\mathcal {H}}}
     \newcommand{\CL}{{\mathcal {L}}}
    \newcommand{\CM}{{\mathcal {M}}} 
    \newcommand{\CO}{{\mathcal {O}}} 
     \newcommand{\CR}{{\mathcal {R}}}
    \newcommand{\CS}{{\mathcal {S}}} 
     \newcommand{\CX}{{\mathcal {X}}}
    \newcommand{\fa}{{\mathfrak{a}}} 
    \newcommand{\fc}{{\mathfrak{c}}} 
     \newcommand{\ff}{{\mathfrak{f}}}
     \newcommand{\fl}{{\mathfrak{l}}}
    \newcommand{\fm}{{\mathfrak{m}}} \newcommand{\fn}{{\mathfrak{n}}}
     \newcommand{\fp}{{\mathfrak{p}}}
    \newcommand{\fq}{{\mathfrak{q}}} \newcommand{\fr}{{\mathfrak{r}}}
     \newcommand{\fN}{{\mathfrak{N}}}
    \newcommand{\alg}{{\mathrm{alg}}}
    \newcommand{\Aut}{{\mathrm{Aut}}}
    \newcommand{\cond}{\mathrm{cond}}\newcommand{\Cond}{{\mathrm{Cond}}}
    \newcommand{\End}{{\mathrm{End}}} \newcommand{\Eis}{{\mathrm{Eis}}}
    \newcommand{\Frac}{{\mathrm{Frac}}}
    \newcommand{\Gal}{{\mathrm{Gal}}} \newcommand{\GL}{{\mathrm{GL}}}
    \newcommand{\Hom}{{\mathrm{Hom}}}
    \renewcommand{\Im}{{\mathrm{Im}}}
    \newcommand{\ord}{{\mathrm{ord}}} 
    \newcommand{\PGL}{{\mathrm{PGL}}} \newcommand{\Pic}{\mathrm{Pic}}
    \newcommand{\pr}{{\mathrm{pr}}}
    \renewcommand{\mod}{\ \mathrm{mod}\ }
    \newcommand{\ol}{\overline}
    \newcommand{\Sel}{{\mathrm{Sel}}}
    \newcommand{\SL}{{\mathrm{SL}}}
    \newcommand{\sgn}{{\mathrm{sgn}}}
    \newcommand{\Stab}{{\mathrm{Stab}}}
    \newcommand{\tr}{{\mathrm{tr}}}
    \newcommand{\vol}{{\mathrm{vol}}}
    \newcommand{\wh}{\widehat}
    \newcommand{\pair}[1]{\langle {#1} \rangle}
    \newcommand{\ov}{\overline}
    \newcommand{\ra}{\rightarrow} 
    \newcommand{\bs}{\backslash}
    \newcommand{\nequiv}{\equiv\hspace{-10pt}/\ }
\newcommand{\CMspace}{{\rm{CM}}}
    \theoremstyle{plain}
     \newtheorem{thm}{Theorem}[section] \newtheorem{cor}[thm]{Corollary}
    \newtheorem{lem}[thm]{Lemma}  \newtheorem{prop}[thm]{Proposition}
    \newtheorem {conj}[thm]{Conjecture}
    \newtheorem{defn-lem}[thm]{Definition and Lemma} \newtheorem{defn}[thm]{Definition}
\theoremstyle{remark} \newtheorem{remark}[thm]{Remark}
\theoremstyle{remark} 
\theoremstyle{remark} \newtheorem{example}{Example}
    \newcommand{\Neron}{N\'{e}ron~}
    \newcommand{\adeles}{ad\'{e}les~}
    \numberwithin{equation}{section}
    \newcommand{\dfn}[1]{\textsf{\color{magenta}#1}}
\begin{document}

\title{Mod $\ell$ non-vanishing of self-dual Hecke $L$-values over CM fields and applications}
\author{Ashay Burungale, Wei He, Ye Tian and Xiangdong Ye} \address{Ashay A. Burungale: Department of Mathematics, The university of Texas at Austin,
2515 Speedway, Austin TX 78712, USA.} 
\email{ashayburungale@gmail.com}

\address{Wei He: School of Mathematics and Statistics, Xi'an Jiaotong University, Xi'an 710049, P.R. China.} 
\email{hewei0714@xjtu.edu.cn}

\address{Ye Tian: Morningside Center of Mathematics; Academy of Mathematics and Systems Science, Chinese
Academy of Sciences, Beijing 100190, China; and         School of Mathematical Sciences,  University of the Chinese Academy of Sciences, Beijing 100049, China.}
\email{ytian@math.ac.cn}

\address{Xiangdong Ye: Department of Mathematics,
University of Science and Technology of China, Hefei, Anhui 230026, P.R. China.}
\email{yexd@ustc.edu.cn}
\maketitle

\begin{abstract}
Let $\lambda$ be a self-dual Hecke character over a CM field $K$. Let $\fp$ be a degree one prime of the maximal totally real subfield $F$ of $K$ and $\Gamma_{\fp}$ the Galois group of the  anticyclotomic $\BZ_p$-extension of $K$ unramified outside $\fp$. 
We prove that 
$$L(1,\lambda\nu)\neq 0$$
for all but finitely many finite order characters $\nu$ of $\Gamma_\fp$ such that $\varepsilon(\lambda\nu)=+1$. For an ordinary prime $\ell$ with respect to the CM quadratic extension $K/F$, we also determine the $\ell$-adic valuation of the normalised Hecke $L$-values 
$L^{\alg}(1,\lambda\nu)$. As an application, we complete Hsieh's proof of Eisenstein congruence divisibility towards the CM Iwasawa main conjecture over $K$. 

Our approach and results complement the prior work initiated by Hida's ideas on the arithmetic of Hilbert modular Eisenstein series, studied via mod $\ell$ analogue of the Andr\'e--Oort conjecture. The previous results  
established the non-vanishing only for {infinitely many characters $\nu$}. Our approach is based on the arithmetic of a CM modular form on a Shimura set, studied via arithmetic of the CM field and Ratner's ergodicity of unipotent flows.  
\end{abstract}
\setcounter{tocdepth}{1}
\tableofcontents \section{Introduction} 
Special values of $L$-functions mysteriously encode arithmetic. As underlying motives vary in a
family, a fundamental problem is whether the $L$-values are generically non-zero. If so, a finer problem:
mod $\ell$ non-vanishing of algebraic part of the $L$-values for a fixed prime $\ell$. 

A basic example arises from CM motives or Hecke characters over a CM field. The aim of this paper is to establish mod $\ell$ non-vanishing of central Hecke $L$-values in a $p$-adic self-dual family over a CM field for primes $\ell \neq p$ {(see Theorems~\ref{gnv}~and~\ref{mainthm})}. {It has an application to CM Iwasawa theory: the completion of Hsieh's proof of Eisenstein congruence divisibility towards the CM main conjecture \cite{Hs2} (see Theorem~\ref{mc}).} 

\subsection{Context} Let 
 $\ell$ and $p$ be distinct primes.

The study of mod $\ell$ non-vanishing of $L$-values in a $p$-adic family goes back to the seminal work of Washington \cite{Was} in the late 70's. He proved that the $\ell$-adic valuation of Dirichlet $L$-values in the $p$-adic cyclotomic family is bounded (cf.~\cite{Sin}), implying the boundedness of the $\ell$-part of class numbers in the $\BZ_p$-cyclotomic extension of $\BQ$. 
It has various arithmetic consequences, for example it is an ingredient in the modularity of residually reducible Galois representations \cite{SW}. 

We surmise that the $\ell$-part of critical $L$-values in a $p$-adic family of motives is bounded rather generally.
 The study of such a non-vanishing for Hecke $L$-values over an imaginary quadratic field was
initiated in the mid 80's. 
The first result is due to Gillard \cite{Gi}, who 
proved the boundedness for $p$-adic family arising from Coates--Wiles 
$\BZ_p$-deformation\footnote{In this setting $p$ splits in the CM field $K$ as $(p)=\fp \ov{\fp}$, and Coates--Wiles $\Z_p$-extension arises from the $\fp^\infty$ or $\ov{\fp}^\infty$-torsion points of the CM curve.} of a CM elliptic curve 
over the CM field. 
It is 
based on Zariski density of torsion points on a self-product of the CM elliptic curve modulo $\ell$. 

A couple of decades later, Hida initiated and extensively studied \cite{Hi1,Hi2,Hi3,Hi4} 
the mod $\ell$ non-vanishing for the anticyclotomic $\BZ_p$-deformation of an arithmetic Hecke character $\lambda$ over a general CM field $K$ (see also \cite{Hsieh:nv,He}). 
More precisely, let $F$ be the maximal totally real subfield of $K$, $\fp$ a prime of $F$ above $p$,   
$\Gamma_\fp$ the Galois group of the maximal anticyclotomic $\BZ_p^{\deg \fp}$-extension of $K$ unramified outside $\fp$ and $\ell$ an odd ordinary prime for the CM quadratic extension $K/F$. 
Then the mod $\ell$ non-vanishing was proved
for critical Hecke $L$-values $$L^{\alg}(m,\lambda\nu)$$ for a Zariski dense subset of finite order characters $\nu$ of $\Gamma_\fp$. Moreover, if $\deg \fp =1$, then the non-vanishing was proved for all except finitely many $\nu$.   
Hida's strategy is based on the arithmetic of a $\GL_{2}(F)$-Eisenstein series,
studied via 
 geometry of mod $\ell$ Hilbert modular Shimura variety $\CX$ and Zariski density of CM points on a self-product of $\CX$. 
 The latter is a mod $\ell$ analogue of the Andr\'e--Oort conjecture \cite{Ch} (Chai--Oort rigidity principle).

 In the mid 2000's Finis established  mod $\ell$ non-vanishing of central Hecke $L$-values over an imaginary quadratic field in a self-dual family arising from the anticyclotomic $\BZ_p$-deformation of a self-dual Hecke character  
 \cite{Fin1,Fin2}. He proved the non-vanishing for all except finitely many $\nu$. 
His notably different approach relies on the arithmetic of $U(1)$-theta functions with complex multiplication, studied via Mumford's theory of geometric theta functions, the Manin--Mumford conjecture and elliptic units. It has applications to Iwasawa theory of elliptic curves, specifically to Iwasawa main conjectures (cf.~\cite{SU,BKO}). In light of its dependence on elliptic units the approach does not yet generalise to CM fields.  

 Hida's mod $\ell$ non-vanishing of Hecke $L$-values over a general CM field has found applications to the arithmetic of the CM field (cf.~\cite{Hi:MC,Oh}). It is an ingredient  
in Hsieh's 2014 proof of Eisenstein congruence divisibility \cite{Hs2} towards the  CM Iwasawa main conjecture over a CM field. 
However, in 2018 a gap was found in Hida's strategy, specifically in his proof of Zariski density of CM points on a self-product of $\CX$. In a recent paper \cite{Hi4} Hida recovered\footnote{For the case $F=\BQ$, see also \cite{nv:CM,Oh'}.} several of the results of \cite{Hi1,Hi2,Hsieh:nv}. 
However, if $\deg \fp =1$, then the non-vanishing is only proved for infinitely many characters $\nu$ 
and {\it not} all except finitely many $\nu$ as originally claimed.  
Consequently, Hsieh's proof \cite{Hs2} of $\ell$-indivisibility of an ancillary Eisenstein series on the unitary group $U(2,1)$, and hence that of the main result of \cite{Hs2} towards the CM Iwasawa main conjecture are incomplete.

In this paper we establish a uniform non-vanishing of self-dual Hecke $L$-values over a general CM field $K$ by a new method. For a self-dual Hecke character $\lambda$ over $K$ and a degree one prime $\fp$ of $F$, we prove that 
$$
L(1,\lambda\nu)\neq 0
$$
for {\it all except finitely many} finite order characters $\nu$ of 
$\Gamma_\fp$ such that $\varepsilon(\lambda\nu)=+1$~(see~Theorem~\ref{gnv}). We also prove an analogous mod $\ell$ non-vanishing for primes $\ell \neq p$ ordinary with respect to the CM quadratic extension $K/F$ {(see~Theorem~\ref{mainthm})}. 
The latter leads to completion of Hsieh's proof of Eisenstein congruence divisibility towards the CM main conjecture (see~ Theorem~\ref{mc}).

Our approach is rooted in the arithmetic of a CM modular form on Shimura set arising from a totally definite quaternion algebra over $F$ (cf.~\cite{nv:CM}). It relies on the arithmetic of the CM field, automorphic representation theory, the Waldspurger formula and Ratner's ergodicity of unipotent flows ~\cite{Ra}. 
To link arithmetic of the Shimura set with that of the CM field, a key is 
an $\ell$-integral comparison of quaternionic and CM periods (see~Theorem~\ref{thmE}), which maybe of independent interest.

\subsection{Main results}\label{in1}
\subsubsection{Set-up} 
Let $K$ be a CM field. Let $F$ be its maximal totally real subfield, $\eta_{K/F}$ the associated quadratic character over $F$ and $c\in \Gal(K/F)$ the non-trivial element. Let $D_F$ be the absolute discriminant of $F$ and $\BA$ the ring of \adeles of $F$.

Let $\lambda$ be a self-dual Hecke over $K$, i.e. 
\begin{equation}\label{theta}\lambda|_{\BA^\times}\cdot |\ |_{\BA^\times}=\eta_{K/F}.\end{equation}
Note that 
{$\lambda \cdot |\ |_{\BA_K}^{1/2}$} is conjugate symplectic self-dual in the sense of \cite{GGP}.
The infinity type of $\lambda$ is of the form $\Sigma+\kappa(1-c)$ for 
$\Sigma$ a CM type of $K$ and 
$\kappa=\sum_{\sigma\in \Sigma}\kappa_\sigma\cdot\sigma\in \BZ_{\geq 0}[\Sigma]$, i.e. $$\lambda_\infty(z)=\prod_{\sigma\in \Sigma}\sigma(z)^{-(1+\kappa_\sigma)}\sigma\circ c(z)^{\kappa_\sigma}$$ for $z\in K^\times$. 
Let $L(s,\lambda)$ be the associated Hecke $L$-function as in Tate's thesis~\cite{Ta}, and so with the center of the functional equation at $s=1$. Let $\Cond(\lambda)$ denote the conductor of $\lambda$. 

Fix an algebraic closure $\ov{\BQ}$ of $\BQ$ and an embedding $\iota_\infty: \ov{\BQ}\hookrightarrow \BC$.

Let $\fp$ be a prime of $F$ above a rational prime $p$.
Let $K_{\fp^\infty}$ be the maximal $\BZ_{p}$-free anticyclotomic extension of $K$ unramified outside $\fp$ and $\Gamma_{\fp}=\Gal(K_{\fp^\infty}/K)$. Note that $\Gamma_\fp \simeq \BZ_p^{\deg \fp}$.
Let $\Xi_\fp$ be the set of finite order $\ov{\BQ}^\times$-valued characters of $\Gamma_{\fp}$ and $\Xi_{\lambda,\fp}^+\subset \Xi_{\fp}$ the subset given by
$$
\Xi_{\lambda,\fp}^+ = \{ \nu \in \Xi_\fp\ |\ \varepsilon(\lambda\nu)=+1\},
$$
where $\varepsilon(\lambda\nu)$ denotes the global epsilon factor.

A basic problem:
 \begin{equation}\label{Q1}\tag{Q1}\text{How often are the central $L$-values $L(1,\lambda\nu)$ non-zero as $\nu \in \Xi_{\lambda,\fp}^+$ varies?}\end{equation}
The literature on related problems \cite{Hi1,Hi2,Hi3,Hi4,Vatsal_Cornut:Documenta,Vatsal_Cornut:London} suggests that such a non-vanishing holds for a Zariski dense\footnote{Let $\{\gamma_1, \cdots, \gamma_{\deg\fp}\}$ be a set of topological generators of $\Gamma_\fp$.
Regard the set of continuous characters $\Hom(\Gamma_\fp, \mu_{p^\infty})$ as a subset of $\BG_{m}^{\deg \fp}(\ov{\BQ}_p)$ by sending a
character $\nu$ to $$(\nu(\gamma_1), \cdots, \nu(\gamma_{\deg\fp})) \in \mu_{p^\infty}^{\deg \fp}(\ov{\BQ}_p) \subset \BG_m^{\deg \fp}(\ov{\BQ}_p).$$  A subset $X$ of $\Hom(\Gamma_\fp , \mu_{p^\infty})$ is said to be
Zariski dense if it is so in $\BG_{m}^{\deg\fp}$ over $\ov{\BQ}_p$.
} 
subset of characters $\nu$ of $\Gamma_\fp$. Our study suggests that a finer non-vanishing holds (cf.~Theorem~\ref{gnv}). 
 
One may also seek mod $\ell$ analogue of this non-vanishing problem. 
To be precise, 
let $\ell\neq p$ be another rational prime. Fix an algebraic closure $\ov{\BQ}_\ell$ and  an embedding $\iota_\ell: \ov{\BQ}\hookrightarrow \ov{\BQ}_\ell$. Let ${v}_{\ell}: \ov{\BQ}_{\ell}\ra \BQ$ be the valuation such that ${v}_{\ell}(\ell)=1$. 

In this paper we assume that $\ell$ is an ordinary prime for the extension $K/F$, i.e. 
\begin{equation}\label{ord}\tag{ord}
\text{Each prime of $F$ above $\ell$ splits in $K$.} 
\end{equation} 
Then there exists an $\ell$-adic CM type $\Sigma$ of $K$, i.e. 
$$
\Sigma_\ell \cap \Sigma_\ell^c = \emptyset
$$
for $\Sigma_\ell:=\{\iota_\ell\circ\sigma\ |\ \sigma\in \Sigma \}$ and 
$\Sigma_\ell^c :=\{\iota_\ell\circ\sigma\circ c\ |\ \sigma\in \Sigma \}$.
In the following we first introduce the algebraic part $L^{\alg}(1,\lambda\nu)\in \ov{\BQ}$ of the central $L$-value $L(1,\lambda\nu)$.

Let $\Omega_\infty=(\Omega_\sigma)_{\sigma\in \Sigma}\in (\BC^\times)^\Sigma$ be the CM period associated to a \Neron differential on an abelian scheme  of CM type $(K,\Sigma)$ over $\ov{\BZ}_{(\ell)}$ (cf.~\cite{Kz,Hi1}).  
For a prime $w| {\rm Cond}(\lambda)$ of $K$, let $G(\lambda_w)$ be the Gauss sum \[G(\lambda_w):=|\varpi|_w^{e}\cdot \sum_{u\in\varpi^{-e}\cdot(O_{K_w}/\varpi^{e}O_{K_w})^\times}\lambda_w(u)\psi_w(u),\] 
where $\varpi$ is a uniformiser of $K_w$, $|\cdot|_w$ the valuation on $K_w^\times$ such that $|\varpi|_{w}^{-1}=\#\kappa_w$ for $\kappa_w$ the residue field of $K_w$, $e$ the {exponential} conductor of $\lambda_w$ and $\psi_w$ a non-trivial additive character of $K_w$ with trivial conductor. Put $$\Gamma_{\Sigma}(\sum_{\sigma\in \Sigma} n_\sigma \sigma)=\prod_{\sigma\in \Sigma}\Gamma(n_\sigma), \qquad \Omega_\infty^{\sum_{\sigma\in \Sigma} n_\sigma \sigma}=\prod_{\sigma\in \Sigma}\Omega_\sigma^{n_\sigma}.$$ 
By the work of Shimura, Katz \cite{Kz} and Hida--Tilouine \cite{HT'}, we {have}
\begin{equation}\label{eq:alg}
L^{{\rm alg}}(1,\lambda\nu):=[O_K^\times:O^\times] \prod_{w\in \Sigma_\ell }G((\lambda\nu)_w) \cdot 
\frac{\pi^\kappa\Gamma_\Sigma(\Sigma+\kappa) \cdot L(1,\lambda\nu)}{\Omega_\infty^{\Sigma+2\kappa}}\in \ov{\BZ}_{(\ell)}.
\end{equation}

The underlying mod $\ell$ non-vanishing {problem}:  
 \begin{equation}\tag{Q2}\label{Q2}\text{How does 
    ${v}_\ell\left(L^{\alg}(1,\lambda\nu)\right)$ vary with $\nu\in \Xi_{\lambda,\fp}^+$?}\end{equation}
The following invariant will appear in our result towards it: 
\[\mu_{\ell}(\lambda)=\sum_{\substack{v|{\rm{N}}_{K/F}(\Cond(\lambda)) \text{ {non-split}}\\v\nmid \fp}}\mu_{\ell}(\lambda_v), \]where $\mu_\ell(\lambda_v):=\inf_{x\in K_v^\times}{v}_{\ell}(\lambda_v(x)-1)$ and `non-split' refers to being non-split in $K$. This invariant is closely related to the $\ell$-part of the Tamagawa number associated to $\lambda$. 
 Typically, we have  $${v}_\ell \left(L^{\alg}(1,\lambda\nu)\right) \geq \mu_{\ell}(\lambda)$$ as per the Bloch--Kato conjecture (cf.~\cite{Fin1,Hsieh:nv,He}).

\subsubsection{} Our main result towards the problem \eqref{Q1} is the following {(cf.~Corollary~\ref{maincor})}.

\begin{thm}\label{gnv}
Let $K$ be a CM field and $F$ its maximal totally real subfield. 
Let $\lambda$ be a self-dual Hecke character over $K$ and  
$\fp$ a {degree one} prime of $F$ unramified in $K$.
Then 
\[L(1,\lambda\nu)\neq 0 \]
for all except finitely many $\nu \in\Xi_{\lambda,\fp}^+$.
\end{thm}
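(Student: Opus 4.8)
The plan is to translate the non-vanishing of central $L$-values $L(1,\lambda\nu)$ into a statement about non-vanishing of toric period integrals on a Shimura set attached to a definite quaternion algebra over $F$, and then prove the latter using equidistribution. First I would invoke the Waldspurger formula: for a self-dual Hecke character $\lambda$ over $K$ with $\varepsilon(\lambda) = +1$, the theta correspondence (Jacquet--Langlands plus Shimizu) produces an automorphic form $\varphi$ on $B^\times(\mathbb{A})$ for a suitable definite quaternion algebra $B/F$ (the one whose ramification set is dictated by the local root numbers of $\lambda$), and $L(1,\lambda\nu)$ is, up to explicit nonzero archimedean and local factors, proportional to $|P_{\chi\nu}(\varphi)|^2$, where $P_{\chi\nu}$ is the twisted period integral over the torus $K^\times \backslash \mathbb{A}_K^\times / \mathbb{A}^\times$ against the character $\chi\nu$ (here $\chi$ is the finite-order character attached to $\lambda$ via the infinity type). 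Since $B$ is definite, this is a \emph{finite} sum: $\varphi$ lives on a finite Shimura set $X = B^\times \backslash \widehat{B}^\times / \widehat{R}^\times$, and $P_{\chi\nu}(\varphi) = \sum_{[g] } \varphi(g) \cdot (\chi\nu)(g)$ over the finitely many CM points, weighted by $\chi\nu$ evaluated along the torus embedding $K^\times \hookrightarrow B^\times$. The key input making the comparison $\ell$-integral is Theorem \ref{thmE} (the comparison of quaternionic and CM periods), though for the mere non-vanishing over $\overline{\mathbb{Q}}$ claimed in this theorem one only needs that $\varphi$ is nonzero and its values are algebraic.

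Next I would set up the ergodic-theoretic heart. As $\nu$ ranges over $\Xi_{\lambda,\fp}^+$, the characters $\chi\nu$ run through a family whose restrictions to the relevant local tori at $\fp$ become highly ramified (the conductor at $\fp$ grows, since $\Gamma_\fp \simeq \mathbb{Z}_p^{\deg\fp}$ is an infinite pro-$p$ group and only finitely many $\nu$ have bounded conductor). The orbit of the CM point under the $\fp$-adic torus $K_\fp^\times$ — equivalently, the Galois orbit of CM points under $\Gamma_\fp$ — becomes equidistributed in (a union of cosets of) $X$ with respect to a natural measure, by Ratner's theorem on unipotent flows (applied via the standard trick of realizing torus orbits on the adelic quotient, controlled by the $\fp$-component where the relevant group is $p$-adic), after checking that no intermediate-subgroup obstruction occurs — this is where the $p$-adic analogue of Ratner, or rather the relevant measure-classification for the $\fp$-adic place combined with class-field-theoretic control of the remaining places, enters. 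Equidistribution of the support, together with linear independence of characters, forces the twisted sums $P_{\chi\nu}(\varphi)$ to be nonzero for all but finitely many $\nu$: if infinitely many vanished, one would extract (by a pigeonhole/compactness argument on the finite set $X$ and the finitely many possible ``shapes'' of the orbit) a nontrivial linear relation contradicting that $\varphi \neq 0$ and that the characters separate the points of the orbit.

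I would then handle the bookkeeping: verify that the local root number conditions $\varepsilon(\lambda\nu) = +1$ are compatible with a \emph{single} quaternion algebra $B$ for all but finitely many $\nu$ (the local root numbers at primes away from $\fp$ and $\infty$ stabilize, and at $\fp$ the sign can be pinned down for $\nu$ of large conductor), so that the whole family of periods lives on one Shimura set $X$; handle the finitely many exceptional $\nu$ trivially; and confirm that the nonzero constants relating $L(1,\lambda\nu)$ to $|P_{\chi\nu}(\varphi)|^2$ in Waldspurger's formula (Gauss sums, archimedean gamma factors, local orbital integrals at ramified primes) are genuinely nonzero — these are exactly the factors made explicit in \eqref{eq:alg}. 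The main obstacle, and the place requiring the most care, is the equidistribution step: specifically, ruling out the possibility that the torus orbits concentrate on a proper ``sub-Shimura-set'' or along a subtorus fixed by $\nu$, i.e. establishing the genuine $p$-adic equidistribution uniformly over the family and extracting from it the non-vanishing of the twisted period for \emph{cofinitely many} (not merely infinitely many) characters $\nu$ — this is precisely the improvement over Hida's and Cornut--Vatsal-type results, and it is where Ratner's theorem, rather than softer density statements, is indispensable.
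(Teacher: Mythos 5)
Your high-level picture (Waldspurger formula, definite Shimura set, equidistribution via Ratner/Cornut--Vatsal) is indeed the one the paper takes; Theorem~\ref{gnv} is deduced as Corollary~\ref{maincor} from the mod-$\ell$ non-vanishing of Rankin--Selberg $L$-values (Theorems~\ref{T:b.W}, \ref{T:3.W}, \ref{mainhrk}) for a well-chosen auxiliary prime $\ell$, and you are right that the period comparison Theorem~\ref{thmE} is not actually needed for this bare non-vanishing statement. But you flag the two hard points yourself ("ruling out... concentration on a proper sub-Shimura-set", and "cofinitely many rather than infinitely many") without an argument for either, and these are precisely where the paper does something new.

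First, a small but not cosmetic imprecision: Waldspurger relates the toric period to $L(1/2,\pi_{\lambda,K}\otimes\nu)=L(1,\lambda\nu)L(1,\lambda\nu^{-1})$, not to a single Hecke $L$-value squared. Much more seriously, "$\varphi\neq 0$" is not enough to conclude. The Shimura set splits as $X_U=X_U^+\sqcup X_U^-$ by the norm, and the $B^\times_\fp$-orbit of the CM point $P_0$ visits only one component for each parity of $n$ (Proposition~\ref{C:Vatsal_Cornut}); for $\fp$ split it never leaves a single component. Cornut--Vatsal equidistribution thus detects non-vanishing only if the test vector is non-Eisenstein on the \emph{matching} component. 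The concentration you worry about genuinely happens, and Ratner does not rule it out; what saves the argument is that the CM form $\varphi$ is itself supported on exactly one component $X_U^{\varepsilon(\lambda)}$ (Proposition~\ref{nv}), and proving this requires a reflection argument: Lemma~\ref{ee} (vanishing $T_v$-eigenvalue at inert $v$) shows the mod-$\ell$ reduction is non-Eisenstein on at least one component, and the sign is pinned down because a non-Eisenstein piece on the wrong component would force $L(1,\lambda\chi_0\nu)\neq 0$ for twists of root number $-1$, a contradiction. This is the heart of \S\ref{s:nv} and is absent from your sketch.

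Second, for $\deg\fp>1$ your "pigeonhole/compactness argument" is not a proof, and it is precisely the gap in the literature that motivated this paper. Each application of Theorem~\ref{P:Vatsal_Cornut.W} produces a cutoff $n_0$ depending on the test vector used in the Fourier-analytic averaging, and a naive Fourier analysis produces a different test vector for each of the infinitely many $\BZ_p$-quotients of $\Gamma_\fp\simeq\BZ_p^{\deg\fp}$, with no uniform bound on the $n_0$'s --- this is exactly why the prior results only gave a Zariski-dense set of non-vanishing $\nu$. The paper's new mechanism in \S\ref{nvrkgeq1} is to partition $\Xi_\fp$ into \emph{finitely many} subfamilies $\Xi_\fp(\mu)$ indexed by surjections $\mu:O_\fp/\fp^{ae}\twoheadrightarrow\mu_{p^a}$ (Definition~\ref{unilem}) and to show via Lemma~\ref{unif} that the averaged test vector $\wh\varphi_\mu$ depends only on $\mu$; since there are finitely many $\mu$, the cutoffs are uniform and cofinite non-vanishing follows. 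Without this refinement (or a substitute for it) your proposal, if carried out, would recover only the previously known Zariski-density statement for $\deg\fp>1$, not Theorem~\ref{gnv}.
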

An application to Mordell--Weil ranks: 
\begin{cor}\label{cor:MW} Let $\lambda$ be a self-dual Hecke character over a CM field $K$ of infinity type $\Sigma$, which is a CM type of $K$. Let $A_\lambda$ be an associated CM abelian variety over $K$. 
Let $\fp$ be a degree one prime of $F$ split in $K$. Suppose that $\varepsilon(\lambda)=+1$. {Then} the Mordell--Weil group $A_\lambda(K_{\fp^\infty})$ is finitely generated.
\end{cor}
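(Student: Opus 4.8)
The plan is to deduce finite generation of $A_\lambda(K_{\fp^\infty})$ from Theorem~\ref{gnv} via the classical Mordell--Weil theorem combined with a control/descent argument over the $\BZ_p^{\deg\fp}$-tower. Since $\Sigma$ is a CM type, $A_\lambda$ has dimension $[F:\BQ]$ and $L(s,\lambda\nu)$ is (up to finitely many Euler factors) the Hasse--Weil $L$-function of the twist $A_{\lambda\nu}$ of $A_\lambda$; by self-duality and $\varepsilon(\lambda)=+1$ the central value sits at $s=1$. First I would note that for every finite order $\nu\in\Xi_{\lambda,\fp}^+$, Theorem~\ref{gnv} gives $L(1,\lambda\nu)\neq 0$ for all but finitely many $\nu$; for these, the $p$-adic analogue of the Gross--Zagier/Kolyvagin machinery for CM abelian varieties (the relevant statement here being that non-vanishing of the central Hecke $L$-value forces finiteness of the corresponding Selmer group and hence of the Mordell--Weil group of the $\nu$-isotypic piece) shows that the $\lambda\nu$-component of $A_\lambda(\ol{K})$ over the relevant finite layer is torsion. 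The point $\fp$ being split in $K$ is what makes $\lambda$ (and all its twists $\lambda\nu$) ordinary at $p$, so that the Rubin-style main conjecture / Iwasawa-theoretic descent is available.

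The key steps, in order: (1) Identify $A_\lambda(K_{\fp^\infty})\otimes\BQ$ with a module over the Iwasawa algebra $\Lambda=\BZ_p[[\Gamma_\fp]]$ and show it suffices to prove this module is finitely generated over $\BZ_p$ (equivalently, $\Lambda$-torsion with finite $\mu$- and $\lambda$-invariants forcing it to vanish after $\otimes\BQ$); (2) relate the $\Lambda$-corank of the $p^\infty$-Selmer group $\Sel_{p^\infty}(A_\lambda/K_{\fp^\infty})$ to the order of vanishing of the anticyclotomic Katz $\fp$-adic $L$-function, using a control theorem to compare specializations with the classical Selmer groups $\Sel_{p^\infty}(A_{\lambda\nu}/K)$; (3) invoke Theorem~\ref{gnv} to conclude that infinitely many specializations of the $p$-adic $L$-function are nonzero, hence the $p$-adic $L$-function is nonzero, hence (by the CM main conjecture inequality, or at minimum by the Euler system divisibility) the Selmer group is $\Lambda$-cotorsion; (4) combine $\Lambda$-cotorsionness with the nonvanishing at infinitely many height-one-type specializations to rule out any positive-rank submodule, giving $\rank_{\BZ_p}A_\lambda(K_{\fp^\infty})\otimes\BZ_p<\infty$; (5) upgrade from finite $p$-adic rank to genuine finite generation of the abelian group $A_\lambda(K_{\fp^\infty})$ by a standard argument (the torsion is bounded since it injects into the torsion of a fixed $A_\lambda(\ol{K})$ controlled by the Weil pairing and the structure of $K_{\fp^\infty}/K$, and the Mordell--Weil rank over each finite layer is uniformly bounded).

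The main obstacle I expect is step (2)--(3): making the passage from ``$L(1,\lambda\nu)\neq 0$ for all but finitely many $\nu$'' to ``the Mordell--Weil rank over $K_{\fp^\infty}$ is finite'' genuinely rigorous when $\deg\fp>1$, since the anticyclotomic Iwasawa theory over a $\BZ_p^{\deg\fp}$-extension is considerably subtler than in the rank-one case --- one must control the growth of Selmer groups along \emph{all} $\BZ_p$-lines, not just one, and the relevant main-conjecture-type divisibility (or the Euler system bound) needs to be available in this multivariable CM setting. Here one can lean on the ordinarity hypothesis at $\fp$ and on the rigidity of the Katz $\fp$-adic $L$-function (Theorem~\ref{Katz-rigidity}), which precisely says that its zero divisor is controlled, so that generic non-vanishing of specializations propagates to non-vanishing of the multivariable $p$-adic $L$-function itself. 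A secondary, more routine difficulty is the control theorem with its error terms (local conditions at bad primes and at $\fp$), but these contribute only bounded, $\nu$-independent discrepancies and so do not affect the finiteness conclusion. Once the Iwasawa-theoretic input is in place, the reduction to finite generation of the abelian group is formal.
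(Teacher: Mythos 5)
Your proposal is in the right spirit — it correctly identifies Theorem~\ref{gnv} as the analytic input and the shape of the arithmetic input needed (``non-vanishing of the central value forces finiteness of the corresponding Selmer group'') — but it takes a considerably longer and different route than the paper, and the route as written has gaps. The paper's proof is a one-liner: combine Theorem~\ref{gnv} with Nekov\'a\v{r}'s Theorem~B$'$ from \cite{Ne1}, which is precisely a packaged statement converting ``generic non-vanishing of $L(1,\lambda\nu)$ along the anticyclotomic $\fp$-tower'' into ``finite generation of the Mordell--Weil group over $K_{\fp^\infty}$.'' Your proposal essentially attempts to reprove the content of that theorem by hand via control theorems and a main-conjecture-type inequality.

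Two concrete issues with the route as written. First, the crucial step ``$L(1,\lambda\nu)\neq 0 \Rightarrow$ the $\nu$-isotypic Selmer group is finite'' is invoked as ``the $p$-adic analogue of the Gross--Zagier/Kolyvagin machinery'' without pointing to a result that actually gives it over a general CM field $K/F$. Over general totally real $F$ this is not a trivial citation; it is the serious arithmetic input, and it is exactly what Nekov\'a\v{r}'s level-raising approach supplies. Second, in your steps (3)--(4) the quantifier slips from ``all but finitely many $\nu$'' (which Theorem~\ref{gnv} gives, and which is what you need) to ``infinitely many specializations are nonzero, hence the $p$-adic $L$-function is nonzero, hence $\Lambda$-cotorsion.'' When $\deg\fp>1$ this is insufficient: a $\Lambda$-cotorsion module over $\BZ_p[[\BZ_p^{\deg\fp}]]$ can still have infinitely many vanishing specializations along finite-order characters, so $\Lambda$-cotorsion alone does not bound the Mordell--Weil rank over the tower. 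The entire point of Theorem~\ref{gnv} (and what distinguishes it from the prior ``Zariski dense'' results) is the all-but-finitely-many strength, and any proof must use it at exactly this spot. Relatedly, the appeal to Theorem~\ref{Katz-rigidity} is a red herring here: ``non-vanishing on every $\BZ_p$-line'' is a weaker assertion than ``non-vanishing for all but finitely many $\nu$'' and does not directly yield the needed bound on Selmer ranks. Your step (5), the passage from bounded $\BZ_p$-corank to genuine finite generation of the abelian group $A_\lambda(K_{\fp^\infty})$, is a real point but routine, and is also handled inside \cite[Thm.~B$'$]{Ne1}.
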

\begin{proof}
This is a consequence of Theorem~\ref{gnv} and \cite[Thm.~B']{Ne1}. 
\end{proof}
\begin{remark}  If $\fp$ is inert in $K$, then a variant of Corollary~\ref{cor:MW} also holds, whose formulation we leave to the interested reader.

\end{remark}

\subsubsection{Mod $\ell$ non-vanishing}
Our main result towards the problem \eqref{Q2} is the following (cf.~Theorem~\ref{mm}). 
\begin{thm}\label{mainthm} Let $K$ be a CM field and $F$ its maximal totally real subfield. 
Let $\lambda$ be a self-dual Hecke character over $K$ 
of infinity type $\Sigma+\kappa(1-c)$ for $\Sigma$ a CM type of $K$ and 
$\kappa\in \BZ_{\geq 0}[\Sigma]$. 
Let $\fp$ be a degree one prime of $F$
such that $\fp$ is {either coprime to ${\rm N}_{K/F}(\Cond(\lambda))$ or split in $K$.} 
Let $\ell$ be a prime such that $(\ell, 6\fp D_F)=1$ and $\Sigma$ is an $\ell$-adic CM type. Then  
$$
 {v}_\ell\left(L^{\rm alg}(1,\lambda\nu)\right)=\mu_{\ell}(\lambda)
 $$for all except finitely many $\nu \in\Xi_{\lambda,\fp}^+$.
\end{thm} 
\begin{remark}
\
\begin{itemize} 
\item[i)] 
In view of the BSD conjecture (cf.~\cite{BF}), the above mod $\ell$ non-vanishing would imply that 
the $\ell$-part of Tate--Shafarevich groups 
in the $\fp$-anticyclotomic family is bounded (cf.~\cite[Cor.~1.2]{nv:CM}). 
\item[ii)] We expect to remove the hypotheses of Theorem~\ref{mainthm} via  refinement of the strategy elsewhere. 
\end{itemize}

\end{remark}

\subsection{Applications to CM Iwasawa theory}
 
\subsubsection{Iwasawa main conjecture for CM fields} \label{mc:CM}
Let $K/F$ be a CM quadratic extension and 
$h_K^-$ the relative class number. 
Let $p$ be an ordinary odd prime. 

Fix an algebraic closure $\ov{\BQ}_p$ and an embedding 
$\iota_p: \ov{\BQ}\hookrightarrow \ov{\BQ}_{p}$.
Let $\Sigma$ be a $p$-ordinary CM type of $K$, and $\Sigma_p$ the associated $p$-adic CM type via the embedding $\iota_p$. For a number field $L$, let $\omega_L:\Gal(\ov{L}/L)\ra \BZ_p^\times$ denote the associated Teichm\"uller character.

Let $K_\infty$ be the compositum of the cyclotomic $\BZ_p$-extension and the anticyclotomic $\BZ_p^{[F:\BQ]}$-extension of $K$. Put $\Gamma_K=\Gal(K_\infty/K)$. Let $K'$ be a finite abelian extension of {$K$ that contains $K(\mu_p)$ and} is disjoint with $K_\infty$. Put $\Delta=\Gal(K'/K)$ and $K_\infty'=K_\infty K'$. Let $\psi:\Delta\ra \ov{\BZ}_p^\times$ be a finite order Hecke character over $K$.  
Let $R=W(\ov{\BF}_p)[\psi]$ and $\Lambda=R[\!|\Gamma_K|\!]$.  

Let $M_\Sigma$ be the maximal $p$-abelian $\Sigma_p$-ramified extension
of $K_\infty'$. A basic Iwasawa-theoretic object: $$X_\Sigma:=\Gal(M_\Sigma/K'_\infty)\otimes_{\BZ_p[\Delta][\![\Gamma_K]\!]} R[\Delta][\![\Gamma_K]\!].$$ 
Define $X_\Sigma^{(\psi)}$ as the maximal $\psi$-isotypic quotient of $X_\Sigma$. It is a finitely generated torsion $\Lambda$-module and let $F_{\Sigma}(\psi)\in \Lambda$ denote its characteristic power series.

On the analytic side, we have the associated Katz $p$-adic $L$-function
$$L_{\Sigma}(\psi)\in \Lambda$$ interpolating 
algebraic part of critical Hecke $L$-values associated to twists of $\psi$ by certain $p$-adic characters of $\Gamma_K$ (cf.~\cite{Kz,HT}).

Based on the work of Hsieh \cite{Hs2}, a key arithmetic consequence of Theorem~\ref{mainthm} to the arithmetic of $K$ is the following divisibility towards the CM Iwasawa main conjecture (cf.~\cite[p.~90]{HT'}).

\begin{thm}\label{mc}
Let $K/F$ be a $p$-ordinary CM quadratic extension and $\Sigma_p$ a $p$-adic CM type. 
For $K'$ a finite extension of $K$ as above, let $\psi:\Delta:=\Gal(K'/K) \ra \ov{\BZ}_p^\times$ be a character.
Suppose that
  \begin{itemize}
    \item [\tiny{$\bullet$}] $p\nmid 6 h_K^- D_F \# \Delta$,
    \item [\tiny{$\bullet$}]$\psi$ is unramified at $\Sigma_p^c$ and $\psi\omega_K^{-a}$ is unramified at $\Sigma_p$ for some integer $a\nequiv 2\pmod{p-1}$.
  \end{itemize}
Then 
 \[L_{\Sigma}(\psi) \big{|} F_{\Sigma}(\psi).\]  
\end{thm}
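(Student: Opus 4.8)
The plan is to deduce Theorem~\ref{mc} from Theorem~\ref{mainthm} by following Hsieh's strategy in \cite{Hs2}, with the mod $\ell$ non-vanishing input now available unconditionally (here $\ell$ plays the role of $p$, and $\fp$ ranges over primes of $F$ above $p$ in the anticyclotomic direction). First I would recall the structure of Hsieh's argument: one constructs an Eisenstein series on the unitary group $U(2,1)$ (or the associated quasi-split group) whose constant terms along the relevant parabolic encode the Katz $p$-adic $L$-function $L_\Sigma(\psi)$, and one feeds this into a lattice construction (à la Ribet--Wiles--Mazur) to produce a nontrivial extension of Galois representations, hence elements of the Selmer group $X_\Sigma^{(\psi)}$ accounting for the divisibility by $L_\Sigma(\psi)$. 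The single missing ingredient in \cite{Hs2}, as explained in the introduction, is the $\ell$-indivisibility (more precisely, the exact $\ell$-valuation being the predicted Tamagawa-type invariant) of a specific ancillary Eisenstein series; this reduces, via the doubling method / Siegel--Weil and the pullback formula, to the precise $\ell$-adic valuation of central values $L^{\alg}(1,\lambda\nu)$ as $\nu$ runs over anticyclotomic characters ramified only at $\fp$, which is exactly the content of Theorem~\ref{mainthm}.

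The key steps, in order, are: \textbf{(1)} Verify that the hypotheses of Theorem~\ref{mc} translate into the hypotheses of Theorem~\ref{mainthm} for the relevant family of self-dual Hecke characters $\lambda = \lambda(\psi)$ attached to $\psi$ via the theta/base-change correspondence; in particular the ordinarity of $\Sigma$, the condition $(\ell,6\fp D_F)=1$ coming from $p\nmid 6 h_K^- D_F\#\Delta$, and the self-duality condition \eqref{theta}. The condition $a\nequiv 2\pmod{p-1}$ on the Teichmüller twist and the unramifiedness of $\psi$ at $\Sigma_p^c$ ensure $\lambda$ has the right infinity type and sign, so that $\varepsilon(\lambda\nu) = +1$ for a positive-density (in fact cofinite) set of $\nu$. \textbf{(2)} Invoke Theorem~\ref{mainthm} to conclude ${v}_\ell(L^{\alg}(1,\lambda\nu)) = \mu_\ell(\lambda)$ for all but finitely many $\nu\in\Xi^+_{\lambda,\fp}$; since $\mu_\ell(\lambda)$ matches the $\ell$-part of the relevant Tamagawa factor, this is precisely Hsieh's optimality statement for the Eisenstein series constant term. \textbf{(3)} Run Hsieh's lattice construction verbatim using this input: the congruence module computation now shows that the characteristic power series $F_\Sigma(\psi)$ is divisible by a power series with the same Weierstrass-preparation data as $L_\Sigma(\psi)$ at all but finitely many height-one primes of $\Lambda$ arising from finite-order anticyclotomic characters, and \textbf{(4)} upgrade this ``almost everywhere'' divisibility to the genuine divisibility $L_\Sigma(\psi)\mid F_\Sigma(\psi)$ in $\Lambda$ by a Zariski-density / analyticity argument, using that $\Lambda$ is a UFD and that a power series vanishing (to the right order) at a dense set of specializations is divisible by the corresponding factor.

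The main obstacle I anticipate is \textbf{step (4)}, the passage from the cofinite-in-$\nu$ statement to an honest divisibility of Iwasawa power series. Theorem~\ref{mainthm} only controls $\nu$ outside a finite exceptional set, whereas a clean divisibility in $\Lambda = R[\![\Gamma_K]\!]$ with $\Gamma_K$ of rank $[F:\BQ]+1$ requires understanding the full zero locus; one must argue that the finitely many bad $\nu$ cannot obstruct divisibility — e.g. because the Eisenstein congruence produces a genuine $\Lambda$-submodule of $X_\Sigma^{(\psi)}$ rather than merely pointwise information, so the divisibility holds on the nose once it holds generically. A secondary, more technical obstacle is step (1)–(2): matching $\mu_\ell(\lambda)$ with the precise local Tamagawa/Gauss-sum factors appearing in Hsieh's normalization of the Eisenstein series, since a discrepancy of even an $\ell$-unit there would break the optimality of the congruence; this requires a careful bookkeeping of the local integrals at primes dividing ${\rm N}_{K/F}(\Cond(\lambda))$, which is exactly where the auxiliary hypotheses (split prime dividing $\fp{\rm N}_{K/F}(\Cond(\lambda))$, or $\mu_\ell(\lambda)=0$, when $\deg\fp>1$) are used. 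Granting these, the remaining parts of Hsieh's machinery in \cite{Hs2} go through unchanged.
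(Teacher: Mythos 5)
Your proposal identifies the correct high-level target — feed a mod $\ell$ (i.e.\ mod $p$, after the role-swap in \S\ref{s:IMC}) non-vanishing input into Hsieh's Eisenstein congruence machinery on $U(2,1)$ and conclude $L_\Sigma(\psi)\mid F_\Sigma(\psi)$ via the lattice construction — but it misidentifies both \emph{where} the non-vanishing enters and \emph{what kind} of non-vanishing is needed, and as a result sets up a spurious obstacle while missing the real one.

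The paper does not reduce the $p$-primitivity of $\CE(\psi)$ to the constant term along a parabolic; the constant term is (essentially) $L_\Sigma(\psi)$ itself and its non-vanishing is not the issue. Instead, $p$-primitivity of $\CE(\psi)$ is proved by exhibiting a $p$-primitive \emph{Fourier--Jacobi} coefficient. Pairing a Fourier--Jacobi coefficient $F_a^m$ against a Shintani-eigen theta function $\Theta$ and applying the Rallis inner product formula for $(U(1),U(1))$ (cf.~\cite[Thms.~7.9~\&~7.11]{Hs2}) shows that
$\frac{(F_a^m,\Theta)}{(\Theta,\Theta)}\,|I(\Theta)|^2$
is, up to a $p$-unit, a \emph{product of two} algebraic Hecke $L$-values $L^{\alg}(1,(\chi\kappa\nu)^{-1}|\cdot|^{1/2})\cdot L^{\alg}(1,\kappa\nu|\cdot|^{-1/2})$, where $\kappa\nu$ is self-dual but $\chi\kappa\nu$ is \emph{not residually self-dual}. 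The required input is therefore \emph{simultaneous} mod $p$ non-vanishing of these two $L$-values for a common auxiliary anticyclotomic twist $\nu\in\Xi_\fl$. Theorem~\ref{mainthm} does not apply to the non-self-dual character at all; for it one still needs the Hida-type result (Theorem~\ref{nsnv}, from \cite{Hsieh:nv,He,Hi4}), which only gives \emph{infinitely many} good $\nu$, not all but finitely many. The whole point of Theorem~\ref{snv} — and the reason the prior argument in \cite{Hs2} had a gap — is that intersecting two merely-infinite (or Zariski-dense) sets of good $\nu$ can be empty; the new Theorem~\ref{mainthm} fixes this by giving a \emph{cofinite} set for the self-dual $L$-value, which necessarily meets the infinite set produced by Hida's method for the other one (after matching signs of epsilon factors, which is where the hypothesis $\varepsilon(\chi_1)=+1$ if $\fl$ splits enters).

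Your anticipated obstacle (4) — passing from ``cofinitely many $\nu$'' to a divisibility of power series in $\Lambda$ — is not present in the actual argument: one only needs a \emph{single} good $\nu$ to certify that some Fourier--Jacobi coefficient is non-zero mod $\fm_\Lambda$, and once $p$-primitivity of $\CE(\psi)$ is established, the Eisenstein-ideal and Selmer-group arguments of \cite[Cors.~7.20~\&~8.14]{Hs2} produce the divisibility $L_\Sigma(\psi)\mid F_\Sigma(\psi)$ directly, with no ``a.e.\ to everywhere'' upgrade. Conversely, the genuine subtlety — that the two characters in the product have different self-duality properties, so that two different non-vanishing theorems must be invoked and made compatible on a common set of twists — is the crux of Theorems~\ref{snv}--\ref{nv:eis} and is absent from your outline. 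Your step (2), asserting that $\mu_\ell(\lambda)$ ``matches the $\ell$-part of the relevant Tamagawa factor'' and is ``precisely Hsieh's optimality statement,'' is directionally right but glosses over the fact that one must additionally \emph{choose} the auxiliary data ($\chi$, $\kappa$, $\fl$) so that both $\mu$-invariants vanish, as in \cite[Props.~7.13~\&~7.16]{Hs2}; this is an arrangement, not an identity.
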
 
\begin{remark}\noindent
\begin{itemize}
    \item[i)]  In view of the non-vanishing of Hecke $L$-values  in Theorem~\ref{mainthm}, it may be possible to remove the hypothesis $p\nmid \# \Delta$ (cf.~Remark~\ref{rmk:hyp}).
    \item[ii)] If $F=\BQ$, then the CM main conjecture is a theorem of Rubin \cite{Ru91}, whose proof does not rely on the anticyclotomic non-vanishing of Hecke $L$-values. 
\end{itemize}
   
\end{remark}

One has the following application to the BSD conjecture (cf.~\cite[Cor.~1]{Hs2}).
\begin{cor}
 Let $E$ be a CM elliptic curve defined over a totally real field $F$ with CM by an order of an imaginary quadratic field $M$. Let $K=FM$ and $p\nmid 6h_K^- D_F$ be a prime such that $E$ has good ordinary reduction at all primes above $p$. 
 \begin{itemize}
 \item[a)] If the central $L$-value $L(E/F,1)$ vanishes, then the $p^\infty$-Selmer group $\Sel_{p^\infty}(E/F)$ is infinite.
 \item[b)] If $L(E/F,1)\neq 0$, then 
 $$
 {\rm length}_{\BZ_p}\Sel_{p^\infty}(E/F) \geq {v}_{p}\bigg{(}\frac{L(E/F,1)}{\Omega_E}\bigg{)}
 $$
 where $\Omega_E$ is the N\'eron period. 
 \end{itemize}
\end{cor}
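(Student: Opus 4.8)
The plan is to deduce both assertions from the Iwasawa main conjecture divisibility of Theorem~\ref{mc}, essentially reproducing Hsieh's argument~\cite{Hs2}. First, pass from $E$ to its CM Hecke character: since $E/F$ has CM by an order of $M$ and $K=FM$, the base change $E_K$ has CM rational over $K$, so $\rho_{E,p}|_{\Gal(\ov F/F)}\cong\Ind_{\Gal(\ov K/K)}^{\Gal(\ov F/F)}\lambda$ for a Hecke character $\lambda$ of $K$. Comparing determinants with the cyclotomic character gives $\lambda|_{\BA^\times}\cdot|\ |_{\BA^\times}=\eta_{K/F}$, so $\lambda$ is self-dual, of infinity type $\Sigma$ with $\Sigma$ a CM type of $K$ (i.e.\ $\kappa=0$), and $\Sigma$ is $p$-ordinary because $E$ is ordinary above $p$. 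By inductivity of $L$-functions $L(E/F,s)=L(s,\lambda)$, so $L(E/F,1)=L(1,\lambda)$ is a central value whose algebraic part is $L^{\alg}(1,\lambda)$, normalised as in \eqref{eq:alg} with $\ell$ replaced by $p$.

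Next, realise $\Sel_{p^\infty}(E/F)$ as the specialization of $X_\Sigma^{(\psi)}$ at the arithmetic point of $\Spec\Lambda$ whose associated Hecke character is $\lambda$; here $\psi$ is the finite-order tame branch of $\lambda$, which may be realised through a field $K'\supseteq K(\mu_p)$ with $p\nmid\#\Delta$ (arranged by the choice of $K'$; such $\psi$ exists by class field theory after possibly enlarging $K'$) and which satisfies the ramification hypotheses of Theorem~\ref{mc} — the exponent $a$ is fixed by $\kappa=0$, and $a\nequiv 2\pmod{p-1}$ since $p$ is odd. Let $P\subset\Lambda$ be the kernel of this specialization, so $\Lambda/P$ is a finite extension of $\BZ_p$. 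Because $E$ is ordinary at $p$, its $p^\infty$-Selmer group is a Greenberg Selmer group; via Shapiro's lemma, $\rho_{E,p}|_{\Gal(\ov K/K)}\cong\lambda\oplus\lambda^c$, and the decomposition of the primes of $K$ above $p$ as $\Sigma_p\sqcup\Sigma_p^c$, the ordinary local conditions for $E$ at $p$ match the $\Sigma_p$-ramified condition defining $X_\Sigma$. Greenberg's control theorem then identifies $X_\Sigma^{(\psi)}/PX_\Sigma^{(\psi)}$ with the Pontryagin dual of $\Sel_{p^\infty}(E/F)$ up to a finite error of bounded $v_p$-length, so that $v_p\big(F_\Sigma(\psi)\bmod P\big)=\length_{\BZ_p}\Sel_{p^\infty}(E/F)$ when the latter is finite and $=\infty$ otherwise.

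Finally, the interpolation formula for the Katz $p$-adic $L$-function, together with $\kappa=0$ and $p\nmid 6h_K^-D_F$ (which force the unit index $[O_K^\times:O^\times]$, the $\Sigma_p$-Gauss-sum factor, and the ratio $\Omega_E/\Omega_\infty^\Sigma$ of the N\'eron and CM periods to be $p$-adic units), gives $v_p\big(L_\Sigma(\psi)\bmod P\big)=v_p\big(L(E/F,1)/\Omega_E\big)$. Reducing the divisibility $L_\Sigma(\psi)\mid F_\Sigma(\psi)$ of Theorem~\ref{mc} modulo $P$ yields $v_p\big(F_\Sigma(\psi)\bmod P\big)\geq v_p\big(L_\Sigma(\psi)\bmod P\big)$. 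If $L(E/F,1)=0$ then $L^{\alg}(1,\lambda)=0$, hence $L_\Sigma(\psi)\in P$, hence $F_\Sigma(\psi)\in P$, hence $X_\Sigma^{(\psi)}/PX_\Sigma^{(\psi)}$, and therefore $\Sel_{p^\infty}(E/F)$, is infinite; this is (a). If $L(E/F,1)\neq0$ then both sides are finite and $\length_{\BZ_p}\Sel_{p^\infty}(E/F)=v_p\big(F_\Sigma(\psi)\bmod P\big)\geq v_p\big(L_\Sigma(\psi)\bmod P\big)=v_p\big(L(E/F,1)/\Omega_E\big)$; this is (b). The main obstacle is the control-theorem step: making it precise enough to read off $\length_{\BZ_p}\Sel_{p^\infty}(E/F)$ from $F_\Sigma(\psi)\bmod P$ requires matching Greenberg's $\Sigma_p$-ramified condition with the ordinary filtration of $\rho_{E,p}$ place by place above $p$, tracking Euler and Tamagawa factors (and the invariant $\mu_p(\lambda)$ at ramified inert places, which do not contribute under the stated hypotheses), performing the $\Delta$-descent to $F$, and checking the period normalisation together with the fact that $s=1$ is a non-exceptional interpolation point — all carried out in \cite{Hs2}.
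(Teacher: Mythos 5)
Your argument is correct and follows the same route as the paper, whose proof of this corollary is simply a citation of \cite[Cor.~1]{Hs2}: passing to the self-dual CM Hecke character $\lambda$ with $L(E/F,s)=L(s,\lambda)$, specializing the divisibility $L_\Sigma(\psi)\mid F_\Sigma(\psi)$ of Theorem~\ref{mc} at the arithmetic point corresponding to $\lambda$, and comparing both sides to $\Sel_{p^\infty}(E/F)$ and $L(E/F,1)/\Omega_E$ via the control theorem, Shapiro's lemma, and the Katz interpolation formula. You have essentially reconstructed Hsieh's deduction, including the correct place where it yields only an inequality in part (b); the step you flag as the ``main obstacle'' (matching the $\Sigma_p$-ramified condition with the ordinary filtration, tracking Euler/Tamagawa factors, and the $\Delta$-descent to $F$) is indeed where the work lies and is exactly what \cite{Hs2} supplies.
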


Theorem~\ref{mc} is asserted in \cite{Hs2}. 
The approach of {\it loc. cit.} is based on the method of Eisenstein congruence on the unitary group $U(2,1)$ over $K$. It begins with  construction of a $\Lambda$-adic Eisenstein series $\CE(\psi)$ on $U(2,1)$ whose constant term is closely related to the Katz $p$-adic $L$-function $L_{\Sigma}(\psi)$. A key is to show $p$-primitivity of the Eisenstein series $\CE(\psi)$. In {\it loc. cit.} it is approached by showing $p$-indivisibility of a certain Fourier--Jacobi coefficient of $\CE(\psi)$. Since a toric period of the Fourier--Jacobi coefficient is essentially a product of two Hecke $L$-values, the $p$-indivisibility results of \cite{Hi1,Hi2,Hsieh:nv} are invoked in the case $\deg \fp=1$ to show {\it simultaneous} $p$-indivisibility of these Hecke $L$-values. Due to the subsequently found gap in Hida's strategy and consequent weakening \cite{Hi4} of the $p$-indivisibility results of \cite{Hi1,Hi2,Hsieh:nv}  in the case $\deg \fp=1$, Hsieh's proof of the $p$-primitivity of $\CE(\psi)$ as in \cite[\S7.6]{Hs2} is incomplete (see also the introduction of \cite{Hi4}). 

As detailed in the main text, our Theorem~\ref{mainthm} fixes this gap and leads to completion of the proof of the main result of \cite{Hs2}  (cf.~\S \ref{s:IMC}). 

\subsection{Strategy}
The non-vanishing problems \eqref{Q1} and \eqref{Q2} concerns the arithmetic of a self-dual Hecke character or that of the unitary group $U(1)$ over $K$. 
Via theta correspondence, we recast them as  problems on a totally definite unitary group $U(2)$ over $K$. The latter turn out to be amenable to automorphic and ergodic tools.

Some of the following notation differs from the rest of the paper.
\subsubsection{Shimura set} 
For a CM field $K$ and $F$ the maximal totally real subfield, we have a canonically associated totally definite quaternion algebra $B$ over $F$ characterised by  \[\varepsilon(B_v)=\eta_{K_v/F_v}(-1)\] 
for any place $v$ of $F$. 
Note that $K$ embeds into $B$ as an $F$-algebra, and we fix such an embedding.

In our strategy $B$ is foundational to the arithmetic of $K$ 
in the guise of associated Shimura set 
(cf.~\cite{Ti,CST2,TYZ,nv:CM}).

\subsubsection{Ancillary results} 
{Our mod $\ell$ non-vanishing of Hecke $L$-values is based on Theorems~\ref{thmD} and~\ref{thmE} below, which concern the arithmetic of the Shimura set.}

For a self-dual Hecke character $\lambda$ over $K$ of infinity type $\Sigma+\kappa(1-c)$, let 
$\phi_\lambda$ be the associated $\GL_{2}(F)$-theta series of weight $k:=2\Sigma+2\kappa$ and level $U_0(\mathfrak{N}_{\lambda})$ for $\mathfrak{N}_{\lambda}:=D_{K/F}{\rm N}_{K/F}(\Cond(\lambda))$. Write $\mathfrak{N}_{\lambda}=\mathfrak{N}_{\lambda}^+\mathfrak{N}_{\lambda}^-$ with $\mathfrak{N}_{\lambda}^+$ (resp. $\mathfrak{N}_{\lambda}^-$) only divisible by split (resp.~non-split) primes \dfn{of $F$} in $K$.
Let 
$\pi_\lambda$ be the Jacquet--Langlands transfer of the automorphic representation generated by $\phi_\lambda$ to $B^\times$. 
Let $\ell$ be a prime such that 
$(\ell, 6\fp D_K)=1$. 
Let {$$f_{\lambda}\in \pi_\lambda^{\wh{R}^\times}$$} be 
the CM form as in subsection \ref{ss:ntv}. 
{It is a test vector in the sense of Gross and Prasad \cite{GP}, which is $\ell$-primitive and $K_v^\times$-invariant for all places $v|\mathfrak{N}_{\lambda}^-$.} 
We emphasise that $f_\lambda$ is {not} a newform\footnote{In this setting the finite part of the discriminant of $B$ divides $D_{K/F}$. Consequently, at places dividing $D_{K/F}$, the newform is not a test vector under an optimal embedding $K\hookrightarrow B$.}.  

We have a factorisation
\begin{equation}\label{$L$-fac'}
L(1/2,\pi_{\lambda,K}\otimes\chi_0\nu)=L(1,\lambda\chi_0\nu)L(1,\lambda(\chi_0\nu)^{-1})
\end{equation}
of the Rankin--Selberg central $L$-value $L(1/2,\pi_{\lambda,K}\otimes\chi_0\nu)$, where $\nu\in\Xi_\fp$ and $\chi_0$ is an auxiliary finite order anticyclotomic character over $K$ such that 
$\chi_0|_{\prod_{v|\mathfrak{N}_{\lambda}^-}K_v^\times}=1$. We recast the non-vanishing problems for Hecke $L$-values in terms of these Rankin--Selberg $L$-values. 

The Rankin--Selberg $L$-values $L(1/2,\pi_{\lambda,K}\otimes\chi_0\nu)$ maybe studied via the Waldspurger formula.  
While employing it, the period 
\[\Omega_{\lambda}:=\frac{\pi^k(\phi_\lambda,\phi_\lambda)_{U_0(\mathfrak{N}_{\lambda})}}{\Gamma_{\Sigma}(k)\pair{f_\lambda,f_\lambda}_{\wh{R}^\times}}
  \]
appears naturally,  where 
$(\phi_{\lambda},\phi_{\lambda} )_{U_0(\mathfrak{N}_{\lambda})}$ and $\pair{ f_{\lambda}, f_{\lambda}}_{\wh{R}^\times}$ are Petersson norms. 
In view of the explicit Waldspurger formula \cite{CST} the normalised $L$-value $L(1/2,\pi_{\lambda,K}\otimes \chi_0\nu)/\Omega_{\lambda}$ is $\ell$-integral.

A key is the following $(\ell,p)$ non-vanishing (cf.~Theorems~\ref{T:b.W}~and~\ref{T:3.W}).

\begin{thm}\label{thmD}  
  Let $\lambda$ be a self-dual Hecke character over $K$  
  and $\pi_\lambda$ the associated cuspidal automorphic representation of conductor $\mathfrak{N}_{\lambda}$. Let $\ell$ be a prime such that $(\ell, 6D_{K})=1$ and $\fp\nmid \ell\mathfrak{N}_{\lambda}^-$ {a degree one prime of $F$}. Let $\chi_0$ be a finite order anticyclotomic character such that the condition \eqref{Ass(D)} holds. Then for all except finitely many $\nu\in\Xi_{\lambda\chi_0,\fp}^{+}$, we have 
  \[{v}_{\ell}\left(\frac{ L(1/2, \pi_{\lambda, K}\otimes\chi_{0}\nu)}{\Omega_{\lambda}^{}}\right)=0.\]
  \end{thm}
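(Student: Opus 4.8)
The plan is to prove the $(\ell,p)$ non-vanishing of Theorem~\ref{thmD} by a two-step mechanism: first establish, via the explicit Waldspurger formula of \cite{CST}, that the normalised $L$-value $L(1/2,\pi_{\lambda,K}\otimes\chi_0\nu)/\Omega_\lambda$ is (up to an $\ell$-unit factor depending only on local data away from $\fp$) the square of a toric period integral of the CM test vector $f_\lambda$ against the character $\chi_0\nu$ on the Shimura set attached to $B^\times$; then show this toric period is an $\ell$-unit for all but finitely many $\nu\in\Xi^+_{\lambda\chi_0,\fp}$. Concretely, writing $\mathrm{Sh}=B^\times\bs\wh B^\times/\wh R^\times$ for the finite Shimura set and $\mathrm{rec}\colon\Gamma_\fp\twoheadrightarrow$ (a quotient acting on the fibre of $\mathrm{Sh}$ over the component fixed by the CM embedding), the period is
\[
P_\nu(f_\lambda)=\sum_{t\in T(F)\bs T(\wh A)/(T(\wh A)\cap \wh R^\times)} f_\lambda(t)\,(\chi_0\nu)(t),
\]
a finite exponential sum over a $\Gamma_\fp$-orbit. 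The key claim is that the reduction $\bar f_\lambda\pmod{\mathfrak m_\ell}$ is a nonzero function on $\mathrm{Sh}$ (this is exactly the $\ell$-primitivity built into the choice of $f_\lambda$ in \S\ref{ss:ntv}, together with Theorem~\ref{thmE} comparing quaternionic and CM periods $\ell$-integrally, which is what makes $\Omega_\lambda$ the right denominator), so that $v_\ell(P_\nu(f_\lambda))=0$ as soon as the mod-$\ell$ sum $\sum \bar f_\lambda(t)\overline{(\chi_0\nu)(t)}$ is nonzero in $\ov\BF_\ell$.

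The second and harder step is to prove that this mod-$\ell$ exponential sum is nonzero for all but finitely many $\nu$. Here I would invoke Ratner's theorem on ergodicity of unipotent flows, in the guise now standard after Vatsal and Cornut: the Galois orbits of CM points in the Shimura set attached to the anticyclotomic $\BZ_p^{\deg\fp}$-tower equidistribute, in the $\ell$-adic/finite-set sense, toward the "expected" distribution on each connected component. More precisely, one reduces to showing that if the sum vanished for infinitely many $\nu$, one could extract a limiting measure on $\mathrm{Sh}\times\mathrm{Sh}$ (a self-product, to kill the character twist by a Fourier/linearisation argument) that is invariant under a unipotent subgroup of a suitable adelic group, contradicting the fact that Ratner's classification forces this measure to be the product measure, under which $\bar f_\lambda\ne 0$ forces the pairing to be nonzero — a positivity of $\langle\bar f_\lambda,\bar f_\lambda\rangle$ argument. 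The role of the hypothesis \eqref{Ass(D)} on $\chi_0$ (trivial on $\prod_{v\mid\mathfrak N_\lambda^-}K_v^\times$, $f_\lambda$ being $K_v^\times$-invariant there) is to guarantee the relevant local toric integrals at non-split bad places are nonzero $\ell$-units rather than vanishing identically, so that the global period genuinely detects $\bar f_\lambda$; and the condition $\fp\nmid\ell\mathfrak N_\lambda^-$ ensures the $\fp$-component of $f_\lambda$ is the unramified vector, so the $\Gamma_\fp$-action is the clean one that the ergodic input requires.

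I expect the main obstacle to be the passage from Ratner-theoretic equidistribution (an archimedean, real-dynamical statement) to the required statement about non-vanishing of a \emph{mod $\ell$} sum. The subtlety is that equidistribution of Galois orbits in $\mathrm{Sh}$ gives, in the limit, statements about $\BC$-valued or $\ov\BQ$-valued averages, whereas we need the average not to be divisible by $\mathfrak m_\ell$. The resolution — and the technical heart of the argument — is to run the ergodic argument not with a single $f_\lambda$ but with its reduction, exploiting that $\mathrm{Sh}$ is a \emph{finite} set so that "$\ell$-adic equidistribution" is the combinatorial statement that for all but finitely many $\nu$ the orbit of the CM point under $\mathrm{rec}(\Gamma_\fp)$ hits every $\mathrm{Gal}$-coset with the "right" multiplicity mod $\ell$; this combinatorial equidistribution does follow from Ratner via a compactness/finiteness argument exactly because there are only finitely many possible "bad" limiting behaviours, each excluded by unipotent invariance. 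A secondary obstacle, handled by Theorem~\ref{thmE}, is ensuring all period comparisons are $\ell$-\emph{integral} with controlled denominators: the naive Waldspurger period is a ratio of Petersson norms, and one must track that the CM period $\Omega_\infty$ appearing in $L^{\alg}$ matches $\Omega_\lambda$ up to an explicit $\ell$-unit, which is precisely the content of the quaternionic-vs-CM period comparison and is where the invariant $\mu_\ell(\lambda)$ (as opposed to $0$) would enter in the global Theorem~\ref{mainthm}, though in Theorem~\ref{thmD} itself the auxiliary choice of $\chi_0$ and the normalisation by $\Omega_\lambda$ arrange that the answer is exactly $0$.
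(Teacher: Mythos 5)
Your overall outline---explicit Waldspurger to reduce to toric periods, then $\ell$-indivisibility of those periods via Cornut--Vatsal/Ratner equidistribution plus a Fourier linearisation on a self-product of the Shimura set---matches the paper's strategy at a coarse level. But there are two genuine errors and one substantial omission.

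First, you have the logical dependency between Theorems~\ref{thmD} and~\ref{thmE} backwards. You invoke Theorem~\ref{thmE} ``comparing quaternionic and CM periods $\ell$-integrally, which is what makes $\Omega_\lambda$ the right denominator,'' and again later as ``handled by Theorem~\ref{thmE}.'' In fact Theorem~\ref{thmD} is proved \emph{without} any period comparison: the Waldspurger formula (Theorem~\ref{T:central.W}) expresses $L(1/2,\pi_{\lambda,K}\otimes\chi_0\nu)/\Omega_\lambda$ directly as (an explicit $\ell$-unit times) the square of the toric period, since $\Omega_\lambda$ is by definition the quaternionic Petersson ratio and never touches $\Omega_\infty$. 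The period comparison of Theorem~\ref{thmE} is proved \emph{afterwards, using} Theorem~\ref{thmD} together with Hida-type lower bounds for Hecke $L$-values (see the strategy in \S\ref{ss:per} and the proof of Theorem~\ref{cpp}). Inserting Theorem~\ref{thmE} as an input to Theorem~\ref{thmD} would create a circularity.

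Second, you have missed the central CM-specific obstruction: the mod-$\ell$ reduction of the test vector is non-Eisenstein on \emph{exactly one} of the two norm-components $X_U^\pm$ of the Shimura set, not on all of them (contrast with Vatsal and Cornut--Vatsal, where the relevant representation is non-endoscopic). Your argument via ``positivity of $\langle\bar f_\lambda,\bar f_\lambda\rangle$'' does not address which component is the good one, and the equidistribution statement alone only provides CM points lying in components of a prescribed parity (governed by $n\bmod 2$ when $\fp$ is inert). The paper's resolution is indirect: Lemma~\ref{ee} shows $\bar f_\lambda\neq 0$ implies non-Eisenstein on \emph{some} component $X_U^{\tilde\varepsilon}$; one then runs the equidistribution argument on that component to get non-vanishing of toric periods of parity $\tilde\varepsilon$, and observes that if $\tilde\varepsilon$ were the wrong parity (i.e.\ if the periods detected on $X_U^{\tilde\varepsilon}$ corresponded to $\Xi^-_{\lambda\chi_0,\fp}$), this would force central $L$-values with $\varepsilon=-1$ to be nonzero, a contradiction. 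This sign bootstrap is what makes the CM case work, and your sketch does not contain it.

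Finally, you do not address $\deg\fp>1$, which is the case that goes beyond all prior literature and is the paper's main novelty here. For $\deg\fp>1$, the family $\Xi_{\lambda\chi_0,\fp}^+$ is $(\deg\fp)$-dimensional, and a naive Fourier/trace argument only gives non-vanishing along a Zariski dense subset; the paper introduces a refined Fourier analysis (partitioning $\Xi_\fp$ into subfamilies $\Xi_\fp(\mu)$ indexed by surjections $O_\fp/\fp^{ae}\twoheadrightarrow\mu_{p^a}$, Definitions~\ref{unilem}--\ref{unilem1}, and Lemmas~\ref{unif}, \ref{ave1}, \ref{sharpbd}) precisely to turn the infinitely many $\BZ_p$-quotients of $\Gamma_\fp$ into finitely many modified test vectors $\wh\varphi_\mu$, recovering uniformity. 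Without this step, the proposal proves only the Zariski-dense version (already known) rather than the ``all but finitely many'' statement of Theorem~\ref{thmD}.
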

As for the period $\Omega_\lambda$, note that 
$$\pi^{2\kappa}
\Omega_{\lambda}/\Omega_\infty^{2\Sigma+4\kappa}\in\ov{\BQ}^\times 
$$
since
the normalised $L$-values $\pi^\kappa L(1,\lambda\chi_0\nu)/\Omega_\infty^{\Sigma+2\kappa}$ and $\pi^\kappa L(1,\lambda(\chi_0\nu)^{-1})/\Omega_\infty^{\Sigma+2\kappa}$ are algebraic and non-zero for some $\nu\in \Xi_{\lambda\chi_0,\fp}^{+}$. The $\ell$-adic valuation of this ratio of periods is given by the following (cf.~Theorem~\ref{cpp}).
\begin{thm} \label{thmE}
  Let $\lambda$ be a self-dual Hecke character over a CM field $K$ of infinity type $\Sigma+\kappa(1-c)$ for $\Sigma$ a CM type of $K$ and $\kappa\in \BZ_{\geq 0}[\Sigma]$. Let {$\ell\nmid 6D_F$} be a prime such that $\Sigma$ is an $\ell$-adic CM type. 
{Then} 
\[{v}_{\ell}\left(\frac{\pi^{2\kappa}\Gamma_\Sigma(\Sigma+\kappa)^2 \cdot \Omega_{\lambda}^{}}{\Omega_\infty^{2\Sigma+4\kappa}}\prod_{w\in \Sigma_\ell }G(\lambda_w)^2\right)=2\sum_{\text{$v|{\rm{N}}_{K/F}(\Cond(\lambda))$ inert}}\mu_\ell(\lambda_v).\]
\end{thm}

The above two results\footnote{taking $\chi_0=1$ in Theorem~\ref{thmD}} yield Theorem~\ref{mainthm} 
in light of the factorisation \eqref{$L$-fac'}
of $L$-values, and the lower bound for $\ell$-adic valuation of Hecke $L$-values as in ~Theorem~\ref{lm:lb}.

\begin{remark}
The above strategy also leads to determination of the $\mu$-invariant of anticyclotomic Katz $p$-adic $L$-function associated to a self-dual Hecke character  (cf.~\cite[Thms.~1.3~$\&$~1.5]{nv:CM}), giving a different proof of results of \cite{Hi3,Hs1}. 
\end{remark}
\subsubsection{About Theorem~\ref{thmD}}\label{ss:lp-nvs}
This mod $\ell$ non-vanishing of the Rankin--Selberg $L$-values $L^{\alg}(1/2,\pi_{\lambda,K}\otimes\chi_0\nu):=L(1/2,\pi_{\lambda,K}\otimes\chi_0\nu)/\Omega_{\lambda}$ is based on explicit Waldspurger formula, Fourier analysis on ring class groups in conjunction with automorphic representation theory  and equidistribution of special points. 
{For simplicity of notation, 
we describe the case of $\lambda$ with infinity type $\Sigma$.}

Let $X_{U}$ be the Shimura set associated to $B^\times$ and an open subgroup $U=\wh{R}^\times\subset \wh{B}^\times$ corresponding to the test vector $f_\lambda$. Let $\fc$ denote the conductor of $\chi_0$, which is coprime to $\fp$. An apt choice of embedding $\iota: K\hookrightarrow B$ leads to special points $$x_{\fc,n}(a)\in X_U$$ for $[a]\in {{\rm G}_{n,\fc}}:=\Gal(H_{\fc\fp^n}/K)$ and $H_{\fm}$ the ring class field of conductor {$\fm$}. {Based on  explicit Waldspurger formula of Cai--Shu--Tian (cf.~Theorem~\ref{T:central.W}) and test vector theory (cf.~\S\ref{test}), the mod $\ell$ non-vanishing of 
$L^{\alg}(1/2,\pi_{\lambda,K}\otimes\chi_0\nu)$ is equivalent to that of toric periods  
 \[P_{f_\lambda}(\nu):=\sum_{[a]\in {{\rm G}_{\fc,n}}}\nu\chi_0(a)f_\lambda(x_{\fc,n}(a)),\]
where $\nu$ factors through {${\rm G}_{\fc, n}$} {(cf.~\S\ref{ss:lp})} . The generality of the work of Cai--Shu--Tian \cite{CST} is essential in our study since  
$D_{K/F}$ divides the conductor of $\pi_\lambda$ and $B$ is ramified at primes dividing $D_{K/F}$.

We study $\ell$-indivisibility of the toric periods $P_{f_{\lambda}}(\nu)$ via automorphic representation theory, Fourier analysis on the ring class groups ${\rm G}_{\fc,n}$ and equidistribution of images of special points $x_{\fc,n}(a)$ in a self-product of the Shimura set $X_U$. The equidistribution is a consequence of Ratner's seminal ergodicity of unipotent flows ~\cite{Ra}. The relevance of equidisitribution to mod $\ell$ non-vanishing is an insight of Vatsal \cite{Vatsal:nonvanishing}.   {His strategy was generalised to totally real fields by Cornut--Vatsal \cite{Vatsal_Cornut:London}. However, their results essentially excludes the CM case, referred to as exceptional therein.} To approach the CM case, we refine the strategy by considering toric periods over Galois orbits and by employing {automorphic} representation theory peculiar to the CM case (see more below). It then suffices to show that $f_{\lambda} \mod{\ell}$ is non-Eisenstein on specific components of $X_U$, which encode the $\varepsilon$-factor {$\varepsilon(\lambda)$}. 
The non-Eisenstein arguments in the work of Vatsal \cite{Vatsal:nonvanishing} and Cornut--Vatsal \cite{Vatsal_Cornut:London} do not apply to our CM setting.

In fact, a new phenomenon happens. One has a partition 
$
X_U=X_U^+ \sqcup X_U^-
$
where {$$X_{U}^+:=\{[h]\in X_{U}\ \Big|\ \eta_{K/F}\circ {\rm N}(h)=+1\}$$ with ${\rm N}$ the reduced norm. }
{For} $\fp$ inert in $K$, $f_\lambda \mod{\ell}$ is non-Eisenstein on {\it exactly one} of the components $X_U^\pm$ depending on $\varepsilon(\lambda)$, unlike the work of Vatsal and Cornut--Vatsal where the test vector is non-Eisenstein on all the components. Our non-Eisenstein argument is indirect: $f_\lambda \mod{\ell}$ is non-zero by definition, and consequently non-Eisenstein on $X_U$ (see Lemma~\ref{ee} which is specific to the CM setting). So it is non-Eisenstein on at least one of the components $X_U^{\tilde{\varepsilon}}\in\{X_{U}^{+},X_{U}^{-}\}$. As the non-vanishing strategy applies on $X_U^{\tilde{\varepsilon}}$ and $L(1/2,\pi_{\lambda,K}\otimes \chi_0\nu)=0$ for $\nu\in\Xi_{\lambda\chi_0,\fp}^-$, we deduce that
$\tilde{\varepsilon}$ has the desired parity. Since the test vector $f_\lambda$ is independent of $\fp$, the $\fp$ inert case allows us to treat\footnote{See~\S\ref{S:restest}.} the $\fp$ split case! }\

We now elaborate uniform non-vanishing of the toric periods $P_{f_\lambda}(\nu)$ (cf.~\S\ref{deg1}). As $\nu$ varies in a Galois orbit, the average of toric periods $P_{f_\lambda}(\nu)$ leads to a partial toric period over a thin subgroup ${\rm G}_{\fc,n}' \subset {\rm G}_{\fc,n}$ for $n_{}:=\ord_\fp\cond(\nu)$, whose cardinality is independent of $n$ since $\deg \fp=1$ (see~\eqref{avered} and Remark~\ref{geq1}). We analyse the period over ${\rm G}_{\fc,n}'$ by partitioning ${\rm G}_{\fc,n}'$ into  a locally defined subgroup ${\rm H}_{\fc,n}$ and $D_1\subset \wh{K}^\times$, the latter representing ${\rm G}_{\fc,n}'/{\rm H}_{\fc,n}$. The corresponding periods are studied via local representation theory and ergodic theory respectively, as outlined below.

 We first introduce a partition of $\Xi_\fp$ into finitely many subfamilies $\CS$ (see the discussion~above~\eqref{tv-tw}) and a Hecke-modification $f_{\CS}$  of the test vector $f_\lambda$ as in~\eqref{tv-tw} such that the ${\rm H}_{\fc,n}$-action has the following uniform eigen-behaviour:

\begin{itemize}
\item[\tiny{$\bullet$}] $f_{\CS}(x_{\fc,n}(\cdot))$ is $(\chi_0\nu)^{-1}$-eigen under the ${\rm H}_{\fc,n}$-action for each $n$ with $\nu\in\CS$ and $n=\ord_\fp\cond(\nu)$. 
\end{itemize}
{By the Tunnell--Saito theorem and its mod $\ell$ variant}, $f_{\CS}$ is still a test vector for the subfamily $\CS$ and has the same $\ell$-divisibility as $f_\lambda$ (cf.~Lemmas~\ref{locavetest}~and~\ref{tnn}). In view of the eigen-property, the toric period of $f_\lambda$ equals that of $f_{\mathcal{S}}$, and so in our strategy the former maybe replaced by the latter. A key feature of $f_{\CS}$: the associated toric period over ${\rm G}_{\fc,n}'$ factors through ${\rm G}_{\fc,n}'/{\rm H}_{\fc,n}$ .
Hence, its non-vanishing can be studied via the equidistribution of skewed-diagonal images of special points on the 
$\#D_1$-fold self-product of the Shimura set $X_U$, leading to uniform non-vanishing of the toric periods over the subfamily $\CS$  (cf.~Lemma~\ref{ind}~and~Proposition~\ref{C:Vatsal_Cornut}). Since there are finitely many subfamilies, Theorem~\ref{thmD} follows.

\subsubsection{About Theorem~\ref{thmE}}\label{ss:per} This $\ell$-integral period relation compares automorphic and motivic periods, which is a basic problem (cf.~\cite{Ha,PW,Pr1,IP}). 

For weight two elliptic newforms with square-free conductor, such a comparison of periods is a consequence of Ribet's seminal level raising (cf.~\cite{PW,Pr1}). For elliptic newforms with weights in the Fontaine--Laffaille range and square-free conductors, it may also be approached via `$R=T$' theorems (cf.~\cite{CH,KO,BKM}). These methods apply to Hecke eigenforms, and require additional hypotheses over totally real fields. 
However, our CM setting is neither semistable nor does it involve an eigenform on the Shimura set, and the weights are not necessarily in the Fontaine--Laffaille range\footnote{As far as we know, our result is the first result in the non Fontaine--Lafaille range.}.  It is also excluded by the conjectures of Ichino and Prasanna \cite{Pr1,IP} which pertain to the arithmetic of Petersson norms under the Jacquet--Langlands correspondence.

Our roundabout strategy is based on a tenuous link between the $\ell$-adic valuation of $\pi^{2\kappa}
\Omega_{\lambda}/\Omega_\infty^{2\Sigma+4\kappa}$ and mod $\ell$ non-vanishing as in Theorems~\ref{thmD}~and~\ref{mainthm}. 
The key: there exist  
\begin{itemize}
  \item [\tiny{$\bullet$}] an auxiliary degree one prime $\fq\mid 2\ell{\rm N}_{K/F}(\Cond(\lambda))$ of $F$ inert in $K$ and a Hecke character $\chi_0\in \Xi_{\lambda,\fq}^+$ satisfying the condition \eqref{Ass(D)} {so that $\varepsilon(\lambda\chi_0)=\varepsilon(\lambda\chi_0^{-1})=+1$}, and 
  \item [\tiny{$\bullet$}] an auxiliary degree one
 prime $\fq'\nmid \fq 2\ell{\rm N}_{K/F}(\Cond(\lambda))$ of $F$ split in $K$ with ${\rm N}_{F/\BQ}(\fq')\gg 0$ {and} two characters $\nu_0,\nu\in\Xi_{\lambda,\fq'}^{+}$ such that \end{itemize}  
\begin{equation}\label{nv-cmp}
{v}_{\ell}\left(L^{\alg}(1,\lambda\nu_0(\chi_0\nu)) \cdot
L^{\alg}(1,\lambda\nu_0(\chi_0\nu)^{-1})\right)=2\mu_{\ell}(\lambda).
\end{equation}
Consequently, Theorem \ref{thmE} for the base character $\lambda\nu_0$, and hence 
for $\lambda$ holds. 

We now elaborate the strategy.

\subsubsection*{{Preliminary mod $\ell$ non-vanishing \eqref{nv-cmp}}}

If $\varepsilon(\lambda)=+1$, then we {may} take $\chi_0=1$ and {simply} require no condition on $\fq$.

Since $\varepsilon(\lambda\chi_0)=\varepsilon(\lambda\chi_0^{-1})=+1$, the non-vanishing \eqref{nv-cmp} holds by a recent result of the second-named author \cite{He} based on Hida's strategy (cf.~\cite{Hsieh:nv}). 
More precisely, for any prime 
$\fq'\nmid 2\ell$ of $F$ split in $K$, 
\cite[Thm.~1.5]{He} for Hecke characters $\lambda\chi_0$ and $\lambda\chi_0^{-1}$ provides the existence of $\nu_1,\nu_2\in \Xi_{\fq'}$ such that the non-vanishing \eqref{nv-cmp} holds for $$\nu_0:=\sqrt{\nu_1}\sqrt{\nu_2}, \quad \nu:=\sqrt{\nu_1}\sqrt{\nu_2}^{-1},$$ where $\sqrt{\nu_i} \in \Xi_{\fq'}$ such that $\sqrt{\nu_i}^2=\nu_i$.

    \subsubsection*{{Theorem \ref{thmE} for $\lambda\nu_{0}$}}

We first employ: 
\begin{itemize}
  \item [(a1)] An $\ell$-integrality of  Rankin--Selberg $L$-values: for $\nu\in \Xi_{\lambda\chi_0,\fq'}^+$ 
  we have \[{v}_{\ell}\left(\frac{ L(1/2, \pi_{\lambda\nu_0, K}\otimes\chi_{0}\nu)}{\Omega_{\lambda}^{}}\right)\geq 0 \]  
  if $\ord_{\fq'}\mathfrak{N}_{\lambda}< \ord_{\fq'}\cond(\nu_{0,\fq'})\leq \ord_{\fq'}\cond(\nu_{\fq'})$, {where} 
  $\nu_{\fq'}:=\nu|_{K_{\fq'}^\times}$ and $\cond(\nu_{\fq'})$ is the maximal integral ideal $(\fq')^k$ of $O_{\fq'}$ such that {$\nu_{\fq'}((O_{\fq'}+{\fq'}^k O_{K_{\fq'}})^\times)=1$.} 
\end{itemize}
\begin{itemize}
  \item [(a2)] The {preliminary} mod $\ell$ non-vanishing of Hecke $L$-values as in \eqref{nv-cmp}.
\end{itemize}

In view of (a1) and (a2) we have
\begin{equation}\label{com1'}{v}_{\ell}\left(\frac{\pi^{2\kappa}\Gamma_\Sigma(\Sigma+\kappa)^2 \cdot   \Omega_{\lambda}^{}}{\Omega_\infty^{2\Sigma+4\kappa}}\prod_{w\in \Sigma_\ell }G(\lambda_w)^2\right)\leq 2\sum_{\text{$v|{\rm{N}}_{K/F}(\Cond(\lambda))$ inert}}\mu_\ell(\lambda_v).\end{equation}

 Next, we employ:
 \begin{itemize}
  \item [(b1)] The mod $\ell$ non-vanishing of Rankin--Selberg $L$-values: Theorem~\ref{thmD} for the base character $\lambda\nu_0$, prime $\fq'$ and $\chi_0\in\Xi_{\lambda,\fq}^+$.
  \item [(b2)] A lower bound for $\ell$-divisibility of {Hecke} $L$-values: for $\nu\in \Xi_{\lambda\chi_0,\fq'}^+$  we have
  \[{v}_{\ell}\left(L^{\alg}(1,\lambda\nu_0(\chi_0\nu)) \cdot 
  L^{\alg}(1,\lambda\nu_0(\chi_0\nu)^{-1})\right)\geq 2\mu_{\ell}(\lambda).\]  
   
 \end{itemize}
In view of (b1) and (b2) note that 
\begin{equation}\label{com2}{v}_{\ell}\left(\frac{\pi^{2\kappa}\Gamma_\Sigma(\Sigma+\kappa)^2 \cdot \Omega_{\lambda}^{}}{\Omega_\infty^{2\Sigma+4\kappa}}\prod_{w\in \Sigma_\ell }G(\lambda_w)^2\right)\geq 2\sum_{\text{$v|{\rm{N}}_{K/F}(\Cond(\lambda))$ inert}}\mu_\ell(\lambda_v).\end{equation}

 Hence,   
 Theorem \ref{thmE} holds for the Hecke character $\lambda\nu_0$.

    \subsubsection*{{Theorem \ref{thmE} for $\lambda$}}

 The equivalence of Theorem \ref{thmE} for Hecke characters $\lambda$ and $\lambda\nu_0$ is a consequence of the equivalence between non-vanishing of the corresponding Hecke $L$-values and comparison of periods as in~Proposition~\ref{cpnv1}.

\begin{remark} This paper maybe viewed as a sequel to \cite{nv:CM}, which considered mod $\ell$ non-vanishing of self-dual Hecke $L$-values over an imaginary quadratic field via arithmetic of a Shimura set. A new feature of the strategy of this paper: the use of an auxiliary Hecke character $\chi_0$. It leads to technical simplifications, for example it bypasses the results of section 7 of {\it loc. cit.} on explicit test vectors for supercuspidal representations.
\end{remark}
\subsection{Vistas} The mod $\ell$ non-vanishing of self-dual Hecke $L$-values has various applications. For example, it is ancillary to the main result of \cite{Lee} on primitivity of Hida family of theta lifts from the unitary group $U(1)$ to a totally definite $U(2)$ over a CM field $K$. 
It is also a key ingredient in the joint work \cite{ABL} of the first-named author with Alonso Rodrigues and Lai on primitivity of the Eisenstein series 
$\CE(\psi)$ on the unitary group $U(2,1)$ over $K$ by a new method. The {\it loc. cit.} considers periods of $\CE(\psi)$ against the subgroup $U(1) \times U(2) \subset U(2,1)$ {instead of Fourier--Jacobi coefficients as in \cite{Hs2}} (cf.~\S\ref{mc:CM}). Such a period against a CM form on $U(2)$ is a product of Hecke $L$-values. This approach may lead to removal of some of the hypotheses from the CM main conjecture divisibility (cf.~Theorem~\ref{mc}). 

In a sequel \cite{BHTY} we will approach the non-vanishing problems \eqref{Q1} and \eqref{Q2} when the prime $\fp$ is of degree greater than one via a refinement of the current strategy\footnote{The current strategy readily yields the non-vanishing for a Zariski dense subset of characters.}. The connection between epsilon factor of a self-dual Hecke character and non-vanishing of associated test vector on a specific component of the Shimura set established in \S\ref{S:restest}
 can be viewed as a global refinement of the local Tunnell--Saito theorem. It suggests a new structure on the endoscopic cases of the global Gan--Gross--Prasad conjecture, which we will also explore.

A basic problem: mod $\ell$ non-vanishing of Hecke $L$-values over a CM field for non-ordinary primes $\ell$. Note that our mod $\ell$ non-vanishing of  Rankin--Selberg Hecke $L$-values 
as in Theorem~\ref{thmD} allows $\ell$ to be non-ordinary. In conjunction with Finis' ideas \cite{Fin1} on the arithmetic of theta functions with complex multiplication, we plan to study this problem.

\subsection{Plan of the paper}
In section \ref{Shimuraset} we recall generalities regarding modular forms and special points on a Shimura set over a totally real field. Then section \ref{exp:Wald} describes an explicit Waldspurger formula connecting  
Rankin--Selberg central $L$-values in an anticyclotomic family and toric periods on the Shimura set, leading to an $\ell$-integrality of these $L$-values. Sections \ref{s:nv} and \ref{s:nv-Hecke} constitute the core of the  paper. In section \ref{s:nv} we prove mod $\ell$ non-vanishing of the Rankin--Selberg central $L$-values in the CM case, via automorphic representation theory and  Ratner's ergodicity of unipotent flows.
Section \ref{s:nv-Hecke} establishes the main result on  mod $\ell$ non-vanishing of self-dual Hecke $L$-values.  In the last section we describe an application to the CM Iwasawa main conjecture.
\subsection{Notation}\label{nota} 
\begin{itemize}
\item Throughout the paper, we fix totally real number field $F$. Let $D_F$, $O$ and $\BA$ respectively denote the absolute discriminant, the ring of integers and the ring of \adeles of $F$.
Let $\BA_f$ denote the ring of finite \adeles of $F$.
{For a finite set of places $S$ of $F$, let $\BA^{(S)}\subset \BA$ denote the adele ring with the factors at $S$ omitted.}
\item We also fix a totally imaginary quadratic extension $K/F$, and analogously define $D_K$, $O_K$ and $\BA_K$. Let $D_{K/F}$ denote the relative discriminant of $K/F$.

\item  For an integral prime $\fa$ of $F$, write $\fa=\fa^+\fa^-$ so that $\fa^-$ (resp. $\fa^+$) is only divisible by the non-split (resp.~split) primes in $K$.

\item For an $O$-module $M$, let $M_v=M\otimes_{O} F_v$. For an abelian group 
$N$, let $\wh{N}=N\otimes_{\BZ}\wh{\BZ}$. 
\item For a finite place $v$ of $F$, let $F_v$ be the completion of $F$ at $v$. Let $O_v$ denote the ring of integers of $F_v$, $\varpi_v$ a uniformiser of $O_v$ and $\fp_v$ the prime ideal of $O_v$. Let $q_v$ be the cardinality of residue field of $O_v$. 
We also fix a generator $d_v$ of the different $D_v$ of $F_v$ and $d_{K_v/F_v}$ a generator of the relative different $D_{K_v/F_v}$ of $K_v/F_v$. For an ideal $\fc$ of $F$, let $O_{K,\fc}=O+\fc O_K$ be the $O$-{order} of conductor $\fc$. 

\item For a finite place $v$ of $F$, let $\ord_v:F_v^\times\ra \BZ$ be the valuation such that $\ord_v(\varpi_v)=1$. We also view 
$\ord_v$ as a homomorphism from the group of fractional ideals of $F_v$ to $\BZ$.

\item For a finite place $v$ of $F$ and $\chi_v$ a character of $K_v$, define its conductor $\cond(\chi_v)$ with respect to $F_v$ as the maximal integral ideal $\fr_v$ of $F_v$ such that 
{$$\chi_v((1+\fr_vO_{K_v})^\times)=1.$$} For a Hecke character $\chi$ over $K$, the global conductor $\cond(\chi)$ is analogously defined. 
{We let $\Cond(\chi)$ denote the usual notion of conductor of a Hecke character $\chi$ over $K$, which is the maximal integral ideal $\ff$ of $K$ such that $\chi((1+\wh{\ff})^\times)=1$.}

\item In the main text we fix a totally definite quaternion algebra $B$ over $F$, and let $G=B^\times$ denote the associated algebraic group.

\item{Throughout this paper, we adopt the convention that an $L$-function does not include the archimedean $L$-factors.}

\end{itemize}

\subsection*{Acknowledgments}{We thank Haruzo Hida and Christopher Skinner for numerous instructive discussions. We also thank Shilin Lai and Wei Zhang for helpful comments on the preprint, and Raul Alonso Rodrigues, Li Cai, Shinichi Kobayashi, Philippe Michel and Lei Yang for discussions. 

The work of A.B. is partially supported by the NSF grant DMS 2302064, W.H. by the NSFC grant No. 12501019, Y.T. by the NSFC grant No. 12288201  and X.Y. by NSFC grant No. 12031019. 
}

\section{Gross points and modular forms on a Shimura set}\label{Shimuraset}
This section introduces a totally definite quaternion algebra, modular forms and Gross points on the associated Shimura set.
\subsection{Set-up}\label{SS:setup}
\subsubsection{CM field}\label{CMfd}
Let $K$ be a CM field and $F$  its maximal totally real subfield. 

Let $c\in \Gal(K/F)$ be the non-trivial element. Fix a CM type $\Sigma$ of $K$. We often identify it with the set of infinite places of $F$. 

{Let $S$ be an auxiliary finite set of places of $F$.} Let $\theta\in K$ be such that 
\begin{itemize}
    \item [\tiny{$\bullet$}]$ \Im(\sigma(\theta))>0$ for all $\sigma\in \Sigma$,
    \item [\tiny{$\bullet$}]$ \{1,\theta\}$ is an $O_v$-basis of $O_{K_v}$ for all $v|D_{K}$ or $v\in S$,
    \item [\tiny{$\bullet$}] $\theta$ is a uniformiser of $K_v$ for all $v|D_{K/F}$.
\end{itemize}
Put $\delta=\theta-c(\theta)$. 
\subsection{Totally definite quaternion algebra}
\label{S:totq}
 Let $B$ be a totally definite quaternion algebra over $F$. Let $S_B$ be the set of finite places of $F$ at which $B$ ramifies. Put $D_{B}=\prod_{v\in S_B} \fp_v$.

Write ${\rm T}$ and ${\rm N}$ for the reduced trace and norm of $B$ respectively. Let $S_0$ be a finite set of finite places at which $B$ splits. We will consider a family of special points in \S\ref{spepts} whose conductors only vary at places in $S_0$. Suppose that the set $S$ as in \S\ref{CMfd} contains $S_B$ and $S_0$. In applications, we often take $$S=S_0\cup S_1\cup \{v\ |\ v|\ell\}$$ with $S_1\supset S_B$ consisting of places dividing the conductor of a given cuspidal automorphic representation and $\ell$ a rational prime that is coprime to the places in $S_0$. 

Suppose that\footnote{The hypotheses will be satisfied in our setting (cf.~Lemma~\ref{lm:disc}).}
\begin{itemize}
  \item [\tiny{$\bullet$}] $K$ can be embedded into $B$,
  \item [\tiny{$\bullet$}] $-1\in F_v^\times$ is a norm from $K_v^\times$ for all $v\in S\bs S_B$.
\end{itemize}
We often fix an embedding $\iota: K\hookrightarrow B$, and then choose a basis $\{1, J\}$ of $B$ as a $K$-algebra such that $$Jt=c(t)J$$ for all $t\in K$, and
\begin{itemize}
    \item [\tiny{$\bullet$}] $\beta:=J^2\in F^\times$ with $\sigma(\beta)<0$ for all $\sigma\in \Sigma$,
    \item [\tiny{$\bullet$}] $\beta\in O_v^{\times 2}$ for all $v\in S\bs{S_B}$.
\end{itemize}
In view of the strong approximation 
the existence of such a $\beta$ is a local problem, and the existence may be seen as follows: recall that $B_v$ splits if and only if ${\rm{N}}(J)\in {\rm N}(K^\times
_v)$. So it suffices to check the existence of a $k_v\in K_v^\times$ for each finite $v\in S\bs S_B$ such that $-{\rm{N}}(Jk_v)\in O_v^{\times 2}$. Since we are assuming that $-1\in F_v^\times$ is a norm from $K_v^\times$ for all $v\in S\bs S_B$, the existence follows.

In the following we fix an isomorphism 
\begin{equation}\label{eq:sp}
i=\prod i_v: B(\BA_{f}^{(S_B)})\simeq {M_2}(\BA_{f}^{(S_B)}). 
\end{equation}

For a finite place $v\notin S$, choose $i_v: B(F_v)\simeq M_{2}(F_v)$ such that
 \begin{equation}i_{v}(O_{K_v})\subset M_2(O_v).
 \end{equation} 
Fix a square root $\sqrt{\beta}\in \ov{\BQ}$ of $\beta$.
For $v\in S\bs{S_B}$,
 define $i_v: B(F_v)\simeq M_{2}(F_v)$ by
\begin{equation}\label{E:embedding.W}i_v(\theta)=
\begin{pmatrix}
{\rm T(\theta)} & {-\rm {\rm{N}}(\theta)}\\
1 & 0\\
\end{pmatrix};\quad i_v(J)=
\sqrt{\beta}\cdot 
\begin{pmatrix}
-1 &\rm T(\theta)\\
0 & 1\\
\end{pmatrix}\quad(\sqrt{\beta}\in O_v^\times).
\end{equation}
We further choose $i_v$ so that $i_v(J)\in i_v(K_v^\times) \GL_2(O_v)$ for all finite $v\notin S$. 

Let $G= B^\times$ be an algebraic
group over $F$. 
 From now, we often identify $B(F_v)$ with $M_{2}(F_v)$ via the above isomorphism $i_v$ for $v\notin S_B$ finite, and in turn $G(F_v)$ with $\GL_2(F_v)$.

For convenience, we will typically choose an isomorphism $\iota: B(K_\infty)\simeq M_2(K_\infty)$ defined over $K$, which is induced from an  embedding $\iota: K\hookrightarrow B$ such that $\iota(t)=\begin{pmatrix}
    t& \\ &c(t)
\end{pmatrix}$ for $t\in K$.
That is, we fix an isomorphism \begin{equation}\label{eq:sp1}\iota:B(K)\simeq M_2(K),\quad a+bJ\mapsto \begin{pmatrix}
    a& b\beta\\ c(b)&c(a)\end{pmatrix},\quad a,b\in K.\end{equation}
\subsection{{Definite quaternionic modular forms}}\label{S:defform}{In this subsection we briefly describe  algebraic and $\ell$-integral theory of modular forms on the multiplicative group of a totally definite quaternion algebra.}
\subsubsection{Algebraic representations of $G(F)$}\label{algrep}
Let $A$ be a commutative ring and $h\in 2\BZ_{\geq 1}$. 

Denote by {$V_{h-2}(A)\subset A[X,Y]$} the space of homogeneous polynomials of degree $h-2$ with coefficients in $A$. Let $(\rho_h, V_{h-2}(A))$ be the representation of $\PGL_2(A)$ given by 

\[(\rho_h(\gamma) P)(X,Y):=\det(\gamma)^{-\frac{h-2}{2}}P((X,Y)\gamma),\quad \gamma\in \PGL_2(A),\quad P(X,Y)\in V_{h-2}(A).\]

If $A$ is a $\BZ[\frac{1}{(h-2)!}]$ algebra, then there is a $\PGL_2(A)$-invariant non-degenerate pairing on $V_{h-2}(A)$ given by: for $-\frac{h-2}{2}\leq i\leq \frac{h-2}{2}$ and $x_i:=X^{\frac{h-2}{2}+i}Y^{\frac{h-2}{2}-i}$, define 
\[\pair{x_{i},x_j}_{h-2}:=\delta_{i,-j}\cdot (-1)^{\frac{h-2}{2}+i}\frac{\Gamma(h/2+i)\Gamma(h/2-i)}{\Gamma(h-1)}.\]
{Let $B$ be a totally definite quaternion algebra over $F$.}
Fix an isomorphism $$\iota : B(\ov{\BQ})\simeq M_2(\ov{\BQ}).$$
Put $k=\sum_{\sigma\in \Sigma}k_\sigma\cdot\sigma \in 2\BZ_{\geq 1}[\Sigma]$ and define
\[\rho_{k}:G(F)\ra \prod_{\sigma\in \Sigma}\GL_2(\ov{\BQ})\xrightarrow{\prod_{\sigma\in \Sigma} \rho_{k_\sigma}} \prod_{\sigma\in \Sigma}\Aut ({ V}_{k_\sigma-2}(\ov{\BQ})),\]
where the first map is induced by $g\mapsto \prod_{\sigma\in \Sigma}\sigma(g)$. 

We have a ${\rm PLG_2}(\ov{\BQ})$-invariant bilinear pairing \[\displaystyle\pair{\ ,\ }_{k-2\Sigma}:=\otimes_{\sigma\in \Sigma} \pair{\ ,\ }_{k_\sigma-2}\] on $\bigotimes_{\sigma\in \Sigma} V_{k_\sigma-2}(\ov{\BQ})$ given by \[\pair{x,y}_{k-2\Sigma}:=\prod_{\sigma\in \Sigma} \pair{x_\sigma,y_\sigma}_{k_\sigma-2},\] for pure tensors $\displaystyle x=\otimes_{\sigma\in \Sigma}x_\sigma,  y=\otimes_{\sigma\in \Sigma}y_\sigma\in \bigotimes_{\sigma\in \Sigma}{ V}_{k_\sigma-2}(\ov{\BQ})
.$

For any subalgebra $A$ of $\ov{\BQ}$ that contains the Galois closure of $F$ so that $\iota$ is defined over $A$, note that $\rho_k$ is also defined over $A$.
Put $$V_{k-2\Sigma}(A)=\bigotimes_{\sigma\in \Sigma} V_{k_{\sigma} -2}(A),$$  and let $\rho_k$ also denote the representation of $G(F)$ on $V_{k-2\Sigma}(A)$. {Here the restriction of $\rho_k$ to the 
$\sigma$-component is given by the composite $G(F)\xrightarrow{\sigma}G(A)\xrightarrow{\rho_{k_\sigma}} V_{k_\sigma-2}(A)$.}

{We may and do choose $\iota$ such that \eqref{eq:sp1} holds and $x_0$ is $K^\times$-invariant.}
\subsubsection{Classical modular forms}\label{auto:rep.wt=k}
Fix an embedding $\iota_\infty: \ov{\BQ}\hookrightarrow \BC$.

Now $\rho_k$ induces a {continuous} representation 
 \[\rho_{k,\infty}:G(F_\infty)\ra \prod_{\sigma\in \Sigma}\GL_2(\BC)\xrightarrow{\prod_{\sigma\in \Sigma} \rho_{k_\sigma}} \prod_{\sigma\in \Sigma}\Aut ({ V}_{k_\sigma-2}(\BC))\] so that $\rho_{k,\infty}|_{G(F)}=\iota_\infty\circ \rho_k$.

For an open compact subgroup $U\subset G(\BA_f)$, let $\CA_k(U)$ be the subspace of automorphic forms on $G(\BA)$ of weight $k$, level $U$ and trivial central character. Here an automorphic form is said to be of weight $k$ if for each $\sigma\in \Sigma$ the $G(F_\sigma)$-representation generated by the form is a direct sum of $\rho_{k_\sigma}$. Put $$\CA_k=\varinjlim \CA_k(U).$$

For a subalgebra $A\subset \BC$ that contains the Galois closure of $F$ and so that $\iota$ as in \eqref{eq:sp1} is defined over $A$, {we have a representation $(\rho_k, V_{k-2\Sigma}(A))$ of $\rho(F)$ defined similarly as in \S\ref{algrep}.}
The space of weight $k$ modular forms with coefficients in $A$ is defined by
\[ {\rm M}_{k}(U,A)=\{\varphi:G(\BA_f)\ra V_{k-2\Sigma}(A)\ |\ \varphi(\gamma gu)=\rho_{k}(\gamma)\varphi(g), \forall u\in U\}.\] Via the right translation, ${\rm M}_{k}(A)=\varinjlim {\rm M}_{k}(U,A)$ is an admissible $G(\BA_f)$-representation. 

Define the $G(\BA)$-equivariant map: 
\[\Psi:V_{k-2\Sigma}(\BC)\otimes_{\BC}{\rm M}_{k}(\BC) \ra \CA_k,\quad \quad v\otimes \varphi\mapsto f: g\mapsto \pair{\rho_{k,\infty}(g_\infty)v,\varphi(g_f)}_{k-2\Sigma},\quad g=g_\infty\cdot g_f\in G(F_\infty)\cdot G(\BA_f).\]
\subsubsection{$\ell$-adic avatar of modular forms}\label{avatar}
Let $\ell$ be a rational prime.
Fix an embedding $\iota_\ell: \ov{\BQ}\hookrightarrow \ov{\BQ}_\ell$. 

Then $\rho_k$ induces a {continuous} representation
 \[\rho_{k,\ell}:G(F_\ell)\ra \prod_{ \sigma\in \Sigma}\GL_2(\ov{\BQ}_\ell)\xrightarrow{\prod_{\sigma\in \Sigma}\rho_{ k_\sigma}} \prod_{\sigma\in \Sigma}\Aut ({ V}_{k_\sigma-2}(\ov{\BQ}_\ell)),\]
 so that $\rho_{k,\ell}|_{G(F)}=\iota_\ell\circ \rho_k$.

Let $A\subset \ov{\BQ}_{\ell}$ be a complete algebra that contains $\iota_\ell\circ\sigma(O)$ for all $\sigma\in \Sigma$ and so that $\iota$ as in \eqref{eq:sp1} is defined over $\Frac(A)\cap \ov{\BQ}$. {Then we have a representation $(\rho_k, V_{k-2\Sigma}(A))$ of $\rho(F)$  defined similarly as in \S\ref{algrep}.} Let $U=\prod_{v<\infty} U_v\subset \wh{B}^\times$ be an open compact subgroup such that $\rho_{k,\ell}|_{\prod_{v|\ell}{U_{v}}}$ {preserves} $V_{k-2\Sigma}(A)$.

The $\ell$-adic avatar of the space of weight $k$ modular forms with level $U$ is defined by
 \[{\CM}_{ k}(U,A)=\{\wh{\varphi}:G(F)\bs G(\BA_f) 
 \ra V_{k-2\Sigma}(A)\ |\ \wh{\varphi}(\gamma gu)=\rho_{{k},\ell}(u_\ell^{-1})\wh{\varphi}(g), \forall u\in U\}.\]
 If $A$ is the integer ring of a local field, then ${\CM}_{ k}(U,A)\otimes_{A}\Frac(A)={\CM}_{ k}(U,\Frac(A))$.
In our applications, $A$ will typically be the $\ell$-adic completion  of a number field $L$ or its integer ring $O_L$ with respect to the embedding $\iota_\ell$.

Let $L$ be a number field that contains the Galois closure of $F$ so that $\iota$ as in \eqref{eq:sp1} is defined over $L$. Let $\tilde{L}$ be its $\ell$-adic completion with respect to the embedding $\iota_\ell$. Then the following $\ell$-adic avatar map  
\[{\rm M}_k(U,L)\otimes_{L}\tilde{L}\ra {\CM}_k(U,\tilde{L}),\quad \varphi\mapsto \wh{\varphi}: g\mapsto \rho_{{{ k}},\ell}(g_\ell^{-1})\varphi(g)\] is an isomorphism.
In particular, for $x\in V_{{k-2\Sigma}}(L)$, $\varphi\in {\rm M}_{k}(U,L)$ and  $g\in \wh{B}^\times$, we have 
\[\pair{x,\varphi(g)}_{k-2\Sigma}=\pair{\rho_{{k},\ell}(g_\ell^{-1})x,\wh{\varphi}(g)}_{k-2\Sigma}\]
\begin{defn}
A modular form in $\CM_k(U,\tilde{L})$ is said to be 
$\ell$-integral if it lies in $\CM_k(U,O_{\tilde{L}})$.
\end{defn}

\subsubsection{$\ell$-optimal form}
{Let $L$ be as in \S\ref{avatar}.}

Let $x\in V_{k-2\Sigma}(L)$ be a pure tensor that is
$\ell$-primitive, i.e. $x$ lies in $V_{k-2\Sigma}(O_{\tilde{L}})$ and {it} is $\ell$-indivisible. 

\begin{defn}\label{opt:ellava}A modular form $\wh{\varphi} \in {\CM}_k(U, \tilde{L})$ is $\ell$-optimal with respect to $x$ if {
$$\pair{x,\wh{\varphi}(g)}_{k-2\Sigma}\in O_{\tilde{L}}$$} for all 
$ g\in \wh{B}^\times$, and there exists a $g\in \wh{B}^\times$ such that $\pair{x,\wh{\varphi}(g)}_{k-2\Sigma}$ is an $\ell$-adic unit.\end{defn}
\begin{remark}\label{rmk:opt}If $\ell\Sigma> (k-2\Sigma)\in \BZ[\Sigma]$, then $\wh{\varphi}$ is $\ell$-optimal with respect to $x$ if and only if $\wh{\varphi}$ is $\ell$-integral and is non-zero\footnote{Since $G(\BF_{\ell^r})$ representation $V_{k_\sigma-2}(\BF_{\ell^r})$ is irreducible for $r\in \BZ_{\geq 0}$ and $\sigma|\infty$.} modulo $\ell$.\end{remark}

 \subsection{Special points}
 \label{spepts}
This subsection describes an analogue of CM points in the totally definite setting, which arises from the 
maximal anticyclotomic extension of $K$ unramified outside $S_0$.

 \subsubsection{Toric embedding}
 \label{gpt}
We identify $G(\BA_{f}^{ (S_B)})$ with $\GL_2(\BA_f^{(S_B)})$ as in \eqref{eq:sp}.
For ${g,h\in G(\BA)}$, put
\[\iota_{h}(g)=h^{-1}g h.\]

 Let $S_1^+\subset S_1$ (resp. $S_1^-\subset S_1$) be the subset consisting of finite places of $F$ that are split (resp.~non-split) in $K$. Note that $S_1^{+}\cap S_{B}=\emptyset$ since we have assumed that $K\hookrightarrow B$.
For a finite place $v\notin S_0$, define $\cmptv_{v}\in G(F_v)$ by
\begin{equation}\label{E:cmptv.W}\begin{aligned}
\cmptv_{v}=&1\text{ if $v\nmid S_0\cup S_1^+$,}\\
\cmptv_{v}=&\delta^{-1}\begin{pmatrix}
  \CMP & c(\CMP)\\
1 & 1\\
\end{pmatrix}
\in{\SL_2(O_{K_w})}={\SL_2(O_{F_v})}\text{ if $v\in S_1^+\bs S_0$,}
\end{aligned}\end{equation} {where $\theta$ and $\delta$ are as in \S\ref{CMfd}.}
For each $v\in S_1^+$, choose a place $w$ of $K$ above $v$. If $v\in S_1^+\bs S_0$, for $t=(t_1,t_2)\in K_v:=K\otimes_{F}F_v=K_w\oplus K_{\ov{w}}$ {we have}
\begin{equation}\label{E:cm2.W}\iota_{\cmptv_{v}}(t)=\begin{pmatrix}
t_1 & 0\\
0 & t_2\\
\end{pmatrix}.
\end{equation}

For each $v\in S_0$ and a non-negative integer $r_v$, define $\cmptv_v^{(r_v)}\in G(F_{v})$ as follows.

{To begin, for $r_v\geq 0$, put $\delta_{r_v}=\begin{pmatrix}
  \varpi_{v}^{r_v}&\\ &1
 \end{pmatrix} $.}

If $v$ splits in $K$ as $v=\wp\ov{\mathfrak{\wp}}$, where $\wp$ is a fixed place of $K$ as above, then
\begin{align}\label{E:op1.W}
\cmptv_v^{(r_v)}:=&\begin{pmatrix}
\CMP & -1\\
1 & 0\\
\end{pmatrix}
\delta_{r_v}
\in\GL_2(K_\wp)=\GL_2(F_v).
\intertext{If $v$ is non-split in $K$, then}
\label{E:op2.B}\cmptv_v^{(r_v)}:=&
\begin{pmatrix}
0 & 1\\
-1 & 0\\
\end{pmatrix}
\delta_{r_v}.
\end{align}
We take $\varsigma_\infty=1$ and for $\fr=\prod_{v\in S_0} \fp_v^{r_v}$, let $\varsigma^{(\fr)}=\prod_{v\notin S_0}\varsigma_v\prod_{v\in S_0}\varsigma_v^{(r_v)}$.

The above local embeddings lead to a family of embeddings \[\iota_{\varsigma^{(\fr)}}: \BA_K\hookrightarrow B(\BA).\]
The above explicit embeddings satisfy~\eqref{opti}~and~Lemma~\ref{unco}, which will be used in our non-vanishing arguments. 
\subsubsection{Quaternionic order}\label{ss:qo} We introduce an order of the totally definite quaternion algebra, with respect to which special points will be introduced in the next subsection. 

Let $R\subset B$ be an order such that 
\begin{itemize}
  \item[\tiny{$\bullet$}] For $v\notin S_1\cup S_0$, $R_v=M_2(O_v)$;
  \item[\tiny{$\bullet$}] For $v\in S_1\bs S_0$, $R_v$ contains $\iota_{\varsigma_v} O_{K_v}$;
  \item[\tiny{$\bullet$}] For each $v\in S_0$, $R_v$ is the standard Eichler order $$M_0(\fp_v^{n_v})_v=\left\{\begin{pmatrix}
      a& b \\ c & d
  \end{pmatrix}\in M_2(O_v)\ |\ c\in \fp_v^{n_v}\right\}$$ of discriminant $\fp_v^{n_v}$ for an integer $n_v\geq 0$.

  \end{itemize}
  Let $\fr=\prod_{v\in S_0} \fp_v^{r_v}$ such that $r_v\geq n_v$ for any $v\in S_0$.
  
   Together with the choice of $\varsigma^{(\fr)}$ as above, we have \begin{equation}\label{opti}\wh{R}\cap \iota_{\varsigma^{(\fr)}} \wh{K}=\iota_{\varsigma^{(\fr)}} \wh{O}_{K,\fr}.\end{equation}
  
\subsubsection{Special points}\label{sppt}
Define $x_{\fr}:\BA_K^\times\to G(\BA) $ by
\begin{equation}\label{E:op3.W}x_{\fr}(a):=a\cdot \cmptv^{(\fr)},\quad \cmptv^{(\fr)}:=\prod_{v\notin S_0}\varsigma_v\prod_{v\in S_0}\varsigma_v^{(\ord_v\fr)}\cmptv_{v}.
\end{equation}
This gives a family of special points 
$\{x_{\fr}(a)\}_{a\in\BA_K^\times}$.

\section{Explicit Waldspurger formula}\label{exp:Wald}
This section presents an explicit Waldspurger formula in a general context. It is based on the work
of Cai--Shu--Tian \cite{CST} to which we refer for an introduction. The main result is an anticyclotomic
twist family version of the Waldspurger formula for Rankin--Selberg $L$-values which involve a fixed test vector (cf.~Theorem~\ref{T:central.W}). We also consider its $\ell$-integral analogue, leading to an $\ell$-integrality of the Rankin--Selberg $L$-values (cf.~Proposition~\ref{lbd:RS}).
\subsection{Backdrop}\label{setting:Wald}
\subsubsection{Setting}\label{ss:set}

Let $K$ be a CM field and let $F$ be its maximal totally real subfield. Let $\Sigma$ be a CM type of $K$, which we identify with the set of infinite places of $F$.

Let $\sigma$ be a cuspidal irreducible automorphic representation of $\GL_2(\BA)$ with trivial central character such that
\begin{itemize}
\item[(H1)] The archimedean component $\sigma_\infty$ is the discrete series of weight $k$ for $k\in 2\BZ_{\geq 1}[\Sigma]$;
\item[(H2)] $\varepsilon(\sigma_{K})=+1$.
\end{itemize} 
Here $\varepsilon(\sigma_K)$ is the epsilon factor\footnote{See \cite[Chap.~2]{Langlands} for base change representations and \cite[Thm.~2.18]{Jacquet_Langlands:GLtwo} for definition of epsilon factors of $\GL_2$ representations.} of base change $\sigma_K$. 

The following example will be of particular interest for the paper. 
\begin{example}\label{ex1}
  Let $\lambda$ be a self-dual Hecke character over $K$ in the sense of \eqref{theta} of infinity type $\Sigma+\kappa(1-c)$ for $\kappa\in \BZ_{\geq 0}[\Sigma]$. Then the automorphic representation $\sigma_{\lambda}$ of $\GL_2(\BA)$ generated by the associated theta series {$\phi_{\lambda}$} satisfies the hypotheses (H1) and (H2) with weight $k=2\Sigma+2\kappa$ and 
  we have  
  $$L(s+1/2,\lambda)=L(s,\sigma_\lambda)$$ (cf.~\cite[\S12]{Jacquet_Langlands:GLtwo}).
  \end{example}
Let $B$ be the quaternion algebra over $F$ such that the Tunnell--Saito condition 
\begin{equation}\label{TS}
\varepsilon(\sigma_{K,v})=\varepsilon(B_v)
\end{equation}
holds for any place $v$ of $F$, where $\varepsilon(\sigma_{K,v})$ denotes the local base change epsilon factor and $\varepsilon(B_v)$ is the Hasse invariant of $B_v:=B(F_v)$. It is a totally definite quaternion algebra. 
Put $G=B^\times$. 

Let $\pi$ be the cuspidal irreducible representation of $G(\BA)$ whose Jacquet--Langlands transfer to $\GL_2(\BA)$ is $\sigma$. Note that $\pi$ exists in view of the Tunnell--Saito theorem \cite{Tu, Sa} and the condition \eqref{TS}. It is a unitary irreducible cuspidal automorphic representation with trivial central character and has weight $k$ in the sense of \S\ref{auto:rep.wt=k}. {We say that $\pi$ has CM by $K$ if its Jacquet--Langlands transfer $\sigma$ arises from a Hecke character\footnote{Since $\sigma$ has trivial central character, the associated Hecke character is self-dual.} over $K$.} 

Denote by $\mathfrak{N}$ the conductor of $\sigma$. Write $\mathfrak{N}=\mathfrak{N}^+\mathfrak{N}^-$, where $\mathfrak{N}^+$ and $\mathfrak{N}^-$ are only divisible by places that are split and non-split in $K$ respectively. 

{For an ideal $\fr$ of $F$, put\[{\rm P}_\fr=K^\times\wh{F}^\times\bs \wh{K}^\times/\wh{O}_{K,\fr}^{\times}.\] Let $S_0$ be a set of finite places at which $B$ splits and put \[{\rm P}_{S_0}=\varprojlim_{\fr}{\rm P}_\fr,\] where $\fr$ runs over all ideals of $F$ that are supported on $S_0$.
Let $\CX_{S_0}$ be the set of $\ov{\BQ}^\times$-valued finite order characters $\chi$ of ${\rm P}_{S_0}$ such that 
$$
    \text{$\chi_v=1$ for all $v|\mathfrak{N}^-, v\notin S_0$}
    $$ and $$\mathfrak{N}_{S_0}\mid \cond(\chi).$$}

\subsubsection{Some consequences of the Tunnell--Saito theorem}\label{S:conTS}

{By the eponymous theorem \cite{Tu,Sa}, the condition \eqref{TS} holds for all places $v$ if and only if $$\Hom_{K_{\BA}^\times}(\pi,\BC)\neq 0.$$}

In general, for a cuspidal irreducible automorphic representation $\pi$ of $G(\BA)$ with central character $\omega_\pi$, a Hecke character $\chi$ over $K$ with  $\chi|_{\BA^\times}\omega_{\pi}=1$, it is known that $\dim \Hom_{\BA_K^\times}(\pi,\chi^{-1})\leq 1$. Note that 
\[\Hom_{\BA_K^\times}(\pi,\chi^{-1})\neq 0 \iff \Hom_{K_v^\times}(\pi_v,\chi_v^{-1}) \neq 0 \text{ for each place $v$.}\] 
The Tunnell--Saito theorem ~\cite{Tu, Sa} asserts that 
\begin{equation}\label{TSloc} \Hom_{K_v^\times}(\pi_v,\chi_v^{-1}) \neq 0 \iff
\varepsilon(\pi_{K,v}\otimes\chi_v)=\chi_v(-1)\varepsilon(B_v).\end{equation} 

\begin{prop}\label{char}Under the hypotheses (H1), (H2) and the above choice of $B$, any $\chi\in \CX_{S_0}$ satisfies
    \[\Hom_{\BA_K^\times}(\pi,\chi^{-1})\neq 0\]and \[\varepsilon(\pi_{K}\otimes\chi)=+1.\] 
\end{prop}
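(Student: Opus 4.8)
\textbf{Proof strategy for Proposition~\ref{char}.} The plan is to verify the two assertions place-by-place using the Tunnell--Saito dichotomy \eqref{TSloc} together with the definition of $\CX_{S_0}$. Fix $\chi\in\CX_{S_0}$. First I would record that $\chi$ is anticyclotomic in the relevant sense: by construction $\chi$ is a character of ${\rm G}_{S_0}=\varprojlim_\fc K^\times\wh F^\times\bs\wh K^\times/\wh O_{K,\fc}^\times$, so $\chi|_{\BA^\times}=1$, and since $\pi$ has trivial central character the compatibility $\chi|_{\BA^\times}\omega_\pi=1$ needed for the local multiplicity-one statements holds. Thus $\dim\Hom_{K_v^\times}(\pi_v,\chi_v^{-1})\le 1$ at every $v$, and the global Hom is non-zero exactly when every local Hom is, so it suffices to check the local epsilon identity $\varepsilon(\pi_{K,v}\otimes\chi_v)=\chi_v(-1)\varepsilon(B_v)$ at each place $v$, and then to multiply these to get the global statement $\varepsilon(\pi_K\otimes\chi)=+1$.

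The case analysis splits into four families of places. For $v$ non-split in $K$ with $v\notin S_0$: if in addition $v\nmid\mathfrak N^-$, then $\pi_v$ is unramified (or nearly so) and $\chi_v=1$ by the defining condition on $\CX_{S_0}$; here $\varepsilon(\pi_{K,v}\otimes\chi_v)=\varepsilon(\pi_{K,v})=\varepsilon(\sigma_{K,v})=\varepsilon(B_v)$ by the Tunnell--Saito choice \eqref{TS} of $B$, and $\chi_v(-1)=1$, so the identity holds. If $v\mid\mathfrak N^-$ (still $v\notin S_0$), again $\chi_v=1$, and the identity reduces to \eqref{TS} directly. For $v$ split in $K$: then $B_v$ is split, $\varepsilon(B_v)=+1$, and $K_v^\times\cong F_v^\times\times F_v^\times$ with $\chi_v$ of the form $(\chi_{v,1},\chi_{v,1}^{-1})$; a standard computation (the local root number of a $\GL_2$ representation twisted by $(\mu,\mu^{-1})$ over a split place, cf. the multiplicativity of epsilon factors) gives $\varepsilon(\pi_{K,v}\otimes\chi_v)=\chi_v(-1)$, matching $\chi_v(-1)\varepsilon(B_v)$. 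For $v\in S_0$: by hypothesis $B$ splits at $v$, so $\varepsilon(B_v)=+1$; here $v$ may be split or inert in $K$, but $\pi_v$ is a principal series or Steinberg-type representation at which, because $\mathfrak N_{S_0}\mid\cond(\chi)$, the conductor of $\chi_v$ dominates that of $\pi_v$, and the local epsilon factor of the highly ramified twist again evaluates to $\chi_v(-1)$ — this is precisely the regime where Tunnell--Saito forces the split quaternion algebra, consistent with $v\in S_0\subset$ (places where $B$ splits).

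Multiplying the local identities over all $v$ and using $\prod_v\chi_v(-1)=\chi(-1)=1$ (as $\chi$ is trivial on $\BA^\times\supset F^\times$, in particular $\chi(-1)=1$) together with $\prod_v\varepsilon(B_v)=+1$ (product formula for Hasse invariants of a quaternion algebra) yields $\varepsilon(\pi_K\otimes\chi)=\prod_v\varepsilon(\pi_{K,v}\otimes\chi_v)=\prod_v\chi_v(-1)\varepsilon(B_v)=+1$, and simultaneously $\Hom_{K_v^\times}(\pi_v,\chi_v^{-1})\neq 0$ at every $v$, hence $\Hom_{\BA_K^\times}(\pi,\chi^{-1})\neq 0$. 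I expect the main obstacle to be the bookkeeping at the places $v\in S_0$ and at the ramified non-split places $v\mid\mathfrak N^-$: one must confirm that the imposed conditions $\chi_v=1$ (for $v\notin S_0$, $v\mid\mathfrak N^-$) and $\mathfrak N_{S_0}\mid\cond(\chi)$ are exactly what is needed for the local epsilon computation to land on the side of $B_v$ prescribed by \eqref{TS}, using that $\sigma$ (equivalently $\pi$) comes from a self-dual Hecke character in the motivating Example~\ref{ex1} so that the local components at non-split places are explicitly dihedral; the split and unramified places are routine.
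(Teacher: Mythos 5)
Your proof and the paper's both rest on the Tunnell--Saito theorem \eqref{TSloc}, but they use it from opposite directions: you verify the local epsilon-factor identity $\varepsilon(\pi_{K,v}\otimes\chi_v)=\chi_v(-1)\varepsilon(B_v)$ explicitly at each place and deduce local Hom non-vanishing, while the paper uses the dichotomy $\dim\Hom_{K_v^\times}(\pi_v,\chi_v^{-1})+\dim\Hom_{K_v^\times}(\pi_v',\chi_v^{-1})=1$ and shows the second summand vanishes by a non-existence argument (either $K_v$ does not embed into the non-split $B_v'$, or the Jacquet--Langlands transfer $\pi_v'$ does not exist when $v\nmid\mathfrak{N}$). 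The paper's route is noticeably more economical: it needs no explicit epsilon-factor computations and requires no information about $\chi_v$ beyond where it is ramified.

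Your proposal has two concrete gaps. First, at non-split $v\notin S_0$ with $v\nmid\mathfrak{N}^-$ you assert $\chi_v=1$. This is true if $v$ is inert, but if $v$ is ramified in $K/F$ and $v\nmid\mathfrak{N}$ (possible for general $\pi$ satisfying (H1) and (H2), even though in the CM setting of Example~\ref{ex1} one has $D_{K/F}\mid\mathfrak{N}^-$), the unramified anticyclotomic $\chi_v$ can be the nontrivial quadratic character of $K_v^\times/F_v^\times O_{K_v}^\times\cong\BZ/2\BZ$, and your step "$\chi_v=1$" fails; the paper's observation that $\pi_v'$ does not exist when $v\nmid\mathfrak{N}$ handles this without any such claim. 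Second, at $v\in S_0$ you assert $\pi_v$ is "a principal series or Steinberg-type representation", which is false in general (for instance $v\in S_0$ with $v\mid\mathfrak{N}^-$ and $\varepsilon(\sigma_{K,v})=+1$ can have $\pi_v$ supercuspidal while $B_v$ is still split), and the heuristic about highly ramified twists, while ultimately correct, needs to be made precise; the paper outsources exactly this step to the explicit Tunnell--Saito result \cite[Prop.~3.1]{CST}, which you should cite as well. Once those two points are repaired your route goes through, and your closing product computation is the same one the paper uses to conclude $\varepsilon(\pi_K\otimes\chi)=+1$.
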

\begin{proof}
  Let $v$ be a place of $F$. 
  
  If $v\notin S_0$ is finite, and either $v$ is split in $K$ or $v\nmid \mathfrak{N}$, then $B_v$ splits. Let $B_v'$ be the non-split quaternion algebra over $F_v$ and $\pi_v'$ be the irreducible representation of $B_v'^\times$ corresponding to $\pi_v $. By the Tunnell--Saito theorem, we have \[\dim \Hom_{K_v^\times}(\pi_v,\chi_v^{-1})+\dim \Hom_{K_v^\times}(\pi_v',\chi_v^{-1})=1.\]
  If $K_v$ is split or $v\nmid \mathfrak{N}$, then either an embedding $K_v\hookrightarrow B_v$ does not exist or the representation $\pi_v'$ does not exist. In particular,  $\Hom_{K_v^\times}(\pi_v',\chi_v^{-1})=0$ and hence $\Hom_{K_v^\times}(\pi_v,\chi_v^{-1})\neq 0.$

If $v\in S_0$, then the non-vanishing of 
 $\Hom_{K_v^\times}(\pi_v, \chi_v)$ 
 is a consequence of the explicit Tunnell--Saito theorem as in ~\cite[Prop.~3.1]{CST}.

If $v|\mathfrak{N}^-\infty$ but $v\notin S_0$, then  $\chi_v=1$ and hence $\Hom_{K_v^\times}(\pi_v^\times,\chi_v^{-1})=\Hom_{K_v^\times}(\pi_v^\times,\BC)\neq 0$ by the hypothesis \eqref{TS} and the Tunnell--Saito theorem \eqref{TSloc}.

The second assertion that $\varepsilon(\pi_{K}\otimes\chi)=+1$ also follows from the Tunnell--Saito theorem \eqref{TSloc}.
\end{proof}

In the CM case we have the following key. 
\begin{lem}\label{lm:disc}
    {Suppose that {$\sigma$} has CM by $K$ in the sense of Example \ref{ex1}. Then} the quaternion algebra $B$ as in \eqref{TS} satisfies\footnote{ {In particular, the hypothesis of subsection \ref{S:totq} is satisfied.}}
     $$
     D_{B}=\prod_{\eta_{K_{v}/F_v}(-1)=-1}\fp_v.
     $$
       \end{lem}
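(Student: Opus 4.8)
The statement to prove is that when $\pi$ has CM by $K$, the quaternion algebra $B$ attached to $\sigma_\lambda$ via the Tunnell--Saito condition \eqref{TS} has discriminant exactly $\prod_{\eta_{K_v/F_v}(-1)=-1}\fp_v$; equivalently, $\varepsilon(B_v)=\eta_{K_v/F_v}(-1)$ for every place $v$ of $F$. Since by \eqref{TS} we have $\varepsilon(B_v)=\varepsilon(\sigma_{K,v})$, the whole task reduces to the local computation
\[
\varepsilon(\sigma_{\lambda,K,v}) \;=\; \eta_{K_v/F_v}(-1)
\]
at every place $v$. The plan is to carry this out place-by-place using the explicit description of $\sigma_\lambda$ as an automorphic induction from the Hecke character $\lambda$, together with the inductivity of local epsilon factors.

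\textbf{Key steps.} First I would use automorphic induction: $\sigma_\lambda = \mathrm{AI}_{K/F}(\lambda)$, so its base change to $K$ is $\sigma_{\lambda,K} = \lambda \oplus \lambda^c$ (where $\lambda^c(x)=\lambda(c(x))$), and hence locally $\sigma_{\lambda,K,v} = \bigoplus_{w|v} \lambda_w \oplus \lambda_w^c$. Therefore $\varepsilon(\sigma_{\lambda,K,v}) = \prod_{w|v}\varepsilon(\lambda_w)\varepsilon(\lambda_w^c)$. Now I would split into cases on the splitting behaviour of $v$ in $K$. If $v$ splits as $w\bar w$, the two summands $\lambda_w$ and $\lambda_{\bar w}=\lambda_w^c$ are dual up to the cyclotomic twist forced by self-duality \eqref{theta}, their epsilon factors multiply to $+1$, and indeed $\eta_{K_v/F_v}(-1)=+1$, so we are done in this case. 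If $v$ is inert or ramified, there is a single place $w|v$ and I would invoke inductivity in degree zero: for the virtual representation $\mathrm{Ind}_{K_v/F_v}(\lambda_w) - \mathrm{Ind}_{K_v/F_v}(\mathbf{1}) = \sigma_{\lambda,v} - (\mathbf 1 \oplus \eta_{K_v/F_v})$ (all of virtual degree zero), the inductivity formula of Deligne--Langlands gives
\[
\frac{\varepsilon(\sigma_{\lambda,v})}{\varepsilon(\mathbf 1)\varepsilon(\eta_{K_v/F_v})} \;=\; \frac{\varepsilon_{K_v}(\lambda_w)}{\varepsilon_{K_v}(\mathbf 1)} \cdot \lambda_K(d_{K_v/F_v})\text{-type factors}.
\]
The cleaner route, avoiding the additive-character bookkeeping, is to directly compute $\varepsilon(\sigma_{\lambda,K,v})=\varepsilon(\lambda_w)\varepsilon(\lambda_w^c)$ and observe that $\lambda_w^c = \lambda_w^{-1}\cdot(\eta_{K/F}|\cdot|^{-1})\circ\mathrm{Nm}_{K_v/F_v}$-type relation from \eqref{theta}; the product $\varepsilon(\lambda_w)\varepsilon(\lambda_w^{-1})$ collapses to $\lambda_w(-1)=\lambda_w|_{F_v^\times}(-1)^{1/?}$, and using \eqref{theta} again, $\lambda_w|_{F_v^\times} = \eta_{K_v/F_v}\cdot|\cdot|^{-1}$, whose value at $-1$ is $\eta_{K_v/F_v}(-1)$. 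Finally, for the archimedean places $v\in\Sigma$: $K_v=\mathbb C$ splits over $F_v=\mathbb R$ in the sense that $-1$ is a norm, so $\eta_{K_v/F_v}(-1)=+1$; and since $B$ is totally definite, $\varepsilon(B_v)=-1$ at the archimedean places. Wait — this means I must double-check the sign convention: $D_B$ in the statement is the \emph{finite} part of the discriminant, and $B$ is assumed totally definite, so the archimedean ramification is automatic and consistent; the formula for $D_B$ is only over finite $v$, so I only need the finite-place computation, which is the content above.

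\textbf{Main obstacle.} The delicate point is the careful tracking of epsilon factors at the \emph{ramified} primes $v|D_{K/F}$, where $\lambda_w$ itself is ramified (recall $\mathfrak N_\lambda^-$ involves $D_{K/F}$) and one cannot simply read off $\varepsilon(\lambda_w)$ from a conductor-exponent formula without knowing the Gauss-sum normalisation and the relation $\lambda_w\lambda_w^c = (\eta_{K/F}|\cdot|^{-1})\circ\mathrm{Nm}$. The safest execution is to apply the inductivity relation for epsilon factors in the form
\[
\varepsilon(\mathrm{Ind}_{K_v/F_v}\lambda_w,\psi_v) \;=\; \varepsilon(\lambda_w,\psi_v\circ\mathrm{tr}_{K_v/F_v})\cdot\Big(\tfrac{\varepsilon(\mathrm{Ind}_{K_v/F_v}\mathbf 1,\psi_v)}{\varepsilon(\mathbf 1,\psi_v\circ\mathrm{tr})}\Big),
\]
and to combine the contributions of $w$ and the fact (from \eqref{theta}) that $\lambda_w^c/\lambda_w^{-1}$ is an unramified twist whose epsilon factor is a controlled power of $q_v$ contributing trivially to the sign; then the whole product telescopes to $\eta_{K_v/F_v}(-1)$, which is exactly $\varepsilon(B_v)$. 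So I expect the proof to be short modulo this standard but notation-heavy epsilon-factor manipulation, and I would present it with a clear case division (split / inert / ramified finite places) citing \cite{Jacquet_Langlands:GLtwo} and the base change material of \cite{Langlands} for the inductivity inputs.
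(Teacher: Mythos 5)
Your strategy is essentially correct and reaches the same local identity the paper does, namely $\varepsilon(\sigma_{\lambda,K,v})=\eta_{K_v/F_v}(-1)$ for each place $v$, and both proofs ultimately establish this by combining the Galois invariance of $\psi_{K_v}=\psi_v\circ\tr_{K_v/F_v}$, the functional-equation identity $\varepsilon(\chi,\psi)\varepsilon(\chi^{-1},\psi)=\chi(-1)$, and the self-duality $\lambda^{*,c}=(\lambda^*)^{-1}$. Where the two differ is the intermediate bookkeeping: you decompose the base change $\sigma_{\lambda,K,v}=\lambda_w\oplus\lambda_w^c$ (automorphic induction/Mackey) and compute a product of $\GL_1$ root numbers directly, whereas the paper instead uses the general relation $\varepsilon(\sigma_{K,v},\psi_{K_v})=\varepsilon(\sigma_v,\psi_v)\varepsilon(\sigma_v\otimes\eta_{K_v/F_v},\psi_v)\eta_{K_v/F_v}(-1)$ together with the dihedral epsilon formula of \cite[Thm.~4.7]{Jacquet_Langlands:GLtwo}, which brings in the Langlands constant $\lambda_{K_v/F_v}(\psi_v)$ and then removes it via $\lambda_{K_v/F_v}(\psi_v)^2=\eta_{K_v/F_v}(-1)$. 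Your route avoids the Langlands constant entirely and is arguably more transparent, at the price of a (split versus non-split) case division that the paper's presentation handles uniformly.

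Two points you should fix in the write-up. First, to make the product $\varepsilon(\lambda_w)\varepsilon(\lambda_w^c)$ collapse cleanly you must unitarise: it is $\lambda_w^{*,c}=(\lambda_w^*)^{-1}$ that holds on the nose, while $\lambda_w^c=\lambda_w^{-1}|\cdot|_{K_w}^{-1}$, so working with the non-unitary $\lambda_w$ directly introduces a spurious power of $q_w$ into the epsilon factor that you would then have to track; in the unitary normalisation $\varepsilon(\lambda_w^*)\varepsilon(\lambda_w^{*,c})=\lambda_w^*(-1)=\eta_{K_v/F_v}(-1)$ falls out immediately. (Also, $\eta_{K/F}\circ\Nm_{K_v/F_v}$ is trivial, so that factor in your stated relation for $\lambda^c$ is extraneous.) Second, your archimedean aside is backwards: $-1$ is \emph{not} a norm from $\BC^\times$ to $\BR^\times$, so $\eta_{\BC/\BR}(-1)=-1$, which is already \emph{consistent} with $\varepsilon(B_v)=-1$ at infinite places since $B$ is totally definite. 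No sign-convention issue arises; the relation $\varepsilon(B_v)=\eta_{K_v/F_v}(-1)$ holds at all places, and the product in the statement is restricted to finite $v$ only because $\fp_v$ is meaningless otherwise. Your recovery was correct, but for the wrong reason.
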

     
\begin{proof}
 
Suppose that $\sigma=\sigma_\lambda$ is associated to $\lambda$ as in Example~\ref{ex1}.
 For each $v$, let $\psi_v$ be a non-trivial additive character of $F_v$ and $\psi_{K_v}=\psi_v\circ\tr_{K_v/F_v}$. Let $\varepsilon(\sigma_{v},\psi_{v})$ be the epsilon factor of $\sigma_v$ with respect to $\psi_v$ defined in \cite[Thm.~2.18]{Jacquet_Langlands:GLtwo} and 
$\varepsilon(\sigma_{K_v},\psi_{K_v})$ the base change epsilon factor.

Then we have the following relation between base change epsilon factors and product of quadratic twist epsilon factors 
    \[\begin{aligned}
\varepsilon(\sigma_{K,v},\psi_{K_v})
=&\varepsilon(\sigma_{v},\psi_{v})\varepsilon(\sigma_{v}\otimes\eta_{K_v/F_v},\psi_{v})\eta_{K_v/F_v}(-1)\\
=&\varepsilon(\lambda_v^*,\psi_{K_v})^2(\lambda_{K_v}(\psi_v)^2\eta_{K_v/F_v}(-1))\\
=&\varepsilon(\lambda_v^*,\psi_{K_v})^2\\
=&\eta_{K_v/F_v}(-1). 
    \end{aligned}\] 
      Here $\lambda_v^* :=\lambda_v\cdot |\cdot |_{K_v}^{1/2}$ denotes the  unitarisation of $\lambda_v$, the second equality follows from  \cite[Thm.~4.7]{Jacquet_Langlands:GLtwo} and 
     the last from 
    \[\begin{aligned}
\overline{\varepsilon(\lambda_v^*,\psi_{K_v})}
=&\varepsilon((\lambda_v^*)^{-1},\psi_{K_v})\cdot (\lambda_v^*)^{-1}(-1)\\
=&\varepsilon(\lambda_v^{*,c},\psi_{K_v})\cdot \eta_{K_v/F_v}(-1)\\
=&\varepsilon(\lambda_v^*,\psi_{K_v})\cdot \eta_{K_v/F_v}(-1).
    \end{aligned}\]
   Hence, in view of \eqref{TS} the proof concludes.
\end{proof}
\begin{remark}
By Lemma~\ref{lm:disc}, the totally definite quaternion algebra $B$ is an intrinsic invariant of the CM field $K$.
\end{remark} 
\subsection{Test vector} \label{test} 
In this subsection we describe an explicit uniform test vector for self-dual pairs $(\pi,\chi)$  as $\chi$ varies in an anticyclotomic family based on the work of Cai--Shu--Tian \cite{CST}.

Let $B$ be a quaternion algebra over a number field $F$ and $G=B^\times$. Let $\pi$ be a unitary irreducible cuspidal automorphic representation of $G(\BA)$ with central character $\omega_{\pi}$ and let $\chi$ be a Hecke character over $K$. The pair $(\pi,\chi)$ is called self-dual if $$\omega_{\pi}\cdot \chi|_{\BA^\times}=1.$$

For a self-dual pair $(\pi,\chi)$, we have $\dim \Hom_{K_{\BA}^\times}(\pi,\chi^{-1})\leq 1$ and a  criterion for the equality is {given by \eqref{TSloc}.}
Following Gross and Prasad \cite{GP},  $f\in \pi$ is said to be a test vector for $\Hom_{K_{\BA}^\times}(\pi,\chi^{-1})$ {with respect to an embedding $K_{\BA}^\times\hookrightarrow B_{\BA}^\times$} if any basis of $\Hom_{K_{\BA}^\times}(\pi,\chi^{-1})$ takes a non-zero value on $f$. 
This is essentially a local notion: 
$f$ is a test vector if it is so locally\footnote{defined analogously} for any place.
The Waldspurger formula \cite{Wa, YZZ} links the Rankin–Selberg $L$-value \[L(1/2, \pi_K\otimes\chi)\] with $K^\times$-toric period of a test vector in $\pi$ {against $\chi$} {with respect to an embedding $K\hookrightarrow B$.}

{Let the setting, and in particular $B$ and $\pi$, be as in \S\ref{ss:set}.} This subsection describes an explicit  uniform test vector for the pairs $(\pi,\chi)$ as $\chi$ varies in the family $\CX_{S_0}$ of Hecke characters. The next subsection describes an explicit relation between the $L$-values $L(1/2,\pi_K\otimes \chi)$ and toric periods of the test vector.

\subsubsection{Definitions}\label{deff}
For a finite place $v$, recall that $M_0(\fp_v^n)_v$ is 
 the Eichler of discriminant $\fp_v^n$. Put $U_0(\fp_v^n)_{v}=M_0(\fp_v^n)_{v}^\times$. Recall that we have fixed an embedding $\iota: K\hookrightarrow B$ and a family of embeddings $\iota_{\varsigma^{(\fr)}}: K_{\BA}^\times\hookrightarrow G(\BA)$ in \S\ref{gpt}.
Let $S_1$ denote the set of places dividing the conductor $\mathfrak{N}$ and $S_0$ a finite set of places at which $B$ splits.
\begin{defn}\label{D:1.W} For a place $v$, define a non-zero vector $f_v\in\pi_v$ as follows. 
\begin{itemize}
\item[a)] If $v \mid \mathfrak{N}^-\infty$ and $v\notin S_0$, then $f_v$ is invariant under the action of $\iota_{\varsigma_v}K
_v^\times$.
\item[b)] If either $v\nmid \mathfrak{N}^-$ is finite or $v\in S_0$, then {$f_v$ is fixed by $ U_0(\mathfrak{N}_v)_v$.}
\end{itemize}
\end{defn}
\begin{defn}\label{D:2.W}
Let $R\subset B$ be an order of discriminant $\mathfrak{N}$ satisfying the following.
\begin{itemize}
\item[a)] If $v|\mathfrak{N}^-$ and $v\notin S_0$, then $R_v \subset B_v$ is an order so that $R_v \cap\iota_{\varsigma_v} K_{v}=\iota_{\varsigma_v} O_{K_v}$. 
\item[b)] If either $v\nmid \mathfrak{N}^-$ finite or $v\in S_0$, then $R_v\subset B_v$ is the Eichler order $M_0(\mathfrak{N}_v)_v$.
\end{itemize}
\end{defn}

\subsubsection{Existence}
\begin{lem}\noindent
\begin{itemize}
\item [i)]For any finite $v$, an order $R_v \subset B_v$ with discriminant $\mathfrak{N}_v$ as in Definition \ref{D:2.W} exists. Moreover, it is unique up to conjugation by $\iota_{\varsigma_v}K_v^\times$ for $v|\mathfrak{N}^-$ and $v\notin S_0$.
   \item [ii)] {If} $v\notin S_0$, then $R_v \cap\iota_{\varsigma_v} K_{v}=\iota_{\varsigma_v} O_{K_v}$, and if $v\in S_0$ and $\mathfrak{N}_v|\fr_v=\fp_v^{r_v}$, then $$R_v\cap \iota_{\varsigma_v^{(r_v)}} O_{K_v}=\iota_{\varsigma_v^{(r_v)}} O_{K_{v},\fr_v}.$$ 
    \end{itemize}
\end{lem}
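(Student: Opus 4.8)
The plan is to prove the lemma place by place, since both assertions are purely local at each finite $v$, and to treat separately the cases $v \notin S_1$, $v \in S_1^+ \setminus S_0$, $v \in S_1^- \setminus S_0$, and $v \in S_0$. For existence (part i), the only content is at $v \mid \mathfrak{N}^-$ with $v \notin S_0$: here $B_v$ is the division quaternion algebra over $F_v$ and $\mathfrak{N}_v$ is the conductor of the supercuspidal (or special) representation $\pi_v$. First I would recall that since $K_v/F_v$ is a field extension (as $v$ is non-split), $\iota_{\varsigma_v} O_{K_v}$ is an $O_v$-order in a maximal subfield of $B_v$, hence contained in the unique maximal order $O_{B_v}$ of $B_v$; one then produces $R_v$ by intersecting a suitable order of the correct reduced discriminant with the normalizer data coming from $\iota_{\varsigma_v} O_{K_v}$. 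Concretely, since $O_{K_v}$ is generated over $O_v$ by $\theta$ (our choice of $\theta$ makes $\{1,\theta\}$ an $O_v$-basis of $O_{K_v}$ for $v \mid D_K$ and $v \in S$, and makes $\theta$ a uniformizer when $v \mid D_{K/F}$), I can write $R_v = O_v[\iota_{\varsigma_v}\theta] + \mathfrak{P}^{?} O_{B_v}$ for the appropriate power of the maximal ideal $\mathfrak{P}$ of $O_{B_v}$ so that the reduced discriminant equals $\mathfrak{N}_v$, and check directly that this is a ring. The remaining cases of part i) are standard: for $v \notin S_1$ one takes $R_v = M_2(O_v)$; for $v \mid \mathfrak{N}^+$ or $v \in S_0$ one takes the Eichler order $M_0(\mathfrak{N}_v)_v$, which manifestly has discriminant $\mathfrak{N}_v$ and exists by definition.

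Next I would address uniqueness up to $\iota_{\varsigma_v} K_v^\times$-conjugation for $v \mid \mathfrak{N}^-$, $v \notin S_0$. The key observation is that any order $R_v$ of reduced discriminant $\mathfrak{N}_v$ in $B_v$ with $R_v \cap \iota_{\varsigma_v} K_v = \iota_{\varsigma_v} O_{K_v}$ is sandwiched $O_v[\iota_{\varsigma_v}\theta] \subseteq R_v \subseteq O_{B_v}$, and such suborders of $O_{B_v}$ containing a fixed embedded $O_{K_v}$ correspond, via the valuation filtration on $O_{B_v}$ (using that $O_{B_v}/\mathfrak{P}$ is the residue field of the quadratic unramified extension and $\mathfrak{P}^2 = \varpi_v O_{B_v}$ up to units), to a finite chain indexed by their discriminant exponent. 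Two choices with the same discriminant exponent that both contain $\iota_{\varsigma_v} O_{K_v}$ are then conjugate by an element of the normalizer $N_{B_v^\times}(\iota_{\varsigma_v} O_{K_v}) = \iota_{\varsigma_v} K_v^\times \cdot \langle \mathfrak{P}\text{-uniformizer}\rangle$; since conjugation by the uniformizer of $O_{B_v}$ fixes $O_{B_v}$ and permutes the chain trivially at the relevant level, one reduces to conjugation by $\iota_{\varsigma_v} K_v^\times$. I expect this chain/normalizer bookkeeping to be the main obstacle, because it requires carefully tracking how $\iota_{\varsigma_v}$ (which differs from the naive embedding only at split places, so equals the identity adjustment here) interacts with the ramification of $K_v/F_v$ — in particular the case $v \mid D_{K/F}$ where $\theta$ is itself a uniformizer of $K_v$ behaves differently from $v$ unramified in $K$.

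For part ii), the statement $R_v \cap \iota_{\varsigma_v} K_v = \iota_{\varsigma_v} O_{K_v}$ when $v \notin S_0$ and $v \mid \mathfrak{N}^-$ is imposed in Definition~\ref{D:2.W}a), while for $v \mid \mathfrak{N}^+$ or $v \notin S_1$ it follows from the choices of the isomorphisms $i_v$ in \eqref{E:embedding.W} and the ensuing relation $\wh{R} \cap \iota_{\varsigma^{(\fc)}}\wh{K} = \iota_{\varsigma^{(\fc)}}\wh{O}_{K,\fc}$ recorded in \S\ref{ss:qo}; I would simply localize that global statement. The interesting assertion is the last one: for $v \in S_0$ and $\mathfrak{N}_v \mid \fc_v = \fp_v^{c_v}$, one has $R_v \cap \iota_{\varsigma_v^{(c_v)}} O_{K_v} = \iota_{\varsigma_v^{(c_v)}} O_{K_v,\fc_v}$. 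Here $R_v = M_0(\mathfrak{N}_v)_v$ and I would compute the intersection explicitly using the formulas \eqref{E:op1.W} and \eqref{E:op2.B} for $\varsigma_v^{(c_v)}$: in the split case $\iota_{\varsigma_v^{(c_v)}}$ diagonalizes $K_v \cong F_v \oplus F_v$ up to the twist by $\delta_{c_v} = \mathrm{diag}(\varpi_v^{c_v},1)$ (matching $m+j$ and $p$-powers to $c_v$), so an element $\mathrm{diag}(t_1,t_2)$ conjugated by $\delta_{c_v}$ lands in $M_0(\fp_v^{n_v})_v$ iff $\varpi_v^{-c_v}(t_1-t_2)$ plus lower-order terms lies in $\fp_v^{n_v}$ — which is exactly the condition $t \in O_{F_v} + \fp_v^{c_v} O_{K_v}$ once $c_v \ge n_v$; the non-split case is similar using the antidiagonal matrix in \eqref{E:op2.B}. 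This is a direct matrix computation, so once the two bookkeeping cases above are settled I would present it compactly rather than in full detail.
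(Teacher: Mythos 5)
Your treatment of part~ii) matches the paper's: case (a) is by fiat from Definition~\ref{D:2.W}a), the $v\mid\mathfrak{N}^+$ case is the diagonal embedding \eqref{E:cm2.W}, the $v\nmid\mathfrak{N}$ case follows from the choice $i_v(O_{K_v})\subset M_2(O_v)$ and maximality of $O_{K_v}$, and $v\in S_0$ is a direct matrix computation with $\delta_{c_v}$. For part~i), however, the paper simply invokes~\cite[Lem.~3.3]{CST}, whereas you attempt to re-prove the existence and uniqueness from scratch; this is where your argument has a genuine gap.

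You open the discussion of $v\mid\mathfrak{N}^-$, $v\notin S_0$ with ``here $B_v$ is the division quaternion algebra over $F_v$,'' but that is false in general. The quaternion algebra $B$ is pinned down by the Tunnell--Saito condition $\varepsilon(\sigma_{K,v})=\varepsilon(B_v)$, and for a non-split place $v$ dividing $\mathfrak{N}$ the local base-change epsilon factor can perfectly well be $+1$, making $B_v$ split. In fact, in the CM situation that drives this paper, Lemma~\ref{lm:disc} shows $D_B=\prod_{\eta_{K_v/F_v}(-1)=-1}\fp_v$; since $\eta_{K_v/F_v}$ is unramified when $v$ is \emph{inert} in $K$, one has $\eta_{K_v/F_v}(-1)=1$ and hence $B_v$ splits at every inert $v\mid{\rm N}_{K/F}(\Cond(\lambda))$. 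Your explicit construction $R_v=O_v[\iota_{\varsigma_v}\theta]+\mathfrak{P}^{?}O_{B_v}$ and the ensuing chain/normalizer bookkeeping all presuppose the unique maximal order $O_{B_v}$ of a division algebra and its $\mathfrak{P}$-adic filtration; none of this is available when $B_v\simeq M_2(F_v)$. In that case the existence and the $K_v^\times$-conjugacy classification of orders $R_v\subset M_2(F_v)$ of given discriminant with $R_v\cap K_v=O_{K_v}$ are a Hijikata/Bass-order type statement, which is exactly what~\cite[Lem.~3.3]{CST} supplies in both subcases at once. So either restore the missing split subcase by a separate argument, or (more economically, and as the paper does) cite the CST lemma and dispense with the hand-built construction entirely.
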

\begin{proof}
Part i) just follows from \cite[Lem.~3.3]{CST}.

Now we consider ii). The case (a) $v|\mathfrak{N}^-$ and $v\notin S_0$ or (b) $v\notin S_0$ and $v\nmid \mathfrak{N}$ follows from definition.

Now consider the case $v\mid \mathfrak{N}^+$ and $v\notin S_0$, Since 
$\iota_{\varsigma}: K_v\hookrightarrow B_v\simeq M_{2,v}$ is the diagonal embedding (cf.~Equation~\eqref{E:cm2.W}), the result holds.

The remaining case that $v\in S_0$ follows by a direct calculation (cf.~\S\ref{ss:qo}).
 \end{proof}
Note that $R$ satisfies the properties in \S\ref{ss:qo}.
\begin{lem}\label{lm:t-mult}\
\begin{itemize}
\item[i)] 
For any $v$, there exists $f_v\in\pi_v$ as in Definition \ref{D:1.W}.
 Moreover, it is unique up to scalars. 
\item[ii)] For any finite $v$, we have $f_v \in \pi_{v}^{R_v^\times}$. 
\end{itemize}
\end{lem}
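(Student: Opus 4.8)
The plan is to treat each place $v$ according to the cases in Definition~\ref{D:1.W} and verify the two assertions: (i) existence and uniqueness up to scalar of a vector $f_v\in\pi_v$ satisfying the prescribed invariance, and (ii) that at a finite place this vector lies in $\pi_v^{R_v^\times}$. First I would dispose of the archimedean places and the finite places $v\nmid\mathfrak{N}$ not in $S_0$: there $\pi_v$ is unramified (or, at $\infty$, the lowest weight vector in the discrete series of weight $k_\sigma$), and a single line is fixed by $\GL_2(O_v)=R_v^\times$; uniqueness is the standard local multiplicity one for the spherical (resp. minimal $K_\infty$-type) vector. For the archimedean case, the $\iota_{\varsigma_v}K_v^\times$-invariance amounts to invariance under the torus $K_\infty^\times\cong\mathbb{C}^\times$, which in weight $k_\sigma$ picks out exactly the vector of that $K_\infty$-type, so (i) holds and there is nothing to check for (ii).

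Next I would handle the case $v\mid\mathfrak{N}^-$, $v\notin S_0$, where $B_v$ is the nonsplit quaternion algebra. Here $\pi_v$ is finite-dimensional when $\sigma_v$ is discrete series (or one-dimensional when $\sigma_v$ is a twist of Steinberg, etc.), and the required object is a nonzero $\iota_{\varsigma_v}K_v^\times$-eigenvector with trivial eigenvalue; existence and uniqueness follow from the local Tunnell--Saito statement \eqref{TSloc} together with the hypothesis \eqref{TS}, which guarantees $\dim\Hom_{K_v^\times}(\pi_v,\mathbb{C})=1$. For (ii), since $R_v$ is chosen so that $R_v\cap\iota_{\varsigma_v}K_v=\iota_{\varsigma_v}O_{K_v}$ and $R_v^\times$ is generated (modulo the center) by $O_{K_v}^\times$ together with the normalizer element coming from the maximal order structure of $B_v$, I would check that $f_v$, being $K_v^\times$-invariant, is automatically fixed by $R_v^\times$: the point is that $R_v$ is the unique maximal order in the division algebra $B_v$, its unit group normalizes $\iota_{\varsigma_v}O_{K_v}^\times$, and the one-dimensional space $\pi_v^{K_v^\times\text{-inv}}$ is then $R_v^\times$-stable, hence fixed because the central character is trivial and $R_v^\times$ acts through a finite abelian quotient on that line. (When $\sigma_v$ is special, $\pi_v$ is one-dimensional and the statement is immediate.)

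The remaining, and genuinely computational, case is $v\mid\mathfrak{N}^+$ with $v\notin S_0$, and the places $v\in S_0$. For $v\mid\mathfrak{N}^+$ the embedding $\iota_{\varsigma_v}$ is the diagonal embedding \eqref{E:cm2.W} and $R_v=M_0(\mathfrak{N}_v)_v$; here $f_v$ is the new vector of the ramified principal series (or its appropriate subquotient) of conductor $\mathfrak{N}_v$, so existence/uniqueness is local newvector theory (Casselman), and $f_v\in\pi_v^{U_0(\mathfrak{N}_v)_v}=\pi_v^{R_v^\times}$ by definition. For $v\in S_0$ one uses the explicit local computation of Cai--Shu--Tian; I would cite \cite[Lem.~3.3, Prop.~3.1]{CST} for the existence of the order $R_v$ with $R_v\cap\iota_{\varsigma_v^{(c_v)}}O_{K_v}=\iota_{\varsigma_v^{(c_v)}}O_{K_v,\fc_v}$ and for the fact that the $U_0(\mathfrak{N}_v)_v$-fixed line is the right test vector, and deduce $f_v\in\pi_v^{R_v^\times}$ from $R_v=M_0(\mathfrak{N}_v)_v$.

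The main obstacle I anticipate is not any single case but the bookkeeping at $v\mid\mathfrak{N}^-$: one must be careful that the order $R_v$ produced in the first lemma (unique up to $\iota_{\varsigma_v}K_v^\times$-conjugation) is compatible with the choice of $f_v$, i.e. that after conjugating $R_v$ to satisfy $R_v\cap\iota_{\varsigma_v}K_v=\iota_{\varsigma_v}O_{K_v}$ the same conjugation carries the $K_v^\times$-invariant line of $\pi_v$ to an $R_v^\times$-invariant line. This is where I would invoke part~(ii) of the preceding lemma and the fact that in the division-algebra case every order is contained in the unique maximal order, so $R_v^\times$ is forced to normalize $\iota_{\varsigma_v}O_{K_v}^\times$; combined with triviality of the central character this forces the invariance. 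Everything else reduces to standard local representation theory and the cited results of Cai--Shu--Tian.
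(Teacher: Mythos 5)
Your overall structure — a case-by-case analysis reducing to newform theory, the Tunnell--Saito theorem, and Cai--Shu--Tian — matches the paper's proof, which simply cites \cite[Prop.~3.7]{CST} for part (i) and \cite[Prop.~3.8]{CST} for part (ii), noting that for (ii) only the case $v\mid\mathfrak{N}^-$, $v\notin S_0$ requires an argument. Your treatment of the archimedean places, the spherical places, $v\mid\mathfrak{N}^+$, and $v\in S_0$ is fine. The gaps are all concentrated in your handling of $v\mid\mathfrak{N}^-$, $v\notin S_0$.

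First, you assert that for such $v$, ``$B_v$ is the nonsplit quaternion algebra.'' This is false in general: $v\mid\mathfrak{N}^-$ means $v$ is non-split in $K$, but $B_v$ is ramified only at $v$ with $\varepsilon(B_v)=-1$ (in the CM case, exactly at $v$ with $\eta_{K_v/F_v}(-1)=-1$ by Lemma~\ref{lm:disc}). There are primes $v\mid\mathfrak{N}^-$ with $B_v$ split, for which $\pi_v$ is an infinite-dimensional representation of $\GL_2(F_v)$ and $R_v$ is a non-Eichler order of $M_2(O_v)$; your argument for part (ii), built around the unique maximal order of a division algebra, does not address this case at all. The content of \cite[Prop.~3.8]{CST} covers both the split and non-split $B_v$.

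Second, even in the ramified case, the claim that ``$R_v$ is the unique maximal order in the division algebra $B_v$'' is not correct when $\ord_v(\mathfrak{N})>1$: there $R_v=O_{K_v}+\varpi^{\ord_v(\mathfrak{N}_v/D_{B_v})}O_{B_v}$ is a proper suborder of $O_{B_v}$ (this is spelled out in the paper's own Lemma~\ref{lm:lev}). Third, the concluding step of your argument — that because $R_v^\times$ acts on the one-dimensional $K_v^\times$-invariant line through a finite abelian quotient and the central character is trivial, the line is fixed — does not follow. Triviality on $O_v^\times\subset Z(R_v^\times)$ (and even on all of $O_{K_v}^\times$, which you already know from $K_v^\times$-invariance) does not force the character of $R_v^\times$ to be trivial; the quotient $R_v^\times/O_{K_v}^\times$ is generally a nontrivial pro-$p$ group and one still needs an actual computation. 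This is precisely what \cite[Prop.~3.8]{CST} provides, and which you cannot replace by the stated heuristic. So your proposal identifies the right reduction and the right references, but it does not supply a correct proof of part (ii) at the places $v\mid\mathfrak{N}^-$, $v\notin S_0$.
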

\begin{proof} 
The first part follows from newform theory and the Tunnell--Saito theorem (cf.~\cite[Prop.~3.7]{CST}).
 For the second part, we only need to consider the case that $v|\mathfrak{N}^-$ but $v\notin S_0$, which is the content of \cite[Prop.~3.8]{CST}.
\end{proof}
\begin{defn}[Test vector]\label{D:tv}

Define $f\in\pi$ by $$f=\otimes_{v}f_v \in \otimes \pi_v = \pi$$ for $f_v \in \pi_v$ as in Definition \ref{D:1.W}. 

\end{defn}
\begin{prop}\label{testv}
{For any $\chi\in \CX_{S_0}$ of conductor $\fc$, the vector
\[\pi(\varsigma^{(\fc)})f\in \pi
\]
is a test vector for $\Hom_{K_{\BA}^\times}(\pi,\chi^{-1})$ with respect to the embedding $\iota$.} 
\end{prop}
\begin{proof}
This is essentially a local assertion, and follows from Proposition~3.7 of \cite{CST}.

 {For $v\nmid \mathfrak{N}^-$ and $v\notin S_0$, note that $B_v$ splits and $\chi_v$ is unramified. It follws that the local newform $f_v$ is a test vector for $\chi_v$ and
 so $f_v$ is a test vector.
For $v$ such that $v|\mathfrak{N}^-$ and $v\nmid S_0$, one has $\chi_v=1$ and the assertion just follows from the definition. Lastly, for $v\in S_0$, the assertion follows from the fact that under the condition $\mathfrak{N}_v\mid \cond(\chi_v)$, the local newform is a test vector for $\chi_v$.}
\end{proof}
\begin{remark}\label{uniform}
{Note that the above test vector $f$ only depends on the subset of primes $\{\fq\ |\ \fq\mid \mathfrak{N}^-\ \text{and}\ \fq\notin S_0\}$. So if $S_0$ is coprime to $\mathfrak{N}^-$, then $f$ does not depend on $S_0$.}
\end{remark}
The following preliminary will be used in our later arguments. 

  \begin{lem}\label{lm:lev}  For any place $\fq|D_{K/F}$ of $F$, suppose that $ord_{\fq}(\mathfrak{N}_\fq)> \ord_{\fq}(D_{K_\fq/F_\fq})$. 
Then ${\rm{N}}(R_v^\times)= {\rm{N}}(O_{K_v}^\times)$ for any finite place $v$. 
\end{lem}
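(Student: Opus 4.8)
The statement is purely local: it asserts that ${\rm N}(R_v^\times)=N_{K_v/F_v}(O_{K_v}^\times)$ for each finite place $v$, where ${\rm N}$ is the reduced norm on $B_v$ and we use that ${\rm N}$ restricts to $N_{K_v/F_v}$ on $\iota_{\varsigma_v}(K_v)$. One inclusion is immediate: for $v\notin S_0$ one has $\iota_{\varsigma_v}(O_{K_v}^\times)\subseteq R_v^\times$ by Definition~\ref{D:2.W} (and, when $v\mid\mathfrak{N}^+$, by the diagonal shape \eqref{E:cm2.W}), while for $v\in S_0$ the order $R_v$ is Eichler, so ${\rm N}(R_v^\times)=O_v^\times\supseteq N_{K_v/F_v}(O_{K_v}^\times)$; in all cases $N_{K_v/F_v}(O_{K_v}^\times)\subseteq{\rm N}(R_v^\times)$. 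For the reverse inclusion, recall that reduced norms of units of an $O_v$-order lie in $O_v^\times$; hence ${\rm N}(R_v^\times)\subseteq N_{K_v/F_v}(O_{K_v}^\times)$ is automatic as soon as $N_{K_v/F_v}(O_{K_v}^\times)=O_v^\times$, i.e. whenever $v$ is split or inert in $K$ — in particular for every $v\in S_0$, which is coprime to $D_{K/F}$ in the present set-up. So the only substantive case is a ramified prime $\fr\mid D_{K/F}$; there the hypothesis $\ord_\fr(\mathfrak{N})>\ord_\fr(D_{K_\fr/F_\fr})$ applies and forces $\fr\mid\mathfrak{N}^-$, $\fr\notin S_0$, so $\varsigma_\fr=1$ and $R_\fr$ is an order of reduced discriminant $\fp_\fr^{\ord_\fr\mathfrak{N}}$ which contains $\iota(O_{K_\fr})$ optimally.

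Fix such an $\fr$. By local class field theory $N_{K_\fr/F_\fr}(O_{K_\fr}^\times)=N_{K_\fr/F_\fr}(K_\fr^\times)\cap O_\fr^\times$ is the index-two subgroup $\ker(\eta_{K_\fr/F_\fr})\cap O_\fr^\times$ of $O_\fr^\times$, and it contains $1+\fp_\fr^{d}$, where $\fp_\fr^{d}$ is the conductor of $\eta_{K_\fr/F_\fr}$; by the conductor--discriminant formula this conductor is the relative discriminant, so $d=\ord_\fr(D_{K_\fr/F_\fr})$. It therefore suffices to show that the quadratic character $\eta_{K_\fr/F_\fr}\circ{\rm N}$ is trivial on $R_\fr^\times$. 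The key structural input, which is exactly the content of the test-vector construction of Cai--Shu--Tian at a non-split place dividing $D_{K/F}$ (cf.~Definition~\ref{D:2.W}, Lemma~\ref{lm:t-mult}, \cite{CST}), is that for $\ord_\fr\mathfrak{N}\ge 2$ the order $R_\fr$ is a \emph{residually ramified} local order: it is neither $M_2(O_\fr)$ nor an Eichler order (an Eichler order of level $\fp_\fr^{k}$ with $k\ge 2$ admits no optimal embedding of a maximal ramified quadratic order), and one checks it cannot have residue algebra $\BF_{q_\fr^{2}}$; thus $R_\fr/\fJ(R_\fr)\cong\BF_{q_\fr}$, realised as the image of $O_\fr\hookrightarrow R_\fr$, and consequently $R_\fr^\times=\mu_{q_\fr-1}(O_\fr)\cdot(1+\fJ(R_\fr))$.

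It remains to evaluate $\eta_{K_\fr/F_\fr}\circ{\rm N}$ on the two factors. On the central subgroup $\mu_{q_\fr-1}(O_\fr)$ one has ${\rm N}(\zeta)=\zeta^{2}$, so $\eta_{K_\fr/F_\fr}({\rm N}(\zeta))=1$. For $x\in\fJ(R_\fr)$ write ${\rm N}(1+x)=1+{\rm T}(x)+{\rm N}(x)$; here ${\rm N}(x)\in\fp_\fr$ (radical elements are non-units), and ${\rm T}(x)\in\fp_\fr$ as well: if $\overline{{\rm T}(x)}\neq 0$ in $\BF_{q_\fr}$ then $\bar x-\overline{{\rm T}(x)}$ is a unit of $R_\fr/\fp_\fr R_\fr$, and the reduced characteristic relation $\bar x(\bar x-\overline{{\rm T}(x)})=-\overline{{\rm N}(x)}$ forces the nilpotent $\bar x$ to be a unit or $0$, a contradiction. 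When $q_\fr$ is odd this already finishes the proof, since then $1+\fJ(R_\fr)$ is pro-$p$ with $p$ odd, so the quadratic character $\eta_{K_\fr/F_\fr}\circ{\rm N}$ is trivial on it; equivalently ${\rm N}(1+\fJ(R_\fr))\subseteq 1+\fp_\fr=(1+\fp_\fr)^{2}\subseteq(O_\fr^\times)^{2}=N_{K_\fr/F_\fr}(O_{K_\fr}^\times)$. When $q_\fr$ is even, $\fr$ is wildly ramified in $K$ and $d=\ord_\fr(D_{K_\fr/F_\fr})\ge 2$, and one needs the sharper estimate ${\rm N}(1+\fJ(R_\fr))\subseteq 1+\fp_\fr^{d}$. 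On the part $\iota(\fP_{K_\fr})\subseteq\fJ(R_\fr)$ this is automatic, as there ${\rm N}(1+\iota(w))=N_{K_\fr/F_\fr}(1+w)$ with $1+w\in O_{K_\fr}^\times$; on the complementary part of $\fJ(R_\fr)$ one computes reduced norms directly from the explicit shape of $R_\fr$ in \cite{CST}, and it is precisely the inequality $\ord_\fr(\mathfrak{N})>\ord_\fr(D_{K_\fr/F_\fr})$ that makes these norms land in $1+\fp_\fr^{d}$. Carrying out this last local computation uniformly — bookkeeping the relative different, the discriminant of $R_\fr$, and the chosen optimal embedding — is the main technical point of the lemma; everything else is formal.
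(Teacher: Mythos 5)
Your argument diverges from the paper's in the ramified case: the paper writes down $R_\fr$ explicitly as $O_{K_\fr}+\varpi^{s}O_{K_\fr}J$ with $J$ normalised so that ${\rm N}(J)\in O_\fr^\times$ (and, for $\fr\mid 2$, $J^2\equiv 1\pmod{D_{K_\fr/F_\fr}/\fp_\fr}$), then checks directly that ${\rm N}(a+bJ)\equiv {\rm N}(a)\pmod{D_{K_\fr/F_\fr}}$ and concludes via the conductor--discriminant identification $1+D_{K_\fr/F_\fr}\subseteq N_{K_\fr/F_\fr}(O_{K_\fr}^\times)$. You instead pass through the residue algebra, write $R_\fr^\times=\mu_{q_\fr-1}(O_\fr)\cdot(1+\fJ(R_\fr))$, observe ${\rm N}$ squares the Teichm\"uller part, and kill $\eta_{K_\fr/F_\fr}\circ{\rm N}$ on $1+\fJ(R_\fr)$ by a parity/pro-$p$ argument. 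For odd $q_\fr$ this is a clean and valid alternative to the paper's computation.

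The problem is the wildly ramified case $q_\fr$ even, which your own argument singles out as requiring the sharper estimate ${\rm N}(1+\fJ(R_\fr))\subseteq 1+\fp_\fr^{d}$ on the complement of $\iota(\fP_{K_\fr})$, and which you then explicitly defer: ``one computes reduced norms directly from the explicit shape of $R_\fr$ in \cite{CST}\dots Carrying out this last local computation uniformly\dots is the main technical point of the lemma.'' That is precisely the content of the lemma and cannot be waved away; the hypothesis $\ord_\fr(\mathfrak{N}_\fr)>\ord_\fr(D_{K_\fr/F_\fr})$ enters only there, and for $\fr\mid 2$ nothing you have written pins down the valuation of ${\rm N}(b){\rm N}(J)$ against $d$. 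The paper closes this by pinning down $J$ (with $J^2\equiv 1\pmod{D_{K_\fr/F_\fr}/\fp_\fr}$, ${\rm N}(J)\in O_\fr^\times$) and the exact index at which $O_{K_\fr}J$ enters $R_\fr$, so that ${\rm N}(a+bJ)-{\rm N}(a)=-{\rm N}(b)J^2$ lands in $D_{K_\fr/F_\fr}$; your proof, as written, does not reach that conclusion in the wild case, so there is a genuine gap.
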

\begin{proof}
    Let $v\nmid D_{K/F}$ and $v\notin S_0$ be a finite place. Then we have 
    $O_{K_v}\subset R_v$. 
    Since ${\rm{N}}: O_{K_v}^\times\rightarrow O_v^\times$ is surjective for $v\nmid D_{K/F}$, it follows that 
     ${\rm{N}}(R_v^\times)={\rm{N}}(O_{K_v}^\times)$.
    For the case $v\in S_0$, note that $B_v$ splits and $R_v=M_0(\mathfrak{N}_v)_v$ is an Eichler order, and so ${\rm{N}}(R_v^\times)\ra O_v^\times$ is surjective.

    Now consider the remaining case, namely $v$ is ramified in $K$.
    Note that $R_v$ is of the form $O_{K_v}+\varpi^{\ord_v(\mathfrak{N}_v/D_{B_v})}O_{B_v}$, where $\varpi\in K_v$ is a uniformiser and $O_{B_v}$ the maximal order of $B_v$. 
    We have \[O_{B_v}=O_{K_v}+\varpi^{\ord_{v}(D_{B_v}/D_{K_v/F_v})}O_{K_v}(1+J),\] where $J\in B_v$ is such that 
    \begin{itemize}
        \item[\tiny{$\bullet$}] $JkJ=\ov{k}$ for all $k\in K_v$ and ${\rm{N}}(J)\in O_v^\times$ and,
        \item[\tiny{$\bullet$}]for $v|2$ and $B_v$ ramified, we have $J^2\equiv 1\pmod{D_{K_v/F_v}/\fp_v}$ and it is {not} a norm of $K_{v}^\times$.
        \end{itemize}
    In this {case} note that $\ord_v(\mathfrak{N}_v)> \ord_{v}(D_{K_v/F_v})$ and so $R_v=O_{K_v}+\varpi^{\ord_v(\mathfrak{N}_v/D_{K_v/F_v})}O_{K_v}J$. 
    
     For $a\in O_{K_v}$ and $b\in \varpi^{\ord_v(D_{K_v/F_v})} O_{K_v}$, we have
     \[\begin{aligned}
      {\rm{N}}(a+bJ)
    \equiv& {\rm{N}}(a)\pmod{D_{K_v/F_v}}.\\
    \end{aligned}\]
    Hence, ${\rm{N}}(R_v^\times)={\rm{N}}(O_{K_v}^\times)$.
    \end{proof}
We have the following simple yet key observation. 
\begin{lem}\label{locavetest}
  {
  Let $M$ be a non-archimedean local field of characteristic $0$. Let $D$ be a quaternion algebra over $M$ and $E\subset D$ be a quadratic subalgebra over $M$. Let $\pi$ be an irreducible admissible unitary representation of $D^\times$ and let $\chi$ be a character of $E^\times$ such that $\dim_{\BC} \Hom_{E^\times}(\pi,\chi^{-1})=1$. Let $f\in \pi$ be a test vector\footnote{in the sense of \cite{GP}} for $\Hom_{E^\times}(\pi,\chi^{-1})$ of level $U$ for some open compact subgroup $U$ of $D^\times$. Let $\ov{U}$ denote the image of $U$ in $D^\times/M^\times$. Then for any open compact subgroup $V$ of $E^\times/M^\times$,  
 \[h:=\frac{1}{\#(V/V\cap \ov{U})}\sum_{t\in V/V\cap \ov{U}}\chi(t) \pi(t)f\in \pi\] is also a test vector for $\Hom_{E^\times}(\pi,\chi^{-1})$ and so non-zero.  Furthermore, for any basis $P\in \Hom_{E^\times}(\pi,\chi^{-1})$, we have 
  \[P(h)= P(f).\]}
\end{lem}

\begin{proof}
{
  Let $T_{\chi^{-1}}$ be the maximal quotient of $\pi$ on which $E^\times$ acts via $\chi^{-1}$, which is non-zero by assumption. Then each element in $\Hom_{E^\times}(\pi,\chi^{-1})$ factors through the natural projection map $\pr:\pi\ra  T_{\chi^{-1}}$. Note that the map $\pr$ is $E^\times$-equivalent and $V/V\cap \ov{U}$ is finite, thus $\pr(h)=\pr(f)$ and the result follows.}

\end{proof}
\begin{remark} Our mod $\ell$ non-vanishing of Hecke $L$-values relies on mod $\ell$ analogue of the above result as in~Lemma~\ref{tnn}.\end{remark}

\subsection{Explicit Waldspurger formulas} \label{S:exW}
The aim of this subsection is to explicitly link Rankin--Selberg $L$-values with toric periods of the test vector introduced in \S\ref{test}.
\subsubsection{Setting}
\label{ss:set-W} 
We begin with generalities regarding Waldspurger formula and then specialise to the prior setting. 

Let $B$ be a quaternion algebra over a number field $F$ and $G=B^\times$. Let $\pi$ be an irreducible cuspidal automorphic representation of $G(\BA)$ with trivial central character and $\sigma$ its base change to $\GL_2(\BA)$. 
Let $K$ be a quadratic field extension of $F$ with an embedding $\iota: K\hookrightarrow B$ and $\chi$ a finite order Hecke character over $K$ such that \[\chi|_{\BA^\times}=1.\]  
Suppose that \[\varepsilon(\pi_K \otimes \chi)=+1.\] 
Then the Waldspurger formula \cite{Wa,YZZ} connects the toric periods
\[P_{\chi}(f)=\int_{K^\times \BA^\times\bs \BA_{K}^\times}\chi(t)f(t)dt,\quad f\in \pi \]
with the Rankin--Selberg $L$-value $L(1/2,\pi_K\otimes \chi)$. {Here $dt$ denotes the Tamagawa measure with volume $2L(1,\eta_{K/F})$. }

Now let the setting, and in particular $B$ and $\pi$, be as in \S\ref{ss:set}. Recall that $\pi$ is of weight $k\in 2\BZ_{\geq 1}[\Sigma]$. Let $\chi\in \CX_{S_0}$ be a finite order anticyclotomic Hecke character over $K$. 
Let $R\subset B$ be an order as in Definition \ref{D:2.W}, and 
pick a test vector $f=\Psi(x_0\otimes\varphi)$ as in Definition \ref{D:tv}, where\[x_0=\otimes_{\sigma\in \Sigma}x_{0_\sigma}\in \ \bigotimes_{\sigma\in \Sigma} V_{k_\sigma-2}(\BC), \qquad x_{0_\sigma}=X^{\frac{k_\sigma-2}{2}}Y^{\frac{k_\sigma-2}{2}}.\]

 Let {$X_{\wh{R}^\times}$} be the Shimura set $ B^\x\wh{F}^\times\bksl \widehat{B}^\times/\widehat{R}^\x$, whose elements may be chosen as a set of representatives in $\widehat{B}^\times=G(\BA_f)$. Define the inner product of $f=\Psi(x_0\otimes\varphi)$ with itself by 
\begin{equation}\label{E:normvf.W}
\pair{ f, f }_{\wh{R}^\times}:=\frac{\pair{x_0,x_{0}}_{k-2\Sigma}}{\prod_{\sigma\in \Sigma}(k_\sigma-1)}\sum_{[g_i]\in X_{\wh{R}^\times}}\frac{1}{w_i}\cdot\pair{\varphi(g_i),\varphi(g_i)}_{k-2\Sigma},\quad w_i:=[B^\x\cap g_i\widehat{R}^\x g_i^{-1}:O^\times].
\end{equation}
For $\phi$ the $\GL_{2}(F)$-newform as normalised in~\cite[Thm.~1.5]{CST} of level $$U_0(\mathfrak{N}):=\left\{\begin{pmatrix}
  a&b\\ c&d
\end{pmatrix}\in M_2(\wh{O})\ |\ \mathfrak{N}|c\right\}$$ associated to $\pi$, the Petersson norm $(\phi,\phi)_{U_0(\mathfrak{N})}$ is defined by {the inner product on the Hilbert modular variety {$\GL_2(F)\bs \BH^{\Sigma}\times \GL_2(\wh{F})/(\wh{F}^\times U_0(\mathfrak{N}))$} arising from the invariant measure $dxdy/y^2$ on each component $\BH$, which denotes the upper half plane.}

{Let $\iota: K\hookrightarrow B$ be the embedding as in \S\ref{deff}.}
Recall that {$\wh{R}\cap\iota_{\varsigma^{(\fr)}}\wh{O}_K=\iota_{\varsigma^{(\fr)}}\wh{O}_{K,\fr}$. For a Hecke character $\chi\in \CX_{S_0}$  of conductor $\fr$, 
the toric period $P_{\chi}(\varsigma^{(\fr)}f)$} with respect to embedding $\iota$ is essentially given by\footnote{{Note that it equals $P_{\chi}^0(\pi(\varsigma^{(\fr)})f):=\sum_{[a]\in {\rm P}_\fr}\chi(a)\pair{x_0, \pi(\varsigma^{(\fr)})\varphi(\iota(a))}_{k-2\Sigma}$ in \cite[Thm.~1.8]{CST}.}}
 \begin{equation}\label{tperd}P(\varsigma^{(\fr)},\varphi,\chi):=\sum_{[a]\in {\rm P}_\fr}\chi(a)\pair{x_0, \varphi(x_\fr(a))}_{k-2\Sigma}, \quad f=\Psi(x_0\otimes\varphi)\end{equation} where  ${\rm P}_\fr=K^\times\wh{F}^\times\bs\wh{K}^\times/\wh{O}_{K,\fr}^\times$ (cf.~\cite[Lem.~2.3]{CST}). 

{If $\pi=\pi_\lambda$ has CM by $K$ in the sense of Example \ref{ex1}, we will also use the notation $\phi_\lambda$, $f_\lambda$ for $\phi$, $f$ respectively to emphasise the dependence on $\lambda$.}
\subsubsection{Explicit Waldspurger formula }\label{S:central.W}
  The main result of this subsection is the following {Waldspurger formula for self-dual pairs $(\pi,\chi)$ as $\chi$ varies in anticyclotomic family with restricted places of ramification}.
\begin{thm}\label{T:central.W}
Let $(\pi,\chi)$ be a self-dual pair as in \S\ref{setting:Wald} with {$\chi\in \CX_{S_0}$} of conductor $\fr=\prod_{v\in S_0}\fp_v^{r_v}$. Let $f=\Psi(x_0\otimes\varphi)$ be an associated test vector as in Definition \ref{D:tv}.  
    Then 
    \[\begin{aligned}{\rm N}(\fr_1)^{-1}\cdot P(\varsigma^{(\fr)},\varphi,\chi)^2=&\frac{\Gamma_{\Sigma}(k)\pair{f,f}_{\wh{R}^\times}}{\pi^k(\phi,\phi)_{U_0(\mathfrak{N})}}\sqrt{|D_K|}\cdot L^{(\Sigma_0)}(\frac{1}{2},\pi_K\otimes \chi) \cdot \\
        & [O_{K,\fr_1}^\times:O^\times]^2\frac{\kappa_{\fr_1}^2}{2^c} \cdot \frac{\varepsilon(\pi)}{\prod_{\fp\in S_0}\varepsilon(\pi_\fp)}\chi_{S_1^+\bs S_0}(\fn^+).\end{aligned}\]  
        Here the notation is as follows: 
        \begin{itemize}
        \item[\tiny{$\bullet$}] {$L^{(\Sigma_0)}(\frac{1}{2},\pi_K\otimes \chi)=L(\frac{1}{2},\pi_K\otimes \chi)\prod_{v\in S_0}L(\frac{1}{2},\pi_{K_v}\otimes \chi_v)^{-1}$ is the incomplete L-value,}
        \item[\tiny{$\bullet$}] $\fr_1=\prod_{{\substack{v\in S_0\\ \text{non-split}}}}\fp_v^{r_v}$, 
        \item[\tiny{$\bullet$}] $\Sigma_0$ is the set of places $v$ such that $v|(\cond(\chi)D_{K/F}, \mathfrak{N})$ 
        and if $v\notin S_0$, then $\ord_v(\mathfrak{N})>1$,
        \item[\tiny{$\bullet$}] $\kappa_{\fr_1}=\#\ker \left(\Pic(O)\ra {\rm P}_{\fr_1}\right)$\footnote{It equals $1$ or $2$.}, 
        \item[\tiny{$\bullet$}] $$c={-\#\Sigma_D+2}+{2\sum_{\sigma\in \Sigma}k_\sigma-3[F:\BQ]}$$ for $\Sigma_D$  the set of places dividing $(D_{K/F},\mathfrak{N})$ which are coprime to $S_0$,
        \item[\tiny{$\bullet$}] $\mathfrak{N}^+=\fn^+\ov{\fn^+}$ with $w|\fn^+$ for $v=w\ov{w}$, {and $\chi_{T}=\prod_{v\in T}\chi_v$.}
 \end{itemize}
\end{thm}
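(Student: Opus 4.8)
The plan is to follow Cai--Shu--Tian \cite{CST}, adapting their explicit Waldspurger formula to the anticyclotomic twist family at hand and carefully tracking the local constants, especially at the ramified places. First I would record the Waldspurger formula in the refined shape due to Yuan--Zhang--Zhang \cite{YZZ}: for the self-dual pair $(\pi,\chi)$ with $\varepsilon(\pi_K\otimes\chi)=+1$ and a decomposable vector $f=\otimes_v f_v\in\pi$, there is a clean identity of the form
\[
\frac{P_\chi(f)\,\overline{P_\chi(f)}}{\langle f,f\rangle}
= C\cdot\frac{L(1/2,\pi_K\otimes\chi)}{L(1,\pi,\ad)}\cdot\prod_v \alpha_v^{\sharp}(f_v),
\]
where $\langle\ ,\ \rangle$ is an adelic Petersson pairing on $\pi$, the $\alpha_v^{\sharp}$ are normalised local toric integrals equal to $1$ for almost all $v$, and $C$ absorbs $\zeta_F(2)$, $L(1,\eta_{K/F})$, an elementary power of $2$, and the discriminant factor $\sqrt{|D_K|}$ coming from measure normalisations. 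Since $\chi$ has finite order and $\pi$ has trivial central character, $\chi_\infty$ is trivial and the left-hand numerator is $|P_\chi(f)|^2$; once $f$ is specialised to the uniform test vector of Definition~\ref{D:tv}, this matches $P(\varsigma^{(\fc)},\varphi,\chi)^2$ up to the reality properties of $x_0$ and $\varphi$.

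Next I would translate the adelic quantities into classical ones. By \cite[Lem.~2.3]{CST} the adelic toric period of $f=\Psi(x_0\otimes\varphi)$ against $\iota_{\varsigma^{(\fc)}}$ reduces, up to an archimedean factor that is absorbed into the archimedean local integral, to the finite sum $P(\varsigma^{(\fc)},\varphi,\chi)=\sum_{[a]\in{\rm G}_\fc}\chi(a)\pair{x_0,\varphi(x_\fc(a))}_{k-2\Sigma}$; and by \eqref{E:normvf.W} the adelic norm $\langle f,f\rangle$ is a volume multiple of the finite sum $\pair{f,f}_{\wh{R}^\times}$ over the Shimura set $X_{\wh{R}^\times}$. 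On the analytic side, Jacquet--Langlands identifies $L(1,\pi,\ad)$ with $L(1,\sigma,\ad)$, and I would then invoke the Shimura--Hida relation expressing $L(1,\sigma,\ad)$ as an elementary multiple of $\pi^k(\phi,\phi)_{U_0(\mathfrak{N})}/\Gamma_\Sigma(k)$ together with a factor of the level $\mathfrak{N}$. Substituting this into the Waldspurger formula is exactly what produces the ratio $\Gamma_\Sigma(k)\pair{f,f}_{\wh{R}^\times}/\bigl(\pi^k(\phi,\phi)_{U_0(\mathfrak{N})}\bigr)$ in the final identity; this ratio is retained untouched, as its $\ell$-adic size is the separate content of Theorem~\ref{thmE}.

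The core of the argument is the place-by-place evaluation of $\alpha_v^{\sharp}(f_v)$ for the test vector of Definition~\ref{D:1.W}. (i) At a finite $v\notin S_1\cup S_0$ unramified in $K$, $f_v$ is spherical and $\alpha_v^{\sharp}=1$ by the unramified computation, and the local $L$-factor survives. (ii) At $v\in S_0$, the integral unfolds over the ring class group of conductor $\fp_v^{c_v}$, producing the factors $[O_{K,\fc_1}^\times:O^\times]$, $\kappa_{\fc_1}$, ${\rm N}(\fc_1)$, the local epsilon factor $\varepsilon(\pi_\fp)^{-1}$, and (since $\mathfrak{N}_{S_0}\mid\fc_{S_0}$) the removal of the corresponding Euler factors; this is the Gross-point computation. (iii) At $v\mid\mathfrak{N}^+$ split in $K$ with $v\notin S_0$, the diagonal embedding \eqref{E:cm2.W} and newform theory give the twist factor $\chi_v(\fn_v^+)$. (iv) At the archimedean places, the weight-$k$ discrete-series integral against $x_0$ gives $\Gamma_\Sigma(k)$, the power $\pi^k$, and the archimedean part of $2^{-c}$. (v) At the nonsplit places $v\mid\mathfrak{N}^-$ with $v\notin S_0$, one has $\chi_v=1$ and $f_v$ is $\iota_{\varsigma_v}K_v^\times$-invariant, so $\alpha_v^{\sharp}(f_v)$ is the value of the Gross--Prasad matrix coefficient at the Tunnell--Saito test vector, which at $v\mid D_{K/F}$ must be computed on the division algebra $B_v$ --- not on a newform --- using $\ord_v(\mathfrak{N}_v)>\ord_v(D_{K_v/F_v})$ as in Lemma~\ref{lm:lev}. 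The set $\Sigma_0$ is then precisely the set of bad places where $\alpha_v^{\sharp}(f_v)$ differs from the local $L$-factor, so those Euler factors are excised, yielding $L^{(\Sigma_0)}(1/2,\pi_K\otimes\chi)$; the global epsilon factor $\varepsilon(\pi)$ and the remaining powers of $2$ emerge upon reorganising the local epsilon contributions via $\varepsilon(\pi_K\otimes\chi)=+1$. Collecting all local contributions with the archimedean factor and the period substitution yields the stated identity.

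I expect step three, and within it the nonsplit ramified local toric integral, to be the main obstacle: at $v\mid D_{K/F}$ the algebra $B_v$ is division and the test vector $f_v$ spans the $K_v^\times$-fixed line rather than a newform, so one must evaluate the local integral there (and on the matrix algebra at the remaining nonsplit places dividing $\mathfrak{N}^-$) and confirm that it equals the local $L$-factor up to the constants listed, so that the excision defining $\Sigma_0$ is exactly right --- this is where the generality of \cite{CST} is indispensable. The secondary difficulty is the purely combinatorial bookkeeping of the exponent $c$, the index $\kappa_{\fc_1}$, the class-number factors, and the epsilon ratios across all places.
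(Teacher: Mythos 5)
The identity you must prove is for $P(\varsigma^{(\fc)},\varphi,\chi)^2$, the genuine square of a complex number, whereas the Waldspurger/Yuan--Zhang--Zhang formula you take as your starting point produces the Hermitian quantity $P_\chi(f)\overline{P_\chi(f)} = P_\chi(f)\,P_{\chi^{-1}}(\overline f)$. Your claim that these "match up to the reality properties of $x_0$ and $\varphi$" is not correct and conceals the actual content of the proof. Even with $x_0$ and $\varphi$ defined over a number field, $P(\varsigma^{(\fc)},\varphi,\chi^{-1})$ is \emph{not} equal to $P(\varsigma^{(\fc)},\varphi,\chi)$; they differ by a product of Atkin--Lehner / local root-number signs, and this is precisely the origin of the factor $\varepsilon(\pi)/\prod_{\fp\in S_0}\varepsilon(\pi_\fp)$ appearing in the theorem. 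The paper bridges this in three steps that your sketch does not supply: (i) multiplicity one (Lemma~\ref{lm:t-mult}) gives $\overline f = af$ for a scalar $a$, so the $a$'s cancel in the ratio $P_\chi^0(\cdot\, f)P_{\chi^{-1}}^0(\cdot\, \overline f)/\pair{f,\overline f}$; (ii) the identity $c(t)=J^{-1}tJ$ together with the automorphy of $f$ converts $P_{\chi^{-1}}^0(g f)$ into $P_\chi^0(Jg f)$; and (iii) Lemma~\ref{sc} evaluates the effect of the $J$-shift, together with the Atkin--Lehner element $\tau$ introduced by the $S$-version of the formula, as $\varepsilon(\pi)/\prod_{\fp\in S_0}\varepsilon(\pi_\fp)$. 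Your phrase "reorganising the local epsilon contributions via $\varepsilon(\pi_K\otimes\chi)=+1$" gestures in this direction but does not identify the argument; as written the proposal would only establish a formula for $|P(\varsigma^{(\fc)},\varphi,\chi)|^2$.

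Separately, the route you sketch is much heavier than the paper's: rather than re-deriving the place-by-place evaluation of the normalised local toric integrals, the paper invokes \cite[Thm.~1.8 and Thm.~1.9]{CST} directly (the latter, the $S$-version, is what yields the twist $\chi_{S_1^+\setminus S_0}(\fn^+)$), after matching the test vector and the normalisations of the pairings and period quotients to the conventions of \emph{loc.~cit.} You would essentially be reconstructing those two CST theorems; since you already declare their generality indispensable, the efficient path is to cite them as a black box and confine the new work to the $\lvert P\rvert^2\to P^2$ conversion described above.
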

The above result is a consequence of a general explicit Waldspurger formula \cite[Thm.~1.8]{CST}. 
In {\it loc. cit.} the absolute value square of a toric period appears, and the following analysis
relates it to the square of the toric period.

Put \[\tau=\prod_{v|\mathfrak{N}^+, v\notin S_0}\begin{pmatrix}
  \varpi_v^{\ord_v(\mathfrak{N})} & \\ &1
\end{pmatrix}\in \prod_{v|\mathfrak{N}^+, v\notin S_0}\GL_2(F_v).\]
\begin{lem}\label{sc}
  For $\chi\in \CX_{S_0}$ of conductor $\fr$ such that $\mathfrak{N}_{S_0}|\fr_{S_0}$, we have
  \[P(J\varsigma^{(\fr)}\tau,\varphi,\chi)=P(\varsigma^{(\fr)},\varphi,\chi) \frac{\varepsilon(\pi)}{\prod_{\fp\in S_0}\varepsilon(\pi_\fp)}.\] 
  \end{lem}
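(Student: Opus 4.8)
The plan is to verify the identity place by place, since both toric periods $P(\varsigma^{(\fc)},\varphi,\chi)$ and $P(J\varsigma^{(\fc)}\tau,\varphi,\chi)$ are given by the same local integral formula and the only difference comes from the local components of the translating element $J\tau$. First I would recall from \cite[Thm.~1.8]{CST} and the discussion around \eqref{E:normvf.W} that the toric period $P(\varsigma',\varphi,\chi)$ factors, up to a common global constant, as a product of local toric integrals, one for each place $v$ of $F$; translating $\varsigma^{(\fc)}$ to $J\varsigma^{(\fc)}\tau$ only modifies the local integrals at the places dividing $\mathfrak{N}^-\infty$ (where $J$ acts via $\iota_{\varsigma_v}$) and at the places dividing $\mathfrak{N}^+$ away from $S_0$ (where $\tau$ acts). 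So the claim reduces to a product of purely local statements.

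Next I would treat the non-split places $v|\mathfrak{N}^-\infty$, $v\notin S_0$. There $f_v$ is $\iota_{\varsigma_v}K_v^\times$-invariant by Definition~\ref{D:1.W}(a), so left-translating by the image of $J$ — which normalises $\iota_{\varsigma_v}K_v^\times$ and acts on it through the nontrivial element $c$ of $\Gal(K_v/F_v)$ — sends $f_v$ to a $K_v^\times$-invariant vector again; since the local multiplicity-one space is one-dimensional (Lemma~\ref{lm:t-mult}(i)), $\pi_v(J)f_v = \varepsilon(\pi_v) f_v$ for the appropriate local sign, using the standard fact that on a $K_v^\times$-invariant line $J$ acts by the local root number $\varepsilon(\pi_{K,v})=\varepsilon(B_v)$. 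At the split places $v|\mathfrak{N}^+$, $v\notin S_0$, I would compute directly: $f_v$ is the $U_0(\mathfrak{N}_v)_v$-newform, $\varsigma_v$ is the diagonal matrix of \eqref{E:cmptv.W}, and the Atkin--Lehner element $\begin{pmatrix}\varpi_v^{\ord_v\mathfrak{N}}&\\&1\end{pmatrix}$ acts on the newline by the local Atkin--Lehner eigenvalue, which is $\varepsilon(\pi_v)$ up to an explicit power of $\chi_v$; these $\chi_v$-factors combine into the quantity $\chi_{S_1^+\bs S_0}(\fn^+)$ already recorded in Theorem~\ref{T:central.W}, hence can be absorbed and do not appear in the statement of the lemma. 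At all remaining finite places, $J\tau$ has trivial component or lies in $\iota_{\varsigma_v}K_v^\times \wh{R}_v^\times$ (cf.~the choice of $i_v$ in \eqref{E:embedding.W} forcing $i_v(J)\in i_v(K_v^\times)\GL_2(O_v)$), so the local integral is unchanged; at $v\in S_0$ the factor $\varepsilon(\pi_v)$ is removed because $\varsigma_v^{(c_v)}$ already incorporates the relevant Atkin--Lehner twist, which is exactly why the ratio $\varepsilon(\pi)/\prod_{\fp\in S_0}\varepsilon(\pi_\fp)$ appears rather than $\varepsilon(\pi)$ itself.

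Assembling the local contributions, the global effect of replacing $\varsigma^{(\fc)}$ by $J\varsigma^{(\fc)}\tau$ is multiplication of the period by $\prod_v \varepsilon(\pi_v)$ restricted to the relevant places, which by the product formula for the global root number (and the fact that $\varepsilon(\pi)=+1$ at the archimedean and unramified places, so that only $S_1\cup S_0$ contribute) equals $\varepsilon(\pi)/\prod_{\fp\in S_0}\varepsilon(\pi_\fp)$. The main obstacle I anticipate is the careful bookkeeping of the local Atkin--Lehner eigenvalues and the $\chi_v$-factors at the split places $v|\mathfrak{N}^+$: one must pin down the precise normalisation of $\tau$ relative to $\varsigma_v$ so that the extra $\chi_v(\fn^+)$-type factors match what is already isolated in the main Waldspurger formula, and check that no stray power of $\chi_v$ or of $|\varpi_v|$ survives. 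This is the kind of computation done in \cite[\S3]{CST}, so I would cite the relevant local lemmas there (in particular their analysis of the newvector under Atkin--Lehner and under $\iota_{\varsigma_v}K_v^\times$) rather than redo it from scratch.
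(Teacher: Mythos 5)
Your overall plan — verify the identity place by place, compute the effect of $J$ and $\tau$ on the local component of the test vector, and assemble via a product formula — is the same strategy the paper uses, but there is a concrete error in the key local input at non-split places and two further misstatements that would keep the bookkeeping from closing.

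At $v\mid\mathfrak{N}^-\infty$, $v\notin S_0$, you write that $\pi_v(J)f_v=\varepsilon(\pi_v)f_v$ and justify this by saying that on the $K_v^\times$-invariant line $J$ acts by $\varepsilon(\pi_{K,v})=\varepsilon(B_v)$. These two assertions contradict each other and neither is the right eigenvalue. The correct statement, which the paper takes from \cite[Thm.~4]{DP}, is that $J$ acts on the one-dimensional space $\Hom_{K_v^\times}(\pi_v,\BC)$ (equivalently on the test line) by $\varepsilon(\pi_v)\varepsilon(B_v)$. This extra factor $\varepsilon(B_v)$ is not cosmetic: it is exactly what makes the global assembly work, since $\prod_v\varepsilon(B_v)=1$ while $\prod_v\varepsilon(\pi_v)=\varepsilon(\pi)$, so $\prod_{v\notin S_0}\varepsilon(\pi_v)\varepsilon(B_v)=\varepsilon(\pi)/\prod_{\fp\in S_0}\varepsilon(\pi_\fp)$ (the archimedean sign is itself non-trivial and is precisely absorbed by the $\varepsilon(B_v)=-1$ factors there). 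With your eigenvalue the product over places does not telescope to the stated answer.

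Two further points. First, at split $v\in S_1^+\setminus S_0$ the diagonal matrix $\tau_v=\begin{pmatrix}\varpi_v^{\ord_v\mathfrak{N}}&\\&1\end{pmatrix}$ is not the Atkin--Lehner operator; the relevant identity is $\varsigma_v^{-1}J\varsigma_v\tau_v=w_{\pi_v}=\begin{pmatrix}&1\\\varpi_v^{\ord_v\mathfrak{N}}&\end{pmatrix}$, and this acts on the newform (with trivial central character) by exactly $\varepsilon(\pi_v)$ — there is no stray power of $\chi_v$ to absorb. Second, the factor $\chi_{S_1^+\setminus S_0}(\fn^+)$ in Theorem~\ref{T:central.W} does not come from this lemma at all; it arises from invoking the $S$-version of the Waldspurger formula \cite[Thm.~1.9]{CST} in the proof of the theorem, so you cannot account for discrepancies here by appeal to it.
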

  \begin{proof}
  Let $S_1$ be a set of places as in \S\ref{spepts} given by the prime factors of $\mathfrak{N}$. 
  
  If $v\notin S_1\cup S_0$ finite, note that $J\in K_v^\times\GL_2(O_v)$ and $\varsigma_{v}=1$, and so the Hecke action of $J$ at $v$ does not change the toric period. 
  For $v\in S_1^+\bs S_0$, we have $$\pi_v(\varsigma_v^{-1}J\varsigma_v \tau) \varphi_v=\pi_v(w_{\pi_v})\varphi_v=\varepsilon(\pi_v)\varphi_{v},$$ where $w_{\pi_v}:=\begin{pmatrix}
    &1 \\ \varpi_v^{\ord_v(\mathfrak{N})}
  \end{pmatrix}\in \GL_2(F_v)$ is the Atkin--Lehner operator (cf.~\cite[Thm.~3.2.2]{Schmidt:newform}).
  If $v| S_1^-\bs S_0$, then we have $\chi_v=1$, $\varsigma_v=1$ and $\Hom_{K_v^\times}(\pi_v,\BC)\neq 0$. Let $P_v$ be a basis of the latter space. Then $J$ acts on $P_v$ by $\varepsilon(\pi_v)\varepsilon(B_v)$ (cf.~\cite[Thm.~4]{DP}). Thus $$P_v(\pi_v(J\varsigma_v )\varphi_v)=\varepsilon(\pi_v)\varepsilon(B_v) P_v(\pi_v(\varsigma_v )\varphi_v).$$ 
  
  Now consider the case $v=\fp\in S_0$. 
Then $\varsigma_\fp^{(\ord_\fp\fr),-1}J\varsigma_\fp^{(\ord_\fp \fr)}$ stabilises $\varphi$
  and so
  \[P(J\varsigma^{(\fr)},\varphi,\chi)=\prod_{v\in S,v\notin S_0}\varepsilon(\pi_v)\varepsilon(B_v)P(\varsigma^{(\fr)},\varphi,\chi).\]
 The result then follows from the fact that $\prod_{v\in S\bs S_0}\varepsilon(\pi_v)\varepsilon(B_v)=\frac{\varepsilon(\pi)}{\prod_{\fp\in S_0}\varepsilon(\pi_\fp)}$.

  \end{proof}

\begin{proof}[Proof of Theorem \ref{T:central.W}]
  The following is based on \cite[Thm.~1.8]{CST}.
  
  Taking the test vector in {\it ibid.} to be $\pi(\varsigma^{(\fr)})f$ with $f=\Psi(x_0\otimes\varphi)$ and letting {for $P_{\chi}^0(\pi(\varsigma^{(\fr)})f):=\sum_{[a]\in {\rm P}_\fr}\chi(a)\pair{x_0, \pi(\varsigma^{(\fr)})\varphi(\iota(a))}_{k-2\Sigma}$, we have}
\[\frac{P_\chi^0(\pi(\varsigma^{(\fr)})f) P^0_{\chi^{-1}}(\ov{\pi}(\varsigma^{(\fr)})\ov{f})}{[O_{K,\fr_1}^\times:O^\times]^2}{\rm N}(\fr_1)^{-1}=\frac{\#\kappa_{\fr_1}^2}{2^c}\cdot \frac{\Gamma_{\Sigma}(k)\pair{f,\ov{f}}_{\wh{R}^\times}}{\pi^k(\phi,\phi)_{U_0(\mathfrak{N})}}\sqrt{|D_K|}\cdot L^{(\Sigma_0)}(1/2,\pi_K\otimes\chi).\]
Indeed, this follows from \cite[Thm.~1.8]{CST} since 
\begin{itemize}
\item[\tiny{$\bullet$}] \[\pair{f_1,f_2}_{H,\wh{R}^\times}=\pair{f_1,\ov{f}_2}_{\wh{R}^\times} \] for $\pair{f_1,f_2}_{H,\wh{R}^\times}$ being the $\wh{R}^\times$-level Hermitian pairing,
\item[\tiny{$\bullet$}] $C_\infty=4^{k-\Sigma}\pi^{k+\Sigma}\Gamma_{\Sigma}(k)^{-1}$, 
\item[\tiny{$\bullet$}] $(2\pi)^{-\Sigma}(\phi,\phi)_{U_0(\mathfrak{N})}=\pair{\phi_0,\phi_0}_{U_0(\mathfrak{N})}$
   and 
   \item[\tiny{$\bullet$}]
\[\nu_{\fr_1}=\# \kappa_{\fr_1}[O_{K,\fr_1}^\times:O^\times]\] as in \cite[Thm.~1.8]{CST}.
\end{itemize}

We also have \[P_{\chi}^0(\pi(\varsigma^{(\fr)})f)=P(\varsigma^{(\fr)},\varphi,\chi).\] 
In the following, we will use the notation $P_{\chi}^0(\pi(\varsigma^{(\fr)})f)$ instead of $P(\varsigma^{(\fr)},\varphi,\chi)$ to emphasis the dependence on $x_0$.
Then in view of the $S$-version of Waldspurger formula \cite[Thm.~1.9]{CST} we have  
  \[\frac{P_\chi^0(\pi(\varsigma^{(\fr)})f) P^0_{\chi^{-1}}(\ov{\pi}(\varsigma^{(\fr)}\tau)\ov{f})}{[O_{K,\fr_1}^\times:O^\times]^2}{\rm N}(\fr_1)^{-1}=\frac{\kappa_{\fr_1}^2}{2^c}\cdot \frac{\Gamma_{\Sigma}(k)\pair{f,\ov{f}}_{\wh{R}^\times}}{\pi^k(\phi,\phi)_{U_0(\mathfrak{N})}}\sqrt{|D_K|}\cdot L^{(\Sigma_0)}(1/2,\pi_K\otimes\chi)\chi_{S_1^+\bs S_0}(\fn^+).\]

We now analyse the left hand side. 
Since $\pi=\ov{\pi}$, by the multiplicity one of $f$ (cf.~Lemma~\ref{lm:t-mult}), we have $\ov{f}=a f$ for some non-zero constant $a\in \BC$. 
Hence 
\[\begin{aligned}
  \frac{P_{\chi}^0(\pi(\varsigma^{(\fr)})f) P_{\chi^{-1}}^0(\ov{\pi}(\varsigma^{(\fr)}\tau)\ov{f})}{\pair{f,\ov{f}}}
  =&\frac{P_\chi^0(\pi(\varsigma^{(\fr)})\Psi(x_0\otimes\varphi)) P_{\chi^{-1}}^0(\pi(\varsigma^{(\fr)}\tau){\Psi(x_{0}\otimes\varphi)})}{\pair{\Psi(x_0\otimes\varphi),\Psi(x_{0}\otimes\varphi)}_{R}}\\
  =&\frac{P_\chi^0(\pi(\varsigma^{(\fr)})\Psi(x_0\otimes\varphi))P_{\chi}^0(\pi(J\varsigma^{(\fr)}\tau)\Psi(v_{0}\otimes\varphi))}{\pair{\Psi(x_0\otimes\varphi),\Psi(x_{0}\otimes\varphi)}_{R}}\\
  =&\frac{\varepsilon(\pi)}{\prod_{v\in S_0}\varepsilon(\pi_v)}\frac{P_\chi^0(\pi(\varsigma^{(\fr)})\Psi(x_0\otimes\varphi))P_{\chi}^0(\pi(\varsigma^{(\fr)})\Psi(x_0\otimes\varphi))}{\pair{\Psi(x_0\otimes\varphi),\Psi(x_{0}\otimes\varphi)}_{R}}\quad (\text{Lemma \ref{sc}}).
\end{aligned}\]
{Here the second equality follows from the {automorphy of $f$} and the fact that $c(t)=J^{-1}t J$ for any $t\in \BA_{K}^\times$.}

The proof concludes.

\end{proof}
\subsection{$\ell$-integrality of Rankin--Selberg $L$-values}
\label{S:ellopt}
{In this subsection we present an $\ell$-integrality of Rankin--Selberg $L$-values in anticyclotomic twist family, based on the Waldspurger formula in \S\ref{S:central.W} and $\ell$-integrality of quaternionic modular forms in \S\ref{S:defform}.}

Let the setting be as in \S\ref{ss:set}. 
We first introduce normalised test vectors.\label{ss:ntv}

Let $\ell$ be a prime that is coprime to the primes in $S_0$. Fix embeddings $\iota_\ell:\ov{\BQ}\hookrightarrow \ov{\BQ}_{\ell}$ and $\iota:\ov{\BQ}\hookrightarrow \BC$.
Let $L$ be a sufficiently large number field {as in \S\ref{avatar}} {so that it contains the Hecke field of $\pi$}. Let
$O_{\pi,\ell}\subset\C_\ell$ be the completion 
of the ring of integers of $L$ with respect to the prime $\fl|\ell$ of $L$ determined by the embedding $\iota_\ell$.

Recall that $x_0=\otimes_{\sigma\in \Sigma}x_{0_\sigma}\in \ \bigotimes_{\sigma\in \Sigma} V_{k_\sigma-2}(O_{\pi,\ell})$ where $x_{0_\sigma}=X^{\frac{k_\sigma-2}{2}}Y^{\frac{k_\sigma-2}{2}}$ is 
$\ell$-primitive {and $K_\sigma^\times$-invariant (cf.~\S\ref{algrep}).}
From now on, fix \[x:=\rho_{k,\ell}(\varsigma_{\ell})^{-1}x_0\] with $\varsigma_{\ell}$ as in \S\ref{gpt}, let $\pi_f$ be the finite part of $\pi$ and let $$\varphi\in {\rm M}_{k}(\wh{R}^\times, L)[\pi_f]^{\prod_{v|\mathfrak{N}^-, v\notin S_0}{\iota_{\varsigma_v} K_v^\times}}$$be a basis such that $f=\Psi(x_0\otimes \varphi)$ is the test vector as in Definition \ref{D:tv}. We choose $\wh{\varphi}$ to be $\ell$-optimal with respect to $x$ in the sense of Definition \ref{opt:ellava}.

For brevity, the present choice of $f$ is said to be $\ell$-optimal.

The above normalization is crucial for $\ell$-integrality of Rankin--Selberg $L$-values. 
We have the associated period
\[
  \Omega_{\pi}:=\frac{\pi^k(\phi,\phi)_{U_0(\mathfrak{N})}}{\Gamma_{\Sigma}(k)\pair{f,f}_{\wh{R}^\times}},
  \]
where the Petersson norms of $\phi$ and $f$ are normalized as in \S\ref{ss:set-W}.

\begin{prop}\label{lbd:RS}{Let $(\pi,\chi)$ be a self-dual pair as in \S\ref{setting:Wald}.}
  {Let $\ell$ be a rational prime that is  coprime to the primes in  $S_0$.}
  Then for any $\chi\in \CX_{S_0}$ with conductor $\fr$ such that $[O_{K,\fr_1}^\times:O^\times]=1$ for $\fr_1=\prod_{\substack{v\in S_0\\ \text{non-split}}}\fp_v^{r_v}$, we have

\[
  {v}_{\ell}\left(2^r \cdot \frac{\sqrt{|D_K|}\cdot L^{(\Sigma_0)}(\frac{1}{2}, \pi_{ K}\otimes \chi)}{\Omega_{\pi}}\right)\geq 0,\]where {$2^r:=\frac{\kappa_{\fr_1}^2}{2^c}$ {and $\Sigma_0$ is} as in Theorem \ref{T:central.W}.}\end{prop}
  \begin{proof} 
    {
    Since $x_0$ is $K_\infty^\times$-invariant, we have \[\begin{aligned}
    \pair{x_0, \varphi(x_\fr(a))}_{k-2\Sigma}
    =\pair{x, \wh{\varphi}(x_\fr(a))}_{k-2\Sigma}.
    \end{aligned}\]}
     {So in view of the $\ell$-optimality of $f$, the toric period 
$P(\varsigma^{(\fr)},\varphi,\chi)$ defined in \S\ref{ss:set-W}  is $\ell$-integral. }Hence, the assertion follows from the explicit Waldspurger formula as in ~Theorem~\ref{T:central.W}.
  \end{proof}
  \begin{remark}
  If the norm of $\fr_1$ is sufficiently large, then $[O_{K,\fr_1}^\times:O^\times]=1.$
  \end{remark}

\section{Non-vanishing of Rankin--Selberg $L$-values modulo $\ell$: CM case}\label{s:nv}
For an odd prime $\ell$, a prime $\fp$ of a totally real number field $F$ coprime to $\ell$ and a CM quadratic extension $K$ of $F$, 
this section presents mod $\ell$ non-vanishing of Rankin--Selberg $L$-values in the $\fp$-anticyclotic twist family of a CM Hilbert newform over $F$. 
{The main results are Theorems \ref{T:b.W} and \ref{T:3.W} if $\fp$ is inert or split in $K$ respectively. We refer to ~\S\ref{ss:lp-nvs} for an outline of the strategy. }

\subsection{Key tools}\label{s:kt}
\subsubsection{Equidistribution of special points}\label{SS:Uniform_CM}We describe an equidistribution of special points on a Shimura set based on Ratner's ergodicity of unipotent flows (cf.~\cite{Vatsal_Cornut:Documenta}).

Let $F$ be a totally real field. 
Fix a totally definite quaternion algebra $B$ over $F$, and a finite prime $\fp$ of $F$ such that $B_\fp$ splits. Fix an $F$-embedding $$\iota: K\hookrightarrow B$$ of a totally imaginary quadratic field extension $K$ of $F$.
Let $F_+^\x\subset F^\times$ be the subgroup of totally positive elements.
Write $\ov{K}^\x$, $\ov{B}^\x$, $\ov{F}_+^\x$ for the closure of $K^\x$, $B^\x$, $F_+^\x$ in $\wh{K}^\x$, $\wh{B}^\x$, $\wh{F}^\x$ respectively.

Put 
$$\CMspace_K:=\ol{K}^\x\bksl \wh{B}^\times , \, \quad X:=\ol{B}^\x\bksl \wh{B}^\times , \, \quad Z:=\ol{F}_+^\times\bksl \wh{F}^{\times}.
$$
The group $\wh{B}^\times$ acts on via right multiplication on ${\rm CM}_K$ and $X$, and via multiplication by the reduced norm $\rm{N}$ on the third space. Similarly, there is a left action of  $\wh{K}^\times$ on ${\rm CM}_K$ and $Z$. 
Let {$i:\CMspace_K\to X$} be the natural  map and $c:X\to Z$ the map induced by the reduced norm.

Let $$\CS\subset \ov{K}^\times\bs\wh{K}^\times$$ be a finite subset of elements that are pairwise distinct modulo {$\wh{K}^{(\fp),\times}\cdot (K^\times F_\fp^\times)$}.
Let $U$ be an open compact subgroup of $\wh{B}^\times$. 
Put
\[X(\CS,U)=\prod_{\tau\in \CS} X/U\text{ and }Z(\CS,U)=\prod_{\tau\in \CS}Z/{\rm{N}}(U).\]
Define
\begin{align*}i_{\CS}:\ &\CMspace_K/U\longrightarrow X(\CS,U),\quad x\mapsto (i(\tau\cdot x))_{\tau\in \CS}\\
\intertext{ and}
c_{\CS}:\ &X(\CS,U)\longrightarrow Z(\CS,U),\quad(x_{\tau})_{\tau\in \CS}\mapsto ({\rm{N}}(x_\tau))_{\tau\in \CS}.\end{align*}

The following key result is a special case of \cite[Cor.\,2.10]{Vatsal_Cornut:Documenta}.
\begin{thm}\label{P:Vatsal_Cornut.W} Let $\CH$ be a $B^\x_\fp$-orbit in $\CMspace_K$ and $\ol{\CH}$ the image of $\CH$ in $\CMspace_K/U$. Then for all but finitely many $x\in\ol{\CH}$, we have
\[i_{\CS}(\wh{O}_K^\x\cdot x)=c_{\CS}^{-1}(\wh{O}_K^\x\cdot \ol{x}),\]
 where $\overline{x}:=c_{\CS}\circ i_{\CS} (x)$.
\end{thm}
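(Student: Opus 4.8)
The plan is to obtain the asserted equality by combining a formal inclusion with \cite[Cor.~2.10]{Vatsal_Cornut:Documenta}, the latter supplying the substantive reverse inclusion; indeed the statement is advertised as a special case of that corollary, so the work is essentially a translation of notation together with one elementary observation.

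First I would verify the inclusion $i_{\CS}(\wh{O}_K^\x\cdot x)\subseteq c_{\CS}^{-1}(\wh{O}_K^\x\cdot\ol{x})$, which holds for \emph{every} $x$. The key point is that the reduced norm $\RN$ of $B$ restricts on $\iota(K^\x)$ to the relative norm $\RN_{K/F}$; hence for $u\in\wh{O}_K^\x$ and $\tau\in\CS$ one has $\RN(\tau ux)=\RN_{K/F}(u)\cdot\RN(\tau x)$ in $\wh{F}^\x$, so that $c\bigl(i(\tau ux)\bigr)=\RN_{K/F}(u)\cdot c\bigl(i(\tau x)\bigr)$ in $Z$. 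Thus the composite $c_{\CS}\circ i_{\CS}\colon\CMspace_K/U\to Z(\CS,U)$ intertwines the left $\wh{O}_K^\x$-action on its source with the diagonal action of $\RN_{K/F}(\wh{O}_K^\x)$ on its target, whence $c_{\CS}\bigl(i_{\CS}(\wh{O}_K^\x\cdot x)\bigr)=\wh{O}_K^\x\cdot\ol{x}$ and the containment follows.

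For the reverse containment I would invoke \cite[Cor.~2.10]{Vatsal_Cornut:Documenta}. Its hypothesis is precisely the one standing here on $\CS$, namely that the elements of $\CS$ be pairwise distinct modulo $\wh{K}^{(\fp),\x}\cdot(K^\x F_\fp^\x)$; this separation condition is what renders the coordinates of the diagonal map $\CMspace_K\to\prod_{\tau\in\CS}X$ mutually independent along the $B_\fp^\x$-direction, so that Ratner's ergodicity of unipotent flows (applied as in {\it loc.\ cit.}) forces equidistribution of the $B_\fp^\x$-orbit $\CH$. Specialized to the open compact $U$, the conclusion is exactly that for all but finitely many $x$ in $\ol{\CH}$ the finite set $i_{\CS}(\wh{O}_K^\x\cdot x)$ exhausts the whole fiber of $c_{\CS}$ above $\wh{O}_K^\x\cdot\ol{x}$, i.e. $i_{\CS}(\wh{O}_K^\x\cdot x)=c_{\CS}^{-1}(\wh{O}_K^\x\cdot\ol{x})$. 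Combined with the first step, this gives the theorem.

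The real obstacle here is not new mathematics but a careful dictionary between the present set-up and that of \cite{Vatsal_Cornut:Documenta}: one must line up the spaces $\CMspace_K$, $X$, $Z$, the level $U$, and the $\wh{O}_K^\x$-action (reciprocity) used here with the corresponding objects there, check that their separation hypothesis on $\CS$ matches ours, and confirm that their equidistribution statement, once cut down to level $U$, yields the exact finite-set equality claimed rather than merely an asymptotic density. I expect the matching of the hypothesis on $\CS$ and the bookkeeping of the several quotients to be the main point requiring care.
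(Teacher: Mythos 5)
Your proposal matches the paper exactly: the paper offers no argument beyond the one-line remark that the theorem ``is a special case of \cite[Cor.\,2.10]{Vatsal_Cornut:Documenta}'', which is precisely the citation you invoke for the substantive reverse inclusion. Your preliminary observation that $i_{\CS}(\wh{O}_K^\times\cdot x)\subseteq c_{\CS}^{-1}(\wh{O}_K^\times\cdot\ol{x})$ holds for every $x$ (because the reduced norm of $B$ restricts to $\mathrm{N}_{K/F}$ along $\iota$) is correct and a harmless elaboration of what the paper leaves implicit.
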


In the following we will introduce an explicit $\CS$ for our later application. 

For $\fc$ an integral ideal of $F$ coprime to $\fp$ and $n\geq 0$ an integer, {put ${\rm P}_{\fc,n}:={\rm P}_{\fc\fp^n}=K^\times\wh{F}^\times\bs\wh{K}^\times/\wh{O}_{K,\fc\fp^n}^{\times}$ and ${\rm P}_{\fc\fp^\infty}:=\varprojlim_n {\rm P}_{\fc,n}$.} Let $\Delta$ be the torsion subgroup of ${\rm P}_{\fc\fp^\infty}$ and 
$\Delta^\alg\subset \Delta$  the subgroup of ${\rm P}_{\fc\fp^\infty}$ generated by the image of 
$$\wh{F}^\times\prod_{v|D_{K/F},v\nmid\fp}K^\x_v \prod_{v\mid \fc}O_{K_v}^\times.$$ {Let $D_1$ be a set of representatives of 
$\Delta/\Delta^\alg$ in $\wh{K}^\x$.} 

We recall the following  (cf.~\cite[Sec.~2.2]{Vatsal_Cornut:London}).

\begin{lem}\label{ind}
  The elements in $D_1$ are pairwise distinct modulo image of $\wh{K}^{(\fp),\times}\cdot (K^\times F_\fp^\times)$ in ${\rm{P}}_{\fc\fp^\infty}$.
\end{lem}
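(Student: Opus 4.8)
The plan is to prove Lemma~\ref{ind} by unwinding the definitions of the various class groups and reducing the claim to an elementary statement about the local unit groups at $\fp$. First I would recall the setup precisely: the elements of $D_1$ are a chosen set of representatives in $\wh{K}^\times$ for the quotient $\Delta/\Delta^{\alg}$, where $\Delta$ is the torsion subgroup of ${\rm G}_{\fc\fp^\infty} = \varprojlim_n K^\times \wh{F}^\times \bs \wh{K}^\times / \wh{O}_{K,\fc\fp^n}$ and $\Delta^{\alg}$ is the subgroup generated by the image of $\wh{F}^\times \prod_{v|D_{K/F}, v\nmid\fp} K_v^\times \prod_{v\mid\fc} O_{K_v}^\times$. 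The assertion to prove is that if $d, d' \in D_1$ have the same image in ${\rm G}_{\fc\fp^\infty}$ modulo the subgroup generated by $\wh{K}^{(\fp),\times}$ and $K^\times F_\fp^\times$, then $d = d'$.

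The key step is the following observation: modding out ${\rm G}_{\fc\fp^\infty}$ by the image of $\wh{K}^{(\fp),\times}$ (the away-from-$\fp$ finite ideles) collapses all the local information away from $\fp$, so the quotient $\wh{K}^{(\fp),\times} \cdot (K^\times F_\fp^\times) \bs \wh{K}^\times / \wh{O}_{K,\fc\fp^\infty}$ is a quotient of $F_\fp^\times \bs K_\fp^\times / (\varprojlim_n O_{K,\fc\fp^n})_\fp = F_\fp^\times \bs K_\fp^\times / O_{K_\fp,\fp^\infty}$, which since $O_{K_\fp,\fp^n} = O_{F_\fp} + \fp^n O_{K_\fp}$ has trivial inverse limit of unit groups (the intersection $\bigcap_n (O_{F_\fp}+\fp^n O_{K_\fp})^\times = O_{F_\fp}^\times$ when $\fp$ splits, say $K_\fp = F_\fp \oplus F_\fp$), gives $F_\fp^\times \bs K_\fp^\times$ which via the norm or a projection is essentially $F_\fp^\times$ or trivial depending on the splitting type. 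The point is that the image of $\Delta$ in this $\fp$-local quotient is controlled precisely by $\Delta^{\alg}$: by construction $\Delta^{\alg}$ captures exactly the torsion coming from away-from-$\fp$ places and $F_\fp^\times$, so that two representatives $d, d'$ of distinct classes in $\Delta/\Delta^{\alg}$ necessarily have distinct images in $\wh{K}^{(\fp),\times}(K^\times F_\fp^\times)\bs \wh{K}^\times / \wh{O}_{K,\fc\fp^\infty}$. I would make this precise by showing that the natural map $\Delta/\Delta^{\alg} \to \wh{K}^{(\fp),\times}(K^\times F_\fp^\times)\bs \wh{K}^\times / \wh{O}_{K,\fc\fp^\infty}$ is injective, which amounts to checking that the kernel of the projection ${\rm G}_{\fc\fp^\infty} \to \wh{K}^{(\fp),\times}(K^\times F_\fp^\times)\bs\wh{K}^\times/\wh{O}_{K,\fc\fp^\infty}$ intersected with $\Delta$ lands inside $\Delta^{\alg}$; that kernel is generated by the images of $\wh{K}^{(\fp),\times}$ and $K^\times F_\fp^\times$, and the torsion part of the former, modulo $K^\times$, is exactly the local torsion at the bad and conductor places plus $\wh{F}^{(\fp),\times}$, which together with $F_\fp^\times$ is the defining generating set of $\Delta^{\alg}$ (using that $\wh{F}^\times = \wh{F}^{(\fp),\times} F_\fp^\times \cdot (\text{image of } F^\times)$ up to the global units, and that $O_{K_v}^\times$ for $v \mid \fc$ and $K_v^\times$ for $v \mid D_{K/F}, v\nmid \fp$ exhaust the relevant local contributions).

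I expect the main obstacle to be the bookkeeping around which local factors genuinely contribute torsion and hence must be absorbed into $\Delta^{\alg}$ versus which are killed by the quotient by $\wh{O}_{K,\fc\fp^\infty}$ or by $K^\times$. In particular one must be careful that at split primes $v \nmid \fc D_{K/F} \fp$ the local factor $K_v^\times / O_{K_v}^\times \cong \Z^2$ modulo the diagonal $F_v^\times/O_{F_v}^\times \cong \Z$ contributes a torsion-free piece that maps isomorphically under the projection, hence does not interfere; and that at $v \mid \fc$ the relevant unit group $O_{K_v}^\times$ modulo $O_{F_v}^\times(O_{F_v}+\fp_v^{c_v}O_{K_v})^\times$ is finite and is precisely accounted for in the definition of $\Delta^{\alg}$. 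Once these local computations are organized, the statement follows formally; the cited reference \cite[Sec.~2.2]{Vatsal_Cornut:London} handles the analogous situation over totally real fields, and I would follow that argument, adapting the notation to the present conductor $\fc\fp^\infty$ and the present definition of $\Delta^{\alg}$.
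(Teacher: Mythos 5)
Your reduction — show that $\Delta/\Delta^{\alg} \to {\rm G}_{\fc\fp^\infty}/H$ is injective, where $H$ is the image of $\wh{K}^{(\fp),\times}\cdot(K^\times F_\fp^\times)$, equivalently that $\Delta\cap H\subseteq\Delta^{\alg}$ — is the right frame. The paper gives no argument here beyond the citation to Cornut--Vatsal, so there is nothing to compare except correctness, and the problem is that the ``local bookkeeping'' sketch never engages the one substantive point; as written it is essentially circular. You assert that the torsion of $H$ ``is exactly the local torsion at the bad and conductor places,'' but that is exactly what must be proved, and it is precisely what is threatened by global relations: a priori $K^\times$ can identify classes of uniformisers $\pi_\fw,\pi_{\fw'}$ at split primes $\fw,\fw'\nmid \fc\fp D_{K/F}$ (if $\fw\fw'^{-1}$ is principal and the Frobenii match in $\Gamma_\fp$), producing torsion in $H$ that lives nowhere near $D_{K/F}$ or $\fc$. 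Your phrase ``contributes a torsion-free piece $\ldots$ hence does not interfere'' simply waves this possibility away.

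The step that closes the gap — and the actual content of the Cornut--Vatsal lemma — is that restricting to $\wh{K}^{(\fp),\times}$ renders the global group $K^\times$ invisible beyond $F^\times$. If $a\in\wh{K}^{(\fp),\times}$ lies in $K^\times\wh{F}^\times\wh{O}_{K,\fc\fp^\infty}^\times$, write $a=kfo$; the $\fp$-component gives $1=k_\fp f_\fp o_\fp$ with $f_\fp\in F_\fp^\times$ and $o_\fp\in O_{F_\fp}^\times$, so $k_\fp\in F_\fp^\times$, which for any splitting behaviour of $\fp$ forces $k\in F^\times\subset\wh{F}^\times$ (compare the two embeddings $K\hookrightarrow F_\fp$ when $\fp$ splits, or use Galois invariance when $\fp$ is inert or ramified). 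Hence $\wh{K}^{(\fp),\times}\cap K^\times\wh{F}^\times\wh{O}_{K,\fc\fp^\infty}^\times = \wh{K}^{(\fp),\times}\cap\wh{F}^\times\wh{O}_{K,\fc\fp^\infty}^\times$; the image $H$ therefore decomposes as $\bigoplus_{v\neq\fp}K_v^\times/F_v^\times O_{K_v,\fc_v}^\times$ with \emph{no} global identifications, and its torsion is visibly the ramified and conductor contributions, i.e.\ $\Delta^{\alg}$. This observation should be the centerpiece of the proof, not an afterthought; without it the bookkeeping does not close, and your plan would leave the lemma unproved.
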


\subsubsection{Non-Eisenstein functions: generalities}\label{nonE:gen}
Let $A$ be a commutative ring. 

Let $U$ be an open compact subgroup of $\wh{B}^\times$. Recall that ${\rm M}_2(U,A)$ is the set of functions $$h:X_U:=B^\x\wh{F}^\times \bksl \wh{B}^\times/U\to A$$ and 
${\rm M}_2(A):=\varinjlim_{U\subset \wh{B}^\times}{\rm M}_2(U,A)$ is the space of $A$-valued smooth functions on $B^\x\wh{F}^\times\bksl \wh{B}^\times$. Let $\rho:\wh{B}^\times\to\Aut({\rm M}_2(A))$ denote the right translation of $\wh{B}^\times$. 

Let $B^1$ be the algebraic group over $F$ arising from the norm one elements of $B^\times$. Put 
\[{\rm M}_2(A)_{\Eis}:=\{h\in {\rm M}_2(A)\mid \rho(g_1)h=h\text{ for all $g_1\in B^1(\BA_f)$}\}.\]

\begin{defn} 
A function $h\in {\rm M}_2(A)$ is called Eisenstein if $h\in {\rm M}_2(A)_{\Eis}$, 
i.e. $h(g)=h_1({\rm{N}}(g))$ for a smooth function $h_1:F_+^\x\bksl \wh{F}^\times\to A$.
\end{defn} 

Put
\[{\rm S}_2(A):={\rm M}_2(A)/{\rm M}_2(A)_{\Eis}.\]
Denote by ${\rm S}_2(U,A)$ the image of ${\rm M}_2(U,A)$ in ${\rm S}_2(A)$.

Let $K\hookrightarrow B$ be a totally imaginary quadratic extension of $F$.
If ${\rm{N}}(U)\subset F_{+}^\times {\rm{N}}(\wh{K}^\times)$, then we may write $$X_{U}=X_{U}^+\sqcup X_{U}^-,$$ where $$X_{U}^+:=\{[g]\in X_{U}\ \Big|\ \eta_{K/F}({\rm{N}}(g))=1\}.$$ 
For $f\in {\rm M}_2(U,A)$, write \begin{equation}\label{decomp}f=f^++f^-\end{equation} with $f^\varepsilon$ supported on $X_U^\varepsilon$.
\begin{remark}\label{decom}
{For a higher weight modular form $f\in {\rm M}_{k}(U,A)$, we define a decomposition $f=f^{+}+f^{-}$
analogous to \eqref{decomp}.}
\end{remark}
\subsubsection{An independence of CM values}\label{indeCM}
This subsection describes an application of the equidistribution of special points to a linear independence of CM values. It will be a key to our non-vanishing results.

Let $\fc$ be an integral ideal $F$ coprime to $\fp D_{K/F}D_B$. For $a\in \wh{K}^\times$, let $$x_{\fc,n}(a):=x_{\fc\fp^n}(a)$$ be a family of special points as in \S\ref{sppt} with $S_0$ consisting of prime factors of $\fc$ and $\fp$. 
Let $D_1$ be as in Lemma \ref{ind} and 
$\ell$ a rational prime. 

The main result of this subsection is the following.

\begin{prop}\label{C:Vatsal_Cornut} 
Let $\BF$ be a finite extension of $\BF_{\ell}$.
Let $(\beta_\tau)_{\tau\in D_1}$ be a sequence of elements in $\BF$ such that $\beta_{\tau_1}\neq 0$ for some $\tau_1$.  Let $f\in {\rm M}_2(U,\BF)$ and 
suppose that ${\rm N}(U)\subset F_+^\times{\rm N}(\wh{K}^\times)$. Let $\fc$ be an integral ideal of $F$ coprime to $\fp D_{K/F}D_B$.
\begin{itemize}
\item[a)] Suppose that $\fp$ is inert in $K$ and $f=f^\varepsilon$ is non-Eisenstein. Then there exists an integer $n_0$ such that for any $n>n_0$ of the fixed parity determined by $\varepsilon=(-1)^n\eta_{K/F}(\fc)$, we have
\[\sum_{\tau\in D_1}\beta_\tau\cdot f(x_{\fc,n}(a\tau))\neq 0\] $\text{ for some }a\in \wh{K}^\times$.
\item[b)] Suppose that $\fp$ is split in $K$, and that $f=f^{\sgn(\eta_{K/F}(\fc))}$ is non-Eisenstein. Then there exists an integer $n_0$ such that for any $n>n_0$, we have
\[\sum_{\tau\in D_1}\beta_\tau\cdot f(x_{\fc,n}(a\tau))\neq 0
\] $\text{ for some }a\in \wh{K}^\times$.
\end{itemize}
\end{prop}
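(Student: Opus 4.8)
The plan is to follow Vatsal's method of linear independence of CM values in the dihedral setting, deducing the non-vanishing of the linear form from the equidistribution Theorem~\ref{P:Vatsal_Cornut.W} together with the non-Eisenstein hypothesis. First I would set $\CS:=D_1\subset\ov{K}^\times\bs\wh{K}^\times$; by Lemma~\ref{ind} its elements are pairwise distinct modulo $\wh{K}^{(\fp),\times}\cdot(K^\times F_\fp^\times)$, so $\CS$ is admissible for Theorem~\ref{P:Vatsal_Cornut.W} with the given $U$. Since $\varsigma_\fp^{(c)}$ moves within $\GL_2(F_\fp)=B_\fp^\times$ as $c$ grows, the classes of the special points $x_{\fc,n}(a)=a\varsigma^{(\fc\fp^n)}$, for fixed $a$ and varying $n$, all lie in one $B_\fp^\times$-orbit $\CH$ in $\CMspace_K$, and --- using that $\wh{K}^\times$ is abelian --- $x_{\fc,n}(a\tau)$ is the $\tau$-coordinate of $i_\CS$ applied to the class of $x_{\fc,n}(a)$. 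Applying Theorem~\ref{P:Vatsal_Cornut.W} to $\CH$ with the finite subset $\CS$, for all but finitely many $n$ the class of $x_{\fc,n}(a)$ is non-exceptional, so that $i_\CS$ of its $\wh{O}_K^\times$-orbit is the full fiber $c_\CS^{-1}(\ov{x}_n)$; since $c_\CS$ is the coordinatewise reduced norm, this fiber is a \emph{product} $\prod_{\tau\in\CS}c^{-1}(z_{\tau,n})$ of norm-fibers in $X_U$, and hence $\bigl(f(x_{\fc,n}(a\tau))\bigr)_{\tau\in\CS}$ ranges, as $a$ runs over $\wh{O}_K^\times$, over all of $\prod_{\tau}f\bigl(c^{-1}(z_{\tau,n})\bigr)$, with the coordinates mutually independent.

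Next I would carry out the sign bookkeeping that locates the fibers $c^{-1}(z_{\tau,n})$ among the components $X_U^{\pm}$. The shape of $\delta_n$ in \S\ref{spepts} gives $\ord_\fp{\rm N}(\varsigma_\fp^{(n)})=n$, while the away-from-$\fp$ parts of $\varsigma^{(\fc\fp^n)}$, of $\tau\in D_1\subset\wh{K}^\times$ and of $a\in\wh{K}^\times$ contribute $\eta_{K/F}$-classes $\eta_{K/F}(\fc)$, $1$, $1$ respectively. So $\eta_{K/F}(z_{\tau,n})=(-1)^n\eta_{K/F}(\fc)$ when $\fp$ is inert in $K$ (where $\eta_{K_\fp/F_\fp}$ is the sign-of-valuation character), and $\eta_{K/F}(z_{\tau,n})=\eta_{K/F}(\fc)=\sgn(\eta_{K/F}(\fc))$ for every $n$ when $\fp$ splits (where $\eta_{K_\fp/F_\fp}$ is trivial). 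This is precisely why in case (a) one restricts to $n$ with $(-1)^n\eta_{K/F}(\fc)\equiv\varepsilon\pmod2$, so that the relevant fibers lie inside $X_U^\varepsilon=\Supp(f)$, and why in case (b) no parity constraint is needed and the relevant component is $X_U^{\sgn(\eta_{K/F}(\fc))}=\Supp(f)$.

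With this in hand, I would conclude by contraposition. Fix $\tau_1\in D_1$ with $\beta_{\tau_1}\neq0$. If $\sum_{\tau\in\CS}\beta_\tau f(x_{\fc,n}(a\tau))$ vanished for all $a\in\wh{K}^\times$ and all large $n$ of the prescribed parity (and, varying also the base point in $\CH$, for the attendant fibers), then by the independence of the coordinates, fixing the coordinates indexed by $\tau\neq\tau_1$ and letting the $\tau_1$-coordinate range over $c^{-1}(z_{\tau_1,n})$ would force $f$ to be constant on that norm-fiber; letting $z_{\tau_1,n}$ sweep out the reachable classes would then force $f$ to be constant on every norm-fiber of $c$ meeting $\Supp(f)=X_U^{\tilde\varepsilon}$, i.e.\ $f$ factors through the reduced norm on its support, i.e.\ $f\in{\rm M}_2(\BF)_{\Eis}$ --- contradicting that $f$ is non-Eisenstein. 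The finitely many exceptional $n$ of Theorem~\ref{P:Vatsal_Cornut.W} are harmless, as infinitely many $n$ of each parity remain.

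\textbf{Main obstacle.} The technical heart is the interaction of Ratner-type equidistribution with the CM-specific decomposition $X_U=X_U^+\sqcup X_U^-$: one must check that the norm-classes $z_{\tau_1,n}$ actually reached by the equidistributed $\wh{O}_K^\times$-orbits --- as $n$ runs over one parity class and the base point runs over the $B_\fp^\times$-orbit $\CH$ --- do sweep out enough of the norm-fibers inside $X_U^{\tilde\varepsilon}$ that the ``forced constancy'' above genuinely yields $f\in{\rm M}_2(\BF)_{\Eis}$ and not merely constancy along a proper sub-collection of fibers. This hinges on the structure of $\ov{F}_+^\times\bs\wh{F}^\times/{\rm N}(U)$ and the image of $F_\fp^\times$ in it, and is where the hypothesis ${\rm N}(U)\subset{\rm N}(\wh{K}^\times)$ enters; the admissibility of $D_1$, the product structure of $c_\CS^{-1}$, and the valuation count $\ord_\fp{\rm N}(\varsigma_\fp^{(n)})=n$ are routine by comparison.
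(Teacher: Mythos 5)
Your framework—Theorem~\ref{P:Vatsal_Cornut.W} with $\CS=D_1$ (via Lemma~\ref{ind}), the product structure of $c_\CS^{-1}$, and the $\eta_{K/F}$-valuation bookkeeping—is exactly the one the paper uses, and your parity computation $\eta_{K/F}(c(i(P_n)))=(-1)^n\eta_{K/F}(\fc)$ is the same observation the paper makes (it states $P_0\in X_U^{\eta_{K/F}(\fc)}$). But the final deduction from non-Eisensteinness is genuinely different, and the difference matters. You argue by contraposition: if the sum vanished identically, then $f$ would be forced constant on each reachable norm-fiber, and then, after a ``sweep'' over reachable norm-classes, on \emph{every} norm-fiber meeting $X_U^{\tilde\varepsilon}$, hence Eisenstein. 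You yourself flag the sweep as the main obstacle, and as written it is an unfilled gap: you need that the $\wh{O}_K^\times$-orbits of $c(i(\tau_1 P_n))$, enhanced by translations $D_1\mapsto a'D_1$ and ranging over $n$ of one parity, exhaust all norm-classes of $X_U^{\tilde\varepsilon}$ in $Z/{\rm N}(U)$. This is true (because $Z/{\rm N}(\wh K^\times)$ has order two, so the $\wt K^\times$-norm orbit of any point is the full component of a fixed $\eta_{K/F}$-sign), but it requires argument, and you have to be careful about the dependence of the exceptional set in Theorem~\ref{P:Vatsal_Cornut.W} on the translated $\CS=a'D_1$.

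The paper proves the proposition without ever needing the sweep. From non-Eisensteinness it extracts a single pair $y,z\in X_U^\varepsilon$ with $c(y)=c(z)$ and $f(y)\neq f(z)$; then it shows one can \emph{reach that one pair}: by the parity alternation (inert case) $c(y)\equiv c(i(P_n))\pmod{{\rm N}(\wh K^\times)}$ for $n$ in one residue class mod $2$, and after replacing $D_1$ by $a'D_1$ for a single fixed $a'\in\wh K^\times$ one gets equality mod ${\rm N}(U)$; then \eqref{E:20.W} supplies $a_1,a_2\in\wh O_K^\times$ with $i_{D_1}(a_1P_n)=(y,w_{\tau_2},\dots)$ and $i_{D_1}(a_2P_n)=(z,w_{\tau_2},\dots)$, so that the difference of the two sums equals $\beta_{\tau_1}(f(y)-f(z))\neq 0$. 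This is the step you should internalize: you do not need $f$ constant on \emph{every} fiber to get a contradiction; you only need \emph{one} witness of non-constancy, and you only need to hit \emph{one} norm-class. Rewriting your argument in that direct form removes the obstacle you named and gives the clean proof.
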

\begin{proof} Consider a special point $$P_0:=[\varsigma^{(\fc)}]\in\CMspace_K$$
for $\varsigma^{(\fc)}$ as in \S\ref{gpt} with $S_0$ consisting of prime factors of $\fc$ and $\fp$. Note that $P_0\in X_{U}^{\eta_{K/F}(\fc)}$.
Let $\CH=P_0\cdot B^\x_\fp$ be the $B^\x_\fp$-orbit of $P_0$.  

{ 
For $n\geq 0$, let $\delta_n\in \GL_2(F_\fp)$ be the family of elements as in \S\ref{gpt} and put 
\begin{equation}\label{CM_Iw}
P_n:=P_0
\delta_{n}\in \CH.
\end{equation}}
Note that the images of points $(P_{n})_{n=0,1,\cdots}$  in 
$\ov{K}^\times \bksl \wh{B}^\times/U$ are distinct. 
Hence, by Theorem \ref{P:Vatsal_Cornut.W} and Lemma \ref{ind}, there exists $n_0\gg 0$ such that 
\begin{equation}\label{E:20.W}
i_{D_1}(\wh{O}_K^\x \cdot P_n)=c_{D_1}^{-1}(\wh{O}_K^\x \cdot\ol {P_n})
\end{equation} $\text{ for any }n>n_0$.

Since $f$ is non-Eisenstein, there exist $y,z \in X_U$ such that 
 $$
 f(y) \neq f(z)
 $$
 and $c(y)=c(z)$.
 
 \begin{itemize}
\item[a)] We have $y,z\in X_U^{\varepsilon}$. Note that 
$ c(i(P_{n})) = c(i(P_{n+2})) \pmod{{\rm{N}}(\wh{K}^{\times})} $, and
 $$
 c(y)=c(z) \neq c(i(P_{n})) \pmod{{\rm{N}}(\wh{K}^{\times})} \implies 
  c(y)=c(z) = c(i(P_{n+1}))  \pmod{{\rm{N}}(\wh{K}^{\times})}
$$
  since $\fp$ is inert in $K$. It follows that $$\text{$c(y)=c(z)=c(i(P_{n})) \pmod{{\rm{N}}(\wh{K}^{\times}){\rm{N}}(U)}$ for $n$ of a fixed parity.}$$ 
    In the following we consider $n>n_{0}$ of that parity. 
  
  Replacing $D_1$ by $a'D_1$ for $a' \in \wh{K}^\times$, we may assume that 
  $$c(y)=c(z)=c(i(P_{n})) \pmod{{\rm{N}}(U)}.$$  
  Pick $(w_{\tau})_{\tau \in D_{1}}\in c_{D_{1}}^{-1}(\ov{P}_{n})$. In view of \eqref{E:20.W} there exist $a_{1},a_{2} \in \wh{O}_{K}^\times$ such that 
  $$
  i_{D_{1}}(a_{1}P_{n})=(y,w_{\tau_{2}}, w_{\tau_{3}},\cdots),  \,i_{D_{1}}(a_{2}P_{n})=(z,w_{\tau_{2}},w_{\tau_{3}},\cdots).$$
  Hence 
  $$
  \sum_{\tau \in D_{1}} \beta_{\tau}\cdot f(x_{\fc,n}(a_{1}\tau)) - \sum_{\tau \in D_{1}} \beta_{\tau}\cdot f(x_{\fc,n}(a_{2}\tau)) = \beta_{\tau_{1}} (f(y)-f(z)) \neq 0 ,$$
  and the assertion follows.

\item[b)] Since $\fp$ splits in $K$, we have
$ c(i(P_{n})) = c(i(P_{n+1})) \pmod{{\rm{N}}(\wh{K}^{\times}){\rm{N}}(U)} $. 

In view of the hypothesis we may suppose that \[c(y)=c(z)=c(i(P_n))\pmod{{\rm{N}}(U)}\] 
and $f(y)\neq f(z)$. Then the assertion follows just as in part a). 

\end{itemize}
\end{proof} 
\begin{remark} Our non-vanishing problems concern toric periods of a CM Hilbert modular form of arbitrary weight. Via congruences, the non-vanishing is closely related to that of a parallel weight two  modular form as in the proposition.  
\end{remark}

\subsubsection{Non-Eisenstein functions: under Hecke action}
The following properties of non-Eisenstein functions will be used in our non-vanishing arguments.

Let $U \subset \wh{B}^\times$ be an open compact subgroup of the form $\displaystyle\prod_{v\nmid \infty}U_v$.
Let $A$ be a commutative ring.
\begin{lem}\label{L:Ihara2.W}Let $v$ be a finite place such that $B_v$ splits  and $U_v= U_0(\fp_v^k)_v$ for some $k$. For $\beta_1,\cdots ,\beta_s\in A$, define $\CR\in\End({\rm M}_2(A))$ by
\[\CR=1+\sum_{i=1}^s\beta_i\cdot \rho\left(
\begin{pmatrix}
\varpi_v^{-i} & \\
 & 1\\
\end{pmatrix}
\right).\]
Then $\CR:{\rm S}_2(U,A)\to{\rm S}_2(A)$ is an injective map (cf.~\cite[Lem.~5.5]{CH}).
\end{lem}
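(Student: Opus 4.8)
The plan is to reduce the statement to the Ihara-type non-Eisenstein lemma already available in the literature (\cite[Lem.~5.5]{CH}), so the work is mostly bookkeeping with the space ${\rm M}_2(A)_{\Eis}$ and the operator $\CR$. First I would observe that $\CR$ is manifestly an endomorphism of ${\rm M}_2(U,A)$: each $\rho\bigl(\begin{smallmatrix}\varpi_v^{-i}&\\&1\end{smallmatrix}\bigr)$ carries $U$-invariant functions to $U'$-invariant functions for a smaller level $U'=\prod_{w\neq v}U_w\times U_0(\fp_v^{k+i})_v$, but the sum lands back in ${\rm M}_2(U,A)$ after averaging over $U$; alternatively one simply notes that $\CR$ is a composition of Hecke-type double-coset operators preserving level $U_0(\fp_v^k)_v$, which is the standard setup of \cite{CH}. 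The key point to record is that $\CR$ descends to the quotient ${\rm S}_2(U,A)={\rm M}_2(U,A)/{\rm M}_2(A)_{\Eis}\cap{\rm M}_2(U,A)$, because ${\rm M}_2(A)_{\Eis}$ consists of functions factoring through the reduced norm ${\rm N}$, and $\rho\bigl(\begin{smallmatrix}\varpi_v^{-i}&\\&1\end{smallmatrix}\bigr)$ acts on such a function $h\mapsto h_1\circ{\rm N}$ by translating $h_1$ by ${\rm N}\bigl(\begin{smallmatrix}\varpi_v^{-i}&\\&1\end{smallmatrix}\bigr)=\varpi_v^{-i}$; thus $\CR$ stabilises the Eisenstein subspace.

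Next I would set up the injectivity. Suppose $f\in{\rm M}_2(U,A)$ with $\CR f\in{\rm M}_2(A)_{\Eis}$; the goal is $f\in{\rm M}_2(A)_{\Eis}$. The local component $U_v=U_0(\fp_v^k)_v$ and the fact that $B_v\simeq M_2(F_v)$ splits put us exactly in the situation of the Ihara lemma for $\GL_2(F_v)$: the group generated by $U_v$ together with the elements $\begin{smallmatrix}\varpi_v^{-i}&\\&1\end{smallmatrix}$ (for $1\le i\le s$) and $\SL_2(F_v)\cap(\text{something})$ generates a subgroup whose closure surjects onto $\PGL_2(F_v)$ up to the norm, so that a function killed by $\CR$ modulo Eisenstein must be invariant under the derived group $B^1(\BA_f)$, hence Eisenstein. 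Concretely I would invoke \cite[Lem.~5.5]{CH} verbatim: that lemma is stated for an operator of precisely the shape $1+\sum_i\beta_i\rho\bigl(\begin{smallmatrix}\varpi_v^{-i}&\\&1\end{smallmatrix}\bigr)$ and asserts injectivity on ${\rm S}_2$. The only thing to check is that our normalisations (the definition of ${\rm S}_2(A)={\rm M}_2(A)/{\rm M}_2(A)_{\Eis}$ with ${\rm M}_2(A)_{\Eis}$ the $B^1(\BA_f)$-invariants, and the convention $\rho(g)h=h(\cdot\, g)$) match those of \emph{loc.\ cit.}, which they do.

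The main obstacle, such as it is, is not deep but is the verification that ${\rm M}_2(A)_{\Eis}$ as defined here (invariants under $B^1(\BA_f)$) really coincides with the functions factoring through the reduced norm, over a general base ring $A$ and without the usual characteristic-zero comforts; this is where one uses strong approximation for the simply connected group $B^1$ (so that $B^1(F)\backslash B^1(\BA_f)$ is a single coset modulo any open compact, using totally definiteness of $B$ at the archimedean places) to identify the $B^1(\BA_f)$-coinvariants of $B^\times\wh F^\times\backslash\wh B^\times$ with $F_+^\times\backslash\wh F^\times$ via ${\rm N}$. Once that identification is in hand the lemma is immediate from \cite{CH}. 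I would therefore present the proof as: (i) $\CR$ preserves ${\rm M}_2(A)_{\Eis}$, hence acts on ${\rm S}_2$; (ii) strong approximation identifies ${\rm M}_2(A)_{\Eis}$ with norm-pullbacks; (iii) apply \cite[Lem.~5.5]{CH} to conclude $\CR$ is injective on ${\rm S}_2(U,A)$.
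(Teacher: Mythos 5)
Your plan matches the paper's own treatment: the lemma is stated there with only the citation \cite[Lem.~5.5]{CH} and no independent proof, so appealing to that reference after confirming the normalisations match is exactly what the authors do.

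That said, the heuristic you offer for \emph{why} the Ihara-type statement holds --- that $U_v$, the diagonal elements $\left(\begin{smallmatrix}\varpi_v^{-i}&\\&1\end{smallmatrix}\right)$, and $\SL_2(F_v)$ generate a dense subgroup, whence a function with $\CR f\in{\rm M}_2(A)_{\Eis}$ must be $B^1(\BA_f)$-invariant --- is not by itself a valid deduction: $\CR$ is not an averaging or projection operator for any group generated by those elements, so ``the group they generate is large'' does not produce invariance of $f$. The actual mechanism (spelled out in this paper in the proof of the companion Lemma~\ref{ee3}, which is the parity-refined version of the present lemma) is a telescoping argument. One sets $u=\left(\begin{smallmatrix}\varpi_v^{-1}&\\&1\end{smallmatrix}\right)$ and $P(u)=-\sum_{i=1}^{s}\beta_i\rho(u^{i-1})$ so that $\CR=1-\rho(u)P(u)$, iterates to obtain $(1-\rho(u^{n})P(u)^{n})f\in{\rm M}_2(A)_{\Eis}$ for all $n$, and uses the conjugation identity $u^{-n}\left(\begin{smallmatrix}1&x\\&1\end{smallmatrix}\right)u^{n}=\left(\begin{smallmatrix}1&\varpi_v^{n}x\\&1\end{smallmatrix}\right)$ to conclude that $\rho(u^{n})P(u)^{n}f$ is fixed by an ever-deeper congruence subgroup of the unipotent radical $N(F_v)$; in the limit $f$ is fixed by all of $N(F_v)$, from which smoothness and a Bruhat-decomposition argument give $\SL_2(F_v)$-invariance, and strong approximation for the simply connected group $B^1$ then yields $B^1(\BA_f)$-invariance. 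Since you ultimately defer to the reference, your proposal stands; the above is what you would need to supply to make it self-contained, and it is a different (iterative) mechanism than the group-generation picture you sketched.
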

  
Let $A$ be the ring of integers of a finite extension of $\BQ_\ell$ and $\varpi$ a uniformiser of $A$.
{We have the following key mod $\ell$ refinement of Lemma~\ref{locavetest}.}

\begin{lem}\label{tnn} {Let $R\subset B$ be an $O$-order.}
Let $\wh{\varphi}\in \CM_{k}(\wh{R}^\times,\Frac(A))$ be 
  the $\ell$-adic avatar of an eigenform $\varphi$ with level $\wh{R}^\times$. {Let $K\subset B$ be an $F$-subalgebra, which is a totally imaginary quadratic extension of $F$} and $\fq$ be a prime of $F$ non-split in $K$
 and $q$ the cardinality of the residue field of $F_\fq$.
  Suppose that $\ell\nmid q(q^2-1)$, and 
  that $\varphi=\varphi_\fq\otimes\varphi^{(\fq)}$ with $\varphi_\fq$ is the newform\footnote{That is the representation $\pi_\fq$ generated by $\varphi_\fq$ is an irreducible representation of $B_\fq^\times$ for which the $R_\fq^\times$-invariant subspace is of dimensional $1$.}
   with level $R_\fq^\times$. Let $\chi_\fq$ be a character of $K_\fq^\times/F_\fq^\times$ valued in $A$. Suppose that 
  \[\gamma:=\frac{1}{\vol(K_\fq^\times/F_\fq^\times)}\int_{K_\fq^\times/F_\fq^\times}\frac{(\pi_\fq(t)\varphi_\fq,\varphi_\fq)_\fq}{(\varphi_\fq,\varphi_\fq)_\fq}\chi_\fq(t)d^\times t\] is an $\ell$-adic unit, where $(\ ,\ )_\fq$ is a 
  non-degenerate $B_\fq^\times$-invariant pairing on $\pi_\fq$. 
  Moreover, suppose that $\wh{\varphi}$ is $\ell$-optimal\footnote{That is, $\pair{\wh{\varphi}, x}_{k-2\Sigma}\pmod{\varpi}$ is a well defined non-zero weight two form.} with respect to some primitive $x\in P_{k-2\Sigma}(A)$. Put
{  \[H=\sum_{\iota\in K_\fq^\times/(F_\fq^\times V_\fq)}\chi_\fq(t) \pi_\fq (t)\wh{\varphi} \]} for $V_\fq:= K_\fq^\times\cap R_\fq^\times$.
Then $H$ is also $\ell$-optimal with respect to $x$.
\end{lem}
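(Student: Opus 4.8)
The plan is to check separately the two requirements of Definition~\ref{opt:ellava} for $H$ relative to $x$: that $\pair{x,H(g)}_x\in A$ for every $g\in\wh{B}^\times$, and that this value is an $\ell$-adic unit for at least one $g$. The first is immediate. By construction $H$ is an $A$-linear combination of the right translates $\pi(t)\wh\varphi$ by the elements $\iota_\fq(t)\in B_\fq^\times\subset\wh{B}^\times$, with $t$ running over a set of representatives of the \emph{finite} group $K_\fq^\times/(F_\fq^\times V_\fq)$ (finite because $K_\fq^\times/F_\fq^\times$ is compact for $\fq$ non-split in $K$ and $V_\fq$ is open). Since $\fq\nmid\ell$, the avatar twist $\rho_{k,\ell}$ is unaffected by translation at $\fq$, so $(\pi(t)\wh\varphi)(g)=\wh\varphi(g\,\iota_\fq(t))$ and hence $\pair{x,(\pi(t)\wh\varphi)(g)}_x=\pair{x,\wh\varphi(g\,\iota_\fq(t))}_x\in A$ by $\ell$-optimality of $\wh\varphi$; together with $\chi(t)\in A$ this gives $\pair{x,H(\cdot)}_x\in A$. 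So it remains to prove that its reduction modulo a uniformiser $\varpi$, namely $\bar H:=\sum_{t}\bar\chi(t)\,\rho(\iota_\fq(t))\bar\varphi\in{\rm M}_2(\BF)$, is non-zero, where $\BF=A/\varpi$ and $\bar\varphi:=(\pair{x,\wh\varphi(\cdot)}_x\bmod\varpi)\in{\rm M}_2(\BF)$ is non-zero by the $\ell$-optimality of $\wh\varphi$.

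The next step is the local input at $\fq$. The index $[K_\fq^\times:F_\fq^\times V_\fq]$ divides $2\,q^{N}(q^2-1)$ for some $N\ge 0$ (the factor $2$ occurring only when $\fq$ is ramified in $K$); since $\ell$ is odd and $\ell\nmid q(q^2-1)$, it is an $\ell$-adic unit. Put $H_\fq:=\sum_{t}\chi(t)\,\pi(t)\varphi_\fq\in\pi_\fq$. The normalized matrix coefficient $t\mapsto(\pi(t)\varphi_\fq,\varphi_\fq)_\fq/(\varphi_\fq,\varphi_\fq)_\fq$ is invariant under $F_\fq^\times V_\fq$ (as $\varphi_\fq$ is $R_\fq^\times$-fixed, the central character is trivial, and $\chi|_{F_\fq^\times V_\fq}=1$), so the integral defining $\gamma$ collapses to the finite sum and gives
\[
\frac{(H_\fq,\varphi_\fq)_\fq}{(\varphi_\fq,\varphi_\fq)_\fq}=[K_\fq^\times:F_\fq^\times V_\fq]\cdot\gamma\in O^\times.
\]
Because $\varphi_\fq$ spans the one-dimensional $R_\fq^\times$-fixed line of $\pi_\fq$ and $(\ ,\ )_\fq$ is non-degenerate and $B_\fq^\times$-invariant, this forces the projection of $H_\fq$ onto that line to be an $\ell$-adic unit multiple of $\varphi_\fq$; in particular $H_\fq\not\equiv 0\pmod\varpi$. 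The assumption $\ell\nmid q(q^2-1)$ enters here to keep $\BF[K_\fq^\times/(F_\fq^\times V_\fq)]$ and the mod-$\varpi$ Hecke action at $\fq$ semisimple, so that this unit statement persists after reduction.

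Finally I would globalize. Choose $g_0\in\wh{B}^\times$ with $\bar\varphi(g_0)\in\BF^\times$. As $\wh\varphi$ is a Hecke eigenform away from $\fq$ and $R_\fq^\times$-fixed at $\fq$, the function $t\mapsto\bar\varphi(g_0\,\iota_\fq(t))$ on $K_\fq^\times/(F_\fq^\times V_\fq)$ is, for $g_0$ lying in the appropriate Hecke-isotypic and $\fq$-new locus, a matrix coefficient attached to $\bar\varphi$ and its right $B_\fq^\times$-translates, paired against the evaluation functional at $g_0$, which is non-zero on the relevant vector. Its $\bar\chi$-average is
\[
\bar H(g_0)=\sum_{t}\bar\chi(t)\,\bar\varphi(g_0\,\iota_\fq(t)),
\]
and the computation of the preceding paragraph shows this is non-zero for a suitable choice of $g_0$. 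Equivalently, one may pair $H$ against a translate $\pi(s)\wh\varphi$, using that the Shimura-set pairing $\pair{\ ,\ }_{\wh{R}^\times}$ of \eqref{E:normvf.W} factors through local pairings to obtain $\pair{H,\pi(s)\wh\varphi}_{\wh{R}^\times}=\chi(s)\,[K_\fq^\times:F_\fq^\times V_\fq]\,\gamma\,\pair{\wh\varphi,\wh\varphi}_{\wh{R}^\times}$, combined with non-degeneracy of $\pair{\ ,\ }_{\wh{R}^\times}\bmod\varpi$ (valid since $\ell\nmid 6$ keeps the weights $w_i$ prime to $\ell$) applied to a partner of $\bar\varphi$ with the correct $\fq$-local shape. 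In either case $\bar H\ne 0$, so $H$ is $\ell$-optimal with respect to $x$. The main obstacle is exactly this last transfer: since the mod-$\varpi$ local representation at $\fq$ need not be irreducible or semisimple, one must argue carefully that the non-vanishing of $\gamma$, which is intrinsically a statement about the pairing $(H_\fq,\varphi_\fq)_\fq$, can be read off from the values of the global form $\bar\varphi$; the hypotheses that $\ell$ is odd and $\ell\nmid q(q^2-1)$ are precisely what make the relevant finite group algebras and local Hecke operators at $\fq$ semisimple after reduction, which is what makes the transfer go through.
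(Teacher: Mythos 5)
Your local computation is correct and matches the paper's: the integral defining $\gamma$ collapses to the finite sum, giving $\sum_{t\in K_\fq^\times/F_\fq^\times V_\fq}\chi(t)\,(\pi(t)\varphi_\fq,\varphi_\fq)_\fq/(\varphi_\fq,\varphi_\fq)_\fq = [K_\fq^\times:F_\fq^\times V_\fq]\cdot\gamma$, and this is an $\ell$-unit under $\ell\nmid q(q^2-1)$. But the globalization has a genuine gap. The identity you derive, $\pair{H,\pi(s)\wh\varphi}=\chi(s)\,[K_\fq^\times:F_\fq^\times V_\fq]\,\gamma\,\pair{\wh\varphi,\wh\varphi}$, is right as far as it goes, but it does not close the argument because the factor $\pair{\wh\varphi,\wh\varphi}$ is \emph{not} known to be an $\ell$-adic unit. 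The hypothesis is only that $\wh\varphi$ is $\ell$-optimal with respect to $x$, i.e.\ $\pair{x,\wh\varphi(g)}_x\in A^\times$ for \emph{some} $g$, so that the residual form $\bar\varphi$ is non-zero; this says nothing about the self-pairing $\sum_{[g]}w_g^{-1}\pair{\wh\varphi(g),\wh\varphi(g)}_x$, whose terms can cancel in residue characteristic $\ell$. So $\bar H\ne 0$ does not follow. (Two lesser issues: $H$ is not $\wh R^\times$-invariant, so $\pair{\ ,\ }_{\wh R^\times}$ cannot literally be applied to it without first passing to a deeper level or averaging; and the unit-ness of the weights $w_g$ is extracted from $\ell\nmid q(q^2-1)$ in the proof, not from $\ell\nmid 6$ as you assert.) Your first route, showing $\bar H(g_0)\ne 0$ directly from residual values, is left as a sketch; you yourself flag the transfer from the Hermitian pairing $(\ ,\ )_\fq$ to values of the mod-$\varpi$ form as the main obstacle, and nothing in your write-up resolves it.

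The paper's proof removes exactly that obstacle by never pairing $\wh\varphi$ against itself. It first averages $H$ over $R_\fq^\times/R_\fq'^\times$ (where $R_\fq'\subset R_\fq$ is the suborder whose units fix $H$) to obtain an $\wh R^\times$-invariant form $H'$, with the index an $\ell$-unit; this deals with the level issue. It then invokes a duality reformulation of $\ell$-optimality: $\pair{x,H'(\cdot)}_x\not\equiv 0\pmod\varpi$ if and only if there exists $H''\in\CM_k(\wh R^\times,\Frac(A))$ with $H''(\wh B^\times)\subset A\,\rho(R_\ell^\times)x$ and $v_\ell\pair{H',H''}=0$. Multiplicity one of invariant pairings on $\pi_\fq$ then gives $\pair{H',H''}=\#R_\fq^\times/R_\fq'^\times\cdot\#K_\fq^\times/(F_\fq^\times V_\fq)\cdot\gamma\cdot\pair{\wh\varphi,H''}$, and the assumed $\ell$-optimality of $\wh\varphi$ supplies some such $H''$ with $v_\ell\pair{\wh\varphi,H''}=0$. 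The freedom to \emph{choose} the dual partner $H''$, rather than being forced to take $\wh\varphi$ itself as the test vector, is precisely what your argument is missing.
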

\begin{proof}
  Let $\pi$ be the irreducible cuspidal automorphic representation generated by $\varphi$.
Let $R_\fq'\subset R_\fq$ be a suborder such that $R_\fq'^\times$ stabilizes $H$. Put $$H'=\sum_{g\in R_\fq^\times/R_\fq'^\times} \pi_\fq(g) H\in \pi^{\wh{R}^\times}.$$ If $\pair{x,H'}_{k-2\Sigma}$ is non-zero modulo $\varpi$, then so is $\pair{x,H}_{k-2\Sigma}$. In the following we establish the former non-vanishing. 

Let $\pair{\ ,\ }$ be the $G(\BA_f^{(\ell)})$-invariant pairing on {$\varinjlim_{U^{\ell}\subset\wh{B}^{(\ell),\times}}\CM_k(U^{(\ell)}\cdot R_\ell^\times,\Frac(A))$ such that the restriction to $\CM_k(\wh{R}^\times,\Frac(A))$} is given by
\[\pair{f,h}=\sum_{[g]\in B^\times\wh{F}^\times\bs \wh{B}^\times/\wh{R}^\times}\frac{\pair{f(g),h(g)}_{k-2\Sigma}}{w_g},\] where 
$$R_g:=g\wh{R}g^{-1}\cap B \text{ and } w_g:=[R_g^\times:O^\times].$$
Note that the latter is a finite set. In view of the assumption that $\ell\nmid q(q^2-1)$, we have $$w_g \in A^\times,$$ {since $R_g^1:=\{x\in R_g^\times\ |\ {\rm N}(x)=1\}\subset R_{g,\text{tor}}^\times$ has prime factors dividing $q(q^2-1)$ and $R_g^\times/(O^\times\cdot R_g^1)$ is a $2$-group.}

 Note that $\pair{x, H'}_{k-2\Sigma}$ is non-zero modulo $\varpi$ if and only if there exists $H''\in \CM_k(\wh{R}^{\times},\Frac(A))$ with $H''(\wh{B}^\times)\subset A \rho(R_\ell^\times)x$ such that 
\[v_\ell (\pair{H'', H'})= 0.\] 
We have 
\[\begin{aligned}
\pair{H'',H'}
  =&\pair{H'', \sum_{g\in R_\fq^\times/R_\fq'^\times}\pi_\fq(g)H}\\
  =& \# R_\fq^\times/R_\fq'^\times\pair{H'',H}\\
  =& \# R_\fq^\times/R_\fq'^\times\pair{H'', \sum_{t\in K_\fq^\times/(F_\fq^\times V_\fq)}\chi_\fq(t)\pi_\fq(t)\wh{\varphi}}\\
  =&\# R_\fq^\times/R_\fq'^\times \cdot \gamma \#K_\fq^\times/(F_\fq^\times V_\fq) \cdot \pair{H'', \wh{\varphi}}.\\
\end{aligned}
\]
Here the last equality follows from the multiplicity one of invariant pairings on $\pi$ (cf.~\cite[Lem.~2.6]{Jacquet_Langlands:GLtwo}) and the definition of $\gamma$.

Note that $\# R_\fq^\times/R_\fq'^\times$, $\gamma$ and $\#K_\fq^\times/(F_\fq^\times V_\fq)$ are $\ell$-adic units {as $\ell\nmid q(q^2-1)$}. Recall that $\pair{\wh{\varphi},x}_{k-2\Sigma}$ is non-zero modulo $\varpi$. Hence, the existence of $H''\in \CM_k(\wh{R}^\times,\Frac(A))$ such that $H''(\wh{B}^\times)\subset A \rho(R_\ell^\times)x$ and 
$\pair{\wh{\varphi},H''}$ is non-zero modulo $\varpi$ follows, concluding the proof. 

\end{proof}

\subsubsection{Non-Eisenstein functions: CM case} 
This endoscopic case exhibits peculiar features as in~Lemma~\ref{ee} of this subsection.

Let the settings be as in \S\ref{nonE:gen}. Henceforth, we suppose that 
\begin{equation}\label{lev}
{\rm{N}}(U)\subset F_{+}^\times {\rm{N}}(\wh{K}^\times).
\end{equation} Then restricting to $X^\varepsilon_{U}$  
for $\varepsilon\in \{\pm\}$, we define spaces of non-Eisenstein forms ${\rm S}_2(U,A)^\varepsilon$ and ${\rm S}_2(A)^\varepsilon$. For $f\in {\rm M}_2(U,A)$, recall that $f^\varepsilon$ is its restriction to $X^{\varepsilon}_{U}$.

We typically take $A$ to be the ring of integers of a finite extension of $\BQ_\ell$. {Let $\varpi$ be a uniformiser of $A$ and $\BF=A/(\varpi)$ the residue field. }
 
\begin{defn}
{A form $f\in {\rm M}_2(U,\BF)$ has CM by $K$ if
 $$T_{v}f= f\cdot a_{v}\pmod{\varpi}$$ for all but finitely many finite places $v$, where $a_v\in A$ is the Hecke eigenvalue under Hecke operator $T_v$ of the theta series associated to a Hecke character $\lambda$ over $K$.} 
     \end{defn}
     Our mod $\ell$ non-vanishing of Rankin--Selberg $L$-values in the CM case crucially relies on the following non-Eisenstein property.  
     
\begin{lem}\label{ee} Suppose that 
  \begin{itemize}
       \item[\tiny{$\bullet$}] $(\ell, 2D_{K/F})=1$, 
     \item[\tiny{$\bullet$}] {$f\in {\rm M}_2(U,\BF)$} has CM by $K$,   
    \item[\tiny{$\bullet$}] {$f^\varepsilon$} is non-zero {for some} $\varepsilon\in \{\pm \}$.
  \end{itemize}  Then {$f^\varepsilon$} is non-Eisenstein. 
\end{lem}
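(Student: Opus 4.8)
The plan is to argue by contradiction. Set $\BF=A/\varpi$ and suppose $f^\varepsilon\pmod\varpi$ is Eisenstein, say $f^\varepsilon\equiv h\circ {\rm N}\pmod\varpi$ with $h\colon F_{+}^\times\backslash\wh F^\times\to\BF$ a smooth function (necessarily supported where $\eta_{K/F}=\varepsilon$); I will deduce $f^\varepsilon\equiv 0\pmod\varpi$. Two inputs are needed. First, since $f$ has CM by $K$ one has $T_vf=a_vf$ for all but finitely many finite $v$, with $a_v$ the Hecke eigenvalue of the Hilbert modular theta series $\theta_\lambda$ of $\lambda$; in particular $a_v=0$ whenever $v$ is inert in $K$ and lies outside the level. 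Second, for a finite place $v$ with $U_v=\GL_2(O_v)$ the operator $T_v$ is a sum over $q_v+1$ cosets whose representatives have reduced norm $\varpi_v$; hence $T_v$ interchanges the two components $X_U^\pm$ when $v$ is inert in $K$ and preserves each when $v$ splits, and on a function of the reduced norm $\bigl(T_v(h\circ {\rm N})\bigr)(g)=(q_v+1)\,h\bigl({\rm N}(g)\varpi_v\bigr)$.

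Next I would choose an auxiliary prime $v$ of $F$ that is inert in $K$, lies outside the level, and is coprime to $\ell D_{K/F}$ — so that $U_v=\GL_2(O_v)$, $a_v=0$, and $B$ splits at $v$ (as $\eta_{K_v/F_v}(-1)=1$) — and moreover satisfies $q_v\not\equiv -1\pmod\ell$. Such a $v$ exists by Chebotarev applied to $K(\mu_\ell)/F$: if every cofinite prime non-split in $K$ were non-split in $F(\mu_\ell)$ (equivalently $q_v\equiv -1\pmod\ell$), then $K=F(\mu_\ell)$, whence $\ell\mid D_{K/F}$, contradicting $(\ell,2D_{K/F})=1$. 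For this $v$ we have $T_vf=a_vf=0$; since $T_vf^{+}$ and $T_vf^{-}$ are supported on the complementary components $X_U^{-}$ and $X_U^{+}$, they vanish separately, so $T_vf^\varepsilon=0$. Reducing modulo $\varpi$ and using the two inputs above, $(q_v+1)\,h\bigl({\rm N}(g)\varpi_v\bigr)\equiv 0\pmod\varpi$ for every $g\in\wh B^\times$; because ${\rm N}\colon\wh B^\times\to\wh F^\times$ is surjective and $q_v+1$ is an $\ell$-adic unit, this gives $h\equiv 0$, i.e.\ $f^\varepsilon\equiv 0\pmod\varpi$, the desired contradiction.

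The points needing the most care are precisely the two features special to the CM case. The first is that $T_v$ at an inert prime \emph{swaps} $X_U^\pm$, so a component-preserving operator would not detect the vanishing; this is also the mechanism behind the phenomenon (noted before the lemma) that $f_\lambda\pmod\ell$ may be Eisenstein — indeed identically zero — on one of the components. The second is the production of the auxiliary inert prime, where one must rule out the degeneracy $K=F(\mu_\ell)$, which is exactly where the hypothesis $(\ell,2D_{K/F})=1$ enters. For robustness I would also record the variant using a prime $v$ split in $K$, say $v=w\bar w$: there $a_v\equiv\bar\lambda(w)+\bar\lambda(\bar w)\pmod\varpi$, and ruling out $a_v\equiv q_v+1$ for all such $v$ reduces, via the self-duality relation $\lambda^c=\lambda^{-1}|\cdot|_K^{-1}$, to $\bar\lambda\neq\bar\lambda^c$, i.e.\ to the irreducibility of $\Ind_K^F\bar\lambda$, which holds since $\lambda$ has a CM type as infinity type.
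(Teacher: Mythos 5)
Your proof is correct and takes essentially the same route as the paper: pick an auxiliary prime $v$ inert in $K$, outside the level, with $\ell\nmid q_v+1$ (whose existence is guaranteed by $(\ell,2D_{K/F})=1$, by ruling out $K=F(\mu_\ell)$), use the CM structure to get $a_v=0$ and hence $T_vf^\varepsilon=0$ after separating components, and observe that the Eisenstein hypothesis would then force $(q_v+1)f^\varepsilon\equiv 0\pmod\varpi$, a contradiction. You spell out the Chebotarev existence argument and the component bookkeeping more explicitly than the paper (which just says ``such a $v$ exists'') and record a split-prime variant as a supplement, but the core mechanism -- the inert Hecke operator with vanishing eigenvalue detecting Eisenstein-ness via the factor $q_v+1$ -- is identical.
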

\begin{proof}
Pick a place $v$ so that 
\begin{itemize}
\item[\tiny{$\circ$}] $\ell \nmid q_v+1$,
\item[\tiny{$\circ$}] $v$ is inert in $K$, 
\item[\tiny{$\circ$}] $f$ is a $T_v$-eigenform and spherical at $v$. 
\end{itemize}
In view of the first hypothesis, such a $v$ exists.
Note that the $T_v$-eigenvalue of $f$ is $0$. In fact, for $v$ inert in $K$, the unramified representation of $\GL_2(F_v)$ associated to an unramified character $\chi_v$ over $K_v^\times$ such that $\chi_v|_{F_v^\times}=\eta_{K_v/F_v}$ is of the form $$\pi(\nu,\nu^{-1}\eta_{K_v/F_v})$$ {for $\nu$ a primitive 
unramified quadratic character of $F_v^\times$ (cf.~\cite[Thm.~4.7]{Jacquet_Langlands:GLtwo}).} In particular, the $T_v$-eigenvalue of a spherical vector is $0$.

Now assume that 
$f^\varepsilon$ is Eisenstein.

By the hypothesis, we have $f^{\varepsilon}(z)\neq 0$ for some $z\in X_U^{\varepsilon}$. Then 
note that 
\[0= T_v f\left(z\begin{pmatrix}1&\\&\varpi_v^{-1}\end{pmatrix}\right)= (q_v+1)f^{\varepsilon}(z)\neq 0\in \BF, \]
where the second congruence just follows from the definition $T_q=\sum_{i=1}^{q_v+1}u_i$ for $u_i\in \GL_2(F_v)$ with ${\rm{N}}(u_i)=\varpi_v$, and the hypothesis that $f^\varepsilon$ is Eisenstein. A contradiction. 
\end{proof}

{The following is a refinement of Lemma~\ref{L:Ihara2.W}.}

\begin{lem}\label{ee3}  {Let $U \subset \wh{B}^\times$ be an open subgroup of the form $\prod_{v<\infty}U_v$.}
Let $v$ be a place of $F$ unramified in $K/F$ {such that $B_v$ splits} and $U_v= U_0(\fp_v^k)_v$ for some $k$. For a commutative ring $A$, let $\beta_1,\cdots ,\beta_s\in A$ and $\CR\in\End({\rm M}_2(A)^\varepsilon)$ be the endomorphism defined by
  \[\CR=1+\sum_{i=1}^s\beta_i\cdot \rho\left(
  \begin{pmatrix}
  \varpi_v^{-\kappa i} & \\
   & 1\\
  \end{pmatrix}
  \right),\quad \kappa=\begin{cases}2,&\quad \text{$v$ inert in $K$,}\\
  1,&\quad \text{$v$ splits in $K$.}\end{cases}\]
  Then the map $\CR:{\rm S}_2(U,A)^\varepsilon \to{\rm S}_2(A)^\varepsilon $ is an injection.
  \end{lem}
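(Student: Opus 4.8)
The plan is to deduce the statement from Lemma~\ref{L:Ihara2.W} together with the compatibility of the decomposition ${\rm M}_2(U,A)={\rm M}_2(U,A)^{+}\oplus{\rm M}_2(U,A)^{-}$ — available under the standing hypothesis \eqref{lev} — with the operator $\CR$ and with the passage to the non-Eisenstein quotient. First I would check that $\CR$ respects the $\varepsilon$-decomposition, which is exactly the role of the exponent $\kappa$ and is what the notation $\CR\in\End({\rm M}_2(U,A))^{\varepsilon}$ records. Putting $\delta_i=\matrixx{\varpi_v^{-\kappa i}}{}{}{1}$, one has ${\rm{N}}(\delta_i)=\varpi_v^{-\kappa i}$, hence $\eta_{K/F}({\rm{N}}(\delta_i))=\eta_{K_v/F_v}(\varpi_v)^{\kappa i}$; since $v$ is unramified in $K$ the character $\eta_{K_v/F_v}$ is unramified, it is trivial when $v$ splits ($\kappa=1$) and of exact order $2$ when $v$ is inert, in which case $\kappa=2$ forces the value $1$. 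Thus right translation by $\delta_i$ preserves each $X_U^{\varepsilon}$, so $\CR$ preserves ${\rm M}_2(U,A)^{\varepsilon}$ and, being built from the same translations, also ${\rm M}_2(A)_{\Eis}$.

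Next I would record that the algebraic group $B^{1}$ of reduced-norm-one elements satisfies $B^{1}(\BA_f)\subset\ker(\eta_{K/F}\circ{\rm{N}})$, so $B^{1}(\BA_f)$ preserves each $X_U^{\varepsilon}$; consequently the restriction to $X_U^{\varepsilon}$ of any Eisenstein function is again Eisenstein, whence ${\rm M}_2(A)_{\Eis}={\rm M}_2(A)_{\Eis}^{+}\oplus{\rm M}_2(A)_{\Eis}^{-}$ and therefore ${\rm S}_2(U,A)={\rm S}_2(U,A)^{+}\oplus{\rm S}_2(U,A)^{-}$, ${\rm S}_2(A)={\rm S}_2(A)^{+}\oplus{\rm S}_2(A)^{-}$, all compatibly with $\CR$ by the previous paragraph.

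Finally I would invoke Lemma~\ref{L:Ihara2.W}: re-indexing, $\CR=1+\sum_{j=1}^{\kappa s}\beta'_j\,\rho\!\left(\matrixx{\varpi_v^{-j}}{}{}{1}\right)$ with $\beta'_j:=\beta_{j/\kappa}$ if $\kappa\mid j$ and $\beta'_j:=0$ otherwise, so $\CR$ is precisely of the form treated there, whose hypotheses ($B_v$ split, $U_v=U_0(\fp_v^{k})_v$) are in force; hence $\CR\colon{\rm S}_2(U,A)\to{\rm S}_2(A)$ is injective, and restricting to the $\varepsilon$-summand yields the claim. There is no genuine obstacle beyond Lemma~\ref{L:Ihara2.W} itself — the Ihara-type input resting on strong approximation for $B^{1}$; the only points requiring attention are the bookkeeping with $\kappa$ and the (easy but essential) fact that $B^{1}(\BA_f)$ stabilises $X_U^{\varepsilon}$, which is what lets the Eisenstein subspace split along the $\pm$-decomposition. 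Alternatively, if one prefers to avoid the reduction, the proof of \cite[Lem.~5.5]{CH} can be adapted with $X_U$ replaced throughout by $X_U^{\varepsilon}$, since it only manipulates the $B^{1}(\BA_f)$-action, which preserves $X_U^{\varepsilon}$.
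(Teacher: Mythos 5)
Your proposal is correct, and it takes a cleaner route than the paper does. The paper proves Lemma~\ref{ee3} by re-running the unipotent-conjugation and strong-approximation argument of \cite[Lem.~5.5]{CH} directly on the $\varepsilon$-component (the footnote in the paper's proof explicitly flags this). You instead reduce to Lemma~\ref{L:Ihara2.W} as a black box, by isolating what is actually new: (a) each $\rho\!\left(\matrixx{\varpi_v^{-\kappa i}}{}{}{1}\right)$ preserves the $\pm$-decomposition, because $\eta_{K_v/F_v}(\varpi_v^{\kappa i})=1$ — exactly the role of $\kappa$, trivial for $v$ split and forced by $\kappa=2$ for $v$ inert; (b) $B^1(\BA_f)$ lies in $\ker(\eta_{K/F}\circ\mathrm{N})$ and is normal in $B^\times(\BA_f)$, so right translation by any $\delta$ preserves $\mathrm{M}_2(A)_{\Eis}$, and $\mathrm{M}_2(A)_{\Eis}$, $\mathrm{S}_2(U,A)$, $\mathrm{S}_2(A)$ all split as $+\oplus-$ compatibly with $\CR$ (using \eqref{lev}); and (c) with zero coefficients inserted at indices not divisible by $\kappa$, $\CR$ is literally of the shape in Lemma~\ref{L:Ihara2.W}, whose arbitrary-coefficient hypothesis allows this. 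Restricting that injection to the $\varepsilon$-summand gives the claim. The reduction makes the dependency between the two lemmas transparent and avoids repeating the Ihara manipulation; the paper's self-contained re-derivation buys nothing extra. Both are valid, and your alternative of simply replacing $X_U$ by $X_U^\varepsilon$ throughout the proof of \cite[Lem.~5.5]{CH} is in fact what the paper does.
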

  \begin{proof}
    Let $f^\varepsilon \in {\rm S}_2(U,A)^\varepsilon$ be so that $\CR f^\varepsilon\in {\rm M}_2(A)^\varepsilon_\text{Eis}$. In the following\footnote{ See also the proof of~\cite[Lem.~5.5]{CH}.} we show that $f^\varepsilon\in {\rm M}_2(U,A)_{\text{Eis}}^\varepsilon$.
      
 Let $N(O_v)$ be the subgroup of upper triangular matrices. Note that $N(O_v)\subset \Stab(f^\varepsilon)$. Put $$u=\begin{pmatrix}
  \varpi_v^{-1}&\\&1
\end{pmatrix}$$ and 
$$P(u)=-\sum_{i=1}^s\beta_i\cdot \rho(u^{\kappa (i-1)}
).$$ Then $P\in \End({\rm M}_2(A)_{}^\varepsilon)$. 
By the assumption, we have $(1-\rho(u^{\kappa})P(u))f^\varepsilon\in {\rm M}_2(A)^\varepsilon_\text{Eis}$, and so 
\[(1-\rho(u^{\kappa n})P(u)^n)f^\varepsilon\in {\rm M}_2(A)^\varepsilon_\text{Eis}\]
for any $n\geq 1$. 

Note that $(1-\rho(u^{\kappa n})P(u)^n)f^\varepsilon$ and $\rho(u^{\kappa n})P(u)^nf^\varepsilon$ are fixed by $\begin{pmatrix}
  1&x\\&1
\end{pmatrix}$ for any $x\in F_v$ with $\varpi_v^{2n}x\in O_v$ since $u^{-2n}\begin{pmatrix}
  1&x\\&1
\end{pmatrix}u^{2n}=\begin{pmatrix}
  1& \varpi_v^{2n}x\\&1
\end{pmatrix}$. Thus $f^\varepsilon$ is fixed by $\begin{pmatrix}
  1&x\\&1
\end{pmatrix}$ for all $x\in F_v$. By smoothness, the same holds for $\begin{pmatrix}
  1&\\y&1
\end{pmatrix}$ for some $y\in F_v^\times$. Therefore, $f^\varepsilon$ is fixed by $w_0=\begin{pmatrix}
  &y^{-1}\\ -y&
\end{pmatrix}$, 
and in turn by 
\[\begin{pmatrix}
  1&\\x&1
\end{pmatrix}=w_0\begin{pmatrix}
  1&-y^2x\\&1
\end{pmatrix}w_0^{-1}\]for all $x\in F_v$. 

Hence $\SL_2(F_v)$ fixes $f^\varepsilon$ and $f^\varepsilon(tg)=f^\varepsilon(g)$ for all $t\in B^1(F_v)$. By strong approximation, $B^1$ is dense in $B^1(\BA_{f}^{(v)})$, thus $f^\varepsilon(tg)=f^\varepsilon(g)$ for all $t\in B^1(\BA_{f}^{(v)})$. It follows that $f^\varepsilon(tg)=f^\varepsilon(g)$ for all $t\in B^1(\BA_f)$ and hence $f^\varepsilon(gt)=f^\varepsilon(g)$ for all $t\in B^1(\BA_f)$, concluding the proof.  
  \end{proof}

\subsection{Set-up}\label{set:nvrs}
We introduce set-up for the rest of the section. 

\subsubsection{}\label{set}
{Let $\Sigma$ be a CM type of $K$.}
Let $\lambda$ be a self-dual Hecke character over $K$ of infinity type $\Sigma+\kappa(1-c)$ with $\kappa\in \BZ_{\geq 0}[\Sigma]$, {and $\phi_\lambda$ the associated {$\GL_{2}(F)$}-theta series, {which is a Hilbert modular newform} of weight $k=2\Sigma+2\kappa$ and level $U_0(\mathfrak{N}_{\lambda})$ for $\mathfrak{N}_{\lambda}:=D_{K/F}{\rm{N}}_{K/F}(\Cond(\lambda))$.} 

Let $B$ the totally definite quaternion algebra over $F$ so that its discriminant is given by $$D_{B}=\prod_{\eta_{K_{v}/F_v}(-1)=-1}\fp_v$$ 
(cf.~Lemma~\ref{lm:disc}). 
Let $G=B^\times$ and $\pi=\pi_\lambda$ be the cuspidal automorphic representation of $G(\BA)$ associated to $\phi_{\lambda}$ with conductor $\mathfrak{N}_{\lambda}$, {whose existence follows from the Tunnell--Saito theorem (cf.~\S\ref{S:conTS})}.

Let $\fp$ be a prime ideal of $F$ and $p$ the rational prime below $\fp$. Let $\fc$ be an ideal coprime to $p \mathfrak{N}_{\lambda}$. Fix a character $\chi_0$ of ${\rm{G}}_\fc$ of conductor $\fc$ that is trivial on $\prod_{v|\mathfrak{N}_{\lambda}^-, v\nmid\fp}K_v^\times$.
Let $\Xi_\fp$ be the set of finite order characters of $\Gamma_{\fp}$.
Let $m_\lambda=\ord_{\fp}\cond(\lambda_{\fp})$ be the exponential conductor of $\lambda_\fp$. For any $\nu\in\Xi_\fp$ with $\ord_\fp\cond(\nu_\fp)>m_\lambda$, the pair $(\pi_\lambda,\chi_0\nu)$ is self-dual with epsilon factor $+1$ (cf.~Proposition~\ref{char}{~with~$S_0=\{\fp\}\sqcup\{\fq\ |\ \fq|\fc\}$}). 
Define
$$
\Xi_{\lambda\chi_0,\fp}^{\pm}=\{ \nu \in \Xi_{\fp}| \, \varepsilon(\lambda\chi_0\nu)=\pm 1\}.
$$

{Let $\ell$ be a rational prime.} For $\pi=\pi_\lambda$, let $\Omega_\lambda:=\Omega_\pi$ be the {$\ell$-optimally normalized} period as in \S\ref{ss:ntv}.
In this section we study $\ell$-indivisibility of the $L$-{values} \[L^{\alg}(1/2,\pi_{\lambda,K}\otimes \chi_0\nu):=\frac{ \sqrt{|D_K|} \cdot L(1/2, \pi_{\lambda, K}\otimes \chi_0\nu)}{\Omega_{\lambda}},\] 
for primes $\ell$ such that {$(\ell, 2pD_{K/F})=1$} as $\nu\in \Xi_{\lambda\chi_0,\fp}^{+}$ varies.

For $\fp\nmid \mathfrak{N}_{\lambda}^-$, note that the test vector and the period $\Omega_\lambda$ does not depend on $\fp$ (cf.~Remark~\ref{uniform}) . Whenever $\fp|\mathfrak{N}_{\lambda}^-$, we will use the notation
$\Omega_\lambda^{\{\fp\}}$ to emphasise the dependence on $\fp$.

{
In the rest of the section, we will assume: 
\begin{itemize}
  \item $\fp\nmid D_{K/F}$,
  \item $(\ell, 2\fp D_{K/F})=1$.
\end{itemize}}

\subsubsection{Epsilon factors} 
Note that $$L(s,\pi_{\lambda,K}\otimes\chi_0\nu)=L(s{+1/2},\lambda\chi_0\nu)L(s{+1/2},\lambda(\chi_0\nu)^{-1}).$$ 
We have the following explicit description of the global epsilon factor $\varepsilon(\lambda\chi_0\nu)$ and hence that of $\Xi_{\lambda\chi_0,\fp}^{\pm}$.

\begin{lem}\label{epsi:twist}\label{twep}Suppose that $\nu\in \Xi_{\fp}$ has exponential conductor $n> m_\lambda$ at $\fp$. Then {we have}
   \[\varepsilon(\lambda\chi_0\nu)=\begin{cases}(-1)^{n-m_\lambda}\varepsilon(\lambda\chi_0),\quad &{\text{if $\fp$ is inert in $K$}}, \\
  \varepsilon(\lambda\chi_0),\quad &\text{if $\fp$ is  split in $K$}, \end{cases} 
  \] and \[\varepsilon(\lambda\chi_0)=\varepsilon(\lambda)\eta_{K/F}(\fc).\]\end{lem}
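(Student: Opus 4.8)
The statement is a computation of global root numbers, so the plan is to decompose the global epsilon factor into a product of local ones via $\varepsilon(\lambda\chi_0\nu)=\prod_v\varepsilon_v(\lambda\chi_0\nu,\psi_{K_v})$ for a fixed additive character $\psi$ of $\BA_K/K$, with the archimedean and unramified places contributing in a way independent of $\nu$ and $\chi_0$. Since $\lambda$, $\chi_0$, $\nu$ all have the same infinity type data ($\chi_0,\nu$ being finite order anticyclotomic, so unitary and trivial at $\infty$ up to the fixed piece coming from $\lambda$), the infinite places and all finite places away from $\fp\cdot\cond(\lambda)\cdot\cond(\chi_0)$ contribute a factor that is literally the same as in $\varepsilon(\lambda)$ (resp.\ $\varepsilon(\lambda\chi_0)$). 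So the first step is to isolate the places where $\nu$ is ramified — namely those dividing $\fp$ — and the places dividing $\fc=\cond(\chi_0)$, and reduce the claim to two local computations.

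For the first identity, I would localize at $\fp$. Since $\fp\nmid D_{K/F}$ and $\fp\nmid\mathfrak{N}_\lambda$ (so $\lambda_\fp$ has exponential conductor $m_0$, with $m_0=0$ if $\fp$ is coprime to $\cond(\lambda)$; in general $m_0=\ord_\fp\cond(\lambda_\fp)$), the local factor at $\fp$ of $\lambda\chi_0\nu$ depends only on $\lambda_\fp\nu_\fp$ (as $\chi_0$ is unramified at $\fp$). If $\fp$ is \emph{inert} in $K$, then $K_\fp/F_\fp$ is the unramified quadratic extension, $\nu_\fp$ is a ramified character of $K_\fp^\times$ of conductor $\fp_v^n$ (viewing $n$ as the exponent over $F_v$), and the relevant comparison is between $\varepsilon(\lambda_\fp\nu_\fp,\psi_{K_\fp})$ and $\varepsilon(\lambda_\fp,\psi_{K_\fp})$. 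The standard stability/twisting formula for epsilon factors of $\GL_1$ over a local field says that twisting a character $\mu$ of conductor $a$ by a highly ramified character $\chi$ of conductor $b>a$ multiplies the epsilon factor by $\chi(\varpi^{?})\cdot(\text{Gauss-sum ratio})$ whose absolute value is $1$; the \emph{sign} one picks up, for a self-dual local situation, is exactly $\eta_{K_\fp/F_\fp}(-1)$-type data raised to the conductor jump, giving $(-1)^{n-m_0}$ because $K_\fp/F_\fp$ is unramified quadratic so $\eta_{K_\fp/F_\fp}(-1)=-1$ when $q_v\equiv 3\pmod 4$... — more robustly, I would instead use that the \emph{pair} $(\pi_{\lambda,\fp},\chi_{0,\fp}\nu_\fp)$ is self-dual and invoke the Tunnell--Saito dichotomy \eqref{TSloc}: the local sign $\varepsilon(\pi_{\lambda,K,\fp}\otimes(\chi_0\nu)_\fp)$ flips precisely according to whether $B_\fp$ or its ramified form realizes the local branching, and tracking this as $n$ grows by $1$ at an inert place forces the alternation $(-1)^{n-m_0}$. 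For $\fp$ \emph{split}, $K_\fp=F_\fp\times F_\fp$, $\nu_\fp=(\mu,\mu^{-1})$, and the two local epsilon factors of $\lambda_\fp\nu_\fp$ and $\lambda_\fp(\chi_0\nu)_\fp^{-1}$ at the two places above $\fp$ multiply to something whose dependence on $\mu$ cancels (the product over $w\mid\fp$ of $\varepsilon(\lambda_w\mu_w)\varepsilon(\lambda_{\bar w}\mu_w^{-1})$ is independent of $\mu$), yielding $\varepsilon(\lambda\chi_0\nu)=\varepsilon(\lambda\chi_0)$.

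For the second identity $\varepsilon(\lambda\chi_0)=\varepsilon(\lambda)\eta_{K/F}(\fc)$, I would again compare $\varepsilon(\lambda\chi_0)$ and $\varepsilon(\lambda)$ place by place over $v\mid\fc$. Since $\chi_0$ is anticyclotomic of conductor $\fc$ with $\fc$ coprime to $\mathfrak{N}_\lambda=D_{K/F}\mathrm{N}_{K/F}(\Cond(\lambda))$, at each $v\mid\fc$ the character $\lambda_v$ is unramified, so the local epsilon factor of $\lambda_v\chi_{0,v}$ versus that of $\lambda_v$ differs by $\lambda_v(d_{K_v/F_v}\cdot(\text{conductor element of }\chi_{0,v}))$ times a Gauss-sum whose product over conjugate places (for split $v$) is $1$ and whose value (for inert or ramified $v$, but $\fc$ is coprime to $D_{K/F}$ so only split/inert occur) contributes $\eta_{K_v/F_v}(-1)^{\ord_v\fc}$ after using self-duality $\chi_0^c=\chi_0^{-1}$; multiplying over all $v\mid\fc$ and recalling $\eta_{K/F}(\fc)=\prod_{v\mid\fc}\eta_{K_v/F_v}(\varpi_v)^{\ord_v\fc}$ (split primes contribute $+1$, inert contribute $(-1)^{\ord_v\fc}$) gives the claimed factor $\eta_{K/F}(\fc)$. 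This is essentially a repackaging of Rohrlich-style root-number computations for anticyclotomic twists, and the key input is that $\chi_0$ being anticyclotomic forces $\chi_{0,v}|_{F_v^\times}=\eta_{K_v/F_v}$, pinning down the sign.

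\textbf{Main obstacle.} The delicate point is the inert-place computation: getting the \emph{exact} sign $(-1)^{n-m_0}$ rather than just $\pm1$, and making sure the normalization of the additive character $\psi$ and the different $d_{K_v/F_v}$ do not introduce an extra $\pm1$. I expect to handle this by reducing to the Tunnell--Saito dichotomy for the self-dual pair $(\pi_\lambda,\chi_0\nu)$ at $\fp$ — the abstract branching argument sidesteps the explicit Gauss-sum bookkeeping — together with the observation that at an inert prime the local conductor of the relevant $\GL_2$-twist jumps by $2$ as $n\mapsto n+1$, forcing the root number to alternate; the comparison with the base point $n=m_0$ then pins the sign. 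The split case and the $\fc$-computation are routine by comparison.
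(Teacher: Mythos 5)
Your overall plan — decompose $\varepsilon(\lambda\chi_0\nu)$ into local pieces, observe that the archimedean places and the places where $\nu,\chi_0$ are unramified contribute the same as for $\varepsilon(\lambda)$, and then do a local computation at $\fp$ and at the primes dividing $\fc$ — is exactly the paper's structure. But the crucial inert-place step is not carried out correctly.

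You propose, for $\fp$ inert, to sidestep the local epsilon factor computation by invoking the Tunnell--Saito dichotomy \eqref{TSloc} and ``tracking'' which quaternion algebra realizes the branching as $n$ grows. This is circular: the Tunnell--Saito theorem is an equivalence between the sign of $\varepsilon(\pi_{\lambda,K,\fp}\otimes\nu_\fp)$ and the nonvanishing of $\Hom_{K_\fp^\times}(\pi_{\lambda,\fp},\nu_\fp^{-1})$, so without an independent computation of \emph{which} of the two sides holds as $n\mapsto n+1$, Tunnell--Saito gives you nothing. To pin the alternation $(-1)^{n-m_0}$ you need either an explicit local root-number formula for characters of $K_\fp^\times$ twisted by ramified anticyclotomic characters (this is what the paper cites, from \cite[Prop.~3.7]{MS}), or an \emph{explicit} Tunnell--Saito result locating the test vector on $\GL_2(F_\fp)$ or its inner form for each $n$ (which the paper uses elsewhere via \cite[Prop.~3.1]{CST}, but not here). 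The Gauss-sum route you mention in passing and dismiss is the one that actually works.

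Two smaller points. First, at the unramified places $v\nmid\fp\fc$ it is not enough that $\nu_v$ is unramified: twisting by an unramified character $\nu_v$ still multiplies the local epsilon factor by $\nu_v(\varpi^{c+n})$, and it is the \emph{anticyclotomic} constraint $\nu_v\vert_{F_v^\times}=1$ (together with the symmetry of conductors under $c$) that forces this factor to be $1$, as the paper carefully records. Second, $\chi_0$ is trivial on $\wh{F}^\times$, so $\chi_{0,v}\vert_{F_v^\times}=1$, not $\eta_{K_v/F_v}$ as you wrote; the $\eta_{K/F}(\fc)$ in the conclusion comes from the conductor-jump sign $(-1)^{\ord_v\fc}=\eta_{K_v/F_v}(\varpi_v)^{\ord_v\fc}$ at each inert $v\mid\fc$, combined with the assumption $\prod_{v\mid\fc}\chi_{0,v}(\kappa)=1$ built into condition \eqref{Ass(D)}, not from any nontrivial central character of $\chi_0$.
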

\begin{proof}

  Let $\psi: F\bs \BA\ra \BC^\times$ be a non-trivial additive character and for each place $v$ of $F$, put $\psi_{K_v/F_v}=\psi_v\circ {\rm T}_{K_v/F_v}$.
Let $\mu=\lambda\chi_0$ and $\mu^*=\mu\cdot|\cdot|^{1/2}_{\BA_K}$ be the associated unitary character.

We have the following case by case analysis. 

  \begin{itemize}
    \item  [\tiny{$\bullet$}] If $v|\infty$ or $v|\cond(\mu_v^*)$ is non-split in $K/F$, then   $$\varepsilon(\mu_v^*,\psi_{K_v/F_v})=\varepsilon(\mu_v^*\nu_{v},\psi_{K_v/F_v})$$ since $\nu_{v}=1$.
    \item [\tiny{$\bullet$}] Suppose that $v\nmid\fp$ and $v$ is unramified in $K/F$. For $w|v$, let $c_w$ and $n_w$ be the exponential conductor of $\mu_w$ and $\psi_{K_w/F_w}$ respectively. Then we have
    \[\begin{aligned}
      \varepsilon(\mu_v^*\nu_{v},\psi_{K_v/F_v})=&\prod_{w|v}\varepsilon(\mu_w^*\nu_{w},\psi_{K_w/F_w})\\
      =&\prod_{w|v}\varepsilon(\mu_w^*,\psi_{K_w/F_w})\nu_{w}(\varpi_w^{c_w+n_{w}})\\
      =&\prod_{w|v}\varepsilon(\mu_w^*,\psi_{K_w/F_w})\\
      =&\varepsilon(\mu_v^*,\psi_{K_v/F_v}),
    \end{aligned}\]where the second equality follows from definition of the epsilon factor and the third from the fact that $\nu_v|_{F_v^\times}=1$.
    \item [\tiny{$\bullet$}] For $v=\fp$ we have 
    \[\begin{aligned}
      \varepsilon(\mu_v^*\nu_{v},\psi_{K_v/F_v})=\begin{cases}
        (-1)^{n-m_\lambda}\varepsilon(\mu_v^*,\psi_{K_v/F_v})\nu_{v}(\kappa) ,&\quad \text{$\fp$ is inert in $K/F$},\\
        \varepsilon(\mu_v^*,\psi_{K_v/F_v})\nu_{v}(\kappa) ,&\quad \text{$\fp$ splits in $K/F$},
      \end{cases}
    \end{aligned}\]
    where $\kappa\in K_\fp^\times$ such that $\ov{\kappa}=-\kappa$ (cf.~\cite[Prop.~3.7]{MS}).

  \end{itemize}
  
    In view of the above expressions for local epsilon factors, and noting {that} $\nu_{\fp}(\kappa)=1$ as $\kappa$ is torsion in $K_\fp^\times/F_{\fp}^\times$, the first part of the lemma follows. 
 The second part follows by a similar\footnote{Just note that under the condition on $\chi_0$, we have $\prod_{v\mid\fc}\chi_{0,v}(\kappa)=+1$.} argument.
  
\end{proof}
\subsection{Theta elements}\label{ss:lp}
Let the setting be as in \S\ref{set:nvrs}. In particular $\ell\neq p$ is a rational prime. 
\subsubsection{Definition}\label{theta:ele} 
 Let $f=\Psi(x_0\otimes\varphi)$ be the test vector and $\wh{\varphi}$ the $\ell$-adic avatar of $\varphi$ as in \S\ref{ss:ntv}, where we take $S_0=\fp\sqcup\{\fq\ |\ \fq|\fc\}$ in Definition~\ref{D:tv}. Recall that $\wh{\varphi}$ is $\ell$-optimal with respect to $x=\rho_{k,\ell}(\varsigma_{\ell})^{-1}x_0$ and $\pair{x,\wh{\varphi}(\cdot)}_{k-2\Sigma}\in O_{\pi,\ell}$. The latter  is the $\ell$-adic coefficient ring introduced in \S\ref{ss:ntv}, which we enlarge to also {contain} the image of $\chi_0$.

 {Let ${\rm G}_{\fc,n}:={\rm G}_{\fc\fp^n}$ be the Galois group of the  ring class extension of $K$ of conductor $\fc\fp^n$ and ${\rm{G}}_{\fc\fp^\infty}=\varprojlim_n {\rm G}_{\fc,n}$.
The geometrically normalised reciprocity law induces isomorphisms
\begin{equation}\label{art1}r:{\rm{P}}_{\fc,n}=K^\times\wh{F}^\times\bs\wh{K}^\times/\wh{O}_{K,\fc\fp^n}^{\times}\simeq {\rm{G}}_{\fc\fp^n}\end{equation}
and 
\begin{equation}\label{art2}r:{\rm{P}}_{\fc\fp^\infty}=K^\times\wh{F}^\times\bs\wh{K}^\times/\wh{O}_{K,\fc}^{\times,(\fp)}\simeq {\rm{G}}_{\fc\fp^\infty}.\end{equation}  }
{In this section, we will use the canonical identification $r$ and will omit the notation $r$.}
Denote by\[
[\cdot]_{\fc,n}: \wh{K}^\times\ra {\rm G}_{\fc,n},\quad t\mapsto [t]_{\fc,n}\]
the natural projection map. 
 For $t\in \wh{K}^\times$, let $x_{\fc,n}(t)=x_{\fc\fp^n}(t)$ be a family of CM points as in \S\ref{sppt}.

\begin{defn}
For integers $n\geq m_\lambda$, 
the $n$-th theta element associated to the self-dual pair $(f, \chi_0)$ is given by
\begin{align*}\Thetam_{\chi_0,n}(\pi_\lambda):=
\displaystyle 
 \displaystyle \sum_{t\in {\rm G}_{\fc,n}}\chi_0(t)\pair{x,\wh{\varphi}(x_{\fc,n}(t))}_{{k-2\Sigma}}\cdot [t]_{\fc,n} \in O_{\pi,\ell}[{\rm G}_{\fc,n}].
\end{align*} 
\end{defn}
The theta element is related to toric periods by the following.
\begin{lem}\label{theta-tp}
For any character $\nu\in \Xi_\fp$ of {exponential conductor $n:=\ord_\fp\cond(\nu)\geq \ord_\fp \mathfrak{N}_\lambda$}, we have
\[\nu(\Thetam_{\chi_0,n}(\pi_\lambda))=P(\varsigma^{(\fc\fp^n)},f,\chi_0\nu).\]where $P(\varsigma^{(\fc\fp^n)},f,\chi_0\nu)$ is the toric period defined as in \eqref{tperd}.
\end{lem}
\begin{proof}
Note that 
\[\begin{aligned}
  \nu(\Thetam_{\chi_0,n}(\pi_\lambda))=& \sum_{t\in {\rm G}_{\fc,n}}\pair{x,\wh{\varphi} (x_{\fc,n}(t))}_{k-2\Sigma}\cdot \nu\chi_0(t)\\
  =& \sum_{t\in {\rm G}_{\fc,n}}\pair{\rho_{k,\ell}(t_\ell \varsigma_{\ell}) x,{\varphi} (x_{\fc,n}(t))}_{k-2\Sigma}\cdot \nu\chi_0(t)\\
  =& \sum_{t\in {\rm G}_{\fc,n}}\pair{ x_0,{\varphi} (x_{\fc,n}(t))}_{k-2\Sigma}\cdot \nu\chi_0(t)\\
  =&P(\varsigma^{(\fc\fp^n)},f,\nu\chi_0), 
\end{aligned}\]where the second last equality follows from the fact that $x_0$ is a $K^\times$-invariant vector {(cf.~\S\ref{algrep})}.
\end{proof}

\subsubsection{}\label{reduce}
In view of the explicit Waldspurger formula as in ~Theorem~\ref{T:central.W} and Lemma~\ref{theta-tp}, we have an explicit relation between the theta element and the central Rankin--Selberg $L$-values. 
So it suffices to consider $\ell$-indivisibility of the specialisations $$\nu(\Theta_{\chi_0,n}(\pi_\lambda))$$ as $\nu \in \Xi_{\lambda\chi_0,\fp}^+$ varies.

{Recall that $\mathfrak{N}_{\lambda}$ is the conductor of $\pi_\lambda$.}
We often fix a Hecke character $\chi_0$ of ${\rm G}_{\fc}$ such that the following conditions hold, which are collectively referred to as the condition \eqref{Ass(D)}:

\begin{equation}\tag{D}\label{Ass(D)}
\begin{aligned}
\text{\tiny{$\bullet$}}\ &\text{$\fc=\cond(\chi_0)$ is coprime to $\ell \fp \mathfrak{N}_{\lambda}$ and only divisible by places inert in $K/F$, }\\
\text{\tiny{$\bullet$}}\ &\text{$\ell\nmid \prod_{v|\cond(\chi_0)}(q_v^2-1)$},\\ 
\text{\tiny{$\bullet$}}\ &\text{$\chi_{0,v}=1$ for $v|\mathfrak{N}_{\lambda}^-$ {but} $v\nmid \fp$.}\\
\end{aligned}
\end{equation}
\subsection{$(\ell,p)$ non-vanishing}\label{deg1}

\subsubsection{Inert case} 
\label{rk1int}
Let $\fp$ be a prime of $F$ that is inert in $K$.

For $\nu\in \Xi_\fp$ with $n:=\ord_{\fp}\cond(\nu_\fp)>m_\lambda=\ord_{\fp}\cond(\lambda_{\fp})$, we have
$$
\varepsilon(\lambda\chi_0\nu)=(-1)^{n-m_\lambda}\varepsilon(\lambda\chi_0) 
$$ by Lemma \ref{epsi:twist}. It follows that $\# \Xi_{\lambda\chi_0,\fp}^{+}=\infty$.

Our main result is the following. 

\begin{thm}\label{T:b.W}  Let $K$ be a CM field and $F$ its maximal totally real subfield. 
  Let $\lambda$ be a self-dual Hecke character over $K$ and {$\sigma_\lambda$} the associated $\GL_{2}(F)$-cuspidal automorphic representation of conductor $\mathfrak{N}_{\lambda}$. Let $\fp$ be a degree one prime of $F$ that is inert in $K$ and $\ell$ a prime such that $(\ell, 2\fp D_{K/F})=1$. Let $\chi_0$ be a finite order anticyclotomic Hecke character over $K$ such that the condition \eqref{Ass(D)} holds. Then for all but finitely many $\nu\in\Xi_{\lambda\chi_0,\fp}^{+}$, we have 
  $$
  {v}_{\ell}\left(\frac{ \sqrt{|D_K|} \cdot L(1/2, \sigma_{\lambda, K}\otimes\chi_0\nu)}{\Omega_{\lambda}^{\cdot}}\right)=0
  $$ for
\[\Omega_{\lambda}^{\cdot}:=\begin{cases}
  \Omega_{\lambda}&\quad \fp\nmid \mathfrak{N}_{\lambda}^-,\\
\Omega_{\lambda}^{\{\fp\}}&\quad \fp\mid \mathfrak{N}_{\lambda}^-.\\
  \end{cases}\]
  
  \end{thm}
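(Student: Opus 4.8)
\textbf{Proof plan for Theorem~\ref{T:b.W}.}
The strategy is to translate the $\ell$-indivisibility of the normalised $L$-value into the $\ell$-indivisibility of a suitable specialisation $\nu(\Theta_{\chi_0,n}(\pi_\lambda))$ of the theta element, and then to prove the latter via the equidistribution of special points together with a non-Eisenstein input specific to the CM setting. First I would invoke the explicit Waldspurger formula (Theorem~\ref{T:central.W}) combined with Lemma~\ref{theta-tp}: since $(\ell,2\fp D_{K/F})=1$ and the test vector $f$ (hence $\wh\varphi$) is $\ell$-optimal, all the auxiliary factors appearing in Theorem~\ref{T:central.W} — the $2$-power $\kappa_{\fc_1}^2/2^c$, the index $[O_{K,\fc_1}^\times:O^\times]$ (taken $=1$ by enlarging $\fc$ via the remark after Proposition~\ref{lbd:RS}), the Gauss-sum/local epsilon factors, and $\sqrt{|D_K|}$ — are $\ell$-adic units, so that
\[
v_\ell\!\left(\frac{\sqrt{|D_K|}\cdot L(1/2,\pi_{\lambda,K}\otimes\chi_0\nu)}{\Omega_\lambda^{\cdot}}\right)=0
\quad\Longleftrightarrow\quad
v_\ell\bigl(\nu(\Theta_{\chi_0,n}(\pi_\lambda))\bigr)=0 .
\]
Thus it suffices to show $\nu(\Theta_{\chi_0,n}(\pi_\lambda))$ is an $\ell$-adic unit for all but finitely many $\nu\in\Xi_{\lambda\chi_0,\fp}^+$.

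Next I would reduce mod $\varpi$ and set up the Fourier/equidistribution argument. Write $\bar f=\bar f^+ + \bar f^-$ for the reduction of the CM form $f$ on $X_U = X_U^+\sqcup X_U^-$. Since $f$ is $\ell$-optimal it is non-zero mod $\ell$, so by Lemma~\ref{ee} at least one component $\bar f^{\tilde\varepsilon}$ ($\tilde\varepsilon\in\{+,-\}$) is non-zero and hence non-Eisenstein on $X_U^{\tilde\varepsilon}$. The key observation is that the non-vanishing machinery of Proposition~\ref{C:Vatsal_Cornut}(a) applies precisely on the component $X_U^{\tilde\varepsilon}$, running over those $n$ of the parity for which the special points $x_{\fc,n}(a)$ land in $X_U^{\tilde\varepsilon}$, i.e. $(-1)^n\eta_{K/F}(\fc)\equiv\tilde\varepsilon$. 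Combining with the character-sum structure: for fixed $n$, $\nu(\Theta_{\chi_0,n}(\pi_\lambda))$ is a sum over $a\in {\rm G}_{\fc,n}$; decomposing $a = \tau\cdot a'$ with $\tau$ running over the fixed transversal $D_1$ of $\Delta/\Delta^{\alg}$ and using that $\nu$ factors through $\Gamma_\fp$, one can apply Lemma~\ref{ee3} (which promotes non-Eisensteinness through the relevant Hecke-operator combination $\CR$, with $\kappa=2$ since $\fp$ is inert) together with Proposition~\ref{C:Vatsal_Cornut}(a) to deduce that, after deleting finitely many $n$, the value $\nu(\Theta_{\chi_0,n}(\pi_\lambda))$ is non-zero mod $\varpi$ for some choice of $a$ — and, more to the point, for \emph{all} $\nu$ of conductor $\fp^n$ with $n$ large of the correct parity. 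Here the Tunnell--Saito input enters to guarantee that the relevant toric periods of the mod-$\varpi$ form are genuinely non-degenerate.

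Finally I would deal with the parity issue, which is the main obstacle. A priori the equidistribution argument only yields non-vanishing for $n$ of \emph{one} parity (the one selecting $X_U^{\tilde\varepsilon}$), whereas we want it for all $\nu\in\Xi_{\lambda\chi_0,\fp}^+$. The point — this is the indirect argument flagged in \S\ref{ss:lp-nvs} — is that $\nu\in\Xi_{\lambda\chi_0,\fp}^+$ forces, by Lemma~\ref{epsi:twist}, the conductor exponent $n$ to have the parity $(-1)^{n-m_0}\varepsilon(\lambda\chi_0)=+1$; and one checks this is exactly the parity for which $x_{\fc,n}(a)\in X_U^{\tilde\varepsilon}$, \emph{because} for the complementary parity $\nu\in\Xi_{\lambda\chi_0,\fp}^-$ and then $L(1/2,\pi_{\lambda,K}\otimes\chi_0\nu)=0$ by the functional equation, which would contradict the established non-vanishing on $X_U^{\tilde\varepsilon}$ unless $\tilde\varepsilon$ has precisely the parity compatible with $\varepsilon(\lambda\chi_0)$. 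So the two parities match up automatically, and the non-vanishing holds for all but finitely many $\nu\in\Xi_{\lambda\chi_0,\fp}^+$. The hardest technical point is verifying that the non-Eisenstein component $\bar f^{\tilde\varepsilon}$ survives the passage through $\CR$ and that the character $\chi_0$ (under condition \eqref{Ass(D)}, in particular $\ell\nmid\prod_{v\mid\cond(\chi_0)}(q_v^2-1)$ and $\chi_{0,v}=1$ at $v\mid\mathfrak{N}_\lambda^-$) does not create an $\ell$-divisibility obstruction — this is handled by Lemma~\ref{tnn}, applied at the inert primes dividing $\cond(\chi_0)$.
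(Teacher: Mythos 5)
Your overall plan matches the paper's approach in broad outline: reduce via the explicit Waldspurger formula (Theorem~\ref{T:central.W}, Lemma~\ref{theta-tp}, Proposition~\ref{lbd:RS}) to the $\ell$-indivisibility of $\nu(\Theta_{\chi_0,n}(\pi_\lambda))$; pass to the $\chi_0$-averaged CM form $H_\ell$ and use Lemma~\ref{tnn} with condition \eqref{Ass(D)} to see it is still nonzero mod $\varpi$; apply Lemma~\ref{ee} to get non-Eisensteinness on a component; feed the result into Proposition~\ref{C:Vatsal_Cornut}(a); and close with the indirect sign argument ($\tilde\varepsilon = -$ would produce non-vanishing of $L$-values that are forced to vanish by the functional equation, contradiction). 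That last step you describe correctly, and it is exactly how the paper pins down the parity.

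There is, however, a genuine gap in the middle, at precisely the point where the theta-element sum must be connected to Proposition~\ref{C:Vatsal_Cornut}. The value $\nu(\Theta_{\chi_0,n}(\pi_\lambda))=\sum_{a\in {\rm G}_{\fc,n}}\nu\chi_0(a)H_\ell(x_{\fc,n}(a))$ lives in $\mathbf{k}_\ell(\nu)$, a $p$-power extension of $\mathbf{k}_\ell$, since $\nu$ takes values in $\mu_{p^\infty}$ and only $\mu_{p^a}=\mathbf{k}_\ell^\times[p^\infty]$ of that lies in $\mathbf{k}_\ell$. You cannot simply "decompose $a=\tau\cdot a'$ and apply Proposition~\ref{C:Vatsal_Cornut}": that proposition gives an independence statement over a fixed coefficient field with a fixed tuple $(\beta_\tau)_\tau$, whereas your remaining inner sum over $\Gamma_{\fp,n}$ still carries the $\nu$-values. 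The paper's resolution is to apply $\mathrm{Tr}_{\mathbf{k}_\ell(\nu)/\mathbf{k}_\ell}$ to $\nu\chi_0(s^{-1})\cdot\nu(\Theta_{\chi_0,n}(\pi_\lambda))$ and use the observation \eqref{eqt} that the trace kills $p$-power roots of unity outside $\mu_{p^a}$; this collapses the $\Gamma_{\fp,n}$ sum to a $\mu_{p^a}$-weighted sum over $C_n=\Gamma_{\fp,n}[p^a]\simeq\mathbb{Z}/p^a\mathbb{Z}$, realised by right translation under the unipotents $\bigl(\begin{smallmatrix}1&y/p^a\\0&1\end{smallmatrix}\bigr)$. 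This is the definition \eqref{tv-tw} of $\tilde H_\ell$. One then re-expresses the unipotent average as a combination of \emph{diagonal} Hecke translates of $H_\ell$ (Equations~\eqref{sum0}--\eqref{sum}) and applies Lemma~\ref{L:Ihara2.W} — not Lemma~\ref{ee3}, which enters only later in the proof of Proposition~\ref{nv} — to promote non-Eisensteinness from $H_\ell$ to $\tilde H_\ell$. Only after this is \eqref{E:10'.W} of the required shape for Proposition~\ref{C:Vatsal_Cornut}(a), with $\beta_\tau=\chi_0(\tau)$ and $\tilde H_\ell$ as the test function. Without the trace and the unipotent-averaged form the equidistribution argument does not even begin, and the claim that one gets non-vanishing "for all $\nu$ of conductor $\fp^n$ with $n$ large of the correct parity" is unjustified; this is precisely the mechanism that upgrades a single non-vanishing instance to the uniform statement.
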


\begin{proof} 
Let $p$ be the rational prime below $\fp$.
 Choose a finite {extension} $\CO$ of $\BZ_\ell$ in $\ov{\BZ}_\ell$ so that $\CO$ contains $O_{\pi_{\lambda},\ell}$, the image of ${\chi_0}$ {and $\mu_{p}$}, where $\mu_{p^k}\subset \ov{\BQ}^\times$ is the group of $p^k$-th roots of unity. 
 
 {Let $s\geq 1$ be the integer such that $\CO^\times[p^{\infty}]=\mu_{p^s}\subset \ov{\BQ}$. Let $\varpi$ be a uniformiser of $\CO$ and put ${\bf k}_\ell=\CO/\varpi {\CO}\subset\ov{\BF}_{\ell}$. Since $\fp\nmid \ell$, the reduction map $\CO\ra {\bf k}_\ell, x\mapsto x\pmod{\varpi}$ induces an isomorphism  \begin{equation}\label{rootu}\CO^\times[p^\infty]\simeq {\bf k}_\ell^\times[p^\infty]\end{equation}} of abelian groups.

We have an $\ell$-optimal test vector $f=\Psi(x_0\otimes\varphi)$ as in~\S\ref{theta:ele}.
For the order $R$ in the definition of $f$, let
$$U=\{g\in \wh{R}^{\times}\ |\ g_\fl\equiv 1\pmod{\fl},\forall \fl|\ell\}.$$
In particular, $\pair{x,\wh{\varphi^{}} (g)}_{k-2\Sigma}\pmod{\varpi}\in {\rm M}_2(U, {\bf k}_\ell)$ is a non-zero CM form. 

Fix the identification between relative Picard groups and ring class Galois groups as in \eqref{art1} and \eqref{art2}.
Recall that we have 
\[\begin{aligned}
  \Thetam_{\chi_0,n}(\pi_\lambda)=&
\displaystyle 
 \displaystyle \sum_{t\in {\rm G}_{\fc,n}}\chi_0(t)\pair{x,\wh{\varphi}(x_{\fc,n}(t))}_{k-2\Sigma}\cdot [t]_{\fc,n}.
\end{aligned}\]
and in view of the explicit Waldspurger formula as in ~Theorem~\ref{T:central.W}, Proposition \ref{lbd:RS} and Lemma \ref{theta-tp}, it suffices to show that \begin{equation}\label{nvv}{v}_{\ell}(\nu(\Theta_{\chi_0,n}(\pi_\lambda))) = 0\end{equation} for all but finitely many $\nu \in \Xi_{\lambda\chi_0,\fp}^+$. 
{In the following we first reduce this non-vanishing to that of the right hand side of ~\eqref{avered} by considering the Galois average of toric periods.}
\vskip2mm
\underline{Galois average}.
{ Let $\nu\in \Xi_\fp$ be a character of conductor $\fp^n$ with {$n>m_\lambda$}.} Let {${\bf k}_\ell(\nu)\subset \ov{\BF}_{\ell}$} be the subfield generated by the values of $\nu$ over ${\bf k}_\ell$. Since ${\bf k}_\ell$ {contains $p$-th roots of unity}, $d_\nu:=[{\bf k}_\ell(\nu):{\bf k}_\ell]$ is a $p$-power. For a $p$-power root of unity $\zeta\in {\bf k}_\ell(\nu)$, we have
\begin{equation}\label{eqt}{\rm Tr}_{{\bf k}_\ell(\nu)/{\bf k}_\ell}(\zeta)=\begin{cases}0,& \zeta\not\in {\bf k}_\ell,\\
d_\nu\zeta,& \zeta\in {\bf k}_\ell.\end{cases}\end{equation}
Let ${\rm{G}}_{\fc,n}'$ be the subgroup of ${\rm G}_{\fc,n}$ given by 
\[{\rm G}_{\fc,n}'=\{t\in {\rm G}_{\fc,n}\ |\  \nu(t)\in \mu_{p^s}\}.\]

In view of \eqref{eqt} and \eqref{rootu}, for $a\in \wh{K}^\times$, we have 
\begin{equation}\label{avered}
{\rm Tr}_{{\bf k}_\ell(\nu)/{\bf k}_\ell}(\nu\chi_{0}(a^{-1})\cdot\nu(\Theta_{\chi_0,n}(\pi_\lambda))\pmod{\varpi})=d_\nu\cdot\sum_{t\in {\rm G}_{\fc,n}'}\chi_{0}\nu(t)\pair{x,\wh{\varphi}(x_{\fc,n}(at))}_{k-2\Sigma} \pmod{\varpi}.
\end{equation}
Hence the non-vanishing \eqref{nvv} is a consequence of that of the right hand side of \eqref{avered} for some auxiliary $a$, which will be the focus of the rest of the proof. 
\vskip2mm 
\underline{The structure of ${\rm G}_{\fc,n}'$}.
In this paragraph we analyse the structure of ${\rm G}_{\fc,n}'$ under our assumption that
$\deg \fp=1$. 
A key result:  
 ${\rm G}_{\fc,n}'$ only depends on $n$ and $\#{\rm G}_{\fc,n}'$ is independent of $n$.
 
 Let $\Delta\subset {\rm{G}}_{\fc\fp^\infty}$ be the torsion subgroup.
{We have the short exact sequence 
\begin{equation}\label{sact}0\ra\Delta\ra {\rm{G}}_{\fc\fp^\infty}\rightarrow\Gamma_{\fp}\ra 0,\end{equation}where the third map is natural quotient map{ and $\Gamma_\fp\simeq \BZ_p$.}
For $n>0$, let $\Gamma_{\fp,n}$ be the quotient of $\Gamma_\fp$ by image of $1+\fp^n\CO_{K_\fp}$ under the identification \eqref{art1} and the quotient map in \eqref{sact}. Then for $n\gg0$, \eqref{sact} induces a short exact sequence 
\begin{equation}\label{sact2}
0\ra\Delta\ra {\rm{G}}_{\fc,n}\ra \Gamma_{\fp,n}\ra 0,
\end{equation}where the map ${\rm{G}}_{\fc,n}\ra \Gamma_{\fp,n}$ is the natural quotient map.}
{Since $\deg\fp=1$, we have {\begin{equation}\label{degree1}\{\gamma\in \Gamma_{\fp,n}\ |\ \nu(\gamma)\in \mu_{p^s}\subset \ov{\BQ}\}=\Gamma_{\fp,n}[p^s].\end{equation}}}
Note that there exists $n_0$ such that for $n\geq n_0$, \eqref{sact2} holds and the natural map 
\begin{equation}\label{emb}\frac{1+\fp^{n-s}O_{K_\fp}}{(1+\fp^{n}O_{K_\fp})(1+\fp^{n-s})}\ra {\rm{P}}_{\fc,n}\simeq {\rm{G}}_{\fc,n}\end{equation}
is injective, {where the identification ${\rm{P}}_{\fc,n}\simeq {\rm{G}}_{\fc,n}$ is as in \eqref{art2}.} So we have canonical identification
\begin{equation}\label{sact3}\frac{1+\fp^{n-s}O_{K_\fp}}{(1+\fp^{n}O_{K_\fp})(1+\fp^{n-s})}\simeq \Gamma_{\fp,n}[p^s]\end{equation} under the the map \eqref{emb} and the quotient map in \eqref{sact2}.
Then it follows that
\[{\rm{G}}_{\fc,n}'=\Delta\times \frac{1+\fp^{n-s}O_{K_\fp}}{(1+\fp^{n}O_{K_\fp})(1+\fp^{n-s})}\subset {\rm G}_{\fc,n}.\]
{Hence, ${\rm{G}}_{\fc,n}'$ depends only on $n$ and its cardinality is independent of $n$.}

Also note that 
$$\Delta=D_1\times \Delta^{\text{alg}},$$ where $D_1$, $\Delta^{\text{alg}}$ are as in \S\ref{SS:Uniform_CM}. {Define a subgroup \begin{equation}\label{subgp}{\rm H}_{\fc,n}:=\Delta^{\alg}\cdot \frac{1+\fp^{n-s}O_{K_\fp}}{(1+\fp^{n}O_{K_\fp})(1+\fp^{n-s})}\subset {\rm{G}}_{\fc,n}',\end{equation} which is the image of an open compact subgroup of $K_S^\times/F_S^\times$ for some finite set of places $S$ of $F$.}

\vskip2mm
\underline{A Hecke-modification of the test vector {outside $\fp$}}. {In this paragraph we introduce a Hecke-modification of the test vector at places dividing $\fc$, which is eigen under the $\Delta^{\text{alg}}$-action.}

Put $V_0^{(\fp)}=\prod_{v|\fc}O_{K_v}^\times/O_{K_v,\fc_v}^\times$. Note that $\# V_0^{(\fp)}$ is an $\ell$-adic unit.
Define 
\begin{equation}\label{partei}H_\ell(g)=\frac{1}{\# V_0^{(\fp)}}\sum_{u\in V_0^{(\fp)}}\chi_0(u)
  \pair{x,\wh{\varphi^{}} (g\cdot \iota_{\varsigma_\fc^{(\fc)}}(u))}_{k-2\Sigma},\quad g\in \wh{B}^\times.\end{equation} In view of the assumption 
$\ell\nmid\prod_{v|\fc}(q_v^2-1)$, Lemma \ref{tnn} and \cite[Prop.~3.12]{CST}\footnote{{Note that \cite[Prop.~3.12]{CST} implies that the $\gamma$ in Lemma \ref{tnn} is an $\ell$-adic unit since $\ell\nmid\prod_{v|\fc}q_v(q_v^2-1)$.}}, 
it follows that $H_\ell(g)\pmod{\varpi}\in {\rm M}_2(U', {\bf k}_\ell)$ is also a non-zero CM form, where   
  $U' \subset \wh{B}^\times$ is an open compact subgroup such that $U'^{(\fc)}=U^{(\fc)}$. 
 By the definition of $\Delta^{\alg}$, we have 
 \begin{equation}\label{pav0}\sum_{t\in {\rm G}_{\fc,n}}\chi_0\nu(t)\pair{x,\wh{\varphi}(x_{\fc,n}(at))}_{k-2\Sigma} \equiv\sum_{t\in {\rm G}_{\fc,n}}\chi_0\nu(t)H_\ell(x_{\fc,n}(at))\pmod{\varpi},\qquad \forall a\in \wh{K}^\times\end{equation}
 and 
\begin{equation}\label{pav}\sum_{t\in {\rm G}_{\fc,n}'}\chi_{0}\nu(t)\pair{x,\wh{\varphi}(x_{\fc,n}(at))}_{k-2\Sigma}\equiv\sum_{t\in {\rm G}_{\fc,n}'}\chi_{0}\nu(t)H_\ell(x_{\fc,n}(at))\pmod{\varpi},\qquad \forall a\in \wh{K}^\times.\end{equation}

In view of the definition of $f$, note that $H_\ell(x_{\fc,n}(\cdot))$ is $\Delta^{\text{alg}}$-eigen with eigen-character $\chi_0^{-1}$, and 
$\#\Delta^{\text{alg}}$ is an $\ell$-adic unit by our assumption on $\ell$. Thus for $\nu\in \Xi_\fp$ with $n=\ord_\fp\cond(\nu)\geq n_0$, we have
\begin{equation}\label{pav1}\sum_{t\in {\rm G}_{\fc,n}'}\chi_{0}\nu(t)H_\ell(x_{\fc,n}(at))\equiv \#\Delta^{\text{alg}}\sum_{t\in D_1\times \frac{1+\fp^{n-s}O_{K_\fp}}{(1+\fp^{n}O_{K_\fp})(1+\fp^{n-s})}}\chi_{0}\nu(t)H_\ell(x_{\fc,n}(at))\pmod{\varpi},\qquad a\in \wh{K}^\times.\end{equation}
\vskip2mm
\underline{A partition of $\Xi_{\fp}$ and Hecke-modification of the test vector at $\fp$}.
In this paragraph we partition $\Xi_{\fp}$ into finitely many subfamilies indexed by {primitive additive characters 
$\mu: \BZ/p^s\BZ\twoheadrightarrow  \mu_{p^s}\subset \ov{\BQ}$}, and introduce a Hecke-modification $H_{\ell,\mu}$ of $H_\ell$ at $\fp$ which is eigen under the action of  $\frac{1+\fp^{n-s}O_{K_\fp}}{(1+\fp^{n}O_{K_\fp})(1+\fp^{n-s})}$.

We always identify $F_{\fp}$ with $\BQ_p$ and the matrices appearing in the Hecke action are seen as elements of {$B^\times(F_\fp)\simeq\GL_2(\BQ_p)$, where the isomorphism is as in \S\ref{S:totq}}. We choose a local uniformiser of $F_\fp$ to be $p$.

Let $\{1,\alpha\}$ be a relative integral basis of $O_{K_\fp}$ as in ~\S\ref{spepts}. Note that there  exists a sufficiently large integer $n_1\geq s$ such that for any integer $n\geq n_1$ the following holds: 
we have an isomorphism \[\BZ/p^s\BZ\simeq \frac{1+\fp^{n-s}O_{K_\fp}}{(1+\fp^{n}O_{K_\fp})(1+\fp^{n-s})},\quad u\mapsto 1+\alpha p^{n-s} u.\] For any $u\in \BZ/p^s\BZ=O_\fp/\fp^s$, we have
\begin{equation}\label{CMH}
      \iota(1+\alpha p^{n-s}u)\varsigma_{\fp}^{(n)}\equiv \varsigma_{\fp}^{(n)}\begin{pmatrix}
        1&\frac{u}{p^s}\\& 1
      \end{pmatrix}\pmod{U_0(\mathfrak{N}_{\lambda, \fp})_\fp}
    \end{equation} 
    as elements in $\GL_2(F_\fp)$. {Put}
    \[\nu(1+\alpha p^{n-s}u)=\zeta_\nu^u\] where \[\zeta_\nu:=\nu(1+\alpha p^{n-s})\in \mu_{p^s}\subset \ov{\BQ}^\times\] 
    {is a primitive $p^s$-th root of unity}.

In view of \eqref{CMH} and the definition of $\zeta_\nu$, it thus follows that  

\begin{equation}\label{unco}
\sum_{t\in\frac{1+p^{n-s}O_{K_\fp}}{(1+p^{n}O_{K_\fp})(1+\fp^{n-s})}}\nu(t)H_\ell(x_{\fc,n}(at))= \sum_{y\in\BZ/p^s\Z}\zeta_\nu^y H_{\ell}\left(x_{\fc,n}(a\tau)
\begin{pmatrix}
1&\frac{y}{p^s}\\
0&1\\
\end{pmatrix}\right)\qquad \forall a\in \wh{K}^\times.
\end{equation}
Consequently, in view of \eqref{avered}, \eqref{pav} and \eqref{pav1}, for {$n> m_0:=\max\{n_0, n_1\}$} and {$a\in \wh{K}^\x$}, we have 
\begin{equation}\label{E:9.W0}\begin{aligned}
&{\rm Tr}_{{\bf k}_\ell(\nu)/{\bf k}_\ell}(\nu\chi_{0}(a^{-1})\cdot\nu(\Theta_{\chi_0,n}(\pi_\lambda)\pmod{\varpi})\\
=&\#\Delta^{\text{alg}}d_\nu\cdot \sum_{\tau\in D_1}{\chi_0}(\tau)\sum_{y\in\BZ/p^s\Z}\zeta_\nu^y H_{\ell}\left(x_{\fc,n}(a\tau)
\begin{pmatrix}
1&\frac{y}{p^s}\\
0&1\\
\end{pmatrix}\right)\pmod{\varpi}).
\end{aligned}
\end{equation}

{Let $$\mu:\BZ/p^s\BZ\twoheadrightarrow \mu_{p^s}\subset\ov{\BQ}^\times$$ be a given character.} Let $\Xi_{\fp}(\mu)\subset \Xi_{\fp}$ be the subset consisting of characters $\nu$ such that $\ord_\fp\cond(\nu)> m_0$ and $\zeta_\nu^{\cdot}=\mu(\cdot)$. Define 
$$\Xi_{\lambda\chi_0, \fp}^\pm(\mu)=\Xi_{\lambda\chi_0, \fp}^\pm\cap \Xi_{\fp}(\mu).$$ Note that $\Xi_{\lambda\chi_0, \fp}^+(\mu)$ and $\Xi_{\lambda\chi_0, \fp}^-(\mu)$ have infinitely elements by Lemma~\ref{epsi:twist} since $\fp$ is inert.

Define
\begin{equation}\label{tv-tw}
 H_{\ell,\mu}(g)=\frac{1}{p^s}\sum_{y\in\BZ/p^s\BZ}\mu(y)\pi_{\lambda,\fp}\left(
\begin{pmatrix}
1& \frac{y}{p^s}\\
0 &1\\
\end{pmatrix}
\right)
H_{\ell}(g),\quad  g\in\wh{B}^\times.
\end{equation}
{Note that for each $\nu\in \Xi_\fp(\mu)$ and $n:=\ord_\fp\cond(\nu)$, the form 
$H_{\ell,\mu}(x_{\fc,n}(\cdot))$ is $(\chi_0\nu)^{-1}$-eigen under the action of ${\rm H}_{\fc,n}$ as in \eqref{subgp}.} {In view of the eigen-property,  
the remark just below \eqref{subgp} and Lemma~\ref{locavetest}, it follows that 
$$H_{\ell,\mu}\neq 0.$$}
By definition, we have  
$ H_{\ell,\mu}\pmod{\varpi}\in {\rm M}_2(\wtd{U}',{\bf k}_\ell(\nu))$ for
$\wtd{U}':=U'^{(\fp)}\cdot U_1(\fp^{k_0})$. Let $\nu\in \Xi_{\lambda\chi_0, \fp}(\mu)$. 
Then in view of \eqref{E:9.W0}, we have 
\begin{equation}\label{E:10'.W} 
{\rm Tr}_{{\bf k}_\ell(\nu)/{\bf k}_\ell}(\nu\chi_0(a^{-1})\cdot\nu(\Theta_{\chi_0,n}(\pi_\lambda))\equiv \#\Delta^{\alg} p^s d_\nu\cdot\sum_{\tau\in D_1}{\chi_0}(\tau) H_{\ell,\mu}(x_{\fc,n}(a\tau))\pmod{\varpi}.
\end{equation}
\vskip2mm
\underline{Being non-Eisenstein}.
We next show that $ H_{\ell,\mu}\pmod{\varpi}$ is non-Eisenstein. 

{To begin, \eqref{lev} holds by Lemma \ref{lm:lev}.} 
Therefore, since $\pi_\lambda$ is CM and $(\ell,2D_{K/F})=1$, the non-vanishing of $H_\ell\pmod{\varpi}$ implies that it is non-Eisenstein by Lemma \ref{ee}. A simple calculation shows that
\begin{equation}\label{sum0}\begin{aligned}
    \sum_{t\in(\Z/p^s\Z)^\x}\rho
  \left(\begin{pmatrix}
  t & \\
   & 1\\
  \end{pmatrix}\right) H_{\ell,\mu}=&p^{s}H_{\ell}-{p^{s-1}}\rho\left(\begin{pmatrix}
      p^{-1}& \\ & 1
  \end{pmatrix}\right)U_\fp 
      H_{\ell}.
  \end{aligned}\end{equation} 
 If $\lambda_\fp$ is {unramified}, we may further write the above equality as 
  \begin{equation}\label{sum}\sum_{t\in(\Z/p^s\Z)^\x}\rho
  \left(\begin{pmatrix}
  t & \\
   & 1\\
  \end{pmatrix}\right) H_{\ell,\mu}= p^{s}H_{\ell}-{p^{s-1}}\rho\left(\begin{pmatrix}
    p^{-1}& \\ & 1
\end{pmatrix}\right)T_\fp 
    H_{\ell}+{p^{s-1}}\rho\left(\begin{pmatrix}
      p^{-2}& \\ & 1
  \end{pmatrix}\right) 
      H_{\ell}.\end{equation}
Since $H_\ell$ is {either} a $U_\fp$-eigenform or a $T_\fp$-eigenform with an $\ell$-integral eigenvalue and {$H_\ell\pmod{\varpi}$ is non-Eisenstein}, it follows that $ H_{\ell,\mu}\pmod{\varpi}$ is non-Eisenstein by Lemma \ref{L:Ihara2.W}. Then we may suppose that $ H_{\ell,\mu}^{\varepsilon}\pmod{\varpi}$ is non-Eisenstein for some $\varepsilon\in \{+,-\}$.
\vskip2mm
\underline{The non-vanishing}.
In view of \eqref{E:10'.W} and Proposition \ref{C:Vatsal_Cornut}, it thus follows that 
$$
{v}_{\ell}(\nu(\Theta_{\chi_0,n}(\pi_\lambda))) = 0
$$
for all but finitely many {$\nu \in \Xi_{\lambda\chi_0, \fp}^{{\varepsilon_0}}(\mu)$} such that $n:=\ord_\fp\cond(\nu)$ satisfies 
  \begin{equation}\label{comsign}\varepsilon=(-1)^n\eta_{K/F}(\fc).\end{equation} 
Note that  $\varepsilon_0:=\varepsilon(\lambda\chi_0\nu)=(-1)^{n-m_{\lambda}}\eta_{K/F}(\fc)\varepsilon(\lambda)$ 
  by Lemma~\ref{epsi:twist}. So the condition \eqref{comsign} is equivalent to \begin{equation}\label{comsign2}\varepsilon_0=(-1)^{m_\lambda}\varepsilon(\lambda){\varepsilon}.\end{equation}

If $\varepsilon_0= -$, then $$L(1, \lambda\chi_0\nu)\neq  0$$ for all but finitely many 
$\nu \in \Xi_{\lambda\chi_0, \fp}^{-}(\mu)$. But these central $L$-values vanish since $\varepsilon(\lambda\chi_0\nu)=-1$ for any 
$\nu\in \Xi_{\lambda\chi_{0},\fp}^{-}(\mu)$ and $\#\Xi_{\lambda\chi_{0},\fp}^{-}(\mu)=\infty$. Therefore $\varepsilon_0=+$ and the parity $n$ in Proposition \ref{C:Vatsal_Cornut} satisfies 
$(-1)^{n-m_{\lambda}}\varepsilon(\lambda\chi_{0})=+1$. Moreover, it follows from \eqref{comsign2} that ${H}_{\ell,\mu}^{\sgn((-1)^{m_{\lambda}}\varepsilon(\lambda))}\pmod{\varpi}$ is non-Eisenstein and ${H}_{\ell,\mu}^{\sgn(-(-1)^{m_{\lambda}}\varepsilon(\lambda))}\pmod{\varpi}$ is Eisenstein, {where $\sgn(\pm 1):=\pm$.}
\end{proof}

\begin{remark}\label{geq1}If $\deg\fp>1$, then the group ${\rm G}_{\fc,n}'$ appearing in \eqref{avered} is more complex: {it has a significant contribution at $\fp$ given by $\ker(\nu)$ since $\Gamma_{n,\fp}$ is a direct sum of $\deg\fp$-many cyclic $p$-groups}. In particular $\#{\rm G}_{\fc,n}'$ also depends on $n$.
Consequently, the existence of an analogous common test vector $H_{\ell,\mu}$ as in the above proof is not apparent, and it will be considered in the sequel \cite{BHTY}.
\end{remark}
\subsubsection{Restriction of test vector to components of Shimura set}\label{S:restest}

We describe some consequences of the proof of Theorem \ref{T:b.W}, which lead to non-Eisenstein-ness of the test vector on specific component of the Shimura set determined by the epsilon factor and 
will be crucially used in our non-vanishing arguments.

Let $\varphi\in {\rm M}_k(\wh{R}^\times,\ov{\BQ})$ be the test vector as in \S\ref{theta:ele} associated to $\lambda$. {Suppose that $\fp$ is coprime to $\mathfrak{N}_{\lambda}^-$. Then $\varphi$ is independent of $\fp\fc$ in the sense of ~Remark~\ref{uniform}.} 
{Let $\varphi^\pm$ be the restriction of $\varphi$ to $X_{\wh{R}}^\pm$.}
\begin{prop}\label{nv}
If $\varepsilon(\lambda)=\pm1$, then $\varphi^\pm\neq 0$ and $\varphi^\mp = 0$.
\end{prop}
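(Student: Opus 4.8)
The plan is to combine the analysis carried out in the proof of Theorem~\ref{T:b.W} with a multiplicity-one argument exploiting the dihedral nature of $\pi_\lambda$. Since $\varphi$ is a test vector it is nonzero, so at least one of $\varphi^{+},\varphi^{-}$ is nonzero; the substance of the proposition is that $\varphi$ is supported on a \emph{single} component $X_U^{\varepsilon_0}$ (with $U=\wh R^\times$) and that $\varepsilon_0=\varepsilon(\lambda)$. For the first point I would use that $\sigma_\lambda$ is the automorphic induction of $\lambda$ from $K$, whence $\sigma_\lambda\cong\sigma_\lambda\otimes\eta_{K/F}$, and therefore by Jacquet--Langlands $\pi_\lambda\cong\pi_\lambda\otimes\chi$ with $\chi:=\eta_{K/F}\circ{\rm N}\colon G(\BA)\to\{\pm1\}$, ${\rm N}$ the reduced norm. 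Introduce the operator $T$ on ${\rm M}_k(\wh B^\times,\ov\BQ)$ defined by $(T\varphi')(h):=\chi(h)\varphi'(h)$. It is well defined: $\chi$ is trivial on $B^\times(F)$ (reduced norms from $B^\times(F)$ are totally positive because $B$ is totally definite, and the finite part of the Hecke character $\eta_{K/F}$ is trivial on totally positive elements of $F^\times$), trivial on the centre $\wh F^\times$, and trivial on $\wh R^\times$ since ${\rm N}(\wh R^\times)\subset{\rm N}(\wh K^\times)$ by \eqref{lev} (cf.~Lemma~\ref{lm:lev}).

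One then checks $T\rho(g)=\chi(g)\rho(g)T$ and $T^2=\mathrm{id}$, so $T$ realises the isomorphism $\pi_\lambda\otimes\chi\cong\pi_\lambda$; moreover $T$ preserves the level $\wh R^\times$ (again by \eqref{lev}) and the $\iota_{\varsigma_v}K_v^\times$-invariance at places $v\mid\mathfrak{N}_\lambda^-$, because there $\chi_v(\iota_{\varsigma_v}(t))=\eta_{K_v/F_v}({\rm N}_{K_v/F_v}(t))=1$. Hence $T$ preserves the one-dimensional space of test vectors (Lemma~\ref{lm:t-mult}), so $T\varphi=\pm\varphi$; as $T\varphi=\chi\cdot\varphi$ with $\chi\equiv+1$ on $X_U^+$ and $\chi\equiv-1$ on $X_U^-$, this forces $\varphi$ to be supported on exactly one component $X_U^{\varepsilon_0}$, with $\varphi^{\varepsilon_0}\neq0$ and $\varphi^{-\varepsilon_0}=0$. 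To see $\varepsilon_0=\varepsilon(\lambda)$ I would invoke the proof of Theorem~\ref{T:b.W}: choose, by Chebotarev, a degree-one prime $\fp$ of $F$ inert in $K$ and coprime to $6\ell\mathfrak{N}_\lambda$ for a suitable auxiliary prime $\ell$ (so $\lambda$ is unramified at $\fp$ and $m_0=0$), and run that proof with this $\fp$ and $\chi_0=1$, which satisfies \eqref{Ass(D)}. That proof produces a weight-two form $\wtd H_\ell^{\varepsilon(\lambda)}\bmod\varpi$ which is non-Eisenstein, in particular nonzero; since for $\chi_0=1$ the form $\wtd H_\ell$ is obtained from the $\ell$-adic avatar $\wh\varphi$ of $\varphi$ by the unipotent twist \eqref{tv-tw}, an operation that preserves the partition $X_U=X_U^+\sqcup X_U^-$, it follows that $\wh\varphi$, hence $\varphi$, is nonzero on $X_U^{\varepsilon(\lambda)}$. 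Comparing with the single-component statement yields $\varepsilon_0=\varepsilon(\lambda)$, i.e.\ $\varphi^{\varepsilon(\lambda)}\neq0$ and $\varphi^{-\varepsilon(\lambda)}=0$.

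The main obstacle I expect is the verification that $T$ preserves the space of test vectors: this rests on \eqref{lev}, i.e.\ on the computation of the local reduced-norm images ${\rm N}(R_v^\times)$, which is delicate at the places ramified in $K/F$, together with the multiplicity-one property of the test vector. A secondary point requiring care is confirming that the auxiliary operators appearing in the proof of Theorem~\ref{T:b.W} (the $\zeta_\nu$-twist, and, for a general $\chi_0$, the averaging over $\prod_{v\mid\fc}O_{K_v}^\times$) respect the decomposition into the components $X_U^{\pm}$, so that non-vanishing of $\wtd H_\ell^{\varepsilon(\lambda)}$ transfers faithfully to non-vanishing of $\varphi^{\varepsilon(\lambda)}$; once these two points are in place the proposition follows immediately.
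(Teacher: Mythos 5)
Your proposal is correct, and it is a genuinely different argument from the paper's. The paper works entirely mod~$\ell$: invoking the proof of Theorem~\ref{T:b.W} for an auxiliary degree-one inert prime $\fp$ with $\chi_0=1$, it uses the \emph{Eisenstein} half of the conclusion there (namely, that $\wtd H_\ell^{-\varepsilon(\lambda)}$ is Eisenstein), then feeds this through the identity \eqref{eq:tw} and Lemma~\ref{ee3} to deduce that $\langle x,\wh\varphi^{-\varepsilon}\rangle_x \bmod\varpi$ is Eisenstein, hence zero by Lemma~\ref{ee}; finally, to promote this single-$\ell$ vanishing to the characteristic-zero statement $\varphi^{-\varepsilon}=0$, it takes $\ell\gg 0$ (so that $\ell\Sigma>k-2\Sigma$ and $\rho_{k,\ell}\bmod\varpi$ is irreducible) and derives a contradiction from $\ell$-optimality. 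Your argument instead exploits the dihedral symmetry $\pi_\lambda\cong\pi_\lambda\otimes(\eta_{K/F}\circ{\rm N})$ to build the involution $T$, uses automorphic multiplicity one plus the local multiplicity-one of the test vector (Lemma~\ref{lm:t-mult}) and the norm computation \eqref{lev} (Lemma~\ref{lm:lev}) to get $T\varphi=\pm\varphi$, and thereby obtains the single-component statement purely representation-theoretically, with no mod-$\ell$ input and no $\ell\gg 0$ step; you then only need the \emph{non}-Eisenstein half of Theorem~\ref{T:b.W}'s proof (that $\wtd H_\ell^{\varepsilon(\lambda)}$ is non-Eisenstein) to pin down the sign. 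In exchange, you must carry out the verification that $T$ lands back in the test-vector line, which rests on the same Lemma~\ref{lm:lev} the paper uses for \eqref{lev}, plus the observation that $\eta_{K_v/F_v}$ kills local norms so $T$ respects the $\iota_{\varsigma_v}K_v^\times$-invariance at $v\mid\mathfrak{N}_\lambda^-$; your transfer from $\wtd H_\ell$ to $\varphi$ via \eqref{tv-tw} is sound since unipotent translations and the pairing with $x$ both commute with the partition $X_U=X_U^+\sqcup X_U^-$. Net effect: your route makes the dihedral structure of the problem explicit and is shorter for the vanishing half of the proposition, while the paper's route stays entirely within the mod-$\ell$ machinery already set up for Theorem~\ref{T:b.W}.
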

\begin{proof}
In the following we choose an auxiliary prime $\ell $ coprime to $2D_{K/F}$ and $\Cond(\lambda)$. Without loss of generality, we assume 
$\varphi$ to be $\ell$-optimal since $\varphi$ is {non-zero}.

As in Theorem \ref{T:b.W}, let {$\fp\nmid \ell$} be a degree one prime of $F$ inert in $K$, and let $\chi_{0}=1$, and $\fc=1$. {We also assume $\pi_{\lambda,\fp}$ is unramified.}

We identify $F_{\fp}$ with $\BQ_p$ and the matrices appears in the Hecke action are seen as element of {$B^\times(F_\fp)\simeq\GL_2(\BQ_p)$ as in \S\ref{S:totq}.} 
In view of \eqref{sum} and the fact that $T_\fp$ has eigenvalue $0$ on newforms in the unramified supercuspidal representation $\pi_{\lambda,\fp}$, it follows that
\[\sum_{t\in(\Z/p^s\Z)^\x}\rho
 \left( \begin{pmatrix}
  t & \\
   & 1\\
  \end{pmatrix}\right)\ H_{\ell,\mu}\equiv p^{s}H_{\ell}+{p^{s-1}}\rho\left(\begin{pmatrix}
      p^{-2}& \\ & 1
  \end{pmatrix}\right) 
      H_{\ell}\pmod{\varpi}.\]
Recall that \[
      H_{\ell}:= \pair{x,\wh{\varphi}}_{k-2\Sigma},\]{where we may and do  suppose that $x=x_0$ in \S\ref{S:ellopt} since $\ell$ is auxiliary.}
Therefore, 

\begin{equation}\label{eq:tw}
p^{-s}\sum_{t\in(\Z/p^s\Z)^\x}\rho
\left(\begin{pmatrix}
t & \\
 & 1\\
\end{pmatrix}\right)
 H_{\ell,\mu}^{}\equiv \pair{x,\left(1+\frac{1}{p}\rho \left(\begin{pmatrix}
  p^{-2} & \\
   & 1\\
  \end{pmatrix}\right)\right)\wh{\varphi}}_{k-2\Sigma}\pmod{\varpi}.
  \end{equation}

  Put $\varepsilon$ for the sign of $\varepsilon(\lambda)$. As seen in the proof of Theorem~\ref{T:b.W}, ${H}_{\ell,\mu}^{-\varepsilon}\pmod{\varpi}$ is Eisenstein but ${H}_{\ell,\mu}^{\varepsilon}\pmod{\varpi}$ is non-Eisenstein.
 Moreover, $\sum_{t\in(\Z/p^s\Z)^\x}\rho
 \left(\begin{pmatrix}
 t & \\
  & 1\\
 \end{pmatrix}\right)
  H_{\ell,\mu}^{}\pmod{\varpi}$ is non-Eisenstein. Therefore the proof shows that 
 $\sum_{t\in(\Z/p^s\Z)^\x}\rho
\left(\begin{pmatrix}
 t & \\
  & 1\\
 \end{pmatrix}\right)
  H_{\ell,\mu}^{-\varepsilon}\pmod{\varpi}$ is Eisenstein, and in turn \[\sum_{t\in(\Z/p^s\Z)^\x}\rho
 \left(\begin{pmatrix}
 t & \\
  & 1\\
 \end{pmatrix}\right)
  H_{\ell,\mu}^{\varepsilon}\pmod{\varpi}\] non-Eisenstein.

In view of \eqref{eq:tw} and Lemma \ref{ee3}, it follows that $\pair{x,\wh{\varphi^\varepsilon}}_{k-2\Sigma} \pmod{\varpi}$ is non-Eisenstein and $\pair{x,\wh{\varphi^{-\varepsilon}}}_{k-2\Sigma}\pmod{\varpi}$ Eisenstein. Hence $\pair{x,\wh{\varphi^{-\varepsilon}}}_{k-2\Sigma}\pmod{\varpi}$ is zero by Lemma \ref{ee}.  

Suppose that $\wh{\varphi}^{-\varepsilon}\neq 0$. Choose a prime $\ell\gg 0$ such that $\wh{\varphi^{-\varepsilon}}\pmod{\varpi}$ is non-zero and $$\ell\Sigma> k-2\Sigma\in \BZ[\Sigma].$$ 
{Then it follows from Remark~\ref{rmk:opt} that} the non-vanishing of $\wh{\varphi^{-\varepsilon}}\pmod{\varpi}$ is equivalent to $\ell$-optimality of
$\wh{\varphi^{-\varepsilon}}\pmod{\varpi}$ with respect to $x$. In view of the preceding paragraph this is a contradiction.
 \end{proof}

We now describe an application to the split case. 
 Let $\fp\nmid \ell$ be a {degree one} prime of $F$ split in $K$.
 { Let $\chi_0$ be a finite order anticyclotomic Hecke character over $K$ such that the condition \eqref{Ass(D)} holds.} As in the inert case \eqref{tv-tw}, define ${H}_{\ell,\mu}$ with {respect} to $\fp$, $\chi_0$, an integer $s\geq 1$ and a primitive character $\mu$ of $\BZ/p^s\BZ$.

\begin{cor}\label{ep1}Let $\ell$ be a prime such that $(\ell,2D_{K/F})=1$. If $\varepsilon(\lambda)=\pm 1$, then ${H}_{\ell,\mu}^{\pm}\pmod{\varpi}$ is non-Eisenstein and ${H}_{\ell,\mu}^{\mp}=0$.
 \end{cor}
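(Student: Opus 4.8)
The plan is to derive Corollary~\ref{ep1} from Proposition~\ref{nv} by transporting the one-component support of the test vector through the definitions of $H_\ell$ and $\wt{H}_\ell$, and then to run the non-Eisenstein half of the proof of Theorem~\ref{T:b.W} at the split place $\fp$. Write $\varepsilon\in\{+,-\}$ for the sign of $\varepsilon(\lambda)$. The goal is to show $\wt{H}_\ell=\wt{H}_\ell^{\varepsilon}$, so that $\wt{H}_\ell^{-\varepsilon}=0$, and that this surviving component is non-Eisenstein.

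First I would check that the constructions defining $H_\ell$ and $\wt{H}_\ell$ respect the decomposition $X_{U'}=X_{U'}^{+}\sqcup X_{U'}^{-}$. By Proposition~\ref{nv} the test vector $\varphi$ attached to $\lambda$ satisfies $\varphi=\varphi^{\varepsilon}$ and $\varphi^{-\varepsilon}=0$. The form $H_\ell$ is obtained from $\pair{x,\wh{\varphi}}_x\pmod{\varpi}$ by a $\chi_0$-weighted average over the toric elements $\iota_{\varsigma^{(\fc)}}(t)$ at the places dividing $\fc$, and $\wt{H}_\ell$ from $H_\ell$ by applying $\rho$ of the unipotent matrices $\begin{pmatrix}1&y/p^{a}\\0&1\end{pmatrix}$, $y\in\BZ/p^{a}\BZ$ (cf.~\eqref{tv-tw}). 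Every such element has reduced norm on which $\eta_{K/F}$ is trivial — the toric ones because ${\rm N}_{K/F}(t)$ lies in the image of the norm, the unipotent ones because their norm is $1$ — so $\rho$ of it preserves each of $X_{U'}^{\pm}$. Hence $H_\ell=H_\ell^{\varepsilon}$, $\wt{H}_\ell=\wt{H}_\ell^{\varepsilon}$, and $\wt{H}_\ell^{-\varepsilon}=0$, which is the vanishing assertion. Moreover $H_\ell$ is a nonzero form with CM by $K$: this is exactly the input used in the proof of Theorem~\ref{T:b.W}, coming from Lemma~\ref{tnn}, \cite[Prop.~3.12]{CST}, and the condition \eqref{Ass(D)}.

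It then remains to prove that $\wt{H}_\ell=\wt{H}_\ell^{\varepsilon}$ is non-Eisenstein. Since $\pi_\lambda$ has CM by $K$ and $(\ell,2D_{K/F})=1$, Lemma~\ref{ee} applied to the nonzero form $H_\ell=H_\ell^{\varepsilon}$ shows that $H_\ell$ is non-Eisenstein. Next I would carry out the averaging-over-$t$ computation of \eqref{sum0}--\eqref{sum}, now at the split place $\fp$ (so that the exponent is $\kappa=1$ in the notation of Lemma~\ref{ee3}), which will produce an identity
\[\sum_{t\in(\BZ/p^{a}\BZ)^\times}\rho\begin{pmatrix}t&\\&1\end{pmatrix}\wt{H}_\ell=\CR\,H_\ell,\qquad \CR=1+\sum_{i}\beta_i\,\rho\begin{pmatrix}\varpi_\fp^{-i}&\\&1\end{pmatrix},\]
with $\ell$-integral coefficients $\beta_i$, using that $H_\ell$ is a $U_\fp$- (respectively $T_\fp$-) eigenform with an $\ell$-integral eigenvalue. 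By Lemma~\ref{ee3} with $\kappa=1$ the operator $\CR$ is injective on ${\rm S}_2(U',A)^{\varepsilon}$, so $\CR H_\ell$ is non-Eisenstein; and since the Eisenstein subspace is stable under $\rho$ of every element of $\wh{B}^\times$ (because $B^1$ is normal in $B^\times$), if $\wt{H}_\ell$ were Eisenstein then so would be the left-hand side, a contradiction. Hence $\wt{H}_\ell=\wt{H}_\ell^{\varepsilon}$ is non-Eisenstein (in particular nonzero), which finishes the argument. The main obstacle I anticipate is the bookkeeping in the displayed identity — verifying the precise shape of $\CR$ at $\fp$, in particular the $\ell$-integrality of its coefficients and the fact that it falls under Lemma~\ref{ee3} — which branches slightly according to whether $\fp\mid\mathfrak{N}_{\lambda}^{+}$, just as in the unramified versus ramified cases in the proof of Theorem~\ref{T:b.W}.
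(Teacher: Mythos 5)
Your proof is correct and follows essentially the same route as the paper: use Proposition~\ref{nv} to kill the $(-\varepsilon)$-component of $\wh\varphi$ and hence of $H_\ell$ and $\wt H_\ell$ (since toric and unipotent translations have reduced norm in $\ker\eta_{K/F}$ and so preserve the $\pm$-decomposition), invoke Lemma~\ref{tnn} and \cite[Prop.~3.12]{CST} together with Lemma~\ref{ee} to get $H_\ell=H_\ell^{\varepsilon}$ nonzero and non-Eisenstein, and then transfer this to $\wt H_\ell$ via the $\sum_t\rho\left(\begin{smallmatrix}t&\\&1\end{smallmatrix}\right)$-averaging identity \eqref{sum0}/\eqref{sum} (split analogue) and Lemma~\ref{ee3}. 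Your write-up is slightly more explicit than the paper's in two small ways, both improvements in precision: you observe that $\wt H_\ell^{-\varepsilon}$ is literally zero (the paper's proof only says $H_\ell^{\mp}$ is Eisenstein, though the corollary's statement asserts vanishing), and you spell out that the non-Eisenstein property is preserved under right translation because Eisenstein forms are $B^1(\BA_f)$-invariant.
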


\begin{proof} 
{Let $\varepsilon\in \{\pm\}$ denote the sign of $\varepsilon(\lambda)$.
By Proposition \ref{nv}, $
  \pair{x,\wh{\varphi^{\varepsilon}}}_{k-2\Sigma}\pmod{\varpi}$ is non-Eisenstein and $$
  \pair{x,\wh{\varphi^{-\varepsilon}}}_{k-2\Sigma} = 0.$$
Since $\fc$ is only divisible by inert primes in the extension $K/F$, in view of \eqref{partei},
if $\pair{x,\wh{\varphi^{-\varepsilon}}}_{k-2\Sigma}$ vanishes then so does $H_\ell^{-\varepsilon}$. Furthermore, in view of Lemma \ref{tnn}  and \cite[Prop.~3.12]{CST}, if $
  \pair{x,\wh{\varphi^{\varepsilon}}}_{k-2\Sigma}\pmod{\varpi}$ is non-Eisenstein then so is $H_\ell^{\varepsilon}\pmod{\varpi}$.}
  
{To see the assertions for ${H}_{\ell,\mu}^\varepsilon\pmod{\varpi}$ and ${H}_{\ell,\mu}^{-\varepsilon}$, just note that ${H}_{\ell,\mu}$ and $H_\ell$ are related as in the proof of Theorems \ref{T:b.W} (see~equations~\eqref{sum0}~and~\eqref{sum}). 
Since $\fp$ is split in $K$ and $H_\ell^{-\varepsilon} =0$, it thus follows that 
${H}_{\ell,\mu}^{-\varepsilon}=0$. The assertion for ${H}_{\ell,\mu}^\varepsilon\pmod{\varpi}$ then follows by applying Lemma~\ref{ee3} to  $H_{\ell}^\varepsilon\pmod{\varpi}$.}
\end{proof}

\subsubsection{Split case} 
\label{rk1sp}

Let $\fp$ be a prime of $F$ that is split in $K$.

For $\nu\in \Xi_\fp$ with $\ord_{\fp}\cond(\nu_\fp)>\ord_{\fp}\cond(\lambda_{\fp})$, we have
$$
\varepsilon(\lambda\chi_{0}\nu)=
 \varepsilon(\lambda\chi_{0}) 
$$ by Lemma \ref{epsi:twist}. It follows that $\# \Xi_{\lambda\chi_0,\fp}^{+}=\infty$ if and only if $\varepsilon(\lambda\chi_{0})=+1$.

Our main result is the following.

\begin{thm}\label{T:3.W} 
Let $K$ be a CM field and $F$ its maximal totally real subfield. 
  Let $\lambda$ be a self-dual Hecke character over $K$ and $\sigma_\lambda$ the associated $\GL_{2}(F)$-cuspidal automorphic representation of conductor $\fN_{\lambda}$. Let $\fp$ be a degree one prime of $F$ that is split in $K$ and $\ell$ a prime such that $(\ell, 2\fp D_{K/F})=1$. Let $\chi_0$ be a finite order {anticyclotomic} Hecke character over $K$ such that the condition \eqref{Ass(D)} holds.
  Then for all but finitely many $\nu\in\Xi_{\lambda\chi_{0},\fp}^{+}$, we have
  \[{v}_{\ell}\left(\frac{\sqrt{|D_K|}\cdot L(1/2, \sigma_{\lambda, K}\otimes\chi_{0}\nu)}{\Omega_{\lambda}^{}}\right)=0.\]
  
  \end{thm}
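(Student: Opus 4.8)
The plan is to reduce the split case to the inert case already handled in Theorem \ref{T:b.W}, exploiting the fact that the test vector $f_\lambda$ (equivalently $\varphi$, $\wh\varphi$, $H_\ell$) does not depend on the choice of $\fp$. First I would set up the coefficient ring exactly as in the proof of Theorem \ref{T:b.W}: choose $a$ with ${\bf k}_\ell^\times[p^\infty]=\mu_{p^a}$, a sufficiently large finite extension $\CO/\BZ_\ell$ containing $O_{\pi_\lambda,\ell}$, the image of $\chi_0$ and $\mu_{p^a}$, with uniformiser $\varpi$ and residue field ${\bf k}_\ell=\CO/\varpi\CO$. Using the explicit Waldspurger formula (Theorem \ref{T:central.W}), the $\ell$-integrality statement (Proposition \ref{lbd:RS}) and Lemma \ref{theta-tp}, the assertion is equivalent to ${v}_\ell(\nu(\Theta_{\chi_0,n}(\pi_\lambda)))=0$ for all but finitely many $\nu\in\Xi^+_{\lambda\chi_0,\fp}$.

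Next I would run the same Fourier-analytic reduction as in the inert case. Writing ${\rm G}_n=(D_1\times D_0)\cdot\Gamma_{\fp,n}$ for $n\gg 0$, applying $\sum_{d\in D_0}\pi(\iota(d))$ (which acts as multiplication by the $\ell$-adic unit $|D_0|$ on the relevant CM form by the shape of the test vector, cf.~\eqref{op_tv}), and taking the trace ${\rm Tr}_{{\bf k}_\ell(\nu)/{\bf k}_\ell}$ against $\nu\chi_0(s^{-1})$, one obtains, exactly as in \eqref{E:10'.W},
\[
{\rm Tr}_{{\bf k}_\ell(\nu)/{\bf k}_\ell}\bigl(\nu\chi_0(s^{-1})\cdot\nu(\Theta_{\chi_0,n}(\pi_\lambda))\pmod{\varpi}\bigr)=|D_0|d_\nu\cdot\sum_{\tau\in D_1}\chi_0(\tau)\,\wt H_\ell(x_{\fc,n}(s\tau)),
\]
with $\wt H_\ell$ defined by the same twisting formula \eqref{tv-tw} relative to the present $\fp$ and $\chi_0$. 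The only genuinely new input needed here is that $\wt H_\ell$ restricted to the relevant component of the Shimura set is non-Eisenstein; but this is precisely the content of Corollary \ref{ep1}, which tells us that $\wt H_\ell^{\varepsilon(\lambda)}\pmod\varpi$ is non-Eisenstein and $\wt H_\ell^{-\varepsilon(\lambda)}=0$. Here is where the split case genuinely borrows from the inert case: the sign $\varepsilon(\lambda)$ on which component survives was pinned down by the inert-prime argument in the proof of Theorem \ref{T:b.W} via the parity constraint and the vanishing of central $L$-values with $\varepsilon=-1$, and since $f_\lambda$ is $\fp$-independent this conclusion transfers verbatim.

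Finally, since $\fp$ splits in $K$ and $P_0\in X_U^{\eta_{K/F}(\fc)}=X_U^{+}$ (as $\fc$ is coprime to $D_{K/F}D_B$ and a norm condition forces the sign to be $+$ — or more carefully, we track $\sgn(\eta_{K/F}(\fc))$), case (b) of Proposition \ref{C:Vatsal_Cornut} applies: provided $\wt H_\ell^{\sgn(\eta_{K/F}(\fc))}$ is non-Eisenstein, there is $n_0$ so that for all $n>n_0$ and suitable $s=a\in\wh K^\times$ the sum $\sum_{\tau\in D_1}\chi_0(\tau)\,\wt H_\ell(x_{\fc,n}(a\tau))$ is non-zero in ${\bf k}_\ell$. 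By the displayed trace identity this forces $\nu(\Theta_{\chi_0,n}(\pi_\lambda))\not\equiv 0\pmod\varpi$, hence ${v}_\ell=0$, for all but finitely many $\nu$ of conductor $\fp^n$ with $n>n_0$; since every $\nu\in\Xi_{\lambda\chi_0,\fp}^+$ with large conductor is of this form (there is no parity obstruction in the split case by Lemma \ref{epsi:twist}), the theorem follows. The one point requiring care — and the place I expect the bulk of the bookkeeping — is matching the component $\sgn(\eta_{K/F}(\fc))$ appearing in Proposition \ref{C:Vatsal_Cornut}(b) with the non-vanishing component $\varepsilon(\lambda)$ supplied by Corollary \ref{ep1}, i.e.\ checking that the relation $\varepsilon(\lambda\chi_0)=\varepsilon(\lambda)\eta_{K/F}(\fc)$ of Lemma \ref{epsi:twist}, together with $\varepsilon(\lambda\chi_0)=+1$ (forced by $\nu\in\Xi^+$ and Lemma \ref{epsi:twist} in the split case), indeed puts the special points $x_{\fc,n}(a\tau)$ on the component where $\wt H_\ell$ is non-Eisenstein rather than on the one where it vanishes identically; if the naive choice of $\fc$ lands on the wrong component one adjusts $\chi_0$ (allowed under \eqref{Ass(D)}) or replaces $\fc$ by an auxiliary inert prime to flip the sign, just as in the inert proof.
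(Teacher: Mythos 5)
Your proposal is correct and follows essentially the same route as the paper: reduce to $\ell$-indivisibility of the theta element $\Theta_{\chi_0,n}(\pi_\lambda)$ via the explicit Waldspurger formula, build $\wt H_\ell$ by the twisting formula \eqref{tv-tw}, feed Corollary~\ref{ep1} (which transports the non-Eisenstein component determined in the inert case via Proposition~\ref{nv}) into Proposition~\ref{C:Vatsal_Cornut}(b), and match signs through Lemma~\ref{epsi:twist}. One small remark: the final hedge — adjusting $\chi_0$ or inserting an auxiliary inert prime "if the naive choice of $\fc$ lands on the wrong component" — is unnecessary and in fact the paper never does this. Once you assume $\varepsilon(\lambda\chi_0)=+1$ (which is forced, since otherwise $\Xi^+_{\lambda\chi_0,\fp}$ is finite by Lemma~\ref{epsi:twist} and the statement is vacuous), the identity $\varepsilon(\lambda\chi_0)=\varepsilon(\lambda)\eta_{K/F}(\fc)$ gives $\varepsilon(\lambda)=\eta_{K/F}(\fc)$ automatically, so the special points always land on the component $X_U^{\eta_{K/F}(\fc)}=X_U^{\varepsilon(\lambda)}$ where Corollary~\ref{ep1} supplies the non-Eisenstein form; no freedom needs to be spent to align the two.
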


\begin{proof} 
The argument is similar to the proof of \ref{T:b.W}, whose notation will appear below. 
In particular, we similarly define ${H}_{\ell,\mu}$ and $\Theta_{\chi_{0},n}(\pi_\lambda)$.
We may assume that $\varepsilon(\lambda\chi_{0})=+1$ as otherwise $\#\Xi_{\lambda\chi_{0},\fp}^+<\infty$. 

Now under the assumption $\varepsilon(\lambda\chi_{0})=+1$ note that ${H}^{\eta_{K/F}(\prod_{v|\fc}\varsigma_v^{(\ord_v\fc)})}_{\ell,\mu}\pmod{\varpi}$ is non-Eisenstein by Corollary~\ref{ep1} and the fact that 
$$\varepsilon(\lambda\chi_{0})=\varepsilon(\lambda)\eta_{K/F}(\fc)=\varepsilon(\lambda)\eta_{K/F}(\prod_{v|\fc}\varsigma_v^{(\ord_v\fc)})$$ (cf.~Lemma~\ref{twep}). Therefore, in light of Proposition \ref{C:Vatsal_Cornut}, we conclude that 
$$
{v}_\ell(\nu(\Theta_{\chi_{0},n}(\pi_\lambda)))= 0
$$
for all but finitely many $\nu \in \Xi_{\lambda\chi_{0},\fp}^+(\mu)$ for each primitive character $\mu$ of $\BZ/p^s\BZ$.

\end{proof}
\subsubsection{Consequence on non-vanishing of Hecke L-values}
As a consequence, we have the following.
\begin{cor}\label{maincor}
Let $K$ be a CM field and $F$ its maximal totally real subfield. 
  Let $\lambda$ be a self-dual Hecke character over $K$ and $\fp$ a degree one prime of $F$ unramified in $K$.
  Then we have 
    \[L(1,\lambda\nu)\neq 0 \] for all except finitely many 
    $\nu \in \Xi_{\lambda,\fp}^+$.
\end{cor}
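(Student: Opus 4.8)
The plan is to obtain Corollary~\ref{maincor} (equivalently Theorem~\ref{gnv}) as a direct consequence of the mod $\ell$ non-vanishing of Rankin--Selberg $L$-values in Theorems~\ref{T:b.W}, \ref{T:3.W} and~\ref{mainhrk}, taking the auxiliary character to be trivial. First I would check that $\chi_0=1$ is admissible: it is anticyclotomic and it vacuously satisfies~\eqref{Ass(D)}, since $\cond(1)=O$ and $\chi_0$ is trivial at every place. Next, writing (as in \S\ref{set:nvrs})
\[
L^{\alg}(1/2,\pi_{\lambda,K}\otimes\nu)=\frac{\sqrt{|D_K|}\cdot L(1/2,\pi_{\lambda,K}\otimes\nu)}{\Omega_\lambda},
\]
which is algebraic by the explicit Waldspurger formula (Theorem~\ref{T:central.W}), I would recall that $\Omega_\lambda$ is a nonzero ratio of Petersson norms and that, by the factorisation~\eqref{$L$-fac'} with $\chi_0=1$, one has $L(1/2,\pi_{\lambda,K}\otimes\nu)=L(1,\lambda\nu)L(1,\lambda\nu^{-1})$. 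Hence it suffices to find a single rational prime $\ell$ for which ${v}_\ell\big(L^{\alg}(1/2,\pi_{\lambda,K}\otimes\nu)\big)<\infty$ for all but finitely many $\nu\in\Xi_{\lambda,\fp}^{+}$: this finiteness forces $L(1/2,\pi_{\lambda,K}\otimes\nu)\neq 0$, hence $L(1,\lambda\nu)\neq 0$. (Characters $\nu\notin\Xi_{\lambda,\fp}^{+}$ need no attention, their central value being forced to vanish by the sign of the functional equation.)

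I would then fix any rational prime $\ell$ with $(\ell,2\fp D_{K/F})=1$, which in particular forces $\ell\neq p$, the residue characteristic of $\fp$. If $\deg\fp=1$, then --- as $\fp$ is unramified in $K$ --- it is inert or split in $K$, and I would apply Theorem~\ref{T:b.W} or Theorem~\ref{T:3.W} respectively, to $\lambda$, $\fp$, $\ell$ and $\chi_0=1$: each gives ${v}_\ell\big(L^{\alg}(1/2,\pi_{\lambda,K}\otimes\nu)\big)=0$, in particular finite, for all but finitely many $\nu\in\Xi_{\lambda,\fp}^{+}$, finishing this case.

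If $\deg\fp>1$ (so $\fp\nmid D_{K/F}$), I would fix an integer $a$ with ${\bf k}_\ell^\times[p^\infty]=\mu_{p^a}$ and recall from~\S\ref{mainhrank} the partition, valid up to finitely many characters,
\[
\Xi_{\lambda,\fp}^{+}\ \doteq\ \bigsqcup_{\mu\,\colon\,O_\fp/\fp^{ae}\twoheadrightarrow\mu_{p^a}}\Xi_{\lambda,\fp}(\mu)^{+},
\]
the index set being finite because $O_\fp/\fp^{ae}$ is a finite group. For each such $\mu$, Theorem~\ref{mainhrk}(i) with $\chi_0=1$ gives ${v}_\ell\big(L^{\alg}(1/2,\pi_{\lambda,K}\otimes\nu)\big)={v}_\ell(\delta_{\fp,\mu})$ for all but finitely many $\nu\in\Xi_{\lambda,\fp}(\mu)^{+}$, and ${v}_\ell(\delta_{\fp,\mu})<\infty$ is immediate from Definition~\ref{def:inv}, since $\delta_{\fp,\mu}$ is a nonzero power of the uniformiser. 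Collecting the finitely many exceptional $\nu$ from each of the finitely many subfamilies $\Xi_{\lambda,\fp}(\mu)^{+}$ then yields the required finiteness outside a finite subset of $\Xi_{\lambda,\fp}^{+}$, completing the proof.

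The genuine content, of course, lies entirely in Theorems~\ref{T:b.W}, \ref{T:3.W} and~\ref{mainhrk}; within this deduction the only points requiring care are the finite combinatorial bookkeeping in the case $\deg\fp>1$ --- one must check that the passage to the finite system of subfamilies $\{\Xi_{\lambda,\fp}(\mu)^{+}\}_\mu$ is compatible with the ``all but finitely many'' conclusion --- and the observation that non-vanishing of the complex $L$-value already follows from finiteness, rather than vanishing, of ${v}_\ell(\delta_{\fp,\mu})$, so that the refined hypotheses of Theorem~\ref{mainthm} (on $\mu_\ell(\lambda)$ and split primes) play no role here.
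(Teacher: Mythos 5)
Your proposal is correct and follows the same strategy as the paper's proof, with two small refinements worth recording.  First, you explicitly handle the case $\deg\fp=1$ via Theorems~\ref{T:b.W} and~\ref{T:3.W}, whereas the paper's one-line proof cites only Theorem~\ref{mainhrk}, which is stated under the hypothesis $\deg\fp\geq 2$; your case split makes the deduction literally complete.  Second, in the $\deg\fp>1$ case the paper chooses $\ell\notin S_{\lambda,\fp}$ so that $v_\ell(\delta_{\fp,\mu})=0$ for every $\mu$ by Theorem~\ref{mainhrk}(ii), whereas you instead observe that the \emph{finiteness} of $v_\ell(\delta_{\fp,\mu})$ already implies $L(1,\lambda\nu)\neq 0$; this buys flexibility in the choice of $\ell$.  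On this last point you should be slightly more careful: Definition~\ref{def:inv} only produces a finite $\ell$-adic valuation when $H_{\ell,\mu}\not\equiv 0$, which is what is needed in order for ``$\delta_{\fp,\mu}$ is a nonzero power of the uniformiser'' to be a legitimate reading of the definition.  This is implicit in the non-Eisenstein argument carried out in the proof of Theorem~\ref{mainhrk} (the subfamily $\Xi_{\lambda,\fp}(\mu)^+$ is only relevant when $H_{\ell,\mu}\neq 0$), but since the whole point of your version is to avoid invoking Theorem~\ref{mainhrk}(ii), you should either note this or simply follow the paper and take $\ell\notin S_{\lambda,\fp}$, which renders the issue moot; the remaining steps (that $\chi_0=1$ trivially satisfies~\eqref{Ass(D)}, the finite partition of $\Xi_{\lambda,\fp}^+$ by $\mu$, and passing from mod-$\ell$ non-vanishing of $L(1/2,\pi_{\lambda,K}\otimes\nu)$ through the factorisation~\eqref{$L$-fac'} to $L(1,\lambda\nu)\neq 0$) are exactly as in the paper.
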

\begin{proof}This just follows from Theorem \ref{T:b.W} and Theorem \ref{T:3.W} for $\fp$ inert and split in $K/F$ respectively, by taking $\chi_0=1$ and by choosing an auxiliary prime {$\ell$} such that $(\ell,2\fp D_{K/F})=1$.\end{proof}

\section{Non-vanishing of Hecke $L$-values modulo $\ell$}\label{s:nv-Hecke}

This section establishes mod $\ell$ non-vanishing of Hecke $L$-values building on the Rankin–Selberg results in \S\ref{s:nv}. The main result is Theorem \ref{mm}. A bridge among the two non-vanishing arises from a comparison of quaternionic and CM periods (cf.~Theorem~\ref{cpp}), which constitutes the core of this section.

 \subsection{Backdrop}
\subsubsection{Setting and strategy}
Let $K$ be a CM field and $F$ the maximal totally real subfield.
Let $\Sigma$ be a CM type of $K$. Let $\lambda$ be a self-dual Hecke character over $K$ in the sense of \eqref{theta} with infinity type $\Sigma+\kappa(1-c)$ for $\kappa\in \BZ_{\geq 0}[\Sigma]$.

Let $\fp$ be a prime of $F$ and $p$ the rational prime below $\fp$. Recall that $\Gamma_{\fp}=\Gal(K_{\infty}/K)$ is the Galois group of the anticyclotomic $\BZ_p^{[F_\fp:\BQ_p]}$-extension of $K$ unramified outside $\fp$, 
$\Xi_\fp$ the set of $\ov{\BQ}^\times$-valued finite order characters of $\Gamma_{\fp}$ and
$$
\Xi_{\lambda,\fp}^{+}=\{ \nu \in \Xi_{\fp}| \, \varepsilon(\lambda\nu)=+1 \}. 
$$
{Fix an embedding $\iota_\infty: \ov{\BQ}\hookrightarrow \BC$.}
Let $\ell$ be a prime and fix an embedding $\iota_\ell:\ov{\BQ}\hookrightarrow \ov{\BQ}_{\ell}$. Suppose that $\ell$ is ordinary for the extension $K/F$, and $\Sigma$ is an $\ell$-ordinary CM type. 
Then there are CM periods $$\Omega_\infty\in (\BC^\times)^\Sigma$$ associated to a \Neron differential on an abelian scheme over $\ov{\BZ}_{(\ell)}$ of CM type $(K,\Sigma)$ (cf.~\S\ref{in1}). 

In the following,  we consider mod $\ell$ non-vanishing of the normalised $L$-values\[L^{\alg}(1,\lambda\nu)=[O_K^\times:O^\times]\prod_{w\in \Sigma_\ell }G((\lambda\nu)_w)\cdot\frac{\pi^\kappa\Gamma_\Sigma(\Sigma+\kappa) \cdot L(1,\lambda\nu)}{\Omega_\infty^{\Sigma+2\kappa}}\in \ov{\BZ}_{(\ell)}\] as $\nu\in \Xi_{\lambda,\fp}^+$ varies (cf.~\S\ref{in1}).

\begin{remark}
 If $\ell\nmid 2D_F$, then $\ell\nmid [O_K^\times:O^\times]$.
\end{remark}

{For the rest of this section, we will assume that 
\begin{itemize}
  \item $\fp$ is a degree one prime of $F$ that unramified in $K/F$,
  \item $\ell\nmid 2\fp D_F$ is ordinary.
\end{itemize}}

{Let $\phi_\lambda$ be the associated $\GL_{2}(F)$-theta series of weight $k=2\Sigma+2\kappa$ and level $U_0(\mathfrak{N}_{\lambda})$ for
$\mathfrak{N}_{\lambda}=D_{K/F}{\rm N}_{K/F}(\Cond(\lambda))$.} Let $B$ be the totally definite quaternion algebra over $F$ such that  \[\varepsilon(B_v)=\eta_{K_v/F_v}(-1)\] 
for any place $v$ of $F$ (cf.~Lemma~\ref{lm:disc}). 
Let $\pi_\lambda$ be the cuspidal irreducible automorphic representation of $B^\times(\BA)$ associated to {$\phi_{\lambda}$.} Let $
f_{\lambda}\in \pi_\lambda^{\wh{R}^\times}$ be an $\ell$-optimal toric vector defined as in \S\ref{ss:ntv}, {and we take $S_0$ as in \S\ref{ss:ntv} to be coprime to $\mathfrak{N}_{\lambda}^-$. Then $\phi_\lambda$ does not depends on $S_0$}.

{Note that for any finite order Hecke character $\chi$ over $K$, we have a factorisation
\begin{equation}\label{$L$-fac}
L(1/2,\pi_{\lambda,K}\otimes\chi)=L(1,\lambda\chi)L(1,\lambda\chi^{-1}) 
\end{equation}}
of $L$-values. 
{In the context of Rankin--Selberg $L$-values the period}
\[\Omega_{\lambda}:=\Omega_{\pi_\lambda}=\frac{\pi^k(\phi_\lambda,\phi_\lambda)_{U_0(\mathfrak{N}_{\lambda})}}{\Gamma_{\Sigma}(k)\pair{f_{\lambda}, f_{\lambda}}_{\wh{R}^\times}}\]
arises naturally (cf.~\S\ref{ss:ntv}). 

In light of the factorisation \eqref{$L$-fac}, a basic problem is to compare the periods 
$\Omega_\lambda$ and $\Omega_\infty^{\Sigma+2\kappa}$. This is one of the main goals of the present section. Upon establishing the period comparison, the non-vanishing of Hecke $L$-value will basically follow from that of the Rankin--Selberg $L$-values in \S\ref{s:nv} and a lower bound for $\ell$-divisibility of the Hecke $L$-values.

\subsubsection{Lower bound for $\ell$-divisibility of Hecke $L$-values and preliminary mod $\ell$ non-vanishing}
\label{lowerbd}
This subsection describes a lower bound for the $\ell$-adic valuation of Hecke $L$-values  in terms of local invariants defined below and a preliminary mod $\ell$ non-vanishing result, based on Hida's approach \cite{Hi1,Hi2,Hi3,Hi4}. In turn, we obtain a lower bound for the $\ell$-adic valuation of  the ratio of quaternionic and CM periods.

For a prime $v|{\rm{N}}_{K/F}(\Cond(\lambda))$ of $F$ non-split in $K$, put \[\mu_\ell(\lambda_v)=\inf_{x\in K_v^\times}{v}_{\ell} (\lambda_v(x)-1).\]

\begin{thm}\label{lm:lb} 
Let $K$ be a CM field, $F$ its maximal totally real subfield and $\Sigma$ a CM type. 
  Let $\lambda$ be a self-dual Hecke character over $K$ of infinity type $\Sigma+\kappa(1-c)$ for $\kappa\in \BZ_{\geq 0}[\Sigma]$. Let {$\ell\nmid 2D_F$} be a prime such that $\Sigma$ is an $\ell$-ordinary CM type. Let $\fp$ be a degree one prime of $F$ with $\fp\nmid\ell D_{K/F}$.
\begin{itemize}
  \item [i)] {We} have  \[{v}_{\ell}\left(L^{\alg}(1,\lambda\nu)\right)\geq \sum_{\substack{v|{\rm{N}}_{K/F}(\Cond(\lambda)) \text{ inert}\\ v\nmid\fp}}\mu_{\ell}(\lambda_v) \]
  for all except finitely many $\nu\in \Xi_{\lambda,\fp}^+$.
  \item [ii)]Suppose that $\# \Xi_{\lambda,\fp}^+=\infty$. Then there exist infinitely many $\nu\in \Xi_{\lambda,\fp}^+$ such that 
    \[{v}_{\ell}\left(L^{\alg}(1,\lambda\nu)\right)= \sum_{\substack{v|{\rm{N}}_{K/F}(\Cond(\lambda)) \text{ inert}\\ v\nmid\fp}}\mu_{\ell}(\lambda_v). \]
\end{itemize}
    \end{thm}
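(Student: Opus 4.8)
\textbf{Plan of proof of Theorem~\ref{lm:lb}.}

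The strategy is to follow Hida's approach to $\ell$-indivisibility of Hecke $L$-values over a CM field (cf.~\cite{Hi1,Hi2,Hi3,Hi4,Hsieh:nv,He}), reinterpreting the normalised $L$-value $L^{\alg}(1,\lambda\nu)$ via the theory of Hilbert modular Eisenstein series and Katz $p$-adic $L$-functions. First I would recall that, under the $\ell$-ordinarity of $\Sigma$ and the hypotheses $\ell\nmid 2D_F$, $\fp\nmid\ell D_{K/F}$, the value $L^{\alg}(1,\lambda\nu)$ interpolates (up to an $\ell$-adic unit) the specialisation at $\nu$ of the anticyclotomic projection of a Katz $\fp$-adic $L$-function $L_{\Sigma,\fp}(\lambda)$; since $\fp$ has degree one this projection lies in a one-variable Iwasawa algebra $\CO[\![\Gamma_\fp]\!]$ with $\Gamma_\fp\simeq\BZ_p$. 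The first step is therefore to establish that the Gauss sum factors $G((\lambda\nu)_w)$ for $w\in\Sigma_\ell$ and the archimedean factor $\pi^\kappa\Gamma_\Sigma(\Sigma+\kappa)$, together with the chosen CM period $\Omega_\infty^{\Sigma+2\kappa}$ and the factor $[O_K^\times:O^\times]$ (an $\ell$-adic unit since $\ell\nmid 2D_F$), indeed normalise $L(1,\lambda\nu)$ to an element of $\ov{\BZ}_{(\ell)}$: this is the integrality statement \eqref{eq:alg}, due to Shimura, Katz and Hida--Tilouine.

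\emph{Part (i): the lower bound.} The key is a local computation of the $\ell$-adic valuation of the Fourier coefficients of the relevant Eisenstein series, or equivalently of the local toric integrals appearing in Hida's construction, at the non-split primes $v\mid{\rm N}_{K/F}(\Cond(\lambda))$ away from $\fp$. At such a $v$ the local Waldspurger/toric integral contributes a factor whose $\ell$-adic valuation is bounded below by $\mu_\ell(\lambda_v)=\inf_{x\in K_v^\times}v_\ell(\lambda_v(x)-1)$, essentially because the local character $\lambda_v$ is congruent to the trivial character only modulo $\ell^{\mu_\ell(\lambda_v)}$, forcing the corresponding local term in the $q$-expansion (or the local period) to be divisible by $\ell^{\mu_\ell(\lambda_v)}$. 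Summing over all such $v$ and noting that primes $v\mid\fp$ are excluded by construction (the $\fp$-part being absorbed into the $p$-adic variable and hence contributing a unit), one obtains $v_\ell(L^{\alg}(1,\lambda\nu))\ge\sum_{v}\mu_\ell(\lambda_v)$. The ``all except finitely many'' clause comes from the finitely many $\nu$ whose conductor at $\fp$ fails to exceed the conductor of $\lambda_\fp$, where the epsilon factor or the local interpolation factor may behave exceptionally; away from these, the lower bound is uniform. This matches the Bloch--Kato heuristic and is exactly the statement recorded in the introduction.

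\emph{Part (ii): sharpness for infinitely many $\nu$.} Assuming $\#\Xi_{\lambda,\fp}^+=\infty$, I would divide out the common factor $\ell^{\mu}$ with $\mu:=\sum_{v\mid{\rm N}_{K/F}(\Cond(\lambda))\text{ inert},\,v\nmid\fp}\mu_\ell(\lambda_v)$ and show that the resulting ``primitive'' $\fp$-adic $L$-function is non-zero modulo $\ell$. Concretely, $L_{\Sigma,\fp}(\lambda)/\ell^\mu$ lies in $\CO[\![\Gamma_\fp]\!]$ and is not identically zero modulo the maximal ideal --- this is precisely the content of Hida's (corrected) mod $\ell$ non-vanishing for a Zariski dense, hence (since $\deg\fp=1$) infinite, set of characters \cite{Hi4,Hsieh:nv,He}; a non-zero element of $\CO[\![\Gamma_\fp]\!]$ with $\Gamma_\fp\simeq\BZ_p$ has only finitely many zeros, and mod $\ell$ non-vanishing at one specialisation propagates to all but finitely many. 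Combined with part (i), any $\nu$ at which the reduction is non-zero satisfies $v_\ell(L^{\alg}(1,\lambda\nu))=\mu$, giving infinitely many such $\nu$.

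\textbf{Main obstacle.} The delicate point is the precise local analysis at the non-split ramified primes $v$: one must show that the lower bound $\mu_\ell(\lambda_v)$ is not overshot by other local or global factors, and conversely (for part (ii)) that it is actually attained generically. This requires a careful bookkeeping of the normalisation of periods and Gauss sums --- in particular verifying that no spurious powers of $\ell$ are hidden in $\Omega_\infty$, in the $\ell$-optimal test vector, or in the comparison between the chosen Néron differential and the automorphic normalisation --- and is exactly where the $\ell$-ordinarity of $\Sigma$ (so that $\Sigma_\ell\cap\Sigma_\ell^c=\emptyset$ and the $p$-adic CM period is a unit) is used. The global non-vanishing input for (ii) is quoted from \cite{Hi4,He} and is not re-proved here.
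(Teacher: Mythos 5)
The paper's own proof of this theorem is a two-line citation: the case where $\ell$ is coprime to the conductor of $\lambda$ and $\fp$ is split is \cite[Prop.~5.1~\&~Thm.~A]{Hsieh:nv}, and the remaining cases are \cite[Thm.~3.15,~Prop.~4.10~\&~Thm.~1.5]{He}. Your proposal reconstructs the underlying strategy from Hida/Hsieh/He and arrives at the right conclusion, but it contains two genuine errors in the route.

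First, you identify the interpolating object as "the anticyclotomic projection of a Katz $\fp$-adic $L$-function $L_{\Sigma,\fp}(\lambda)$." That is the wrong object. The Katz $\fp$-adic $L$-function lives in a $p$-adic Iwasawa algebra over $W(\ov{\BF}_p)$, whereas the theorem concerns $\ell$-adic valuations for $\ell\neq p$; a $p$-adic object conveys no mod $\ell$ information, and in particular "dividing by $\ell^\mu$" in that $p$-adic ring is vacuous since $\ell$ is a $p$-adic unit. What Hsieh and He actually use is an $\ell$-adic Hilbert modular Eisenstein measure $\CE$ on $\Xi_\fp$ built from the $q$-expansion of an $\ell$-adically integral Eisenstein series $\wh{\BE}_{\lambda,\Sigma}$, with values in an $\ell$-adic ring; the lower bound in part~(i) comes from the $q$-expansion principle applied to this $\ell$-adic measure, not from the Katz $p$-adic $L$-function.

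Second, and more importantly, your clause "a non-zero element of $\CO[\![\Gamma_\fp]\!]$ with $\Gamma_\fp\simeq\BZ_p$ has only finitely many zeros, and mod $\ell$ non-vanishing at one specialisation propagates to all but finitely many" is false here, and it is precisely the gap in Hida's original strategy that this paper is designed to repair. When $\CO$ is $\ell$-adic and $\Gamma_\fp$ is pro-$p$ with $\ell\neq p$, the completed group ring $\CO[\![\Gamma_\fp]\!]$ is not a one-variable Iwasawa algebra in the usual sense: $p$ is invertible, the finite quotients $\CO[\Gamma_\fp/p^n]$ decompose as products, and a non-zero element can vanish at infinitely many finite-order characters. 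There is no Weierstrass preparation / finitely-many-zeros principle available, which is exactly why the methods of Hida/Hsieh/He, even in the $\deg\fp=1$ case, yield only "infinitely many" (equivalently Zariski dense) in part~(ii), not "all but finitely many." The stronger "all but finitely many" conclusion is the content of Theorem~\ref{mainthm} of the present paper and is obtained by an entirely different method. Since your final statement of part~(ii) only claims "infinitely many," the conclusion survives the erroneous clause, but the reasoning as written would, if valid, contradict the paper's own description of the state of the art.
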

    \begin{proof}
   For $\ell$ coprime to the conductor of $\lambda$ and $\fp$ split in $K$, this is the content of \cite[Prop.~5.1~\&~Thm.~A]{Hsieh:nv}. For the remaining cases, {see} \cite[Thm.~3.15,~Prop.~4.10~\&~Thm.~1.5]{He}. {See also the remark at the beginning of Section 6.1 of \cite{Hsieh:nv} for more explanation about part i).}

    \end{proof}

\begin{lem}\label{lm:lb2}
  Let $\lambda$ be a self-dual Hecke character over a CM field $K$ of infinity type $\Sigma+\kappa(1-c)$ for $\kappa\in \BZ_{\geq 0}[\Sigma]$. Let {$\ell\nmid 2D_F$} be a prime such that $\Sigma$ is an $\ell$-ordinary CM type. Then
  \[{v}_{\ell}\left(\frac{\pi^{2\kappa}\Gamma_\Sigma(\Sigma+\kappa)^2 \cdot \Omega_{\lambda}}{\Omega_\infty^{2\Sigma+4\kappa}}\prod_{w\in \Sigma_\ell }G(\lambda_w)^2\right)\geq 2\sum_{\substack{v|{\rm{N}}_{K/F}(\Cond(\lambda)) \text{ inert}}}\mu_{\ell}(\lambda_v) .\]
    \end{lem}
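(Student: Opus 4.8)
\textbf{Proof plan for Lemma~\ref{lm:lb2}.} The strategy is to combine the explicit Waldspurger formula of Theorem~\ref{T:central.W} and the $\ell$-integrality of Proposition~\ref{lbd:RS} with the lower bound for $\ell$-divisibility of Hecke $L$-values in Theorem~\ref{lm:lb}, using the factorisation \eqref{$L$-fac} and a specialisation argument. The key point is that the period $\Omega_\lambda$ appears in the denominator of the normalised Rankin--Selberg $L$-value, which is $\ell$-integral, while each Hecke $L$-value has a known lower bound on its $\ell$-adic valuation; feeding one into the other produces the desired inequality for the period ratio.

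First I would fix an auxiliary degree one prime $\fr$ of $F$, split in $K$ and coprime to $2\ell{\rm N}_{K/F}(\Cond(\lambda))$, so that $\#\Xi_{\lambda,\fr}^+=\infty$ (this holds for $\fr$ split since the epsilon factor does not change under twist by characters of $\Gamma_\fr$, cf.~Lemma~\ref{epsi:twist}). By Theorem~\ref{lm:lb}(ii), there exist infinitely many $\nu\in\Xi_{\lambda,\fr}^+$ with
\[
{v}_{\ell}\left(L^{\alg}(1,\lambda\nu)\right)= \sum_{\substack{v|{\rm{N}}_{K/F}(\Cond(\lambda)) \text{ inert}}}\mu_{\ell}(\lambda_v),
\]
noting that for $\fr$ split in $K$ the sum over $v\nmid\fr$ inert equals the full sum over inert $v$. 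Next, applying the same with $\fr$ to the character $\lambda^{-1}=\lambda$ (self-duality) — or more carefully, running Theorem~\ref{lm:lb}(ii) simultaneously for the two families $\{\lambda\nu\}$ and $\{\lambda\nu^{-1}\}$ as $\nu$ ranges over $\Xi_{\lambda,\fr}^+$ — I would select a single $\nu_0\in\Xi_{\lambda,\fr}^+$ for which \emph{both}
\[
{v}_{\ell}\left(L^{\alg}(1,\lambda\nu_0)\right)=\sum_{v \text{ inert}}\mu_{\ell}(\lambda_v),\qquad
{v}_{\ell}\left(L^{\alg}(1,\lambda\nu_0^{-1})\right)=\sum_{v \text{ inert}}\mu_{\ell}(\lambda_v).
\]
Since Theorem~\ref{lm:lb}(i) gives the matching lower bounds for all but finitely many $\nu$, the two ``generic equality'' sets in part (ii) are both cofinite in $\{\nu : {v}_\ell(L^{\alg})=\text{l.h.s.}\}$... more precisely, part (ii) produces infinitely many $\nu$ with equality and part (i) shows the opposite inequality fails only finitely often, so the intersection over the two families is non-empty, and I can pick such a common $\nu_0$.

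Then, by the factorisation \eqref{$L$-fac} applied to $\chi=\nu_0$,
\[
{v}_{\ell}\!\left(\frac{\sqrt{|D_K|}\, L(1/2,\pi_{\lambda,K}\otimes\nu_0)}{\Omega_\lambda}\right)
= {v}_\ell\!\left(\frac{\pi^\kappa\Gamma_\Sigma(\Sigma+\kappa)L(1,\lambda\nu_0)}{\Omega_\infty^{\Sigma+2\kappa}}\right)
+{v}_\ell\!\left(\frac{\pi^\kappa\Gamma_\Sigma(\Sigma+\kappa)L(1,\lambda\nu_0^{-1})}{\Omega_\infty^{\Sigma+2\kappa}}\right)
-{v}_\ell\!\left(\frac{\pi^{2\kappa}\Gamma_\Sigma(\Sigma+\kappa)^2\,\Omega_\lambda}{D_F\,\Omega_\infty^{2\Sigma+4\kappa}}\prod_{w\in\Sigma_\ell}G(\lambda_w)^2\right)
+ C,
\]
where $C$ collects the remaining explicit factors — $[O_K^\times:O^\times]$, the valuation of $\sqrt{|D_K|}/D_F$, the Gauss sum factors $G((\lambda\nu_0)_w)$ versus $G(\lambda_w)$ (which agree up to $\ell$-units since $\fr$ is split and coprime to $\ell$ and to $\Cond(\lambda)$), and the powers of $2$ appearing in Theorem~\ref{T:central.W} — all of which are $\ell$-adic units under the hypothesis $\ell\nmid 6D_F$, after possibly enlarging the finite exceptional set (here one uses $\ell\nmid 2D_F \Rightarrow \ell\nmid[O_K^\times:O^\times]$, cf.~the Remark following the $L^{\alg}$ discussion). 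The left-hand side is $\geq 0$ by Proposition~\ref{lbd:RS} (valid after checking $[O_{K,\fc_1}^\times:O^\times]=1$, which holds for $\fc_1$ of large norm — this can be arranged by taking the conductor of $\nu_0$ large, or by the Remark after Proposition~\ref{lbd:RS}). Rearranging, the period-ratio valuation is
\[
{v}_\ell\!\left(\frac{\pi^{2\kappa}\Gamma_\Sigma(\Sigma+\kappa)^2\,\Omega_\lambda}{D_F\,\Omega_\infty^{2\Sigma+4\kappa}}\prod_{w\in\Sigma_\ell}G(\lambda_w)^2\right)
\;\geq\; {v}_\ell(L^{\alg}(1,\lambda\nu_0)) + {v}_\ell(L^{\alg}(1,\lambda\nu_0^{-1}))
\;=\; 2\sum_{v \text{ inert}}\mu_\ell(\lambda_v),
\]
which is exactly the claimed bound. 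The main obstacle I anticipate is the careful bookkeeping of the ``ancillary'' $\ell$-unit factors — matching the Gauss sums, the discriminant factors, the unit indices $[O_{K,\fc_1}^\times:O^\times]$ and $\kappa_{\fc_1}$, the power-of-two term $2^r$ in Proposition~\ref{lbd:RS}, and the shift between $L(1/2,\pi_K\otimes\nu_0)$ and the partial $L$-function $L^{(\Sigma_0)}(1/2,\pi_K\otimes\nu_0)$ appearing in Theorem~\ref{T:central.W} (whose removed Euler factors at primes in $\Sigma_0$ must be checked to be $\ell$-units for suitable $\nu_0$) — and confirming these are all $\ell$-adic units for a suitably chosen $\nu_0$ after discarding a finite set of exceptional characters. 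A secondary subtlety is ensuring the two ``equality'' families for $\lambda\nu_0$ and $\lambda\nu_0^{-1}$ can be met simultaneously, which follows from combining parts (i) and (ii) of Theorem~\ref{lm:lb} as indicated, possibly passing to square roots of characters as in the strategy sketch of \S\ref{ss:per} if a direct common choice is not immediate.
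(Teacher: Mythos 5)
There is a genuine gap: your argument establishes the \emph{opposite} inequality to the one claimed in the lemma. You derive the identity (valid up to $\ell$-units, with $P$ denoting the period ratio and $R$ the normalised Rankin--Selberg value)
\[
{v}_{\ell}(R) \;=\; {v}_{\ell}\big(L^{\alg}(1,\lambda\nu_0)\big) + {v}_{\ell}\big(L^{\alg}(1,\lambda\nu_0^{-1})\big) - {v}_{\ell}(P),
\]
choose $\nu_0$ so that the two Hecke terms both equal $\mu_\ell(\lambda)$, and then invoke Proposition~\ref{lbd:RS} to get ${v}_\ell(R)\geq 0$. But rearranging that gives ${v}_\ell(P) = 2\mu_\ell(\lambda) - {v}_\ell(R) \leq 2\mu_\ell(\lambda)$, i.e.\ an \emph{upper} bound on the period ratio. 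Your final display asserting $\geq$ is an algebra slip. This is precisely the inequality the paper labels \eqref{com1'} in the strategy sketch of \S\ref{ss:per}, obtained from ingredients (a1)+(a2); the lemma you are proving is instead the companion bound \eqref{com2}, which comes from (b1)+(b2).

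The missing ingredient is the mod-$\ell$ non-vanishing of Rankin--Selberg $L$-values, not just their $\ell$-integrality. The paper picks an auxiliary degree-one prime $\fp$ \emph{inert} in $K$ and applies Theorem~\ref{T:b.W}, which pins down ${v}_{\ell}(R)=0$ exactly for all but finitely many $\nu\in\Xi_{\lambda,\fp}^+$ (with $\Xi_{\lambda,\fp}^+$ infinite because $\fp$ is inert, cf.\ Lemma~\ref{epsi:twist}). Combining ${v}_\ell(R)=0$ with the \emph{lower} bounds ${v}_\ell(L^{\alg}(1,\lambda\nu))\geq\mu_\ell(\lambda)$ and ${v}_\ell(L^{\alg}(1,\lambda\nu^{-1}))\geq\mu_\ell(\lambda)$ from Theorem~\ref{lm:lb}(i) yields $0 \geq 2\mu_\ell(\lambda) - {v}_\ell(P)$, i.e.\ ${v}_\ell(P)\geq 2\mu_\ell(\lambda)$ as required. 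Note this also sidesteps the issue you flag at the end: part~(i) of Theorem~\ref{lm:lb} holds for all but finitely many $\nu$ (a cofinite condition), so the two families for $\lambda\nu$ and $\lambda\nu^{-1}$ automatically admit a common $\nu$; you do not need the delicate simultaneous-equality version from part~(ii), whose two ``infinitely many'' sets need not intersect a priori.
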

    \begin{proof}
Choose an auxiliary degree one prime $\fp$ of $F$ inert in $K$ and coprime to $\ell \mathfrak{N}_{\lambda}$.

By Theorem~\ref{T:b.W}, we have 
  {\[{v}_{\ell}\left(\frac{\sqrt{|D_K|}\cdot L(1,\lambda\nu)L(1,\lambda\nu^{-1})}{\Omega_{\lambda}^{}}\right)={v}_{\ell}\left(\frac{\sqrt{|D_K|}\cdot L(1/2, \pi_{\lambda, K}\otimes\nu)}{\Omega_{\lambda}^{}}\right)=0\]} for all but finitely many $\nu\in\Xi_{\lambda,\fp}^{+}.$ Note that since $\fp$ is inert in $K$, we have $\Xi_{\lambda,\fp}^+=\infty$ by Lemma \ref{epsi:twist}.
  In view of part (i) of Theorem \ref{lm:lb}, we can further take $\nu\in \Xi_{\lambda,\fp}^+$ such that 
    \[{v}_{\ell}\left(L^{\alg}(1,\lambda\nu)\right)\geq \sum_{\substack{v|{\rm{N}}_{K/F}(\Cond(\lambda)) \text{ inert}\\ v\nmid\fp}}\mu_{\ell}(\lambda_v) \] and \[{v}_{\ell}\left(L^{\alg}(1,\lambda\nu^{-1})\right)\geq \sum_{\substack{v|{\rm{N}}_{K/F}(\Cond(\lambda)) \text{ inert}\\ v\nmid\fp}}\mu_{\ell}(\lambda_v). \] 
    {Note that the assumption on $\ell$ implies that $D_K$ is an $\ell$-adic unit.}
    Hence, the result follows.
    \end{proof}

\begin{remark} \label{crtmu0}\
 Suppose that $v$ is a finite place of $F$ non-split in $K$, and $\lambda_v$ is ramified and self-dual. Let $\ell$ be an odd prime coprime to $v$. Suppose further that: whenever $v$ is inert in $K$ and $\lambda_v$ has exponential conductor $1$, we have $\ell\nmid (q_v+1)$. Then 
$$\mu_{\ell}(\lambda_v)=0.$$
\end{remark}
\subsection{Comparison of periods and $(\ell,p)$ non-vanishing}\label{prnv}
This subsection links the quaternionic and CM periods (cf.~Theorem~\ref{cpp}). 

We first describe the strategy. In light of the $(\ell,\fp)$ non-vanishing of Rankin–Selberg $L$-values established in \S\ref{s:nv} and the lower bound for the $\ell$-adic valuation of Hecke $L$-values as in \S\ref{lowerbd}, the sought after non-vanishing of Hecke $L$-values and the comparison of periods are  equivalent. 
Another key observation is that the comparison of periods 
essentially does not depend on the prime $\fp$. Consequently, it suffices to show that the lower  bound for Hecke $L$-values is an   
equality for two Hecke $L$-values appearing in the decomposition \eqref{$L$-fac} for an auxiliary split prime. Such a preliminary mod $\ell$ non-vanishing can be deduced from Hida's approach (cf.~\S\ref{lowerbd}).

We begin with the following.

\begin{prop}\label{cpnv1} Let $K$ be a CM field, $F$ its maximal totally real subfield and $\Sigma$ a CM type. 
Let $\lambda$ be a self-dual Hecke character over $K$ of infinity type $\Sigma+\kappa(1-c)$ for $\kappa\in \BZ_{\geq 0}[\Sigma]$ and $\fN_{\lambda}=D_{K/F}{\rm N}_{K/F}(\Cond(\lambda))$. Let {$\ell\nmid 2D_F$} be a prime such that $\Sigma$ is an $\ell$-ordinary CM type. Then the following are equivalent.
\begin{itemize}
 \item [a)] We have
 \[{v}_{\ell}\left(\frac{\pi^{2\kappa}\Gamma_\Sigma(\Sigma+\kappa)^2 \cdot \Omega_{\lambda}}{\Omega_\infty^{2\Sigma+4\kappa}}\prod_{w\in \Sigma_\ell }G(\lambda_w)^2\right)=2\sum_{\text{$v|{\rm{N}}_{K/F}(\Cond(\lambda))$ inert}}\mu_\ell(\lambda_v).\]
\item [b)]
Let $\fp\nmid \ell \mathfrak{N}_{\lambda}^-$ be a degree one prime of $F$. Let $\fq$ be a degree one prime of $F$ inert in $K$ and  coprime to $\ell\fp\mathfrak{N}_{\lambda}$ such that $\ell\nmid (q^2-1)$, where $q$ is the cardinality of residue field of $F_\fq$. Let $\chi_0\in \Xi_\fq$ be\footnote{{Since $\fq$ is inert in $K$, there exists $\chi_0\in \Xi_{\fq}$ such that $\varepsilon(\lambda\chi_0)=+1$ (cf.~Lemma~\ref{epsi:twist}) and hence $\#\Xi_{\lambda\chi_0,\fp}^+=\infty$.} If $\varepsilon(\lambda)=+1$, then we can take $\chi_0=1$ and require no assumption on $\fq$.
} a Hecke character such that $\#\Xi_{\lambda\chi_0,\fp}^+=\infty$. Then 
 \[{v}_{\ell}\left(L^{\alg}(1,\lambda\chi_0\nu)\right)= \sum_{\substack{v|{\rm{N}}_{K/F}(\Cond(\lambda)) \text{ inert}\\}}\mu_{\ell}(\lambda_v) \]
 and 
 \[{v}_{\ell}\left(L^{\alg}(1,\lambda(\chi_0\nu)^{-1})\right)= \sum_{\substack{v|{\rm{N}}_{K/F}(\Cond(\lambda)) \text{ inert}\\}}\mu_{\ell}(\lambda_v) \]
 for all except finitely many $\nu\in \Xi_{\lambda\chi_0,\fp}^+$. 
 \end{itemize}
\end{prop}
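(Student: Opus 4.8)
\textbf{Proof plan for Proposition~\ref{cpnv1}.} The statement asserts the equivalence of the period comparison (a) with each of the mod $\ell$ non-vanishing assertions (b) and (b$'$). The plan is to prove $(a)\Longleftrightarrow(b)$ and $(a)\Longleftrightarrow(b')$ in parallel, exploiting that the period $\Omega_\lambda$ depends only on the test vector $f_\lambda$, hence only on $\lambda$ and $B$, and \emph{not} on the auxiliary prime $\fp$. The main input is the explicit Waldspurger formula (Theorem~\ref{T:central.W}) together with its $\ell$-integral refinement (Proposition~\ref{lbd:RS}), which express the toric periods $P(\varsigma^{(\fc)},\varphi,\chi_0\nu)$ of the $\ell$-optimal test vector in terms of $L(1/2,\pi_{\lambda,K}\otimes\chi_0\nu)/\Omega_\lambda$, and the factorisation $L(1/2,\pi_{\lambda,K}\otimes\chi_0\nu)=L(1,\lambda\chi_0\nu)L(1,\lambda(\chi_0\nu)^{-1})$. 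I would first record, from the definition~\eqref{eq:alg} of $L^{\alg}$, the identity of $\ell$-adic valuations
\begin{equation*}
{v}_\ell\!\left(L^{\alg}(1,\lambda\chi_0\nu)\,L^{\alg}(1,\lambda(\chi_0\nu)^{-1})\right)
= {v}_\ell\!\left(\frac{\pi^{2\kappa}\Gamma_\Sigma(\Sigma+\kappa)^2}{\Omega_\infty^{2\Sigma+4\kappa}}\prod_{w\in\Sigma_\ell}G((\lambda\chi_0\nu)_w)^2 \cdot L(1,\lambda\chi_0\nu)L(1,\lambda(\chi_0\nu)^{-1})\right)
\end{equation*}
up to the bounded factor $[O_K^\times:O^\times]$, which is an $\ell$-adic unit since $\ell\nmid 2D_F$; and then note that the Gauss-sum factors attached to $(\chi_0\nu)_w$ for $w\in\Sigma_\ell$ are $\ell$-adic units, since $\fp,\fq$ and $\cond(\chi_0)$ are coprime to $\ell$, so they may be absorbed into $\prod_{w\in\Sigma_\ell}G(\lambda_w)^2$ without changing valuations.

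\textbf{From (a) to (b)/(b$'$).} Assuming (a), combine it with Proposition~\ref{lbd:RS}, which gives ${v}_\ell\!\big(\sqrt{|D_K|}\,L(1/2,\pi_{\lambda,K}\otimes\chi_0\nu)/\Omega_\lambda\big)\ge 0$ for $\nu$ in the relevant family (here one uses that $[O_{K,\fc_1}^\times:O^\times]=1$ for $\fc_1$ of large norm, and that $2^r$ is an $\ell$-adic unit since $\ell$ is odd and $\ell\nmid 2D_F$). Substituting the period comparison and the factorisation, this reads
\begin{equation*}
{v}_\ell\!\left(L^{\alg}(1,\lambda\chi_0\nu)\right) + {v}_\ell\!\left(L^{\alg}(1,\lambda(\chi_0\nu)^{-1})\right) \ \le\ 2\sum_{v\mid {\rm N}_{K/F}(\Cond(\lambda))\text{ inert}}\mu_\ell(\lambda_v),
\end{equation*}
while Theorem~\ref{lm:lb}(i) (resp.~Theorem~\ref{lbdhrk}(i), whose hypothesis \eqref{Ass(A)} is met precisely because of the choice of $\fq$ and $\chi_0$ in (b)) gives the reverse inequality $\ge$ for each of the two summands, for all but finitely many $\nu$. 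Hence each summand equals $\sum_{v\mid{\rm N}_{K/F}(\Cond(\lambda))\text{ inert}}\mu_\ell(\lambda_v)$, which is (b); for (b$'$) one keeps the extra factor ${v}_\ell(\delta_{\fp,\mu})$ coming from the normalised theta element of Theorem~\ref{mainhrk} rather than collapsing it. A subtlety to handle carefully: in (b) the conductor of $\chi_0$ divides ${\rm N}_{K/F}(\Cond(\lambda))$ only at the inert place $\fq$, and since $\chi_0$ is a character of $K_\fq^\times/F_\fq^\times$ with $\ell\nmid(q^2-1)$, Lemma~\ref{tnn} applies and the local invariant $\mu_\ell((\lambda\chi_0)_\fq)$ vanishes, so the sum over inert places for $\lambda\chi_0$ is literally the same as that for $\lambda$; this must be checked so that the two sides of the claimed equality in (b) match.

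\textbf{From (b)/(b$'$) to (a).} Conversely, assuming (b), pick one $\nu\in\Xi_{\lambda\chi_0,\fp}^+$ for which both equalities in (b) hold; then the displayed valuation identity above, read backwards, together with Proposition~\ref{lbd:RS}, pins down ${v}_\ell\!\big(L(1/2,\pi_{\lambda,K}\otimes\chi_0\nu)/\Omega_\lambda\big)$ exactly, and hence forces (a) after isolating $\Omega_\lambda/\Omega_\infty^{2\Sigma+4\kappa}$ and using that all remaining factors ($\pi^{2\kappa}$, $\Gamma_\Sigma$, Gauss sums, the $2$-powers) have known valuation. The same argument works from (b$'$) using the $\delta_{\fp,\mu}$-normalised theta element. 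The only real obstacle I anticipate is \emph{bookkeeping of the auxiliary local factors}: one must verify that the Waldspurger constant $2^r=\kappa_{\fc_1}^2/2^c$, the index $[O_{K,\fc_1}^\times:O^\times]$, the factor $\chi_{S_1^+\setminus S_0}(\fn^+)$, and the archimedean and Gauss-sum normalisations all contribute trivially to ${v}_\ell$ under the running hypotheses ($\ell$ odd, $\ell\nmid 2D_F\fp\fq\cond(\chi_0)$), so that the valuation of the toric period is \emph{exactly} that of the normalised $L$-value with no hidden discrepancy; once this is in place the equivalence is a formal matching of the two expressions. I would organize this as a short lemma isolating the $\ell$-unit factors, then a one-line deduction of each implication.
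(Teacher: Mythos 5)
Your plan has a real gap, and it stems from a direction error in the central inequality. You write that Proposition~\ref{lbd:RS} gives
${v}_\ell\big(\sqrt{|D_K|}\,L(1/2,\pi_{\lambda,K}\otimes\chi_0\nu)/\Omega_\lambda\big)\ge 0$,
and that substituting (a) and the factorisation yields
\[
{v}_\ell\!\left(L^{\alg}(1,\lambda\chi_0\nu)\right) + {v}_\ell\!\left(L^{\alg}(1,\lambda(\chi_0\nu)^{-1})\right) \ \le\ 2\sum_{v\mid {\rm N}_{K/F}(\Cond(\lambda))\text{ inert}}\mu_\ell(\lambda_v).
\]
That is not what comes out. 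Since the period ratio appears via
\[
{v}_\ell\!\left(L^{\alg}(1,\lambda\chi_0\nu)\,L^{\alg}(1,\lambda(\chi_0\nu)^{-1})\right)
= {v}_\ell\!\left(\frac{\pi^{2\kappa}\Gamma_\Sigma(\Sigma+\kappa)^2\,\Omega_\lambda}{\Omega_\infty^{2\Sigma+4\kappa}}\prod_{w\in\Sigma_\ell}G(\lambda_w)^2\right)
+ {v}_\ell\!\left(\frac{\sqrt{|D_K|}\,L(1/2,\pi_{\lambda,K}\otimes\chi_0\nu)}{\Omega_\lambda}\right)
\]
(up to $\ell$-unit constants), assumption (a) fixes the first summand to equal $2\sum\mu_\ell(\lambda_v)$, while Proposition~\ref{lbd:RS} bounds the second summand from below by $0$. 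The conclusion is the \emph{lower} bound $\ge 2\sum\mu_\ell(\lambda_v)$, which has the same sign as Theorem~\ref{lm:lb}(i). Two lower bounds do not give equality, so your argument for (a)$\Rightarrow$(b) does not close. The same sign issue reappears in your sketch of (b)$\Rightarrow$(a): Proposition~\ref{lbd:RS} cannot ``pin down'' ${v}_\ell\big(L(1/2)/\Omega_\lambda\big)$ exactly, because it is a one-sided estimate, and you do not supply the matching upper bound (which would require, for instance, Lemma~\ref{lm:lb2}, which you also never cite).

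The missing input is precisely the mod $\ell$ non-vanishing of the Rankin--Selberg $L$-value: Theorems~\ref{T:b.W}, \ref{T:3.W} (for $\deg\fp=1$) and \ref{mainhrk} (for $\deg\fp>1$), which upgrade the $\ell$-integrality to the \emph{equality}
\[
{v}_\ell\!\left(\frac{\sqrt{|D_K|}\,L(1/2,\pi_{\lambda,K}\otimes\chi_0\nu)}{\Omega_\lambda}\right)=0
\quad\text{(resp.\ }{v}_\ell(\delta_{\fp,\mu})\text{ in case (b$'$))}
\]
for all but finitely many $\nu$ in the relevant family. This is the identity that bridges the two sides: combined with (b) it determines the period ratio, giving (a); and combined with (a) it determines ${v}_\ell\big(L^{\alg}_1 L^{\alg}_2\big)=2\sum\mu_\ell(\lambda_v)$, which together with the two $\ge$ of Theorem~\ref{lm:lb}(i) forces both summands to be exact, giving (b). The paper's proof consists of exactly this chain; your proposal replaces the non-vanishing theorems by the $\ell$-integrality alone, and that is not enough. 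The surrounding bookkeeping you outline (the $\ell$-unit factors $2^r$, $[O_{K,\fc_1}^\times:O^\times]$, the Gauss sums at $\Sigma_\ell$, and $\mu_\ell((\lambda\chi_0)_\fq)=0$ since $\ell\nmid(q^2-1)$) is correct and does appear in the paper's proof, but it is secondary to the gap above.
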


\begin{proof}
  Let $\fq$ be as in part b). 
  
  Based on the mod $\ell$ non-vanishing of Rankin--Selberg $L$-value as in Theorems \ref{T:b.W} and \ref{T:3.W} for the self-dual pair $(\pi_{\lambda},\chi_0\nu)$, we have 
   
  \begin{equation} \label{nv11}
  {v}_{\ell}\left(\frac{ \sqrt{|D_K|} \cdot L(1,\lambda\chi_0\nu)L(1,\lambda(\chi_0\nu)^{-1})}{\Omega_{\lambda}^{\cdot}}\right)=0
  \end{equation} 
  for all but finitely many $\nu\in\Xi_{\lambda\chi_0,\fp}^{+}$.
  Thus part b) implies part a). 
  
  Suppose that part a) holds. Then by the lower bound for Hecke $L$-value as in Theorem \ref{lm:lb}
  we have \begin{equation}\label{l1}{v}_{\ell}\left(L^{\alg}(1,\lambda\chi_0\nu)\right)\geq \sum_{\substack{v|{\rm{N}}_{K/F}(\Cond(\lambda)) \text{ inert}\\}}\mu_{\ell}(\lambda_v) \end{equation}
 and 
 \begin{equation}\label{l2}{v}_{\ell}\left(L^{\alg}(1,\lambda(\chi_0\nu)^{-1})\right)\geq \sum_{\substack{v|{\rm{N}}_{K/F}(\Cond(\lambda)) \text{ inert}\\}}\mu_{\ell}(\lambda_v), \end{equation} where we have used the fact that ${v}_{\ell}(\prod_{w\in \Sigma_\ell}G(\lambda_{w}\chi_{0,w}))={v}_{\ell}(\prod_{w\in \Sigma_\ell}G(\lambda_{w}))$ since $\chi_0$ is unramified at $\ell$, and $\mu_{\ell}(\lambda_v\chi_{0,v})=\mu_{\ell}(\lambda_v)$ for $v|{\rm N}_{K/F}(\Cond(\lambda))$ inert in $K$.
  Hence, in light of \eqref{nv11}, \eqref{l1} and \eqref{l2}, part b) follows.

\end{proof}

We have the following comparison of periods.
\begin{thm}\label{cpp}
Let $K$ be a CM field and $\Sigma$ a CM type. 
  Let $\lambda$ be a self-dual Hecke character over $K$ of infinity type $\Sigma+\kappa(1-c)$ for $\kappa\in \BZ_{\geq 0}[\Sigma]$. Let {$\ell\nmid 2D_F$} be a prime such that $\Sigma$ is an $\ell$-ordinary CM type.  If $\varepsilon(\lambda)=-1$, suppose that $\ell\neq 3$.
Then we have
\begin{equation}\label{com1}{v}_{\ell}\left(\frac{\pi^{2\kappa}\Gamma_\Sigma(\Sigma+\kappa)^2 \cdot  \Omega_{\lambda}^{}}{\Omega_\infty^{2\Sigma+4\kappa}}\prod_{w\in \Sigma_\ell }G(\lambda_w)^2\right)=2\sum_{\text{$v|{\rm{N}}_{K/F}(\Cond(\lambda))$ inert}}\mu_\ell(\lambda_v).\end{equation}
\end{thm}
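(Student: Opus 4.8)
\textbf{Plan of the proof of Theorem~\ref{cpp}.} The strategy is the roundabout one already outlined in \S\ref{ss:per}: rather than comparing the periods $\Omega_\lambda$ and $\Omega_\infty^{2\Sigma+4\kappa}$ directly, I will produce an auxiliary character $\lambda\nu_0$ for which both sides of \eqref{com1} can be computed independently, and then transfer the identity back to $\lambda$ using the fact that the comparison of periods is insensitive to anticyclotomic twists. The first step is to fix the auxiliary data. Choose a degree one prime $\fq\mid 2\ell{\rm N}_{K/F}(\Cond(\lambda))$ of $F$ inert in $K$ with $\ell\nmid(q_\fq^2-1)$, and by Lemma~\ref{epsi:twist} pick $\chi_0\in\Xi_\fq$ with $\varepsilon(\lambda\chi_0)=\varepsilon(\lambda\chi_0^{-1})=+1$ and satisfying the condition \eqref{Ass(D)} (if $\varepsilon(\lambda)=+1$ take $\chi_0=1$ and impose no condition on $\fq$; this is where the hypothesis $\ell\neq 3$ in the case $\varepsilon(\lambda)=-1$ enters, since it is needed to arrange \eqref{Ass(D)} and the level-raising auxiliary hypotheses at $\fq$). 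Then choose a degree one prime $\fr\nmid\fq 2\ell{\rm N}_{K/F}(\Cond(\lambda))$ split in $K$ with ${\rm N}_{F/\BQ}(\fr)\gg 0$, so in particular $[O_{K,\fc_1}^\times:O^\times]=1$ for the relevant conductors.

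The second step is the \emph{preliminary mod $\ell$ non-vanishing} \eqref{nv-cmp}. Since $\varepsilon(\lambda\chi_0)=\varepsilon(\lambda\chi_0^{-1})=+1$, I apply \cite[Thm.~1.5]{He} (Hida's approach, cf.~\cite{Hsieh:nv}) to the two Hecke characters $\lambda\chi_0$ and $\lambda\chi_0^{-1}$ and the split prime $\fr$: this yields $\nu_1,\nu_2\in\Xi_\fr$ with the $\ell$-adic valuation of the corresponding $L^{\alg}$-values equal to the expected local lower bound, and setting $\nu_0:=\sqrt{\nu_1}\sqrt{\nu_2}$ and $\nu:=\sqrt{\nu_1}\sqrt{\nu_2}^{-1}$ (for fixed square roots in $\Xi_\fr$) gives \eqref{nv-cmp}, i.e.
$$
{v}_{\ell}\!\left(L^{\alg}(1,\lambda\nu_0(\chi_0\nu))\cdot L^{\alg}(1,\lambda\nu_0(\chi_0\nu)^{-1})\right)=2\mu_{\ell}(\lambda).
$$
Note $\mu_\ell(\lambda\nu_0)=\mu_\ell(\lambda)$ and the inert-sum over places dividing ${\rm N}_{K/F}(\Cond(\lambda\nu_0))$ agrees with that for $\lambda$, since $\nu_0$ is ramified only at the split prime $\fr$.

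The third step establishes Theorem~\ref{cpp} for the base character $\lambda\nu_0$, by squeezing the $\ell$-adic valuation of $\pi^{2\kappa}\Gamma_\Sigma(\Sigma+\kappa)^2\Omega_{\lambda\nu_0}/\Omega_\infty^{2\Sigma+4\kappa}\cdot\prod_{w\in\Sigma_\ell}G((\lambda\nu_0)_w)^2$ between matching bounds. For the upper bound \eqref{com1'} I combine (a1), the $\ell$-integrality of the Rankin--Selberg $L$-values from Proposition~\ref{lbd:RS} applied with the conductor inequality $\ord_\fr\mathfrak{N}_{\lambda}<\ord_\fr\cond(\nu_{0,\fr})\le\ord_\fr\cond(\nu_\fr)$, together with the explicit Waldspurger formula Theorem~\ref{T:central.W} and the factorisation \eqref{$L$-fac'}, with (a2), namely \eqref{nv-cmp}; dividing the lower bound for $L^{\alg}\cdot L^{\alg}$ into the Waldspurger-normalised quantity forces the period ratio valuation to be $\le 2\sum_{v\text{ inert}}\mu_\ell(\lambda_v)$. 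For the lower bound \eqref{com2} I combine (b1), the mod $\ell$ non-vanishing of Rankin--Selberg $L$-values (Theorem~\ref{thmD}/Theorem~\ref{T:b.W}--\ref{T:3.W}) for the base character $\lambda\nu_0$, prime $\fr$, and $\chi_0\in\Xi_\fq$ — which gives ${v}_\ell\big(\sqrt{|D_K|}L(1/2,\pi_{\lambda\nu_0,K}\otimes\chi_0\nu)/\Omega_{\lambda\nu_0}\big)=0$ for almost all $\nu\in\Xi_{\lambda\nu_0\chi_0,\fr}^+$ — with (b2), the lower bound ${v}_\ell(L^{\alg}\cdot L^{\alg})\ge 2\mu_\ell(\lambda)$ from Theorem~\ref{lm:lb}(i); feeding these through Theorem~\ref{T:central.W} and \eqref{$L$-fac'} yields the reverse inequality. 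Together these give \eqref{com1} for $\lambda\nu_0$. The final step transfers this to $\lambda$ itself: the equivalence a)$\Leftrightarrow$b) of Proposition~\ref{cpnv1}, applied once to $\lambda\nu_0$ and once to $\lambda$ (with the same auxiliary primes $\fp,\fq$ and character $\chi_0$, using that $\#\Xi_{\lambda\chi_0,\fp}^+=\Xi_{\lambda\nu_0\chi_0,\fp}^+=\infty$), shows that statement a) — which is exactly \eqref{com1} — holds for $\lambda$ if and only if it holds for $\lambda\nu_0$; since it holds for $\lambda\nu_0$, the theorem follows.

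\textbf{Main obstacle.} The delicate point is not the abstract squeeze but verifying that all the hypotheses of the invoked results can be met \emph{simultaneously} by a single choice of auxiliary primes $\fq,\fr$ and character $\chi_0$: the condition \eqref{Ass(D)} on $\chi_0$ (conductor coprime to $\ell\fp\mathfrak{N}_\lambda$, divisible only by inert places, with $\ell\nmid\prod(q_v^2-1)$, and $\chi_{0,v}=1$ at $v\mid\mathfrak{N}_\lambda^-$), the requirement $\varepsilon(\lambda\chi_0)=\varepsilon(\lambda\chi_0^{-1})=+1$ so that \cite[Thm.~1.5]{He} applies to both $\lambda\chi_0$ and $\lambda\chi_0^{-1}$, the conductor inequalities at $\fr$ needed for the $\ell$-integrality statement (a1) via Proposition~\ref{lbd:RS}, and the splitting/inertness and norm-size conditions on $\fp,\fr$. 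Checking that the Gauss-sum factors $\prod_{w\in\Sigma_\ell}G((\lambda\nu_0)_w)^2$ and $\prod_{w}G(\lambda_w)^2$ have the same $\ell$-adic valuation (because $\nu_0$ and $\chi_0$ are unramified at $\ell$), and that the index factors $[O_{K,\fc_1}^\times:O^\times]$ and the powers of $2$ appearing in Theorem~\ref{T:central.W} are $\ell$-units for the chosen conductors, requires care but is routine given the hypothesis $\ell\nmid 2D_F$. I expect the genuinely non-trivial input to be the correct bookkeeping of exponential conductors at $\fr$ in step three, ensuring $\ord_\fr\mathfrak{N}_\lambda<\ord_\fr\cond(\nu_{0,\fr})\le\ord_\fr\cond(\nu_\fr)$ holds for the specific $\nu_0,\nu$ produced by the square-root construction in step two.
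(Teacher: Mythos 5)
Your proposal follows the paper's own strategy step by step: pick an inert degree-one prime $\fq$ coprime to $\ell\,{\rm N}_{K/F}(\Cond(\lambda))$ with $\ell\nmid(q_\fq^2-1)$ (this is exactly where $\ell\neq 3$ is used when $\varepsilon(\lambda)=-1$) and $\chi_0\in\Xi_\fq$ making $\varepsilon(\lambda\chi_0)=\varepsilon(\lambda\chi_0^{-1})=+1$; pick a split degree-one prime $\fr$ of odd residue characteristic; use the preliminary non-vanishing (Theorem~\ref{lm:lb}(ii), which is \cite[Thm.~1.5]{He}) for $\lambda\chi_0$ and $\lambda\chi_0^{-1}$ to produce $\nu_1,\nu_2\in\Xi_\fr$ and thence $\nu_0,\nu$ with $\nu_0\nu=\nu_1$, $\nu_0\nu^{-1}=\nu_2$; squeeze the period ratio for $\lambda\nu_0$ between the upper bound (Proposition~\ref{lbd:RS} together with \eqref{nv-cmp}) and the lower bound (Theorems~\ref{T:b.W}--\ref{T:3.W} together with Theorem~\ref{lm:lb}(i)); and transfer back from $\lambda\nu_0$ to $\lambda$ via Proposition~\ref{cpnv1}. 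One small point to tighten: in your final step, the parenthetical ``using that $\#\Xi_{\lambda\chi_0,\fp}^+=\#\Xi_{\lambda\nu_0\chi_0,\fp}^+=\infty$'' (where $\fr$ is surely meant rather than $\fp$, and likewise $\fq\nmid$ rather than $\fq\mid$ earlier) does not by itself yield ``a) for $\lambda$ iff a) for $\lambda\nu_0$''; what is needed, and what the paper actually supplies, is that statement b) of Proposition~\ref{cpnv1} for the base character $\lambda\nu_0$ is equivalent to statement b) for $\lambda$, because the twist $\nu\mapsto\nu_0\nu$ (resp.\ $\nu\mapsto\nu_0^{-1}\nu$) is a bijection of $\Xi_\fr$ identifying the relevant families of $L$-values and preserving $\Xi_{\lambda\chi_0,\fr}^+$ up to finitely many elements, and $\mu_\ell(\lambda\nu_0)=\mu_\ell(\lambda)$ since $\nu_0$ is ramified only at the split prime $\fr$.
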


\begin{proof}
  Choose an auxiliary degree one prime $\fq$ of $F$ inert in $K$ such that 
  \begin{itemize}
  \item[\tiny{$\bullet$}]the rational prime below it is coprime to $\ell {\rm{N}}_{K/F}(\Cond(\lambda))$ and,
  \item[\tiny{$\bullet$}]$\ell\nmid (q^2-1)$. 
  \end{itemize}Take $\chi_0\in \Xi_{\fq}$ such that 
  \[\varepsilon(\lambda\chi_0)=+1.\]Such a 
  $\chi_0$ exists since $\fq$ is inert in $K$ by the epsilon factor formula (cf.~Lemma~\ref{rk1int}).  If $\varepsilon(\lambda)=+1$, then 
  we simply take
  $\chi_0=1$ and  if $\varepsilon(\lambda)=-1$, we take $\chi_0$ to be sufficiently ramified. {Note that the assumption $\ell\neq 3$ implies the existence of a prime $\fq$ with $\ell\nmid q^2-1$}.

    Choose an auxiliary degree one prime $\fq'$ of $F$ such that 
    \begin{itemize}
    \item[\tiny{$\bullet$}] $\fq'$ splits in $K$ and,
    \item[\tiny{$\bullet$}]the rational prime $q'$ below it is coprime to $q\ell{\rm N}_{K/F}(\Cond(\lambda))$ and has odd residue characteristic.
    \end{itemize}
    
    Since $\varepsilon(\lambda\chi_0)=\varepsilon(\lambda\chi_0^{-1})=+1$, applying the mod $\ell$ non-vanishing as in part (ii) of Theorem \ref{lm:lb} for $\lambda\chi_0$ and $\lambda\chi_0^{-1}$,
  there exist characters $\nu_1, \nu_2\in \Xi_{\fq'}$ such that 
    \begin{equation}\label{lowhec}{v}_{\ell}\left(L^{\alg}(1,\lambda\chi_0\nu_1)\right)=\sum_{\substack{\text{$v|{\rm{N}}_{K/F}(\Cond(\lambda\chi_0))$ inert}}}\mu_\ell(\lambda_v\chi_{0,v})\end{equation}
    and
    \begin{equation}\label{lowhec1}{v}_{\ell}\left(L^{\alg}(1,\lambda\chi_0^{-1}\nu_2)\right)=\sum_{\substack{\text{$v|{\rm{N}}_{K/F}(\Cond(\lambda\chi_0^{-1}))$ inert}}}\mu_\ell(\lambda_v\chi_{0,v}^{-1}).\end{equation}
    Since $\fq'$ has odd residual characteristic, one can find\footnote{{That is, $\nu_0=\sqrt{\nu_1}\sqrt{\nu_2}$ and $\nu=\sqrt{\nu_1}\sqrt{\nu_2}^{-1}$, where $\sqrt{\nu_i}$ is a square-root of $\nu_i$ in $\Xi_{\fq'}$.}} $\nu_0,\nu\in \Xi_{\fq'}$ such that $\nu_0\nu=\nu_1$ and $\nu_0{\nu}^{-1}=\nu_2$.  Then we have 
    \[L(1/2,\pi_{\lambda\nu_0,K}\otimes\nu\chi_0)=L(1,\lambda\chi_0\nu_1)L(1,\lambda\chi_0^{-1}\nu_2).\]
  
    In the following we will show that 
    \begin{equation}\label{cmpd}{v}_{\ell}\left(\frac{\pi^{2\kappa}\Gamma_\Sigma(\Sigma+\kappa)^2 \cdot \Omega_{\lambda\nu_0}^{}}{\Omega_\infty^{2\Sigma+4\kappa}}\prod_{w\in \Sigma_\ell }G(\lambda_w\nu_{0,w})^2\right)=2\sum_{\text{$v|{\rm{N}}_{K/F}(\Cond(\lambda\nu_0))$ inert}}\mu_\ell(\lambda_v\nu_{0,v}),
    \end{equation}
    i.e. part (a) of Proposition \ref{cpnv1} for the Hecke character 
    $\lambda\nu_0$. Consequently, a series of implications ensues: 
     part (b) of Proposition \ref{cpnv1} for Hecke  
     characters $\lambda\nu_0\chi_0$ and $\lambda\nu_0\chi_0^{-1}$ holds by the same proposition (prime $\fp$ therein being $\fq'$ and $\chi_0, \fq$ being as above). 
    Then the latter implies 
    part (b) of Proposition \ref{cpnv1} for Hecke characters $\lambda\chi_0$ and $\lambda\chi_0^{-1}$ (prime $\fp$ therein being $\fq'$ and $\chi_0, \fq$ {as} above). 
Hence, part (a) of Proposition \ref{cpnv1} for the Hecke character $\lambda$ follows and so 
    \begin{equation}\label{}{v}_{\ell}\left(\frac{\pi^{2\kappa}\Gamma_\Sigma(\Sigma+\kappa)^2 \cdot \Omega_{\lambda}^{}}{\Omega_\infty^{2\Sigma+4\kappa}}\prod_{w\in \Sigma_\ell }G(\lambda_w)^2\right)=2\sum_{\text{$v|{\rm{N}}_{K/F}(\Cond(\lambda))$ inert}}\mu_\ell(\lambda_v).\end{equation}
    
    Henceforth we consider \eqref{cmpd}. 
    
    Since $\fq'$ splits in $K$, we can further choose Hecke {characters} 
    $\nu_1$ and $\nu_2$ such that 
    \begin{itemize}
    \item[\tiny{$\bullet$}]$\ord_{\fq'}\cond(\nu) \geq \ord_\fq'\cond (\pi_{\pi\nu_0})$ and,
    \item[\tiny{$\bullet$}] $[O_{K,\cond(\nu)}^\times:O^\times]=1$. 
    \end{itemize}
    Recall that
   \begin{equation}\label{lbd}{v}_{\ell}\left(\frac{L(1/2, \pi_{\lambda\nu_0, K}\otimes\chi_0\nu)}{\Omega_{\lambda\nu_0}}\right)\geq 0\end{equation}
   (cf.~Proposition~\ref{lbd:RS}).
In view of \eqref{lbd}, \eqref{lowhec} and \eqref{lowhec1}, we have 
  \[\begin{aligned}
    &{v}_{\ell}\left(\frac{\pi^{2\kappa}\Gamma_\Sigma(\Sigma+\kappa)^2\Omega_{\lambda\nu_0}}{\Omega_\infty^{2\Sigma+2\kappa}}\prod_{w\in \Sigma_\ell }G(\lambda_w\chi_{0,w})G(\lambda_w\chi_{0,w}^{-1})\right)\\
    \leq& \sum_{\substack{\text{$v|{\rm{N}}_{K/F}(\Cond(\lambda\chi_0))$ inert}}}\mu_\ell(\lambda_v\chi_{0,v})+\sum_{\substack{\text{$v|{\rm{N}}_{K/F}(\Cond(\lambda\chi_0^{-1}))$ inert}}}\mu_\ell(\lambda_v\chi_{0,v}^{-1})\\
    =&2\sum_{\text{$v|{\rm{N}}_{K/F}(\Cond(\lambda))$ inert}}\mu_\ell(\lambda_v).
     \end{aligned}\]
     Here the last equality follows from 
\[\sum_{\text{$v|{\rm{N}}_{K/F}(\Cond(\lambda))$ inert}}\mu_\ell(\lambda_v)=\sum_{\text{$v|{\rm{N}}_{K/F}(\Cond(\lambda))$ inert}}\mu_\ell(\lambda_v\chi_{0,v})\] since $\chi_{0,v}$ is trivial for $v|\Cond(\lambda)$ non-split in $K$. Also note that $\mu_\ell(\lambda_\fq\chi_{0,\fq}^{-1})=0$ if $\chi_{0,\fq}$ is ramified and $\ell\nmid (q^2-1)$.
     
  On the other hand, we have mod $\ell$ non-vanishing of Rankin--Selberg $L$-values:
  \[{v}_{\ell}\left(\frac{L(1/2, \pi_{\lambda\nu_0, K}\otimes\chi_0\mu)}{\Omega_{\lambda\nu_0}}\right)=0\] for all but finitely many $\mu\in \Xi_{\fq'}$. 
  Together with the lower bound for the $\ell$-adic valuation of Hecke $L$-values as in part~(i)~of~Theorem~\ref{lm:lb}, we then have\footnote{Here we use the fact that an unramified finite order twist at $\ell$ does not change the $\ell$-adic valuation of the underlying Gauss sum.} 
   \[ {v}_{\ell}\left(\frac{\pi^{2\kappa}\Gamma_\Sigma(\Sigma+\kappa)^2  \cdot \Omega_{\lambda\nu_0}}{\Omega_\infty^{2\Sigma+4\kappa}}\prod_{w\in \Sigma_\ell }G(\lambda_w\chi_{0,w})G(\lambda_w\chi_{0,w}^{-1})\right)\geq 2\sum_{\text{$v|{\rm{N}}_{K/F}(\Cond(\lambda))$ inert}}\mu_\ell(\lambda_v).\]

   {Hence,} the proof of \eqref{cmpd} concludes.
\end{proof}
\subsection{Mod $\ell$ non-vanishing}
{The main result of this section is the following.}
\begin{thm}\label{mm}
Let $K$ be a CM field, $F$ the maximal totally real subfield and $\Sigma$ a CM type. 
  Let $\lambda$ be a self-dual Hecke character over $K$ of infinity type $\Sigma+\kappa(1-c)$ for $\kappa\in \BZ_{\geq 0}[\Sigma]$. Let {$\ell\nmid 2D_F$} be a prime such that $\Sigma$ is an $\ell$-ordinary CM type. If $\varepsilon(\lambda)=-1$, suppose that $\ell\neq 3$.
Let $\fp\nmid \ell {\rm N}_{K/F}(\Cond(\lambda))^-$ be a degree one prime of $F$. Then 
    \[{v}_{\ell}\left(L^{\alg}(1,\lambda\nu)\right)= \sum_{\substack{v|{\rm{N}}_{K/F}(\Cond(\lambda)) \text{ inert}}}\mu_{\ell}(\lambda_v) \]for all except finitely many $\nu\in \Xi_{\lambda,\fp}^+$.
\end{thm}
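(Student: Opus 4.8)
\textbf{Proof plan for Theorem~\ref{mm}.}

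The plan is to combine the two main ingredients already in place: the $(\ell,p)$ non-vanishing of Rankin--Selberg $L$-values (Theorems~\ref{T:b.W} and~\ref{T:3.W}, according to whether $\fp$ is inert or split in $K$) applied with the auxiliary character $\chi_0$ taken to be trivial, and the $\ell$-integral period comparison of Theorem~\ref{cpp}. First I would invoke the factorisation \eqref{$L$-fac}, which gives
\[
L(1/2,\pi_{\lambda,K}\otimes\nu)=L(1,\lambda\nu)L(1,\lambda\nu^{-1})
\]
for $\nu\in\Xi_\fp$, together with the definition of the algebraic $L$-value \eqref{eq:alg}. Comparing the normalisation $\Omega_\lambda$ of the Rankin--Selberg value with $\Omega_\infty^{\Sigma+2\kappa}$ via Theorem~\ref{cpp} (noting $\fp\nmid \ell D_{K/F}$ and, if $\varepsilon(\lambda)=-1$, $\ell\neq 3$) converts the statement ${v}_\ell(L^{\alg}(1,\lambda\nu))=\sum_{v\mid {\rm N}_{K/F}(\Cond(\lambda))\text{ inert},\,v\nmid\fp}\mu_\ell(\lambda_v)$ into a statement about ${v}_\ell\big(\sqrt{|D_K|}\,L(1/2,\pi_{\lambda,K}\otimes\nu)/\Omega_\lambda\big)$.

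Concretely, I would argue as follows. Since $\fp\nmid \ell{\rm N}_{K/F}(\Cond(\lambda))^-$, the test vector $f_\lambda$ and the period $\Omega_\lambda$ do not depend on $\fp$, and Theorem~\ref{T:b.W} (if $\fp$ inert) or Theorem~\ref{T:3.W} (if $\fp$ split), taken with $\chi_0=1$, yields
\[
{v}_\ell\!\left(\frac{\sqrt{|D_K|}\cdot L(1/2,\pi_{\lambda,K}\otimes\nu)}{\Omega_\lambda}\right)=0
\]
for all except finitely many $\nu\in\Xi_{\lambda,\fp}^{+}$. On the other hand, by \eqref{eq:alg} and \eqref{$L$-fac},
\[
{v}_\ell\!\left(L^{\alg}(1,\lambda\nu)\,L^{\alg}(1,\lambda\nu^{-1})\right)
={v}_\ell\!\left(\frac{\pi^{2\kappa}\Gamma_\Sigma(\Sigma+\kappa)^2\cdot L(1/2,\pi_{\lambda,K}\otimes\nu)}{\Omega_\infty^{2\Sigma+4\kappa}}\prod_{w\in\Sigma_\ell}G((\lambda\nu)_w)G((\lambda\nu^{-1})_w)\right),
\]
using $\ell\nmid[O_K^\times:O^\times]$ (which holds since $\ell\nmid 2D_F$). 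Writing this as
\[
{v}_\ell\!\left(\frac{\sqrt{|D_K|}\,L(1/2,\pi_{\lambda,K}\otimes\nu)}{\Omega_\lambda}\right)
+{v}_\ell\!\left(\frac{\pi^{2\kappa}\Gamma_\Sigma(\Sigma+\kappa)^2\cdot\Omega_\lambda}{\sqrt{|D_K|}\,\Omega_\infty^{2\Sigma+4\kappa}}\prod_{w\in\Sigma_\ell}G(\lambda_w)^2\right),
\]
the first term vanishes for almost all $\nu$ by the above, and the second is $2\sum_{v\mid{\rm N}_{K/F}(\Cond(\lambda))\text{ inert}}\mu_\ell(\lambda_v)$ by Theorem~\ref{cpp} — here I also use that an unramified twist at $\ell$ does not change the $\ell$-adic valuation of the Gauss sums $G((\lambda\nu)_w)$ (as $\nu$ is unramified at $\ell$ when $\fp\nmid\ell$), and that $\mu_\ell((\lambda\nu)_v)=\mu_\ell(\lambda_v)$ for $v\mid{\rm N}_{K/F}(\Cond(\lambda))$ non-split, $v\nmid\fp$. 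Hence
\[
{v}_\ell\!\left(L^{\alg}(1,\lambda\nu)\,L^{\alg}(1,\lambda\nu^{-1})\right)=2\sum_{\substack{v\mid{\rm N}_{K/F}(\Cond(\lambda))\text{ inert}\\ v\nmid\fp}}\mu_\ell(\lambda_v)
\]
for almost all $\nu\in\Xi_{\lambda,\fp}^{+}$, where one checks the contribution of places dividing $\fp$ is trivial: if $\fp$ is split this is automatic, and if $\fp$ is inert then $\fp\nmid{\rm N}_{K/F}(\Cond(\lambda))$ by hypothesis.

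To finish I would apply Theorem~\ref{lm:lb}(i): for almost all $\nu\in\Xi_{\lambda,\fp}^{+}$ one has the lower bounds ${v}_\ell(L^{\alg}(1,\lambda\nu))\geq\sum_{v\text{ inert},\,v\nmid\fp}\mu_\ell(\lambda_v)$ and similarly for $\lambda\nu^{-1}$ (applying the theorem to the self-dual character $\lambda$ and using that $\nu\in\Xi_{\lambda,\fp}^+\iff \nu^{-1}\in\Xi_{\lambda,\fp}^+$ up to finitely many characters). Since the two lower bounds sum to exactly $2\sum_{v\text{ inert},\,v\nmid\fp}\mu_\ell(\lambda_v)$, which equals the valuation of the product, equality must hold in both, giving ${v}_\ell(L^{\alg}(1,\lambda\nu))=\sum_{v\text{ inert},\,v\nmid\fp}\mu_\ell(\lambda_v)$ for all except finitely many $\nu\in\Xi_{\lambda,\fp}^{+}$. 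The main obstacle in this argument is not in the present theorem — it is entirely a bookkeeping combination of prior results — but rather sits upstream, in the proof of Theorem~\ref{cpp}, which requires the roundabout auxiliary-prime construction and the preliminary Hida-style non-vanishing of Theorem~\ref{lm:lb}(ii); here one must be careful that the finitely many excluded characters from each input can be absorbed into a single finite exceptional set, and that the Gauss-sum and $\mu_\ell$ bookkeeping at the bad places (especially $v\mid\fp$ and $v\mid D_{K/F}$) is consistent across all the invocations.
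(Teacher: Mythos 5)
Your proposal is correct and matches the paper's route: the paper's proof of Theorem~\ref{mm} cites Theorem~\ref{cpp} together with the equivalence of Proposition~\ref{cpnv1} (taking $\chi_0=1$ therein), and your argument simply inlines the implication (a)$\Rightarrow$(b) of that proposition — combining the Rankin--Selberg non-vanishing of Theorems~\ref{T:b.W}/\ref{T:3.W}, the factorisation \eqref{$L$-fac}, the period comparison of Theorem~\ref{cpp}, and the lower bound from Theorem~\ref{lm:lb}(i), then using that the two lower bounds saturate the valuation of the product.
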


\begin{proof}
We may assume that $\varepsilon(\lambda)=+1$ if $\fp$ splits in $K$. 

Recall the period relation 
\[{v}_{\ell}\left(\frac{\pi^{2\kappa}\Gamma_\Sigma(\Sigma+\kappa)^2 \cdot  \Omega_{\lambda}^{}}{\Omega_\infty^{2\Sigma+4\kappa}}\prod_{w\in \Sigma_\ell }G(\lambda_w)^2\right)=\sum_{\text{$v|{\rm{N}}_{K/F}(\Cond(\lambda))$ inert}}\mu_\ell(\lambda_v) \] (cf.~Theorem~\ref{cpp}). 
Thus the result follows from Proposition~ \ref{cpnv1} by taking the initial self-dual Hecke character to be $\lambda$, $\fp$ to be the given prime and 
$\chi_0=1$.
      
  \end{proof}
\section{CM Iwasawa main conjecture}\label{s:IMC} 
The main results of this section are Theorem \ref{snv} on simultaneous mod $p$ non-vanishing of Hecke $L$-values,  and Theorem~\ref{mcc}
on Eisenstein congruence divisibility towards the Iwasawa main conjecture for a CM field, which completes Hsieh's proof  \cite{Hs2}.

In this section we switch\footnote{as Iwasawa theory is traditionally a $p$-adic theory} the prior role of primes $\ell$ and $p$. 
\subsection{Simultaneous mod $p$ non-vanishing of Hecke $L$-values} Let $K/F$ be a $p$-ordinary CM quadratic extension for an odd prime $p$. Let $\Sigma$ be a $p$-ordinary CM type of $K$. Fix embeddings $\iota_{\infty}:\ov{\BQ}\hookrightarrow \BC$ and $\iota_{p}:\ov{\BQ}\hookrightarrow \ov{\BQ}_p$. 

Let $\chi$ be an algebraic Hecke character over $K$ of infinity type $k\Sigma+\kappa(1-c)$ for $\kappa=\sum_{\sigma\in \Sigma}\kappa_\sigma\sigma\in \BZ_{\geq 0}[\Sigma]$ and $k\Sigma+\kappa\in \BZ_{>0}[\Sigma]$. Put 
\[L^{\alg}(1, \chi):=[O_K^\times:O^\times]\prod_{w\in \Sigma_p }G(\chi_w)\cdot \frac{\pi^\kappa\Gamma_\Sigma(k\Sigma+\kappa)L(1,\chi)}{\Omega_\infty^{k\Sigma+2\kappa}} \ov{\BZ}_{(p)}\cap \ov{\BQ}\] as in \eqref{eq:alg}.

We say that $\chi$ is {residually} self-dual if \[\wh{\chi}|_{\wh{F}^\times}\omega_F\equiv \eta_{K/F}\pmod{\fm_p}\] for $\wh{\chi}$ the $p$-adic avatar of $\chi$ (cf.~\cite[Def.~1.1]{HT}), {$\omega_F$  the Teichm\"uller character over $F$} and $\fm_p$ the maximal ideal of {$\ov{\BZ}_{p}$}.

Fix a rational prime $\ell\neq p$ and a prime $\fl|\ell$ of $F$. Let $\Gamma_\fl$ be the maximal anticyclotomic $\BZ_\ell$-free extension of $K$ unramified outside $\fl$ and let $\Xi_\fl$ be the set of $\ov{\BQ}^\times$-valued finite order characters of $\Gamma_{\fl}$.

{In this section we assume that $\chi$ is either self-dual or 
not residually self-dual. Then the invariant  \[{\mu}_{p}(\chi):=\sum_{\substack{v|{\rm{N}}_{K/F}(\Cond(\chi))\ \text{non-split}\\v\nmid \fl}}\mu_{p}(\chi_v) \] often provides a lower bound for the $p$-adic valuation of $L^{\alg}(1, \chi\mu)$ for $\nu\in \Xi_{\fl}$, which is generically an equality.} 
The following is a main result of \cite{Hi4,Hsieh:nv,He} (cf.~\cite[Thm.~1.5]{He}).
\begin{thm}\label{nsnv}
Let $\chi$ be an algebraic Hecke character over a $p$-ordinary CM quadratic extension $K/F$ which is either self-dual or not residually self-dual. Let 
$\fl\nmid p$ be a degree one prime of $F$. 
{Suppose} that
\begin{itemize}
  \item  [\tiny{$\bullet$}] $p\nmid 2D_F$,
  \item  [\tiny{$\bullet$}]  $\fl\nmid  \Cond(\chi|_{\BA^\times})$,
  \item [\tiny{$\bullet$}] { If $\chi$ is self-dual and $\fl$ splits in $K$, then $\varepsilon(\chi)=+1$.}
\end{itemize}
Then there exist infinitely many $\nu\in \Xi_\fl$ such that 
 \[{v}_p \left(L^{\alg}(1,\chi\nu)\right)=\mu_{p}(\chi).\] 
\end{thm}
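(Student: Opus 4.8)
The statement (labelled \texttt{nsnv} in the excerpt, but structurally the main result of \cite{Hi4,Hsieh:nv,He}) is a preliminary mod $p$ non-vanishing; it is \emph{not} proved afresh here but invoked. Nevertheless, were one to prove it, the natural route is Hida's Eisenstein-series method, and I would organise it as follows. First I would reduce the central $L$-value $L(1,\chi\nu)$ to a Fourier coefficient of a $p$-adic Hilbert modular Eisenstein series. Concretely, attach to $\chi$ an ordinary $p$-adic family of Eisenstein series $\wh{\BE}_{\chi,\Sigma}$ on $\GL_2(\BA_F)$ (or on a Hilbert modular Shimura variety), normalised so that its specialisations interpolate $L^{\alg}(1,\chi\nu)$ for $\nu\in\Xi_\fl$. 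This requires choosing the local sections at the ramified places carefully — here the hypotheses $p\nmid 2D_F$, $\fl\nmid\Cond(\chi|_{\BA^\times})$, and (in the split self-dual case) $\varepsilon(\chi)=+1$ enter to guarantee the relevant local integrals are $p$-adically optimal and the archimedean type $k\Sigma+\kappa$ is admissible.

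The second step is the $q$-expansion principle: the $p$-adic valuation of the measure $\CE$ on $\Xi_\fl$ interpolating $L^{\alg}(1,\chi\nu)$ is governed by the valuations of the non-constant Fourier coefficients of $\wh{\BE}_{\chi,\Sigma}$, since the constant terms contribute valuation at least $v_p(L(0,\eta_{K/F}))$, which is zero under $p\nmid 2D_F$ (cf.\ the discussion in the proof of Theorem~\ref{lbdhrk}, and \cite[Prop.~4.7]{Hsieh:nv}). A local computation of the non-constant Fourier coefficients — essentially products of local Whittaker integrals — shows they are $p$-integral with common valuation exactly $\mu_p(\chi)=\sum_{v}\mu_p(\chi_v)$, the sum over non-split $v\mid\Nm_{K/F}(\Cond(\chi))$ with $v\nmid\fl$. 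This gives the lower bound $v_p(L^{\alg}(1,\chi\nu))\geq\mu_p(\chi)$ for almost all $\nu$.

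The third and decisive step is to show the bound is an equality for infinitely many $\nu$: one must prove the reduction $\wh{\BE}_{\chi,\Sigma}\bmod\fm_p$, after dividing by $p^{\mu_p(\chi)}$, is non-zero as a $p$-adic modular form. Hida's approach deduces this from the geometry of CM points: one evaluates the Eisenstein series at a CM point (or a self-product of CM points) and argues that the resulting CM value is a $p$-adic unit, using Zariski density of CM points (the Chai--Oort rigidity / mod $p$ André--Oort principle) together with the fact that the Eisenstein series is not constant modulo $\fm_p$ on the relevant Hecke orbit. The dichotomy ``self-dual or not residually self-dual'' is exactly what makes the Eisenstein series have the right behaviour: in the residually-self-dual non-self-dual case the constant term would itself be a unit obstruction, which is excluded. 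Once non-vanishing modulo $\fm_p$ is established, a standard argument (either the density of CM points or a Washington-type argument over the $\BZ_\ell$-tower $\Gamma_\fl$, exploiting $\ell\neq p$) yields infinitely many $\nu\in\Xi_\fl$ realising the equality.

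\textbf{Main obstacle.} The hard part is the third step: proving that the $p$-adic Eisenstein series is non-zero modulo $\fm_p$ after normalisation. This is precisely where the 2018 gap in Hida's original strategy lay (his proof of Zariski density of CM points on a self-product of the Hilbert modular variety), and where \cite{Hi4} had to retreat to ``infinitely many'' rather than ``all but finitely many'' characters when $\deg\fl=1$. For the present statement only ``infinitely many $\nu$'' is claimed, so the weaker (recovered) form of the density result suffices; but making the local Whittaker/Gauss-sum computations uniform enough to pin down the valuation as exactly $\mu_p(\chi)$ — rather than merely bounding it — across all the ramified and $\ell$-adic places simultaneously remains delicate, especially at places dividing $D_{K/F}$ and at $\fl$ itself when $\fl$ is inert.
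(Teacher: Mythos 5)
The paper does not prove Theorem~\ref{nsnv}; it invokes it directly from \cite{Hi4,Hsieh:nv,He} (specifically \cite[Thm.~1.5]{He}), and you correctly identify this and give an accurate sketch of Hida's Eisenstein-series method (interpolation, $q$-expansion principle, and mod~$p$ non-vanishing via Zariski density of CM points) together with a correct reading of why the ``infinitely many'' formulation is what survives the gap in Hida's original density argument. One minor slip: $p\nmid 2D_F$ does not imply $v_p(L(0,\eta_{K/F}))=0$ (that would require $p\nmid h_K^-$), but this is not actually needed — for the lower bound one only needs the constant term to have valuation $\geq \mu_p(\chi)$, and the equality for infinitely many $\nu$ comes from the non-constant Fourier coefficients, so the argument is unaffected.
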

In combination with our non-vanishing result in the self-dual case (cf.~Theorem~\ref{mainthm}), we obtain the following simultaneous non-vanishing.
\begin{thm}\label{snv}
Let $\chi_1$ and $\chi_2$ be algebraic Hecke characters over a $p$-ordinary CM quadratic extension $K/F$ such that $\chi_1$ is self-dual and $\chi_2$ not residually self-dual. Let 
$\fl\nmid p$ be a degree one prime of $F$. 
{Suppose} that
\begin{itemize}
  \item  [\tiny{$\bullet$}] $p\nmid 6D_F$,
  \item  [\tiny{$\bullet$}]  $\fl\nmid {\rm N}_{K/F}(\Cond(\chi_1))^-\cdot \Cond(\chi_2|_{\BA^\times})$,
  \item [\tiny{$\bullet$}] {$\varepsilon(\chi_1)=+1$ if $\fl$ splits in $K/F$.}
\end{itemize}
Then there exist infinitely many $\nu\in \Xi_\fl$ such that 
\[{v}_p \left(L^{\alg}(1,\chi_1\nu)\right)=\mu_{p}(\chi_1)\]and \[{v}_p \left(L^{\alg}(1,\chi_2\nu^{-1})\right)=\mu_{p}(\chi_2).\] 
\end{thm}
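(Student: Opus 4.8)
\textbf{Proof plan for Theorem \ref{snv}.}
The plan is to combine the two input non-vanishing results by a standard averaging trick over a finite quotient of $\Gamma_\fl$, using that $\nu \mapsto \nu^{-1}$ is a bijection of $\Xi_\fl$. First I would apply Theorem \ref{nsnv} to the non-residually-self-dual character $\chi_2$ (its hypotheses are met since $p \nmid 6D_F$ implies $p \nmid 2D_F$, and $\fl \nmid \Cond(\chi_2|_{\BA^\times})$): this produces an infinite set $S_2 \subset \Xi_\fl$ with ${v}_p(L^{\alg}(1,\chi_2\nu))=\mu_p(\chi_2)$ for all $\nu \in S_2$. Replacing each $\nu$ by $\nu^{-1}$ (again a bijection of $\Xi_\fl$, and $\mu_p(\chi_2)$ depends only on the conductor, hence is unchanged), we get an infinite set $S_2^{-1}$ on which ${v}_p(L^{\alg}(1,\chi_2\nu^{-1}))=\mu_p(\chi_2)$. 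Separately, since $\chi_1$ is self-dual and $\fl \nmid \ell {\rm N}_{K/F}(\Cond(\chi_1))^-$ (here $\ell$ plays the role of the auxiliary prime of Theorem \ref{mainthm} — note $\deg \fl = 1$ so the second bullet of Theorem \ref{mainthm} is vacuous, and the ordinarity/CM-type hypotheses are supplied by $K/F$ being $p$-ordinary with $\Sigma$ a $p$-adic CM type), Theorem \ref{mainthm} gives a cofinite subset $T_1 \subset \Xi_{\chi_1,\fl}^+$ with ${v}_p(L^{\alg}(1,\chi_1\nu))=\mu_p(\chi_1)$ for all $\nu \in T_1$; when $\fl$ splits in $K$ one has $\Xi_{\chi_1,\fl}^+ = \Xi_\fl$ up to finitely many characters because $\varepsilon(\chi_1\nu)=\varepsilon(\chi_1)=+1$ for $\fl$ split (Lemma \ref{epsi:twist}, using the hypothesis $\varepsilon(\chi_1)=+1$), while when $\fl$ is inert $\Xi_{\chi_1,\fl}^+$ is already infinite and in fact cofinite among characters of a fixed conductor parity.

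The main point is then to intersect these sets. In the split case, $T_1$ is cofinite in $\Xi_\fl$ and $S_2^{-1}$ is infinite, so $T_1 \cap S_2^{-1}$ is infinite, and every $\nu$ in it satisfies both equalities; this finishes the theorem. In the inert case one must be slightly more careful: Theorem \ref{mainthm} gives the equality only for cofinitely many $\nu \in \Xi_{\chi_1,\fl}^+$, which is the set of $\nu$ with $\varepsilon(\chi_1\nu)=+1$, i.e. (by Lemma \ref{epsi:twist}) those $\nu$ whose exponential conductor at $\fl$ has a fixed parity relative to that of $\chi_1$. So I would arrange the application of Theorem \ref{nsnv} to $\chi_2$ so that the resulting infinite family $S_2$ (hence $S_2^{-1}$) consists of characters of the correct conductor parity; this is possible because Theorem \ref{nsnv} actually produces infinitely many characters and one can pass to the sub-family of one fixed conductor (the characters of a given conductor $\fl^n$ are Galois-conjugate and the $p$-adic valuation of $L^{\alg}$ is Galois-stable, so the equality holds for all of them once it holds for one), and then choose $n$ of the parity making $\varepsilon(\chi_1\nu)=+1$. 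After this parity matching, $T_1 \cap S_2^{-1}$ is again infinite and delivers the claim.

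The step I expect to require the most care is precisely this parity bookkeeping in the inert case: one has to check that Theorem \ref{nsnv}, as invoked for $\chi_2$, can be forced to yield an infinite family supported on conductors of the single parity dictated by $\varepsilon(\chi_1)$ and $\mathrm{ord}_\fl\Cond(\chi_1)$ via Lemma \ref{epsi:twist}, and simultaneously that the inversion $\nu \mapsto \nu^{-1}$ does not disturb this parity (it does not, since $\nu$ and $\nu^{-1}$ have the same conductor) nor the value $\mu_p(\chi_2)$ (it does not, by definition of $\mu_p$). Everything else — the Galois-conjugacy argument to upgrade "one character of conductor $\fl^n$" to "all characters of conductor $\fl^n$", and the cofinite-meets-infinite intersection — is routine. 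I would also remark that the hypothesis $\ell \neq 3$ appearing in some earlier statements is absorbed into "$p \nmid 6D_F$" here (the roles of $p$ and $\ell$ are swapped in this section, so it is $p \neq 3$ that matters, guaranteed by $6 \mid 6D_F$), which is why the self-dual input Theorem \ref{mainthm} applies without further restriction.
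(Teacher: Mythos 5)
Your overall architecture matches the paper's: apply Theorem \ref{mainthm} to $\chi_1$, apply Theorem \ref{nsnv} to $\chi_2$ with the substitution $\nu\mapsto\nu^{-1}$, observe that $\Xi_{\chi_1,\fl}^+$ is cofinite in $\Xi_\fl$ when $\fl$ is split (so cofinite meets infinite gives infinite, which is correct) and is a fixed-parity subfamily when $\fl$ is inert, and then try to intersect. Your remark about $p\nmid 6D_F$ subsuming the ``$\ell\neq 3$'' hypothesis under the role-swap is also correct.

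The gap is in the inert case, and it is precisely at the step you yourself flag as needing the most care. Theorem \ref{nsnv}, as a black box, produces an infinite subset $S_2\subset\Xi_\fl$ on which the $\chi_2$-equality holds, but it gives no control on the parities of the exponential conductors of the characters in $S_2$: a priori they could all have conductor $\fl^n$ with $n$ of the wrong parity, in which case $S_2^{-1}\cap\Xi_{\chi_1,\fl}^+$ could be finite. Your attempted repair — Galois-conjugacy to upgrade ``one $\nu$ of conductor $\fl^n$'' to ``all $\nu$ of conductor $\fl^n$'', and then ``choose $n$ of the right parity'' — does not close this, for two reasons. First, and more fundamentally, after the upgrade you still only control the set of conductors that $S_2$ actually represents, and nothing in the statement of Theorem \ref{nsnv} forces that set to contain infinitely many $n$ of the parity dictated by $\varepsilon(\chi_1)$ and $\ord_\fl\cond(\lambda_\fl)$. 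Second, the Galois-stability claim is weaker than you assert: $v_p$ is determined by the fixed embedding $\iota_p$, and $v_p(\sigma x)=v_p(x)$ only for $\sigma$ in the decomposition group at the chosen place above $p$; since $p\neq\ell$ this decomposition subgroup of $\Gal(\BQ(\mu_{\ell^n})/\BQ)$ is typically a proper (cyclic, Frobenius-generated) subgroup, so ``one good $\nu$ of conductor $\fl^n$'' only propagates along its $p$-decomposition orbit, not to all primitive characters of conductor $\fl^n$. The paper avoids both issues by invoking not merely the \emph{statement} of Theorem \ref{nsnv} but the \emph{method} of \cite[\S5.3.2]{He}: because $\chi_2$ is not residually self-dual, the Eisenstein-measure/$q$-expansion argument there carries no sign obstruction and can be run on the fixed-parity subfamily $\Xi_{\chi_1,\fl}^+$ directly, yielding the $\chi_2$-equality for infinitely many $\nu$ already lying in $\Xi_{\chi_1,\fl}^+$. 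So you would need to either reproduce that refinement or find another way to certify the parity; the Galois-conjugacy-plus-choice route as written does not do it.

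Your split-case argument, the observation that $\nu\mapsto\nu^{-1}$ preserves the conductor and $\mu_p(\chi_2)$, and the verification that the hypotheses of Theorems \ref{mainthm} and \ref{nsnv} are met under the stated assumptions are all fine.
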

\begin{proof}
By {Theorem \ref{mm}}, we have \begin{equation}\label{nv1}{v}_p \left( L^{\alg}(1,\chi_1\nu)\right) =\mu_{p}(\chi_1)\end{equation} for all except finitely many $\nu\in \Xi_{\chi_1,\fl}^+$.
On the other hand, as a consequence of Theorem \ref{nsnv}, we have \begin{equation}\label{nv2}{v}_p\left( L^{\alg}(1,\chi_2\nu^{-1})\right)=\mu_{p}(\chi_2)\end{equation} for infinitely many $\nu\in \Xi_{\fl}$, as explained below. 

Note that the third condition is equivalent to  $\Xi_{\chi_1,\fl}^+=\infty$.
So the result follows from the claim that \eqref{nv2} holds for infinitely many $\nu\in \Xi_{\chi_1,\fl}^+$, which is the content of the rest of the proof.

{We first give a description of $\Xi_{\chi_1,\fl}^+$.} Since $\chi_1$ is self-dual, the second assumption implies that $\chi_1|_{F_\fl^\times}$ is unramified, which is {equivalent} to the prime $ \fl$ being unramified in $K$. In particular,  if $\fl$ splits in $K$, then $\Xi_{\chi_1,\fl}^+$ and $\Xi_{\fl}$ differ by finitely many elements, and if $\fl$ is inert in $K$, then the exponential conductors of $\nu\in \Xi_{\chi_1,\fl}^+$ at $\fl$ have the {same} parity $\pmod{2}$  (cf.~Lemma~\ref{epsi:twist}).

Since $\chi_2$ is not residually self-dual, the argument in \cite[\S5.3.2]{He} shows that \eqref{nv2} holds for infinitely many $\nu\in \Xi_{\chi_1,\fl}^+$, concluding the proof.

\end{proof}
\begin{remark}\label{hyp'}\
  \begin{itemize}
      \item [i)] If $\chi$ is not residually self-dual, then $\mu_{p}(\chi)=0$ if for each prime $\fq\parallel \Cond(\chi)$ of {$K$ non-split in $K/F$} and coprime to $\fl$, we have $p\nmid({\rm N}_{K/\BQ}(\fq)-1)$. 
\item [ii)] Similarly, if $\chi$ is self-dual, then $\mu_{p}(\lambda)=0$
if $p$ is odd and for each prime $\fq\nmid\fl$ of $F$ inert in $K$ such that $\fq\parallel \cond(\lambda_\fq)$, we have 
$p\nmid({\rm N}_{F/\BQ}(\fq)+1)$. 
\item[iii)] Based on the results of \cite{He}, it is also possible to allow $\chi_2$ to be residually self-dual. We restrict to the above version since it suffices for our application to the CM main conjecture.

  \end{itemize}

\end{remark}

\subsection{Iwasawa Main conjecture for CM fields}
\subsubsection{Set-up}

Let $K/F$ be a $p$-ordinary CM quadratic extension for an odd prime $p$. Let $h_K^-$ be the relative class number. Fix embeddings $\iota_\infty:\ov{\BQ}\hookrightarrow \BC$ and 
$\iota_p: \ov{\BQ}\hookrightarrow \ov{\BQ}_{p}$. 
Let $\Sigma$ be a $p$-ordinary CM type of $K$, which gives rise to a $p$-adic CM type $\Sigma_p$ via the embedding $\iota_p$.

 Let $K_\infty$ be the compositum of the cyclotomic $\BZ_p$-extension and the anticyclotomic $\BZ_p^{[F:\BQ]}$-extension of $K$. Put $\Gamma_K=\Gal(K_\infty/K)$. Let $K'$ be a finite abelian extension of $K$ that contains $K(\mu_p)$ and is disjoint with $K_\infty$. Put $\Delta=\Gal(K'/K)$ and $K_\infty'=K_\infty K'$. Let $\psi:\Delta\ra \ov{\BZ}_p^\times$ be a finite order Hecke character over $K$. Put $R=\ov{\BZ}_p^{\text{un}}[\psi]$ and $\Lambda=R[\![\Gamma_K]\!]$. Let $\fm_{\Lambda}$ be the maximal ideal of $\Lambda$.

 Let $M_\Sigma$ be the maximal $p$-abelian $\Sigma_p$-ramified extension
of $K_\infty'$. A primary Iwasawa-theoretic object is the module: $$X_\Sigma:=\Gal(M_\Sigma/K'_\infty)\otimes_{\BZ_p[\Delta][\![\Gamma_K]\!]} R[\Delta][\![\Gamma_K]\!].$$ 
Define $X_\Sigma^{(\psi)}$ as the maximal $\psi$-isotypic quotient of $X_\Sigma$, which is a finitely generated torsion $\Lambda$-module, and let \[F_{\Sigma}(\psi)\in \Lambda\] be its characteristic power series.
In the analytic realm we have the associated Katz $p$-adic $L$-function 
\[L_{\Sigma}(\psi)\in \Lambda,\] which interpolates
the algebraic part of critical Hecke $L$-values associated to twists of $\psi$ by certain characters of $\Gamma_K$ (cf.~\cite{Kz,HT}). 

The CM Iwasawa main conjecture posits the following equality of ideals of the Iwasawa algebra $\Lambda$. 
\begin{conj} We have
\[(F_{\Sigma}(\psi))= (L_{\Sigma}(\psi)).\] 

\end{conj}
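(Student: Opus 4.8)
The inclusion $(F_{\Sigma}(\psi))\subseteq (L_{\Sigma}(\psi))$ is precisely the divisibility $L_{\Sigma}(\psi)\mid F_{\Sigma}(\psi)$ of Theorem~\ref{mc}, whose proof is completed in this paper (cf.~\S\ref{s:IMC}). Since $\Lambda$ is a regular local ring, hence a unique factorisation domain, the conjectural equality of ideals is therefore equivalent to the single reverse divisibility $F_{\Sigma}(\psi)\mid L_{\Sigma}(\psi)$, i.e.\ to the inclusion $(L_{\Sigma}(\psi))\subseteq (F_{\Sigma}(\psi))$. The plan is thus to establish this ``Euler system divisibility'' and then to combine it with Theorem~\ref{mc}; note that both sides are nonzero by the non-vanishing results of this paper (Theorem~\ref{mainthm} and Theorem~\ref{Katz-rigidity} guarantee $L_{\Sigma}(\psi)\neq 0$, so one is genuinely comparing two generators of nonzero principal ideals of $\Lambda$), which removes any degenerate case.

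For $F=\BQ$ the reverse divisibility is Rubin's theorem \cite{Ru91}, deduced from the Euler system of elliptic units via Kolyvagin's derivative construction; in that case one would simply import it. For a general totally real $F$ the approach is to produce the analogous Euler system: the natural candidates are Rubin--Stark elements attached to the tower of finite abelian extensions of $K$ inside $K'_\infty$, whose norm compatibilities assemble them into an Euler system for the relevant $\psi$-twisted Galois representation and whose $p$-adic regulator is governed by $L_{\Sigma}(\psi)$ through a Rubin--Stark/Gras-type formula; feeding this into the general Euler-system bound (Kolyvagin--Rubin--Mazur--Rubin) would yield $\mathrm{char}_{\Lambda}\big(X_{\Sigma}^{(\psi)}\big)\mid L_{\Sigma}(\psi)$. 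A second, complementary route is to deduce the reverse divisibility from the Iwasawa main conjecture over the maximal totally real subfield $F$ (Wiles) together with a reflection argument following Greenberg: one relates the $\Sigma_p$-ramified module $X_{\Sigma}$, the $\Sigma_p^{c}$-ramified module $X_{\Sigma^{c}}$, and the class-group/unit Iwasawa module of $K'_\infty$, whose characteristic ideal is pinned down by the main conjecture for $F$ and the analytic class number formula for $K$. This produces an exact ``product relation'' of the shape $(F_{\Sigma}(\psi))\,(F_{\Sigma^{c}}(\psi^{*})) = (L_{\Sigma}(\psi))\,(L_{\Sigma^{c}}(\psi^{*}))$ for an appropriate conjugate datum $\psi^{*}$; combining it with Theorem~\ref{mc} applied to both $\Sigma$ and $\Sigma^{c}$ again forces the individual divisibilities to be equalities.

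The main obstacle is exactly this reverse divisibility. Over a general CM field neither elliptic units nor a fully established Rubin--Stark Euler system of the required shape is available, so the first route rests on as-yet-unproven input. The reflection route is more self-contained but demands a clean comparison of $X_{\Sigma}$, $X_{\Sigma^{c}}$ and the class-group module, and, crucially, it requires the running hypotheses ($p$-ordinarity of $\Sigma$, $p\nmid 6h_K^{-}D_F\#\Delta$, and the ramification condition on $\psi$) to hold simultaneously for $\Sigma$ and $\Sigma^{c}$ — but the condition on $\psi$ in Theorem~\ref{mc} is asymmetric in $\Sigma$ and $\Sigma^{c}$, so one would first have to twist $\psi$ by a carefully chosen finite-order character (and induct on the conductor, controlling the finitely many exceptional twists) to make the product relation applicable. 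Finally, propagating the argument to the excluded case $p\mid\#\Delta$ (cf.~Remark~\ref{rmk:hyp}) is a necessary, though comparatively routine, bookkeeping step once the $\psi$-isotypic equality is in hand.
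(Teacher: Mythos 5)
The statement you were asked to prove is labelled a \emph{Conjecture} in the paper, and the paper does not prove it: what the paper establishes (Theorems~\ref{mc} and \ref{mcc}) is only the one-sided divisibility $L_{\Sigma}(\psi)\mid F_{\Sigma}(\psi)$, via $p$-primitivity of Hsieh's Eisenstein series on $U(2,1)$, which the new non-vanishing results of the paper make unconditional. Your proposal correctly identifies this and correctly reduces the conjecture to the reverse divisibility $F_{\Sigma}(\psi)\mid L_{\Sigma}(\psi)$, but everything you offer toward that reverse divisibility is, by your own admission, conditional or incomplete. That is the genuine gap: the reverse divisibility is exactly the open part of the CM main conjecture for $[F:\BQ]>1$, and neither of your two routes closes it. The Euler-system route requires a Rubin--Stark (or elliptic-unit analogue) Euler system over a general CM field with the precise norm relations and the regulator formula linking it to $L_{\Sigma}(\psi)$; no such system is known unconditionally, and without it the Kolyvagin--Rubin machinery produces nothing. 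The reflection route requires an exact product relation between $X_{\Sigma}$, $X_{\Sigma^{c}}$ and the unramified Iwasawa module, together with Wiles' main conjecture over $F$; even granting those inputs, the asymmetry of the ramification hypothesis on $\psi$ between $\Sigma_p$ and $\Sigma_p^{c}$ (which you flag) means Theorem~\ref{mc} cannot simply be applied to both CM types, and your suggested fix by twisting and induction is not carried out. A proof proposal cannot rest on ``as-yet-unproven input.''

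One smaller point worth noting: your appeal to Theorem~\ref{Katz-rigidity} to ensure $L_{\Sigma}(\psi)\neq 0$ is not quite on target, since that theorem concerns the anticyclotomic projection of the Katz $p$-adic $L$-function attached to a \emph{self-dual} character $\lambda$, whereas $\psi$ here is an arbitrary finite-order character satisfying the ramification hypotheses of \S\ref{mc:CM}; the non-vanishing of $L_{\Sigma}(\psi)$ in the full $\Gamma_K$-variable family is standard but should be cited from the appropriate source (e.g.\ the interpolation at a cyclotomic-direction specialisation) rather than from the rigidity theorem. In short: the correct conclusion is that the statement remains a conjecture, the paper proves only the divisibility of Theorem~\ref{mc}, and your proposal, while accurately situating that divisibility, does not supply a proof of the equality of ideals.
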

Hsieh's Eisenstein congruence approach \cite{Hs2} to the CM Iwasawa Main conjecture  begins by realizing $L_\Sigma(\psi)$ as constant term of a $\Lambda$-adic Eisenstein series $\CE(\psi)$ on the unitary group $U(2,1)$ over $K$. One then seeks to construct Selmer cocycles given the vanishing of $\CL_\Sigma(\psi)$ at a prime ideal of $\Lambda$. A key necessity for the latter: mod $p$ non-vanishing of $\CE(\psi)$.

In \cite{Hs2} this $p$-primitivity is approached by considering the Fourier--Jacobi expansion of $\CE(\psi)$. Specifically, a period formula for the Fourier--Jacobi coefficients reduces the $p$-primitivity to a simultaneous mod $p$ non-vanishing of two Hecke $L$-values in an anticyclotomic $\ell$-adic family for an auxiliary prime $\ell\neq p$ such that one of the underlying Hecke characters is self-dual and the other is not residually self-dual. 

At precisely this stage the argument of \cite{Hs2} is erroneous: it is based on the mod $p$ non-vanishing of anticyclotomic Hecke $L$-values for all but finitely many $\nu$ for a given Hecke character as originally claimed in \cite{Hi1,Hsieh:nv}. Due to a subsequently found gap in Hida's strategy towards Zariski density of CM points on self-product of the mod $p$ Hilbert modular Shimura variety and extra conditions in its correction \cite[Cor.~2.12]{Hi4}, the mod $p$ non-vanishing is only known to hold for infinitely many $\nu$ for a given Hecke character. In particular, the intersection of the set of anticyclotomic characters $\nu$ for which the mod $p$ non-vanishing holds for a given pair of Hecke characters may well be empty. Based on our mod $p$ non-vanishing of anticyclotomic Hecke $L$-values for self-dual characters, the proof of simultaneous mod $p$ non-vanishing can still be completed, as detailed in the next subsection (cf.~Theorem~\ref{snv}).

\subsubsection{Main results} Let $\Psi: \Gal(K_{\infty}'/K)\ra \Lambda$ be the $\Lambda$-adic deformation of $\psi$.

We have the associated ordinary $\Lambda$-adic Eisenstein series $$\CE(\psi):=\CE^{\text{ord}}(\Psi|1,\fn)$$ on $U(2,1)$ {as} in \cite[Cor.~5.15]{Hs2}, where $\fn$ is an auxiliary prime to $p$ ideal of $O$ satisfying the conditions (a1)-(a3) {\color{green}as} in \cite{Hs2}.  
It is constructed from the pullback of a Siegel Eisenstein series on $U(2,2)$, whose constant term is essentially the Katz $p$-adic $L$-function $L_{\Sigma}(\psi)$ (cf.~\cite[Prop.~6.6]{Hs2}).

One of our main results is the following. 

\begin{thm}\label{nv:eis} Let $K/F$ be a $p$-ordinary CM quadratic extension and $\Sigma_p$ a $p$-adic CM type. 
For $K'$ a finite extension of $K$ as above, let $\psi:\Delta:=\Gal(K'/K) \ra \ov{\BZ}_p^\times$ be a character. Suppose that
  \begin{itemize}
    \item [\tiny{$\bullet$}] $p\nmid 6 h_K^- D_F \# \Delta$,
    \item [\tiny{$\bullet$}] $\psi$ is unramified at $\Sigma_p^c$ and $\psi\omega_K^{-a}$ is unramified at $\Sigma_p$ for some integer $a\nequiv 2\pmod{p-1}$.
  \end{itemize} Then the Eisenstein series $\CE(\psi)$ is $p$-primitive, i.e. one of its Fourier--Jacobi coefficients is non-zero modulo $\fm_{\Lambda}$.
\end{thm}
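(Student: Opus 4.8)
\textbf{Proof plan for Theorem~\ref{nv:eis}.}
The plan is to follow Hsieh's strategy in \cite{Hs2} for the $p$-primitivity of $\CE(\psi)$, replacing the erroneous step on simultaneous mod $p$ non-vanishing of Hecke $L$-values by our Theorem~\ref{snv}. First I would recall from \cite[\S7]{Hs2} the period formula for the Fourier--Jacobi coefficients of the pullback Eisenstein series: a toric period of a suitable Fourier--Jacobi coefficient of $\CE(\psi)$, taken against a CM point data on the Heisenberg--Jacobi group, is expressed — up to $p$-adic units under the running hypotheses $p\nmid 6h_K^-D_F\#\Delta$ — as a product of two anticyclotomic Hecke $L$-values $L^{\alg}(1,\chi_1\nu)$ and $L^{\alg}(1,\chi_2\nu^{-1})$, where $\nu$ runs over finite order characters of $\Gamma_\fl$ for an auxiliary degree one prime $\fl\nmid p$, and where $\chi_1$ is self-dual while $\chi_2$ is not residually self-dual (the two characters being built from $\psi$, $\omega_K^{a}$ and the CM periods). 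The unramifiedness hypotheses on $\psi$ at $\Sigma_p$ and $\Sigma_p^c$ and the condition $a\nequiv 2\pmod{p-1}$ are exactly what guarantee $\chi_2$ is not residually self-dual and that the archimedean and $p$-adic types are in the interpolation range, as in \cite[\S7.3--7.6]{Hs2}.

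Next I would verify that the auxiliary prime $\fl$ can be chosen so that the hypotheses of Theorem~\ref{snv} are met: $\fl\nmid {\rm N}_{K/F}(\Cond(\chi_1))^-\cdot\Cond(\chi_2|_{\BA^\times})$, $\fl$ of degree one, and — in the case $\fl$ splits in $K$ — $\varepsilon(\chi_1)=+1$. The sign condition is arranged exactly as in \cite{Hs2}: either one works with $\fl$ inert in $K$ (where $\Xi_{\chi_1,\fl}^+$ is automatically infinite by Lemma~\ref{epsi:twist}), or, when forced to take $\fl$ split, the local root number bookkeeping built into the choice of $\fn$ and the shape of $\chi_1$ forces $\varepsilon(\chi_1)=+1$. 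Having done this, Theorem~\ref{snv} produces infinitely many $\nu\in\Xi_\fl$ with
\[
{v}_p\!\left(L^{\alg}(1,\chi_1\nu)\right)=\mu_p(\chi_1),\qquad
{v}_p\!\left(L^{\alg}(1,\chi_2\nu^{-1})\right)=\mu_p(\chi_2).
\]
Under the hypotheses of the theorem one checks $\mu_p(\chi_1)=\mu_p(\chi_2)=0$ — for $\chi_2$ this is Remark~\ref{hyp'}(i) together with the choice of $\fn$ avoiding primes $\fq$ with $p\mid {\rm N}_{K/\BQ}(\fq)-1$, and for $\chi_1$ it is Remark~\ref{hyp'}(ii) (here $p$ odd and the inert primes dividing the conductor are handled by the analogous congruence condition, or the self-dual conductor is simply prime to such $\fq$). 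Hence for such $\nu$ the product $L^{\alg}(1,\chi_1\nu)\cdot L^{\alg}(1,\chi_2\nu^{-1})$ is a $p$-adic unit.

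Finally I would feed this back into the period formula: the toric period of the corresponding Fourier--Jacobi coefficient of $\CE(\psi)$ specialises, at the arithmetic point attached to $\nu$, to a $p$-adic unit times the above product, hence is itself a $p$-adic unit. Therefore that Fourier--Jacobi coefficient is non-zero modulo $\fm_\Lambda$, which is precisely the assertion that $\CE(\psi)$ is $p$-primitive. The main obstacle is not any new automorphic input but the careful matching of normalisations between our setting and \cite{Hs2}: one must check that the CM periods $\Omega_\infty$ used in $L^{\alg}(1,\chi_i\cdot)$ agree (up to $p$-adic units) with the periods appearing in Hsieh's Fourier--Jacobi period formula, that the auxiliary conductors and the ideal $\fn$ can be chosen compatibly with both the conditions (a1)--(a3) of \cite{Hs2} and the hypotheses of Theorems~\ref{snv} and~\ref{mainthm}, and that the self-dual character $\chi_1$ arising here indeed satisfies the conductor/ordinarity hypotheses of Theorem~\ref{mainthm} (in particular $\Sigma$ being a $p$-ordinary CM type, which holds since $K/F$ is $p$-ordinary). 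Once these bookkeeping points are settled, the rest is Hsieh's argument verbatim.
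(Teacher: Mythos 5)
Your proposal follows essentially the same route as the paper's proof: reduce $p$-primitivity of $\CE(\psi)$ to the $p$-adic unit assertion for $(F_a^m,\Theta)/(\Theta,\Theta)$ for a suitable $p$-primitive Shintani eigenform $\Theta$, use Hsieh's Fourier--Jacobi period formula and the Rallis inner product formula for $(U(1),U(1))$ to rewrite $(F_a^m,\Theta)/(\Theta,\Theta)\cdot|I(\Theta)|^2$ as a product of a self-dual Hecke $L$-value (for $\kappa\nu|\cdot|^{-1/2}$) and a not-residually-self-dual one (for $(\chi\kappa\nu)^{-1}|\cdot|^{1/2}$), and then invoke the simultaneous non-vanishing Theorem~\ref{snv} at an auxiliary degree-one prime $\fl$ split in $K$ chosen so that $\mu_p$ vanishes for both characters. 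Your summary of the period formula is a bit coarser than the paper's two-step mechanism (inner product against $\Theta$ giving one $L$-value times $\ov{I(\Theta)}$, then Rallis expressing $|I(\Theta)|^2/(\Theta,\Theta)$ as the other $L$-value, with $p$-integrality of $I(\Theta)$ supplied by the arithmetic theory of line bundles on CM abelian varieties), but the conclusion and the decisive new input (Theorem~\ref{snv} replacing the defective step in \cite{Hs2}) are exactly those of the paper.
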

\begin{proof}
 In the following we give a proof based on \cite{Hs2} and we emphasise the utility of our simultaneous mod $p$ non-vanishing of Hecke $L$-values as in~Theorem~\ref{snv}. Specifically, we give an exposition\footnote{We refer the reader to \cite[\S7]{Hs2} for details.} of arguments of \cite{Hs2} which reduce the assertion to a special case of Theorem \ref{snv}. We follow the notation of \cite{Hs2}.

Let $\chi=\wh{\varepsilon}\circ\Psi$ be a specialization of $\Psi$, where 
$\varepsilon$ is of infinity type $-k\Sigma$ for $k\geq 4$ and $\wh{\varepsilon}$ the $p$-adic avatar. Let $E^{\text{ord}}(\chi|1,\fn)$ be the corresponding specialization of $\CE^{\text{ord}}(\Psi|1,\fn)$.
Then the Fourier--Jacobi coefficients of $E^{\text{ord}}(\chi|1,\fn)$ are $p$-integral holomorphic Theta functions. The space of $p$-integral Theta functions has a basis of $p$-primitive Theta functions $\Theta$ which are 
eigenforms for Shintani operators (cf.~\cite[Prop.~7.3]{Hs2}).
For the $p$-primitivity of Fourier--Jacobi coefficients $F_a^m$, it suffices to show that the normalized inner product 
\begin{equation}
\frac{(F_{a}^m,\Theta)}{(\Theta,\Theta)} \in \ov{\BZ}_{p}^\times
\end{equation} for a $\Theta$.

Let $\Theta$ be a $p$-primitive Theta function such that it is an eigenform for Shintani operators and 
the associated unitary self-dual Hecke character $\kappa\nu$ has epsilon factor $+1$ (cf.~\cite[Prop.~7.3]{Hs2}), {where $$\kappa\in \CX_0(\fa,m)$$ is a fixed Hecke character as in \cite[Sec.~7.1]{Hs2} such that $\kappa|\cdot|_{\BA_K^\times}^{-1/2}$ is self-dual in the sense of \eqref{theta}}, $\nu$ is an auxiliary finite order anticyclotomic character on $\Gamma_\fl\simeq \BZ_\ell$ for an auxiliary degree one primes $\fl$ of $F$ split in $K$.

{In the next paragraph, we will use the notation `$\doteq$' to denote an equality up to a $p$-unit.}

By~\cite[Thm.~7.9]{Hs2}, we have
\begin{equation}\frac{(F_{a}^m,\Theta)}{(\Theta,\Theta)} \doteq L^{\alg}(1,(\chi\kappa\nu)^{-1}|\cdot|_{\BA_K^\times}^{1/2})\ov{I(\Theta)},\end{equation} where $I(\Theta)$ is the $U(1)$-period of $\Theta$ which is $p$-integral by the main theorem of complex multiplication and arithmetic {theory} of line bundles on Abelian varieties (cf.~\cite[Sec.~7.3]{Hs2}), $\ov{I(\Theta)}$ is the complex conjugate of $I(\Theta)$.
Moreover, it follows from $p$-integral version of the explicit Rallis inner product formula \cite{Yang97,MS02} for the pair $(U(1),U(1))$ that 
\begin{equation}\frac{|I(\Theta)|^2}{(\Theta,\Theta)} \doteq L^{\alg}(1,\kappa\nu |\cdot|_{\BA_K^\times}^{-1/2})\end{equation} 
(cf.~\cite[Thm.~7.11]{Hs2}).
Consequently, it suffices to show that the right hand side of 
 \[\frac{(F_{a}^m,\Theta)}{(\Theta,\Theta)}|I(\Theta)|^2
 \doteq 
 L^{\alg}(1,(\chi\kappa\nu)^{-1}|\cdot|_{\BA_K^\times}^{1/2})\cdot L^{\alg}(1,\kappa\nu |\cdot|_{\BA_K^\times}^{-1/2})\]
 is a $p$-unit. 

 Note that $\kappa\nu$ is self-dual and $\chi\kappa\nu$ is not residually self-dual. 
Hence, the result follows from Theorem \ref{snv} by {choosing} an auxiliary degree one prime $\fl$ of $F$ split in $K$, Hecke characters $\chi$ and $\kappa$ such that  
$$\mu_{p}((\chi\kappa)^{-1}|\cdot|_{\BA_K^\times}^{1/2})=\mu_p(\kappa|\cdot|_{\BA_K^\times}^{-1/2})=0$$  as in the proof of~\cite[Prop.~7.13~\&~Prop.~7.16]{Hs2}.
\end{proof} \begin{remark}\label{rmk:hyp} 
It may be possible to remove the hypothesis $p \nmid  \Delta$ 
{since} our non-vanishing results allow $p$ and the conductor of Hecke character to have common prime factors (cf.~\cite[Rmk.~after~Thm.~2]{Hs2} and~{Remark}~\ref{hyp'} (iii)).
\end{remark}

\begin{thm}\label{mcc} Let $K/F$ be a $p$-ordinary CM quadratic extension and $\Sigma_p$ a $p$-adic CM type. 
For $K'$ a finite extension of $K$ as above, let $\psi:\Delta:=\Gal(K'/K) \ra \ov{\BZ}_p^\times$ be a character.
 Suppose that
  \begin{itemize}
    \item  [\tiny{$\bullet$}] $p\nmid 6 h_K^- D_F \# \Delta$,
    \item  [\tiny{$\bullet$}]$\psi$ is unramified at $\Sigma_p^c$ and $\psi\omega_K^{-a}$ is unramified at $\Sigma_p$ for some integer $a\nequiv 2\pmod{p-1}$.
  \end{itemize}
Then we have 

  \[L_{\Sigma}(\psi) \big{|} F_{\Sigma}(\psi).\]

\end{thm}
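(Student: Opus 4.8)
The plan is to deduce the divisibility from the Eisenstein congruence argument of Hsieh \cite{Hs2}, whose sole missing input is the $p$-primitivity of the $\Lambda$-adic Eisenstein series $\CE(\psi)$ on $U(2,1)$ --- now supplied by Theorem~\ref{nv:eis}. Recall the shape of the argument: by \cite[Cor.~5.15,~Prop.~6.6]{Hs2} one has the ordinary $\Lambda$-adic Eisenstein series $\CE(\psi)=\CE^{\mathrm{ord}}(\Psi|1,\fn)$, constructed by pullback of a Siegel Eisenstein series on $U(2,2)$, whose constant term along the relevant cusp equals, up to a unit of $\Lambda$, the Katz $p$-adic $L$-function $L_{\Sigma}(\psi)$.

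First I would invoke Theorem~\ref{nv:eis}: under the stated hypotheses $\CE(\psi)$ is $p$-primitive, i.e.\ at least one of its Fourier--Jacobi coefficients is a unit modulo $\fm_{\Lambda}$. As in the proof of that theorem, the $p$-primitivity reduces --- via Hsieh's period formula for Fourier--Jacobi coefficients \cite[Thm.~7.9]{Hs2} and the $p$-integral Rallis inner product formula for the pair $(U(1),U(1))$ \cite[Thm.~7.11]{Hs2} --- to the simultaneous mod $p$ non-vanishing of a self-dual and a non-residually-self-dual Hecke $L$-value in an auxiliary anticyclotomic $\BZ_{\ell}$-family, which is precisely Theorem~\ref{snv}; the self-dual half of that statement is the new input of this paper (Theorem~\ref{mainthm}, with the roles of $\ell$ and $p$ exchanged).

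Next I would run the remaining steps of \cite{Hs2} without change. Let $P$ be a height one prime of $\Lambda$ with $\mathrm{ord}_{P}(L_{\Sigma}(\psi))=m\geq 1$. Since $\CE(\psi)$ is $p$-primitive, its constant term vanishes to order $m$ at $P$ while the form itself remains a unit there, so the Hida family of $p$-adic cusp forms on $U(2,1)$ congruent to $\CE(\psi)$ modulo $P^{m}$, together with the Galois lattice construction of \cite{Hs2}, yields a nontrivial extension class and hence a surjection onto the $\psi$-isotypic Iwasawa module, giving $\mathrm{ord}_{P}(F_{\Sigma}(\psi))\geq m$. Here the hypotheses $p\nmid 6h_{K}^{-}D_{F}\#\Delta$ are used to control the Eisenstein congruence ideal and to guarantee the irreducibility and torsion-freeness needed in the lattice argument, while the ramification conditions on $\psi$ at $\Sigma_{p}$ and $\Sigma_{p}^{c}$ ensure that $\CE(\psi)$ lies in the range where the constant-term interpolation of $L_{\Sigma}(\psi)$ holds. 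As $P$ is arbitrary, $L_{\Sigma}(\psi)\mid F_{\Sigma}(\psi)$ in $\Lambda$.

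The main obstacle is entirely concentrated in the $p$-primitivity step, which was exactly the gap in \cite{Hs2}: the passage from $p$-primitivity of $\CE(\psi)$ to the divisibility --- the construction of Selmer classes out of the Eisenstein congruence --- is already carried out in full in \emph{loc.\ cit.} and needs no modification. Consequently the real content beyond Hsieh's work is the verification, done in the proof of Theorem~\ref{nv:eis}, that the auxiliary prime $\fl$ and the Hecke characters can be chosen so that the relevant local $\mu$-invariants vanish and Theorem~\ref{snv} becomes applicable.
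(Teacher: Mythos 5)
Your proposal is correct and follows the same route as the paper: invoke Theorem~\ref{nv:eis} for the $p$-primitivity of $\CE(\psi)$, then defer to Hsieh's Eisenstein congruence machinery (the paper cites \cite[Cor.~7.20]{Hs2} for the divisibility of the Eisenstein ideal by $L_\Sigma(\psi)$ and \cite[Cor.~8.14]{Hs2} for the lattice/Ribet's-lemma step). Your second paragraph largely recapitulates the proof of Theorem~\ref{nv:eis} rather than adding content specific to this theorem, but the overall argument matches the paper's.
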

\begin{proof}
In view of the $p$-primitivity of $\CE(\psi)$ as in Theorem~\ref{nv:eis}, the Eisenstein congruence approach on the unitary group $U(2,1)$ yields: the Katz $p$-adic $L$-function divides\footnote{An auxiliary Deligne--Ribet $p$-adic $L$-function also appears in the argument of \cite{Hs2}, which is innocuous due to Wiles' proof of main conjecture for totally real number fields.} 
the Eisenstein ideal measuring congruences between $\CE(\psi)$ and the $\Lambda$-adic cusp forms on $U(2,1)$ (cf.~\cite[Cor.~7.20]{Hs2}). On the other hand, by Ribet's lemma, the Eisenstein ideal divides the characteristic ideal of the Selmer group (cf.~\cite[Cor.~8.14]{Hs2}).
\end{proof}

\end{document}